\documentclass[12pt]{amsart}

\usepackage{tikz,pgf,amscd,enumerate,multicol}
\usepackage[pagebackref,colorlinks,linkcolor=red,citecolor=blue,urlcolor=blue,hypertexnames=false]{hyperref}
\usepackage{amsmath,amssymb,tikz-cd,tkz-graph}

\usepackage[margin=1.2cm]{geometry}
\usetikzlibrary{decorations.markings,arrows}

\tikzset{midarrow/.style = {postaction=decorate, decoration={markings,mark=at position .5 with \arrow{stealth}}}}

\pgfdeclarelayer{background}
\pgfsetlayers{background,main}

\usepackage[all,2cell]{xy}

\newcommand{\ZZ}{\mathbb{Z}}

\newcommand{\N}{\mathbf{n}}
\newcommand{\M}{\mathbf{m}}

\newcommand{\Hom}{\operatorname{Hom}}
\newcommand{\Obj}{\operatorname{Obj}}

\newcommand{\MCE}{\operatorname{MCE}}
\newcommand{\KP}{\operatorname{KP}}

\newcommand{\Pgr}[1][]{{\operatorname{{Pgr}^{#1}\!-}}}

\pagestyle{headings}

\newtheorem{thm}{Theorem}[section]

\newtheorem{lem}[thm]{Lemma}
\newtheorem{prop}[thm]{Proposition}

\newtheorem{prob}[thm]{Problem}
\newtheorem{ques}[thm]{Question}

\theoremstyle{definition}
\newtheorem{dfn}[thm]{Definition}

\theoremstyle{remark}
\newtheorem{rmk}[thm]{Remark}
\newtheorem{rmks}[thm]{Remarks}
\newtheorem{example}[thm]{Example}

\newcommand{\image}{\operatorname{Im}}
\newcommand{\Ker}{\operatorname{Ker}}
\newcommand{\Aut}{\operatorname{Aut}}

\newcommand{\Ext}{\operatorname{Ext}}
\newcommand{\End}{\operatorname{End}}
\newcommand{\lspan}{\operatorname{span}}

\def\r{{\textsf r}}

\newcommand{\Typ}{\operatorname{Typ}}
\newcommand{\typ}{\operatorname{typ}}
\newcommand{\gr}{\operatorname{gr}}
\newcommand{\Gr}{\operatorname{\textbf{Gr}}}
\newcommand{\Mod}{\operatorname{\textbf{Mod}}}

\allowdisplaybreaks

\begin{document}

\title{Higher-rank graphs and the graded $K$-theory of Kumjian--Pask Algebras}

\author[Hazrat, Mukherjee, Pask and Sardar]{{ Roozbeh Hazrat, } { Promit Mukherjee, } { David Pask, } and { Sujit Kumar Sardar}}
\address{ (R. Hazrat) Centre for Research in Mathematics and Data Science, Western Sydney University,  NSW 2150, Australia}
\email{r.hazrat@westernsydeny.edu.au}
\address{ (P. Mukherjee) Department of Mathematics, Jadavpur University, Kolkata-700032, India}
\email{promitmukherjeejumath@gmail.com}
\address{ (D. Pask) School of Mathematics and Applied Statistics, University of Wollongong,  NSW 2522, Australia}
\email{david.a.pask@gmail.com}
\address{ (S. K. Sardar) Department of Mathematics, Jadavpur University, Kolkata-700032, India}
\email{sujitk.sardar@jadavpuruniversity.in, sksardarjumath@gmail.com}   

\subjclass[2020]{16W50, 19A49, 19D55, 22A22, 37B10}

\keywords{Higher-rank graphs, Kumjian--Pask algebra, graded $K$-theory, graded homology, bridging bimodule, moves on $k$-graphs, lifting problem}

\begin{abstract}
This paper lays out the foundations of graded $K$-theory for Leavitt algebras associated with higher-rank graphs, also known as Kumjian--Pask algebras, establishing it as a potential tool for their classification. 

For a row-finite $k$-graph $\Lambda$ without sources, we show that there exists a $\mathbb{Z}[\mathbb{Z}^k]$-module isomorphism between the graded zeroth (integral) homology $H_0^{\gr}(\mathcal{G}_\Lambda)$ of the infinite path groupoid $\mathcal{G}_\Lambda$ and the graded Grothendieck group $K_0^{\gr}(\KP_\mathsf{F}(\Lambda))$ of the Kumjian--Pask algebra $\KP_\mathsf{F}(\Lambda)$, which respects the positive cones (i.e., the talented monoids).

We demonstrate that the $k$-graph moves of \emph{in-splitting} and \emph{sink deletion} defined by Eckhardt et al. (Canad. J. Math. 2022) preserve the graded $K$-theory of associated Kumjian--Pask algebras and produce algebras which are graded Morita equivalent, thus providing evidence that graded $K$-theory may be an effective invariant for classifying certain Kumjian--Pask algebras.

We also determine a natural sufficient condition regarding the fullness of the graded Grothendieck group functor. More precisely, for two row-finite $k$-graphs $\Lambda$ and $\Omega$ without sources and with finite object sets, we obtain a sufficient criterion for lifting a pointed order-preserving $\mathbb{Z}[\mathbb{Z}^k]$-module homomorphism between $K_0^{\gr}(\KP_\mathsf{F}(\Lambda))$ and $K_0^{\gr}(\KP_\mathsf{F}(\Omega))$ to a unital graded ring homomorphism between $\KP_\mathsf{F}(\Lambda)$ and $\KP_\mathsf{F}(\Omega)$. For this, we adopt, in the setting of $k$-graphs, the \emph{bridging bimodule} technique recently introduced by Abrams, Ruiz and Tomforde (Algebr. Represent. Theory 2024).  
\end{abstract}

\maketitle

\tableofcontents

\section{Introduction}\label{sec intro}
The triangle of symbolic dynamics, operator algebras, and noncommutative algebras has seen a notable surge of activity in recent years. The classical dynamical notions such as conjugacy, shift equivalence  and flow equivalence can be captured  by both analytical and discrete combinatorial algebras (here, graph $C^*$-algebras \cite{KPR} and Leavitt path algebras \cite{Abrams-Monograph, Ara}) arising from underlying dynamics. On the other hand, conjecturally,  these algebras can be classified by certain variants of $K$-theory, thus enabling one to interpret the dynamical behaviour in algebraic terms, and vice versa \cite{willie, eilersxyz}.

To be precise, for a finite graph $E$ with no sinks,  the graded Grothendieck group $K_0^{\gr}$ of the Leavitt path algebra $L_\mathsf{F}(E)$, coincides with the equivariant $K$-theory $K_0^{\mathbb T}$ of the graph $C^*$-algebra $C^*(E)$ and they both coincide with Krieger's dimension group of the shift of finite type associated to the adjacency matrix of the graph $E$ \cite{Hazrat-main, Lind-Marcus}. The long-standing Graded Classification Conjecture states that the pointed pre-ordered $\mathbb Z[x^{-1}, x]$-module $K_0^{\gr}$ classifies the class of Leavitt path algebras (and similarly the graph $C^*$-algebras). A consequence of the conjecture is that two shifts of finite type are eventually conjugate or, shift equivalent if and only if the associated Leavitt path algebras are graded Morita equivalent. 

In an effort to settle the conjecture, it has been shown that the pointed $K_0^{\gr}$ is a full functor for the category of Leavitt path algebras; namely a pointed order-preserving $\mathbb Z[x^{-1},x]$-module homomorphism $\phi:K_0^{\gr}(L_\mathsf{F}(E)) \rightarrow K_0^{\gr}(L_\mathsf{F}(F))$ lifts to a unital graded homomorphism  $L_\mathsf{F}(E)\rightarrow L_\mathsf{F}(F)$ \cite{Arnone, Vas}. And that this functor is invariant under out- and in-splitting graph moves, which gives the graded Morita equivalent algebras~\cite{Cordeiro}. We refer the readers to \cite{ART, AraPardo2014, Arnone, guidohom, guidowillie, Bock, Brix, bilich, eilers, Vas, Vasdisjoint} for recent works on the Graded Classification Conjecture and \cite{willie} for a comprehensive survey. 

This paper aims to initiate a study of graded $K$-theory in the context of a generalised class of Leavitt path algebras, namely those associated with higher-rank graphs (or, $k$-graphs); commonly referred to as Kumjian–Pask algebras~\cite{Pino}. Higher-rank graphs (see Definition \ref{def higher-rank graph} below) were introduced by Kumjian and the third named author in \cite{Kumjian--Pask} to provide a combinatorial framework for higher-rank Cuntz--Krieger algebras \cite{RS}. Soon it appeared that the $C^*$-algebras of higher-rank graphs cover a broader class of $C^*$-algebras, including the well-known graph $C^*$-algebras. Moreover, the $C^*$-algebras and Kumjian--Pask algebras associated to higher-rank graphs include many examples which are not graph $C^*$-algebras and Leavitt path algebras (see \cite{Pask, Pino}); also it is observed recently in \cite{HMPS} that there exists a finitely generated commutative refinement monoid which cannot be realised as a graph monoid but can be realised as the monoid of a row-finite higher-rank graph. Higher-rank graphs, being naturally equipped with delicate categorical structure, often demand innovative and subtle ideas to establish results related to their algebras. 

For a higher-rank $k$-graph $\Lambda$, and its associated Kumjian--Pask algebra $\KP_\mathsf{F}(\Lambda)$ with coefficients in a field $\mathsf{F}$, one can describe the graded Grothendieck group $K_0^{\gr}(\KP_\mathsf{F}(\Lambda))$ directly from the underlying $k$-graph $\Lambda$. The positive cone of this group coincides with the so-called \emph{talented monoid} of $\Lambda$, studied in~\cite{HMPS}. 
Parallel to the theory of Leavitt path algebras, it is natural to ask whether this invariant captures similar structural or categorical properties  in the context of Kumjian–Pask algebras. 
 
As illustrated by Example~\ref{ex non-isomorphic KP-algebras with isomorphic talented monoids}, the $2$-graphs arising from two different factorisations of the following bi-colored skeleton 
\[
\begin{tikzpicture}[scale=1]
\node[circle,draw,fill=black,inner sep=0.5pt] (p11) at (0, 0) {$.$} 

edge[-latex, red,thick,loop, dashed, in=10, out=90, min distance=70] (p11)
edge[-latex, red,thick, loop, dashed, out=90, in=170, min distance=70] (p11)

edge[-latex, blue,thick, loop, out=225, in=-45, min distance=70] (p11);
                                        
\node at (0, -0.5) {$v$};
\node at (-1.5,1) {$e_1$};
\node at (1.5,1) {$e_2$};
\node at (0.8,-1) {$f$};

\end{tikzpicture}
\]
give rise to two non-graded isomorphic Kumjian--Pask algebras: $L_\mathsf{F}(1,2)[t^{-1},t]$, the Laurent polynomial algebra  with coefficients from the Leavitt path algebra $L_\mathsf{F}(1,2)$ of the two petal rose $R_2$, where $t$ commutes with $L_\mathsf{F}(1,2)$ and $L_\mathsf{F}(1,2)[t^{-1},t;\varphi]$, the skew Laurent polynomial algebra where $\varphi:L_\mathsf{F}(1,2)\rightarrow L_\mathsf{F}(1,2)$ is the ring automorphism switching the petals. However, one can observe that these algebras have the same graded Grothendieck group $\mathbb Z[1/2]$.

This observation naturally suggests the following problem.

\begin{prob}\label{pp11}
Characterise the class of $k$-graphs for which the existence of a (pointed) order-preserving $\mathbb{Z}[\mathbb{Z}^k]$-module isomorphism between the graded Grothendieck groups implies that the respective Kumjian--Pask algebras are graded (isomorphic) Morita equivalent. 
\end{prob}

One should note that, for $1$-graphs, Problem~\ref{pp11} is precisely the Graded Classification Conjecture for Leavitt path algebras. Although the above problem is yet unsolved, it appears, at least from the discrepancy mentioned above, that the $K$-theoretic classification of Kumjian--Pask algebras of higher-rank graphs might be much more involved than that of Leavitt path algebras. This motivates us to investigate how far the graded Grothendieck group can capture the substantial information about this algebra. In this direction, we here accomplish three objectives, each of which contributes to our quest in spite of being independent in their treatment. 

We start by showing that the newly developed moves on $k$-graphs~\cite{EFGGGP}, namely in-splitting and sink deletion, produce Kumjian--Pask algebras that are graded Morita equivalent (see Propositions \ref{pro in-splitting gives graded isomorphic KP algebras} \& \ref{pro sink deletion giving graded Morita equivalent KP-algberas}). We further directly establish in Theorems \ref{th in-splitting preserves talented monoid} and \ref{th sink deletion preserves talented monoid} that the graded $K$-theories of these algebras remain isomorphic under the said moves. As a continuation of \cite{HMPS}, this gives further evidence that the talented monoid may be an effective invariant for classifying certain Kumjian--Pask algebras up to graded Morita equivalence. It is interesting to note that the moves of in- and out-splitting appear extensively in the study of dynamical systems, especially shifts of finite type. It is known that two-dimensional shifts of finite type are best modelled by using textile systems (see \cite{JM}) introduced by Nasu \cite{Nasu}. However, finite $2$-graphs can also be used to form certain shifts of finite type. The connections between $2$-graph in-splitting (\cite{EFGGGP}) and textile in-splitting (introduced by Johnson and Madden \cite{JM}) have been explored very recently in \cite{WoA}, where it is shown that every $2$-graph in-splitting can be broken down into multiple textile in-splittings and inversions; consequently, $2$-graph in-splitting gives conjugacy of the associated one-sided two-dimensional shifts of finite type. In dimension $1$, conjugacy (eventual conjugacy) of edge shifts (essentially shifts of finite type) is characterized by strong shift equivalence (shift equivalence) of the corresponding adjacency matrices. In Remark \ref{rem matrix connection of in-splitting}, we observe that the vertex matrices of a row-finite $k$-graph $\Lambda$ and its in-split $\Lambda_I$ are naturally connected via a certain matrix relation, which looks like a higher-dimensional analogue of shift equivalence on the class of non-negative integral square matrices. This, together with the preceding discussion, gives some hope to derive in future work, some possible matrix conditions describing conjugacy of two-dimensional shifts of finite type as desired at the end of \cite{JM}. 

Our second objective is to realise the graded $K$-theory of Kumjian--Pask algebras as the graded homology \cite{H-Li} of the infinite path groupoid (see \S \ref{ssec path groupoid} below) associated with the underlying $k$-graph. The seminal work of Renault \cite{Renault} showcased topological groupoids as a potential tool to model operator algebras, especially $C^*$-algebras. In the past two decades, through several works (\cite{Bosa, ABBL, Ara-Hazrat, CHR, Clark, H-Li, Kumjian--Pask, Matui, MatAdv} and many more) it has become evident that groupoids play the role of a crucial junction point connecting combinatorial dynamical systems, their $C^*$-algebras, and their algebraic counterparts. The homology theory of \'{e}tale groupoids, introduced by Crainic and Moerdijk \cite{Crainic}, is significant in studying groupoid $C^*$-algebras \cite{Matui, MatAdv}. In this context, Matui proved that for any \'{e}tale groupoid $\mathcal{G}$ arising from a one-sided, one-dimensional shift of finite type, $H_0(\mathcal{G})\cong K_0(C_r^*(\mathcal{G}))$ (see \cite[Theorem 4.14]{Matui}), where $C_r^*(\mathcal{G})$ is the reduced $C^*$-algebra of $\mathcal{G}$. As the majority of interesting groupoids can be naturally graded, an exciting program is to relate graded combinatorial invariants for graded groupoids (e.g., graded homology) to their Steinberg and $C^*$-algebraic counterparts (here graded $K$-theory). In line with this program, we prove here, in two different ways (see Theorem \ref{th relationship of graded K-theory and graded homology} and Remark \ref{rem alternative proof incorporating type semigroup}), that there is an order-preserving $\ZZ[\ZZ^k]$-module isomorphism between the graded zeroth homology $H_0^{\gr}(\mathcal{G}_\Lambda)$ of the infinite path groupoid $\mathcal{G}_\Lambda$ of a row-finite $k$-graph $\Lambda$ without sources and the graded $K$-theory of the corresponding Kumjian--Pask algebra $\KP_\mathsf{F}(\Lambda)$. This extends \cite[Theorem 6.6]{H-Li} to higher dimensions. Moreover, Theorem \ref{th relationship of graded K-theory and graded homology}, together with Remark \ref{rem Connecting K-theory of Kp-algebra and C*-algebra} and \cite[Corollary 3.5 $(i)$]{Kumjian--Pask}, enables us to obtain the following relationships: \[H_0(\mathcal{G}_{\Lambda\times_d \ZZ^k})\cong H_0(\mathcal{G}_\Lambda\times_{\Tilde{d}}\ZZ^k)=H_0^{\gr}(\mathcal{G}_\Lambda)\cong K_0^{\gr}(\KP_\mathsf{F}(\Lambda))\cong K_0(C^*(\Lambda\times_d \ZZ^k))\cong K_0(C_r^*(\mathcal{G}_{\Lambda\times_d \ZZ^k})),\] extending Matui's result mentioned above, to skew-product $k$-graph groupoids which possibly do not arise from shifts of finite type.

The third and final goal of this paper is to seek an answer to the following question:
\begin{ques}\label{ques brief lifting question}
Given two row-finite $k$-graphs $\Lambda$ and $\Omega$ without sources and with finite vertex sets, when can one lift a pointed order-preserving $\ZZ[\ZZ^k]$-module homomorphism between $K_0^{\gr}(\KP_\mathsf{F}(\Lambda))$ and $K_0^{\gr}(\KP_\mathsf{F}(\Omega))$ to a unital graded ring homomorphism between $\KP_\mathsf{F}(\Lambda)$ and $\KP_\mathsf{F}(\Omega)$?
\end{ques}
The above question is framed in categorical terms in Question \ref{ques lifting question}, and is indeed related to the fullness of the graded Grothendieck group functor. The analogue of the above problem on the level of Leavitt path algebras has been solved independently by several authors, viz. Va\v{s} \cite{Vas}, Arnone \cite{Arnone} and Brix et al. \cite{Brix}. The answer is that we can always lift a homomorphism between graded $K$-theories of Leavitt path algebras to a graded algebra homomorphism between the algebras concerned. The authors in \cite{Brix} used the idea of \emph{bridging bimodules}, recently invented by Abrams, Ruiz and Tomforde \cite{ART}, to prove the fullness of the functor $K_0^{\gr}$. Motivated by this approach, in order to find an answer to the lifting question, we develop machinery to adopt the bridging bimodule technique in our higher-rank setting. However, we face some technical hurdles. In a higher-rank graph, a path can be made of edges having different degrees and this simple fact can cause a serious compatibility issue in the process of forming a well-defined conjugacy between two $k$-graphs $\Lambda$ and $\Omega$, especially since any such conjugacy should respect the internal factorization rules of $\Lambda$ and $\Omega$ at the time of intertwining the $\mathsf{F}\Lambda^0-\mathsf{F}\Lambda^0$-bimodule $\mathsf{F}\Lambda^\N$ with the $\mathsf{F}\Omega^0-\mathsf{F}\Omega^0$-bimodule $\mathsf{F}\Omega^\N$ (see diagram (\ref{comd4}) of Lemma \ref{lem towards specified conjugacy}), where $\mathsf{F}$ is a field. To overcome this, we work on the level of polymorphisms and introduce a certain condition on matrices $R$ in $\mathbb{M}_{\Lambda^0\times \Omega^0}(\mathbb{N})$ satisfying $A_{\mathbf{e}_i}R=RB_{\mathbf{e}_i}$ for all $i=1,2,\ldots,k$ ($A_{\mathbf{e}_i}$ and $B_{\mathbf{e}_i}$ are the vertex matrices of $\Lambda$ and $\Omega$ respectively), which is actually a coherence condition taking care of compatibility between the mutual commutativity of the polymorphism of $R$ with the coordinate polymorphisms of $\Lambda$, $\Omega$ and the individual factorization rules of $\Lambda$ and $\Omega$. In Definition \ref{def the bridging matrix and polymorphism}, we call a matrix $R$ satisfying such a coherence condition, a \emph{bridging} matrix. We give enough justification, in Remarks \ref{rem explaining the bridging graph}, for the significance of the bridging criterion and also describe its similarity with the associativity condition required to form a $k$-graph out of a $k$-colored skeleton (see \cite[Theorem 4.4]{Hazlewood}). Though the bridging criterion is vacuously satisfied in dimension $1$, it is not at all obvious while moving to higher dimensions; Examples \ref{ex matrix commutativity is not enough} and \ref{ex positive example of a bridging matrix} justify this for $k=2$. 

Coming back to the lifting question, in Lemma \ref{lem the connecting matrix R} $(ii)$, we show that any pointed order-preserving $\ZZ[\ZZ^k]$-module homomorphism $\mathfrak{h}:K_0^{\gr}(\KP_\mathsf{F}(\Lambda))\longrightarrow K_0^{\gr}(\KP_\mathsf{F}(\Omega))$ gives rise to a matrix $R\in \mathbb{M}_{\Lambda^0\times \Omega^0}(\mathbb{N})$ whose polymorphism $E_R$ makes the diagram
\begin{equation*}\label{comd0}
\begin{tikzcd}[row sep=huge, column sep=huge]
		\Omega^0 \arrow [r, "E_{\mathbf{e}_i}^\Omega"] \arrow [d, "E_R"'] & \Omega^0 \arrow [d, "E_R"] \\
		\Lambda^0 \arrow [r, "E_{\mathbf{e}_i}^\Lambda"] & \Lambda^0
\end{tikzcd}    
\end{equation*}
commute in the category of polymorphisms for each $i=1,2,\ldots,k$, where $E_{\mathbf{e}_i}^\Lambda$, $E_{\mathbf{e}_i}^\Omega$ are the $i^{\text{th}}$-coordinate graphs of $\Lambda$ and $\Omega$ respectively (see \S \ref{sec lifting problem} for more details). In addition, if $R$ is a bridging matrix, then by a step-by-step construction of a bridging bimodule $M(R)$ (a suitable graded $\KP_\mathsf{F}(\Lambda)-\KP_\mathsf{F}(\Omega)$-bimodule), we finally show in Theorem \ref{the lifting theorem}, that there exists a graded homomorphism $\psi:\KP_\mathsf{F}(\Lambda)\longrightarrow \KP_\mathsf{F}(\Omega)$ such that $K_0^{\gr}(\psi)=\mathfrak{h}$. In this way, we establish the bridging criterion as a sufficient condition for lifting a homomorphism on the level of graded $K$-theories to the level of Kumjian--Pask algebras.  

Throughout this paper, all our $k$-graphs are row-finite without sources.

The paper is structured as follows. After this introductory section, we recall in \S \ref{sec basics} the very basic concepts related to higher-rank graphs and Kumjian--Pask algebras. This includes fundamental examples of $k$-graphs (Examples \ref{ex a finite grid graph}, \ref{ex pullback k-graphs}, \ref{ex skew-product k-graphs}), the description of the infinite path groupoid and most importantly a brief overview of the graded $K$-groups of Kumjian--Pask algebras. The preliminary section is followed by \S \ref{sec moves}--\ref{sec lifting problem}, where we collect the main findings of the paper. In \S \ref{sec moves}, we describe the $k$-graph moves of in-splitting and sink-deletion separately in two subsections (see \S \ref{ssec in-slitting} and \S \ref{ssec sink deletion}). We obtain Theorems \ref{th in-splitting preserves talented monoid} and \ref{th sink deletion preserves talented monoid} which state that the talented monoid of a $k$-graph remains $\mathbb{Z}^k$-isomorphic under each of these moves. \S \ref{sec homology} starts with a brief introduction to the homology theory of ample groupoids as well as the graded homology of a $\Gamma$-graded groupoid, where $\Gamma$ is a discrete abelian group. In the rest of this section, we focus on the path groupoid of a $k$-graph $\Lambda$ and prove necessary results (Lemma \ref{lem key for showing well-definedness}, Proposition \ref{pro connecting path group with SA of path groupoid}) leading to Theorem \ref{th relationship of graded K-theory and graded homology}, which exhibits a desired isomorphism between $H_0^{\gr}(\mathcal{G}_\Lambda)$ and $K_0^{\gr}(\KP_\mathsf{F}(\Lambda))$. We also provide some interesting connections between the (graded) homology of $\mathcal{G}_\Lambda$ and that of $\mathcal{G}_{f^*(\Lambda)}$ where $f^*(\Lambda)$ is the pullback of $\Lambda$ with respect to a surjective monoid homomorphism $f:\mathbb{N}^\ell\longrightarrow \mathbb{N}^k$. \S \ref{sec dimension group} is dedicated to the study of the dimension group associated with a $k$-graph $\Lambda$ which is actually another realisation of the graded $K$-theory. The main result of this section is Theorem \ref{th isomorphism of TM in terms of matrices}, which describes the existence of an isomorphism between graded Grothendieck groups of Kumjian--Pask algebras via a specific matrix condition connecting the vertex matrices of the underlying $k$-graphs. In \S \ref{sec lifting problem}, we take up the lifting question (Question \ref{ques lifting question}) and through the introduction of bridging matrices, determine a natural sufficient condition in Theorem \ref{the lifting theorem}, under which the lifting can be done. We conclude the paper in \S \ref{sec future} discussing a few problems which have not been explored here and might be interesting for further research. 

\section{Basic concepts}\label{sec basics}
\subsection{Higher-rank graphs and Kumjian--Pask algebras}\label{ssec k-graphs}
For any positive integer $k$, $\mathbb{N}^k$ stands for the commutative monoid of $k$-tuples of non-negative integers under coordinatewise addition. It is simultaneously viewed as a small category with one object.
\begin{dfn}\label{def higher-rank graph}
A \emph{higher-rank graph} (also called a \emph{$k$-graph}) is a countable small category $\Lambda$ together with a functor $d:\Lambda\longrightarrow \mathbb{N}^k$ (called the \emph{degree functor}) which satisfies the \emph{unique factorization property}: if $\lambda\in \Lambda$ and $d(\lambda)=\mathbf{n}+\mathbf{m}\in \mathbb{N}^k$, then there exist unique $\alpha,\beta\in \Lambda$ such that $d(\alpha)=\mathbf{n},d(\beta)=\mathbf{m}$ and $\lambda=\alpha\beta$.    
\end{dfn} 

The \emph{range} and \emph{source} of any morphism $\lambda\in \Lambda$ are usually denoted as $r(\lambda)$ and $s(\lambda)$. For any $\mathbf{n}\in \mathbb{N}^k$ and $u,v\in \Obj(\Lambda)$, we denote $\Lambda^\mathbf{n}:=d^{-1}(\mathbf{n})$, $u\Lambda^\mathbf{n}:=r^{-1}(u)\cap \Lambda^\mathbf{n}$, $\Lambda^\mathbf{n} v:=\Lambda^\mathbf{n}\cap s^{-1}(v)$ and $u\Lambda^\mathbf{n} v:=u\Lambda^\mathbf{n} \cap \Lambda^\mathbf{n} v$. Also, we write $\Lambda^\mathbf{1}=\displaystyle{\bigcup_{i=1}^{k}}~\Lambda^{\mathbf{e}_i}$ and $v\Lambda^\mathbf{1}=\displaystyle{\bigcup_{i=1}^{k}}~v\Lambda^{\mathbf{e}_i}$, where $\{ \mathbf{e}_i~|~ i = 1 , \ldots , k \}$ is the canonical basis for $\mathbb{N}^k$. Using the factorization property, one can show that $\Obj(\Lambda)=\Lambda^0$. In this paper, we confine ourselves to \emph{row-finite $k$-graphs without sources}, i.e., $k$-graphs $\Lambda$ such that $0<|v\Lambda^\mathbf{n}|< \infty$ for all $v\in \Lambda^0$ and $\N\in \mathbb{N}^k$. Let $\Lambda$ be any $k$-graph and $\lambda\in \Lambda$. Suppose $\mathbf{m},\mathbf{n}\in \mathbb{N}^k$ are such that $0\le \mathbf{m}\le \mathbf{n}\le d(\lambda)$. By applying the unique factorization, we have unique paths $\mu_1,\mu_2,\mu_3\in \Lambda$ such that 
\begin{center}
    $d(\mu_1)=\mathbf{m},d(\mu_2)=\mathbf{n}-\mathbf{m}, d(\mu_3)=d(\lambda)-\mathbf{n}$ and $\lambda=\mu_1\mu_2\mu_3$.
\end{center}

We denote the paths $\mu_1,\mu_2$ and $\mu_3$ by $\lambda(0,\mathbf{m}),\lambda(\mathbf{m},\mathbf{n})$ and $\lambda(\mathbf{n},d(\lambda))$ respectively. Let $\lambda,\mu\in \Lambda$. A path $\tau\in \Lambda$ is called a \emph{minimal common extension} of $\lambda$ and $\mu$ if 

\begin{center}
    $d(\tau)=d(\lambda)\vee d(\mu),$ $\tau(0,d(\lambda))=\lambda$ and $\tau(0,d(\mu))=\mu$.
\end{center}
Define 
\begin{center}
$\MCE(\lambda,\mu):=\{\tau~|~\tau$ is a minimal common extension of $\lambda,\mu\}$.
\end{center}

For a subset $E\subseteq \Lambda$ and $\lambda\in \Lambda$, we denote
\begin{center}
$\Ext(\lambda;E):=\displaystyle{\bigcup_{\mu\in E}} \{\alpha~|~\lambda \alpha=\mu\beta\in \MCE(\lambda,\mu)$ for some $\beta\in \Lambda\}$.
\end{center}

To each $k$-graph $\Lambda$, we can associate an (edge) colored directed graph $G_\Lambda$ called the \emph{$1$-skeleton} of $\Lambda$. Define the set of vertices $G_\Lambda^0:=\Lambda^0$ and set of edges $G_\Lambda^\mathbf{1}:=\Lambda^\mathbf{1}$. Range and source maps are the restrictions of $r$ and $s$ to $\Lambda^\mathbf{1}$. Fix $k$ different colors, each of which corresponds to a generator of $\mathbb{N}^k$. If $\lambda\in G_\Lambda^\mathbf{1}$ has degree $\mathbf{e}_i$ and $c_i$ is the color corresponding to $\mathbf{e}_i$, then we color the edge $\lambda$ by $c_i$. On the other hand, given a $k$-colored directed graph $G$, one can obtain a $k$-graph by defining, on the path category $G^*$, an equivalence relation $\sim$ satisfying some specific properties (see \cite[Theorems 4.4 and  4.5]{Hazlewood}) which encodes the factorisation property. This correspondence between $k$-graphs and $k$-colored directed graphs with certain equivalence relations gives a general mechanism to perform constructions on $k$-graphs and to establish results by focusing on the $1$-skeleton.

\begin{example}\label{ex a finite grid graph}
(\textbf{The $k$-dimensional grid graph}) Let $k$ be any positive integer and $\mathbf{n}\in \mathbb{N}^k$. Denote by $\mathbf{n}^{\downarrow}$, the principal down-set of $\mathbf{n}$ in the poset $\mathbb{N}^k$ (with respect to coordinatewise ordering). Then, $\mathbf{n}^{\downarrow}$ is also a poset. Let $\Omega_{k,\mathbf{n}}$ be the corresponding poset category, i.e., \[\Omega_{k,\mathbf{n}}:=\{(\mathbf{p},\mathbf{q})\in \mathbf{n}^{\downarrow}\times \mathbf{n}^{\downarrow}~|~\mathbf{p}\le \mathbf{q}\},\] with $r((\mathbf{p},\mathbf{q}))=\mathbf{p}$, $s((\mathbf{p},\mathbf{q}))=\mathbf{q}$ and the composition is defined by \[(\mathbf{p},\mathbf{q})(\mathbf{q},\mathbf{t}):=(\mathbf{p},\mathbf{t}).\] Then $\Omega_{k,\mathbf{n}}$ can be viewed as a $k$-graph with respect to the degree functor $d_\mathbf{n}:\Omega_{k,\mathbf{n}}\longrightarrow \mathbb{N}^k$; $d_\mathbf{n}((\mathbf{p},\mathbf{q})):=\mathbf{q}-\mathbf{p}$. Instead of choosing a principal down-set, if we consider the poset $\mathbb{N}^k$ as a whole, then the corresponding poset category, denoted as $\Omega_k$, is also a $k$-graph with the degree functor $d:\Omega_k\longrightarrow \mathbb{N}^k$ defined by $d((\mathbf{p},\mathbf{q})):=\mathbf{q}-\mathbf{p}$ for all $(\mathbf{p},\mathbf{q})\in \Omega_k$. 

Below, we show the $1$-skeleton of $\Omega_{3,(2,2,2)}$. For the three basic $3$-tuples $\mathbf{e}_1,\mathbf{e}_2$ and $\mathbf{e}_3$ we use the colors blue (solid), green (dotted) and red (dashed) respectively. 

\[
\begin{tikzpicture}[scale=4]
\node[inner sep=1.5pt, circle,draw,fill=black] (A1) at (0,0,0) {};
\node[inner sep=1.5pt, circle,draw,fill=black] (A2) at (1,0,0) {};
\node[inner sep=1.5pt, circle,draw,fill=black] (A3) at (1,0,1) {};
\node[inner sep=1.5pt, circle,draw,fill=black] (A4) at (0,0,1) {};

\node[inner sep=1.5pt, circle,draw,fill=black] (B1) at (0,1,0) {};
\node[inner sep=1.5pt, circle,draw,fill=black] (B2) at (1,1,0) {};
\node[inner sep=1.5pt, circle,draw,fill=black] (B3) at (1,1,1) {};
\node[inner sep=1.5pt, circle,draw,fill=black] (B4) at (0,1,1) {};

\node[inner sep=1.5pt, circle,draw,fill=black] (C1) at (0,1,0.5) {};
\node[inner sep=1.5pt, circle,draw,fill=black] (C2) at (0.5,1,0) {};
\node[inner sep=1.5pt, circle,draw,fill=black] (C3) at (1,1,0.5) {};
\node[inner sep=1.5pt, circle,draw,fill=black] (C4) at (0.5,1,1) {};
\node[inner sep=1.5pt, circle,draw,fill=black] (C5) at (0.5,1,0.5) {};

\node[inner sep=1.5pt, circle,draw,fill=black] (D1) at (0,0,0.5) {};
\node[inner sep=1.5pt, circle,draw,fill=black] (D2) at (0.5,0,0) {};
\node[inner sep=1.5pt, circle,draw,fill=black] (D3) at (1,0,0.5) {};
\node[inner sep=1.5pt, circle,draw,fill=black] (D4) at (0.5,0,1) {};
\node[inner sep=1.5pt, circle,draw,fill=black] (D5) at (0.5,0,0.5) {};

\node[inner sep=1.5pt, circle,draw,fill=black] (E1) at (0,0.5,1) {};
\node[inner sep=1.5pt, circle,draw,fill=black] (E2) at (0,0.5,0) {};
\node[inner sep=1.5pt, circle,draw,fill=black] (E3) at (0,0.5,0.5) {};

\node[inner sep=1.5pt, circle,draw,fill=black] (F1) at (1,0.5,1) {};
\node[inner sep=1.5pt, circle,draw,fill=black] (F2) at (1,0.5,0) {};
\node[inner sep=1.5pt, circle,draw,fill=black] (F3) at (1,0.5,0.5) {};

\node[inner sep=1.5pt, circle,draw,fill=black] (F) at (0.5,0.5,1) {};
\node[inner sep=1.5pt, circle,draw,fill=black] (B) at (0.5,0.5,0) {};
\node[inner sep=1.5pt, circle,draw,fill=black] (G) at (0.5,0.5,0.5) {};

\path[->, red, dashed, >=latex,thick] (B1) edge [] node[]{} (C1);
\path[->, red, dashed, >=latex,thick] (C1) edge [] node[]{} (B4);
\path[->, red, dashed, >=latex,thick] (A1) edge [] node[]{} (D1);
\path[->, red, dashed, >=latex,thick] (D1) edge [] node[]{} (A4);
\path[->, red, dashed, >=latex,thick] (B2) edge [] node[]{} (C3);
\path[->, red, dashed, >=latex,thick] (C3) edge [] node[]{} (B3);
\path[->, red, dashed, >=latex,thick] (A2) edge [] node[]{} (D3);
\path[->, red, dashed, >=latex,thick] (D3) edge [] node[]{} (A3);
\path[->, red, dashed, >=latex,thick] (C2) edge [] node[]{} (C5);
\path[->, red, dashed, >=latex,thick] (C5) edge [] node[]{} (C4);
\path[->, red, dashed, >=latex,thick] (D2) edge [] node[]{} (D5);
\path[->, red, dashed, >=latex,thick] (D5) edge [] node[]{} (D4);
\path[->, red, dashed, >=latex,thick] (E2) edge [] node[]{} (E3);
\path[->, red, dashed, >=latex,thick] (E3) edge [] node[]{} (E1);
\path[->, red, dashed, >=latex,thick] (F2) edge [] node[]{} (F3);
\path[->, red, dashed, >=latex,thick] (F3) edge [] node[]{} (F1);
\path[->, red, dashed, >=latex,thick] (B) edge [] node[]{} (G);
\path[->, red, dashed, >=latex,thick] (G) edge [] node[]{} (F);

\path[->,blue, >=latex,thick] (B1) edge [] node[]{} (E2);
\path[->,blue, >=latex,thick] (E2) edge [] node[]{} (A1);
\path[->,blue, >=latex,thick] (B4) edge [] node[]{} (E1);
\path[->,blue, >=latex,thick] (E1) edge [] node[]{} (A4);
\path[->,blue, >=latex,thick] (B2) edge [] node[]{} (F2);
\path[->,blue, >=latex,thick] (F2) edge [] node[]{} (A2);
\path[->,blue, >=latex,thick] (B3) edge [] node[]{} (F1);
\path[->,blue, >=latex,thick] (F1) edge [] node[]{} (A3);
\path[->,blue, >=latex,thick] (C1) edge [] node[]{} (E3);
\path[->,blue, >=latex,thick] (E3) edge [] node[]{} (D1);
\path[->,blue, >=latex,thick] (C3) edge [] node[]{} (F3);
\path[->,blue, >=latex,thick] (F3) edge [] node[]{} (D3);
\path[->,blue, >=latex,thick] (C4) edge [] node[]{} (F);
\path[->,blue, >=latex,thick] (F) edge [] node[]{} (D4);
\path[->,blue, >=latex,thick] (C2) edge [] node[]{} (B);
\path[->,blue, >=latex,thick] (B) edge [] node[]{} (D2);
\path[->,blue, >=latex,thick] (C5) edge [] node[]{} (G);
\path[->,blue, >=latex,thick] (G) edge [] node[]{} (D5);

\path[->,green, dotted, >=latex,thick] (B2) edge [] node {} (C2);
\path[->,green,  dotted, >=latex,thick] (C2) edge [] node {} (B1);
\path[->,green,  dotted, >=latex,thick] (B3) edge [] node {} (C4);
\path[->,green,  dotted, >=latex,thick] (C4) edge [] node {} (B4);
\path[->,green,  dotted, >=latex,thick] (C3) edge [] node {} (C5);
\path[->,green,  dotted, >=latex,thick] (C5) edge [] node {} (C1);
\path[->,green,  dotted, >=latex,thick] (A2) edge [] node {} (D2);
\path[->,green,  dotted, >=latex,thick] (D2) edge [] node {} (A1);
\path[->,green,  dotted, >=latex,thick] (A3) edge [] node {} (D4);
\path[->,green,  dotted, >=latex,thick] (D4) edge [] node {} (A4);
\path[->,green,  dotted, >=latex,thick] (D3) edge [] node {} (D5);
\path[->,green,  dotted, >=latex,thick] (D5) edge [] node {} (D1);
\path[->,green,  dotted, >=latex,thick] (F2) edge [] node {} (B);
\path[->,green,  dotted, >=latex,thick] (B) edge [] node {} (E2);
\path[->,green,  dotted, >=latex,thick] (F1) edge [] node {} (F);
\path[->,green,  dotted, >=latex,thick] (F) edge [] node {} (E1);
\path[->,green,  dotted, >=latex,thick] (F3) edge [] node {} (G);
\path[->,green,  dotted, >=latex,thick] (G) edge [] node {} (E3);

\node at (-0.2,0,1) {\tiny $(0,0,0)$};
\node at (1.2,1,0) {\tiny $(2,2,2)$};

\end{tikzpicture}
\]
\end{example}

\begin{example}\label{ex pullback k-graphs}
(\textbf{Pullback of a $k$-graph}) Suppose we have a $k$-graph $\Lambda$ with degree functor $d$. Let $\ell$ be any positive integer. A common way to produce an $\ell$-graph out of $\Lambda$ is to consider a monoid homomorphism $f:\mathbb{N}^\ell \longrightarrow \mathbb{N}^k$. Set \[f^*(\Lambda):=\{(\lambda,\N)~|~d(\lambda)=f(\N)\},\] \[s((\lambda,\N)):=(s(\lambda),0),~~r((\lambda,\N)):=(r(\lambda),0),~~ d^*((\lambda,\N)):=\N.\] Define a composition as \[(\lambda,\N)(\mu,\M):=(\lambda\mu,\N+\M).\]
Then $(f^*(\Lambda),d^*)$ is an $\ell$-graph, which is often called the \emph{pullback of $\Lambda$ along $f$}  (see \cite[Definition 1.9]{Kumjian--Pask}). 

Pulling back is a standard procedure to transit between higher-rank graphs of different dimensions. Below, we provide two simple instances; in each case, we only draw the skeletons of the $k$-graphs and mention the relevant monoid homomorphism along which the pullback is performed. 

$(i)$ $1$-graph $\rightsquigarrow$ $2$-graph:

\[
\begin{tikzpicture}[scale=1.2]

\node[circle,draw,fill=black,inner sep=0.5pt] (p11) at (0, 0) {$.$} 
edge[-latex,thick,loop, out=45, in=135, min distance=50] (p11);

\node[circle,draw,fill=black,inner sep=0.5pt] (p11) at (6, 0) {$.$} 
edge[-latex, blue,thick,loop, out=-45, in=45, min distance=50] (p11)
edge[-latex, red,thick, loop, dashed, out=135, in=225, min distance=50] (p11);
                                                            
\node at (0, -0.68) {$R_1$};
\node at (6, -0.7) {$f^*(R_1)$};

\node[] (A) at (1,0) {};
\node[] (B) at (4.5,0) {};

\path[->,black, >=latex,thick] (A) edge [] node[]{} (B);

\node at (2.7,0.4) {$f:\mathbb{N}^2\longrightarrow \mathbb{N}$};
\node at (2.6, -0.4) {$f((x,y)):=x+y$};

\end{tikzpicture}
\]

$(ii)$ $2$-graph $\rightsquigarrow$ $1$-graphs: 
\[
\begin{tikzpicture}[scale=1]

\node[circle,draw,fill=black,inner sep=0.5pt] (p11) at (0, 0) {$.$} 

edge[-latex, red,thick,loop, dashed, in=10, out=90, min distance=50] (p11)
edge[-latex, red,thick, loop, dashed, out=90, in=170, min distance=50] (p11)

edge[-latex, blue,thick, loop, out=225, in=-45, min distance=50] (p11);
                                          
\node at (-1.2,1) {$h_1$};
\node at (1.2,1) {$h_2$};
\node at (0.7,-1) {$f$};

\node[circle,draw,fill=black,inner sep=0.5pt] (p11) at (-5.5, 0) {$.$} 
edge[-latex, thick,loop, out=225, in=-45, min distance=50] (p11);

\node[circle,draw,fill=black,inner sep=0.5pt] (p11) at (6, 0) {$.$} 
edge[-latex, thick,loop, in=10, out=90, min distance=50] (p11)
edge[-latex, thick, loop, out=90, in=170, min distance=50] (p11);

\node[] (A) at (1,0) {};
\node[] (B) at (4.5,0) {};

\path[->,black, >=latex,thick] (A) edge [] node[]{} (B);

\node at (2.7,0.4) {$\iota_2:\mathbb{N}\longrightarrow \mathbb{N}^2$};
\node at (2.6, -0.4) {$\iota_2(x):=(0,x)$};

\node[] (C) at (-1,0) {};
\node[] (D) at (-4.5,0) {};

\path[->,black, >=latex,thick] (C) edge [] node[]{} (D);

\node at (-2.7,0.4) {$\iota_1:\mathbb{N}\longrightarrow \mathbb{N}^2$};
\node at (-2.6, -0.4) {$\iota_1(x):=(x,0)$};

\node at (0, -1.3) {$\Lambda$};
\node at (7.2, 0) {$\iota_2^*(\Lambda)$};
\node at (-6.5, 0) {$\iota_1^*(\Lambda)$};
\end{tikzpicture}
\]
The edges $h_1,h_2$ in $\Lambda$ have degree $(0,1)$ and the edge $f$ has degree $(1,0)$. The factorizations of paths of degree $(1,1)$ are given as $h_1 f=f h_1$, $h_2 f=f h_2$. 
\end{example}

In Example \ref{ex pullback k-graphs} $(ii)$, the two directed graphs $\iota_1^*(\Lambda)$ and $\iota_2^*(\Lambda)$ are known as the coordinate graphs of the $2$-graph $\Lambda$. More generally, given a $k$-graph $\Lambda$, the \emph{$i^{\text{th}}$ coordinate graph} associated with $\Lambda$ is a directed graph $E_{\mathbf{e}_i}^\Lambda$ with set of vertices $\Lambda^0$, set of edges $\Lambda^{\mathbf{e}_i}$, and range and source maps are the restrictions of $r,s$ to $\Lambda^{\mathbf{e}_i}$.

\begin{example}\label{ex skew-product k-graphs}
(\textbf{Skew-product $k$-graph}) Let $\Gamma$ be a discrete abelian group and $\Lambda$ a $k$-graph. Given a functor $c : \Lambda \to \Gamma$, we may form the \emph{skew-product $k$-graph} $\Lambda \times_c \Gamma$, which is the set $\Lambda \times \Gamma$ endowed with structure maps given by $r((\lambda, \gamma)) := (r(\lambda), \gamma)$, $s((\lambda, \gamma)) :=(s(\lambda), c(\lambda) + \gamma)$, product $(\lambda, \gamma)(\mu, c(\lambda)+ \gamma ) := (\lambda\mu, \gamma)$, and $d ( \lambda, \gamma):= d(\lambda)$ (see \cite[Definition 5.1]{Kumjian--Pask}). There is a natural $\Gamma$-action $\alpha$ on $\Lambda \times_c \Gamma$ given by $\alpha_\kappa( \lambda , \gamma) := ( \lambda
, \kappa + \gamma )$. Note that the degree functor $d:\Lambda\longrightarrow \mathbb{N}^k$ can be regarded as a functor from $\Lambda$ to the abelian group $\ZZ^k$. We often denote the corresponding skew-product $k$-graph $\Lambda\times_d \ZZ^k$ by $\overline{\Lambda}$. For example, if $\Lambda$ is the $2$-graph $f^*(R_1)$ of Example \ref{ex pullback k-graphs} $(i)$, then $\overline{\Lambda}$ is isomorphic to the infinite $2$-graph $\Delta_2$ (see \cite[Example 2.2 $(iv)$]{HMPS}), whose definition is similar to that of the $2$-dimensional grid graph $\Omega_2$ of Example \ref{ex a finite grid graph}. 
\end{example}

Let us now recall the definition of \emph{Kumjian--Pask algebras} associated with row-finite $k$-graphs with no sources. 
\begin{dfn}\label{def KP-algebra}
Following \cite{Pino}, let $\Lambda^{\neq 0}:=\{\lambda\in \Lambda~|~d(\lambda)\neq 0\}=\Lambda\setminus \Lambda^0$ and $G(\Lambda^{\neq 0}):=\{\lambda^*~|~\lambda\in \Lambda^{\neq 0}\}$. Let $\mathsf{F}$ be any field. The \emph{Kumjian--Pask} algebra of $\Lambda$ over $\mathsf{F}$ is denoted by $\KP_\mathsf{F}(\Lambda)$ and is defined to be the universal associative $\mathsf{F}$-algebra generated by symbols $p_v,s_\lambda,s_{\lambda^*}$; $v\in \Lambda^0,\lambda\in \Lambda^{\neq 0}$ subject to the relations:

$(KP1)$ $p_up_v=\delta_{u,v}p_u$ for all $u,v\in \Lambda^0$;

$(KP2)$ $s_\lambda s_\mu=s_{\lambda\mu},$ $s_{\mu^*}s_{\lambda^*}=s_{(\lambda\mu)^*}$ for all $\lambda,\mu\in \Lambda^{\neq 0}$ with $s(\lambda)=r(\mu)$, and

$p_{r(\lambda)}s_\lambda=s_\lambda=s_\lambda p_{s(\lambda)},$ $p_{s(\lambda)}s_{\lambda^*}=s_{\lambda^*}=s_{\lambda^*}p_{r(\lambda)}$ for all $\lambda\in \Lambda^{\neq 0}$;

$(KP3)$ $s_{\lambda^*} s_\mu=\delta_{\lambda,\mu}p_{s(\lambda)}$ for all $\lambda,\mu\in \Lambda^{\neq 0}$ with $d(\lambda)=d(\mu)$;

$(KP4)$ $p_v=\displaystyle{\sum_{\lambda\in v\Lambda^\N}} s_\lambda s_{\lambda^*}$ for all $v\in \Lambda^0$ and for all $\N\in \mathbb{N}^k\setminus \{0\}$.
\end{dfn}

The Kumjian--Pask algebra is a higher-rank analogue of the Leavitt path algebra: if $E$ is a directed graph, then $E^*$ is a $1$-graph, where the roles of the range and source maps are reversed; hence $L_\mathsf{F} (E) = \KP_{\mathsf{F}} ( E^* )$.

$\KP_{\mathsf{F}} ( \Lambda )$ is canonically $\mathbb{Z}^k$-graded, where the $n^{\text{th}}$ homogeneous component is defined as: \[\KP_\mathsf{F}(\Lambda)_\N:=\lspan_\mathsf{F} \{s_\alpha s_{\beta^*}~|~\alpha,\beta\in \Lambda, s(\alpha)=s(\beta)~\text{and}~d(\alpha)-d(\beta)=\N\}.\]

\subsection{The infinite path groupoid}\label{ssec path groupoid}
We assume that the readers are well familiar with the notion of a groupoid. Briefly, a \emph{groupoid} $\mathcal{G}$ is a small category consisting of isomorphisms only. The \emph{range} and \emph{source} of a morphism $x\in \mathcal{G}$ are respectively denoted as $r(x)$ and $s(x)$. The set of objects is denoted by $\mathcal{G}^{(0)}$, usually called the \emph{unit space} and the set of \emph{composable} pairs is denoted by $\mathcal{G}^{(2)}$, i.e., \[\mathcal{G}^{(2)}:=\{(x,y)\in \mathcal{G}\times \mathcal{G}~|~s(x)=r(y)\}.\] As we shall be concerned with the graded homology of a certain \'{e}tale groupoid in Section \ref{sec homology}, we recall the necessary terminologies related to topological groupoids.

A groupoid $\mathcal{G}$ is called a \emph{topological groupoid} if it is endowed with a topology such that the partial multiplication and the inversion are continuous. By an \emph{\'{e}tale} groupoid, here we mean a locally compact Hausdorff topological groupoid $\mathcal{G}$ such that the range map $r$ (equivalently, the source map $s$) is a local homeomorphism, i.e., for each $x\in \mathcal{G}$, there is an open neighbourhood $U$ of $x$ such that $r|_U:U\longrightarrow r(U)$ is a homeomorphism. A subset $B$ of a groupoid $\mathcal{G}$ is called a \emph{bisection} if the restrictions $r|_B$ and $s|_B$ are injective. An \emph{ample} groupoid is an \'{e}tale groupoid $\mathcal{G}$ whose topology has a basis consisting of compact open bisections of $\mathcal{G}$. In an ample groupoid $\mathcal{G}$, the set of all compact open bisections is denoted by $\mathcal{G}^a$ and it forms an inverse semigroup with respect to the product \[AB:=\{xy~|~(x,y)\in (A\times B)\cap \mathcal{G}^{(2)}\},\] and the inversion \[A^{-1}:=\{x^{-1}~|~x\in A\}.\] For any subset $X$ of the unit space $\mathcal{G}^{(0)}$, the \emph{restriction} of $\mathcal{G}$ to $X$ is denoted by $\mathcal{G}|_X$ and is defined as $\mathcal{G}|_X:=\{x\in \mathcal{G}~|~s(x),r(x)\in X\}$. The subset $X$ is called \emph{$\mathcal{G}$-full} if for any $u\in \mathcal{G}^{(0)}$, there exists $x\in \mathcal{G}$ such that $r(x)=u$ and $s(x)\in X$. 

Let $\Gamma$ be a group with discrete topology. A topological groupoid $\mathcal{G}$ is said to be $\Gamma$-\emph{graded} if there is a continuous functor $c:\mathcal{G}\longrightarrow \Gamma$, which is usually referred to as a \emph{cocycle} on $\mathcal{G}$. A subset $X$ of $\mathcal{G}$ is called \emph{homogeneous} if $X\subseteq c^{-1}(\gamma)$ for some $\gamma\in \Gamma$. The set of all homogeneous compact open bisections of $\mathcal{G}$ is denoted by $\mathcal{G}_h^a$. 

The \'{e}tale groupoid we are concerned with here is the infinite path groupoid associated with a $k$-graph introduced in \cite{Kumjian--Pask}. We now recall its construction. Let $\Lambda$ be a row-finite $k$-graph with no sources. An infinite path of $\Lambda$ is a degree preserving functor $x:\Omega_k\longrightarrow \Lambda$. The set of infinite paths is denoted by $\Lambda^\infty$. For any $x\in \Lambda^\infty$, the \emph{range} of $x$ is defined as $r(x):=x(0)$. For $\mathbf{p}\in \mathbb{N}^k$ and $x\in \Lambda^\infty$, the $\mathbf{p}$-shifted infinite path $\sigma^\mathbf{p}(x)$ is defined as:
\begin{center}
    $\sigma^\mathbf{p}(x)((\M,\N)):=x((\M+\mathbf{p},\N+\mathbf{p}))$ for all $(\M,\N)\in \Omega_k$.
\end{center} 
Define \begin{center}
    $\mathcal{G}_\Lambda:=\{(x,\N-\M,y)\in \Lambda^\infty \times \mathbb{Z}^k \times \Lambda^\infty~|~\N,\M\in \mathbb{N}^k$ and $\sigma^\N (x)=\sigma^\M(y)\}$,
\end{center}
and $\mathcal{G}_\Lambda^{(0)}:=\Lambda^\infty$; the range and source maps are defined as $r((x,\N,y)):=x$ and $s((x,\N,y)):=y$. Two arrows $(x,\N,y)$ and $(z,\M,w)$ are composable if and only if $y=z$ and in that case
\begin{center}
    $(x,\N,y)(y,\M,w):=(x,\N+\M,w)$.
\end{center}
Define $(x,\N,y)^{-1}:=(y,-\N,x)$. Then, $\mathcal{G}_\Lambda$ is a groupoid which is known as the \emph{infinite path groupoid} of $\Lambda$. 

For $\lambda,\mu\in \Lambda$ with $s(\lambda)=s(\mu)$, define
\begin{center}
    $Z(\lambda,\mu):=\{(\lambda z, d(\lambda)-d(\mu), \mu z)~|~z\in s(\lambda)\Lambda^\infty\}$.
\end{center}
Then $\mathcal{B}:=\{Z(\lambda,\mu)~|~ \lambda,\mu\in \Lambda, s(\lambda)=s(\mu)\}$ forms a basis for an \'{e}tale topology on $\mathcal{G}_\Lambda$ \cite[Proposition 2.8]{Kumjian--Pask}. Moreover, each $Z(\lambda,\mu)$ is a compact open bisection so $\mathcal{G}_\Lambda$ is an ample groupoid. If we identify $x\in \Lambda^\infty$ with the arrow $(x,0,x)$, then the sets $Z(\lambda):=Z(\lambda,\lambda)$ form a basis of compact open sets for the subspace topology on $\mathcal{G}_\Lambda^{(0)}$. The groupoid $\mathcal{G}_\Lambda$ is very useful as it provides groupoid models for the $C^*$-algebra and the Kumjian--Pask algebra of $\Lambda$ (see \cite{Kumjian--Pask, Clark}).

\subsection{Graded $K$-theory of Kumjian--Pask algebras}\label{ssec graded K-theory}

Let $A$ be a $\Gamma$\!-graded ring, where $\Gamma$ is an abelian group.  The category of  finitely generated
$\Gamma$\!-graded  projective right $A$-modules is denoted by $\Pgr[\Gamma] A$. This 
is an exact category with the usual notion of a (split) short exact
sequence. Thus, one can apply Quillen's $Q$-construction~\cite{quillen} to obtain $K$-groups
\[K_i(\Pgr[\Gamma] A),\] for $i\ge 0$, which we denote by $K_i^{\gr}(A)$. 
 The group $\Gamma$ acts on the category
$\Pgr[\Gamma] A$ from the right via $(P,\alpha) \longmapsto P(\alpha)$, where $\alpha \in \Gamma$. By the functoriality of $K$-groups, this gives $K_i^{\gr}(A)$ the structure of a right $\mathbb Z[\Gamma]$-module~\cite{hazgrothen}. For a trivial group $\Gamma$, one retrieves the $K$-theory groups $K_i(A)$.

For $i=0$, $K^{\gr}_0(A)$ is called the graded Grothendieck group of $A$, which coincides with the group completion of the commutative monoid $\mathcal V^{\gr}(A)$, the monoid of isomorphism classes of  finitely generated $\Gamma$-graded projective (right) $A$-modules. Again, for a trivial group, the group completion of $\mathcal V(A)$ gives the Grothendieck group $K_0(A)$.  

In the setting of Leavitt path algebras, for a graph $E$, one can realise $L_\mathsf{F}(E)$ as a Bergman algebra, which in turn allows one to  describe $\mathcal V(L_\mathsf{F}(E))$ and consequently, $K_0(L_\mathsf{F}(E))$ from the underlying graph $E$~\cite{Ara}. 

The class of Kumjian–Pask algebras that can be realised as Bergman algebras remains unknown.
Therefore, there is not yet a systematic way to describe the Grothendieck group $K_0(\KP_\mathsf{F}(\Lambda))$ for a $k$-graph $\Lambda$ (see Problem~\ref{prob For which k-graphs KP-algebras are Bergman algebras}). However, one can describe the commutative $\Gamma$-monoid $\mathcal V^{\gr}(\KP_\mathsf{F}(\Lambda))$ and the graded Grothendieck group  $K^{\gr}_0(\KP_\mathsf{F}(\Lambda))$. This is possible as we have the following equivalence of categories 
\[\Gr\!-\KP_\mathsf{F}(\Lambda) \sim \Mod\!-\KP_\mathsf{F}(\overline \Lambda),\]
where $\overline \Lambda=\Lambda \times_d \mathbb Z^k$ is the skew-product higher-rank graph (see Example~\ref{ex skew-product k-graphs}). One then can show that $\KP_\mathsf{F}(\overline \Lambda)$ is an ultramatricial algebra, which in turn allows one to compute the lower $K$-theory (see~\cite{Ara-Hazrat}). In fact, $\mathcal{V}^{\gr}(\KP_\mathsf{F}(\Lambda))$ can be realised effectively from the geometry of $\Lambda$.  Recall from \cite{HMPS} that the \emph{talented monoid} $T_\Lambda$ is the $k$-graph monoid of the skew-product $k$-graph $\overline{\Lambda}:=\Lambda\times_d \mathbb{Z}^k$. More formally, $T_\Lambda$ can be defined as the commutative monoid freely generated by the symbols $v(\N)$, $v\in \Lambda^0,\N\in \mathbb{Z}^k$ subject to the following relations: 
\begin{center}
    $v(\N)=\displaystyle{\sum_{\alpha\in v\Lambda^{\mathbf{e}_i}}} s(\alpha)(\N+\mathbf{e}_i)$,
\end{center}
for all $i=1,2, \ldots,k$ (see \cite[Definition 3.9]{HMPS}). As a joint consequence of \cite[Corollary 6.7]{Ara-Hazrat} and \cite[Proposition 3.14 $(i)$]{HMPS}, it follows that $\mathcal{V}^{\gr}(\KP_\mathsf{F}(\Lambda))\cong T_\Lambda$ as $\mathbb{Z}^k$-monoids and, therefore, the Grothendieck group completion of $T_\Lambda$ is $K_0^{\gr}(\KP_\mathsf{F}(\Lambda))$, the graded $K$-theory of $\KP_\mathsf{F}(\Lambda)$. In view of this and due to the simpler description of $T_\Lambda$ over $\mathcal{V}^{\gr}(\KP_\mathsf{F}(\Lambda))$, it is quite helpful to deal with $T_\Lambda$ if one wants to study the graded $K$-theory of $\KP_\mathsf{F}(\Lambda)$. This motivates our program of classifying Kumjian--Pask algebras by $K$-theoretic invariants.

\section{Invariance of graded $K$-theory under some $k$-graph moves}\label{sec moves}
In \cite{HMPS}, some Morita invariant properties of Kumjian--Pask algebras, e.g., being simple, semisimple and the graded Morita invariant property of being graded ideal simple are characterised via properties of the talented monoid that are preserved under $\mathbb{Z}^k$-monoid isomorphisms. It suggests that talented monoid may be a possible classifying invariant for certain Kumjian--Pask algebras up to graded Morita equivalence. In this section we gather further evidence in support of this by directly showing that the talented monoid remains isomorphic under certain \emph{moves} on $k$-graphs preserving graded Morita equivalence of Kumjian--Pask algebras. 

Among the four moves introduced in \cite{EFGGGP}, we only focus on \emph{in-splitting} and \emph{sink deletion}. The other two moves, namely \emph{delay} and \emph{reduction} (see \cite{EFGGGP} to know how these work), in general, do not preserve the talented monoid. For example, if we consider $2$-graphs $\Sigma$, $\Lambda$ and $\Gamma$ with the following $1$-skeletons:
\[
\begin{tikzpicture}[scale=1.2]

\node[circle,draw,fill=black,inner sep=0.5pt] (p11) at (0, 0) {$.$} 
edge[-latex, blue,thick,loop, out=-45, in=45, min distance=50] (p11)
edge[-latex, red,thick, loop, dashed, out=135, in=225, min distance=50] (p11);
                                         
\node at (0, -0.42) {$u$};

\node[inner sep=1.5pt, circle,draw,fill=black] (A) at (5,0) {}
edge[-latex, red,thick, loop, dashed, out=135, in=225, min distance=50] (A);

\node[inner sep=1.5pt, circle,draw,fill=black] (B) at (7,0) {}
edge[-latex, red,thick, loop, dashed, out=-45, in=45, min distance=50] (B);

\node[inner sep=1.5pt, circle,draw,fill=black] (C) at (-4,0) {};	
\node[inner sep=1.5pt, circle,draw,fill=black] (D) at (-6,0) {};

\path[->, red, dashed, >=latex,thick] (C) edge [bend left=40] node[above=0.05cm]{} (D);
\path[->, red, dashed, >=latex,thick] (D) edge [bend left=40] node[above=0.05cm]{} (C);
\path[->,blue, >=latex,thick] (C) edge [bend left=70] node[below=0.05cm]{} (D);
\path[->,blue, >=latex,thick] (D) edge [bend left=70] node[below=0.05cm]{} (C);

\path[->, blue, >=latex,thick] (A) edge [bend left=40] node[above=0.05cm]{} (B);
\path[->, blue, >=latex,thick] (B) edge [bend left=40] node[above=0.05cm]{} (A);

\node[] (P) at (-3.8,0) {};
\node[] (Q) at (-1.2,0) {};

\path[->,black, >=latex,thick] (P) edge [] node[above=0.05cm]{\textbf{Reduction}} (Q);

\node[] (R) at (1.4,0) {};
\node[] (S) at (3.8,0) {};

\path[->,black, >=latex,thick] (R) edge [] node[above=0.03cm]{\textbf{Delay}} (S);

\node at (5,-0.42) {$u$};
\node at (7,-0.42) {$v$};

\node at (-4,-0.42) {$u$};
\node at (-6.05,-0.42) {$w$};

\node at (1.2,0) {$f$};
\node at (6,0.6) {$f^1$};
\node at (6,-0.6) {$f^2$};

\node at (-5,-0.9) {$\Sigma$};
\node at (0,-0.9) {$\Lambda$};
\node at (6,-1) {$\Gamma$};

\end{tikzpicture}
\]
then $\Lambda$ is the reduction of $\Sigma$ at the vertex $w$ and $\Gamma$ is obtained from $\Lambda$ by delaying the edge $f$. One can easily show that $T_\Lambda\cong \mathbb{N}$, whereas both $T_\Sigma$ and $T_\Gamma$ are isomorphic to $\mathbb{N}\oplus \mathbb{N}$. Since graded Morita equivalent rings have order-isomorphic graded $K$-groups and hence isomorphic graded $\mathcal{V}$-monoids, the above example shows that the moves of reduction and delay may fail to preserve graded Morita equivalence of Kumjian--Pask algebras. 

\subsection{In-splitting at a single vertex}\label{ssec in-slitting}
In this subsection, we review the move of \emph{in-splitting} a $k$-graph $\Lambda$ introduced by Eckhardt et al. \cite[Section 3]{EFGGGP} as the analogue of out-splitting of directed graphs. 

Let $v\in \Lambda^0$. The main idea of in-splitting $\Lambda$ at $v$ is to partition $v\Lambda^\mathbf{1}$ into two nonempty subsets $\mathcal{E}_1,\mathcal{E}_2$ by keeping in mind the \emph{pairing condition}: if $\lambda,\mu\in v\Lambda^\mathbf{1}$ are such that $\lambda \alpha=\mu\beta$ for some $\alpha,\beta\in \Lambda^\mathbf{1}$ then $\lambda$ and $\mu$ should be in the same partition set. If $\lambda\neq \mu$ and $d(\lambda)=d(\mu)$ then $\MCE(\lambda,\mu)=\emptyset$ and so the condition puts no restriction on $\lambda,\mu$; they may or may not be kept in the same partition set. 

Suppose $v\Lambda^\mathbf{1}=\mathcal{E}_1\cup \mathcal{E}_2$ is a partition satisfying the pairing condition. The \emph{in-split} of $\Lambda$ at $v$ is a $k$-graph $\Lambda_I$, where the set of vertices is \[\Lambda_I^0:=(\Lambda^0\setminus \{v\})\cup \{v^1,v^2\},\] and the set of edges is \[\Lambda_I^\mathbf{1}:=(\Lambda^\mathbf{1}\setminus \Lambda^\mathbf{1} v)\cup \{f^1,f^2~|~f\in \Lambda^\mathbf{1} v\}.\] The range and source maps are defined on edges as follows:
$$r_I(\alpha):=
	\left\{
	\begin{array}{llll}
		r(\alpha)  & \mbox{if } \alpha\in \Lambda^\mathbf{1}\setminus (\Lambda^\mathbf{1}v \cup v\Lambda^\mathbf{1}), \\
		v^i  & \mbox{if } \alpha\in v\Lambda^\mathbf{1}\setminus \Lambda^\mathbf{1}v ~\mbox{and}~ \alpha\in \mathcal{E}_i;~ i=1,2,\\
            r(f)  & \mbox{if } \alpha=f^i ~\mbox{and}~ r(f)\neq v,\\
        v^j & \mbox{if } \alpha=f^i ~\mbox{and}~ f\in \mathcal{E}_j;~ j=1,2,
	\end{array}
	\right.$$
$$s_I(\alpha):=
	\left\{
	\begin{array}{ll}
		s(\alpha)  & \mbox{if } \alpha\in \Lambda^\mathbf{1}\setminus \Lambda^\mathbf{1}v, \\
		v^i  & \mbox{if } \alpha=f^i;~i=1,2,
	\end{array}
	\right.$$
and then extended to all paths. Similarly, the degree map $d_I$ is defined on edges as 
$$d_I(\alpha):=
	\left\{
	\begin{array}{ll}
		d(\alpha)  & \mbox{if } \alpha\in \Lambda^\mathbf{1}\setminus \Lambda^\mathbf{1}v, \\
		d(f)  & \mbox{if } \alpha=f^i;~i=1,2,
	\end{array}
	\right.$$
and then additively extended to any path. That $(\Lambda_I,d_I)$ is indeed a $k$-graph, is established in \cite[Theorem 3.8]{EFGGGP}. To keep track of the new vertices and edges originated in the in-splitting process, it is helpful to assign a \emph{parent} to each member of $\Lambda_I$. This is done by defining $par(v^i):=v$, $par(f^i):=f$ for $i=1,2$ and $par(x):=x$ if $x$ is neither $v^i$ nor $f^i$. Also, if $\lambda=\lambda_1\lambda_2\cdots \lambda_n$, then $par(\lambda):=par(\lambda_1)par(\lambda_2)\cdots par(\lambda_n)$. The factorization rule for paths in $\Lambda_I$ is then determined by the factorization of their parent in $\Lambda$. 

In \cite[Theorem 3.9]{EFGGGP}, it was shown that there is a gauge-action preserving isomorphism between the $C^*$-algebra of a $k$-graph $\Lambda$ and the $C^*$-algebra of its in-split $\Lambda_I$ at a vertex $v$. The following can be seen as a purely algebraic analogue of this result. 
\begin{prop}\label{pro in-splitting gives graded isomorphic KP algebras}
Let $\Lambda$ be a row-finite $k$-graph with no sources and let $\Lambda_I$ be the in-split of $\Lambda$ at a vertex $v$ with respect to the partition $v\Lambda^\mathbf{1}=\mathcal{E}_1\cup \mathcal{E}_2$. Let $\mathsf{F}$ be any field. Then, there is a graded isomorphism 
\[
\pi:\KP_\mathsf{F}(\Lambda)\longrightarrow \KP_\mathsf{F}(\Lambda_I).
\] 
Consequently, $\KP_\mathsf{F}(\Lambda)$ and $\KP_\mathsf{F}(\Lambda_I)$ are graded Morita equivalent.  
\end{prop}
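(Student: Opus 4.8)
The plan is to construct the graded isomorphism $\pi$ explicitly on generators using the universal property of $\KP_\mathsf{k}(\Lambda)$, then verify it respects the defining relations $(KP1)$--$(KP4)$, and finally produce an inverse by the same method. The natural candidate is dictated by the parent map: each vertex $w \neq v$ of $\Lambda$ should map to $p_w$, while the split vertex $v$ should map to $p_{v^1} + p_{v^2}$ (since $\{v^1, v^2\}$ together represent the single vertex $v$). For edges, an edge $\alpha \in \Lambda^\mathbf{1}$ not ending at $v$ has a unique counterpart in $\Lambda_I$ and maps to $s_\alpha$; an edge $f \in \Lambda^\mathbf{1} v$ splits into $f^1, f^2$ and should map to $s_{f^1} + s_{f^2}$. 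One then extends multiplicatively to all of $\Lambda^{\neq 0}$ and defines $\pi$ on the ghost generators $s_{\lambda^*}$ by the adjoint rule $\pi(s_{\lambda^*}) = \pi(s_\lambda)^*$, meaning we replace each $s_\mu$ by $s_{\mu^*}$ in the image. Because the parent map is degree-preserving ($d_I(f^i) = d(f)$), this assignment sends the $\mathbf{n}$-component into the $\mathbf{n}$-component, so $\pi$ is automatically graded once it is shown to be a well-defined homomorphism.

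First I would set $P_v := p_{v^1} + p_{v^2}$ and, for each $f \in \Lambda^\mathbf{1} v$, set $S_f := s_{f^1} + s_{f^2}$ and $S_{f^*} := s_{(f^1)^*} + s_{(f^2)^*}$, with the obvious identity assignments for all other generators. The heart of the verification is checking that these elements of $\KP_\mathsf{k}(\Lambda_I)$ satisfy a Kumjian--Pask family of relations for $\Lambda$; by universality this yields the homomorphism $\pi$. Relation $(KP1)$ for the image of $v$ reduces to $(p_{v^1}+p_{v^2})^2 = p_{v^1}+p_{v^2}$, which holds because $v^1 \neq v^2$ makes the cross terms vanish. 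For $(KP3)$, given $f, g \in \Lambda^\mathbf{1} v$ with the same degree, one computes $S_{f^*} S_g = \sum_{i,j} s_{(f^i)^*} s_{g^j} = \delta_{f,g}(p_{v^1}+p_{v^2}) = \delta_{f,g} P_{s(f)}$, using that $f^i$ and $g^j$ are distinct edges in $\Lambda_I$ unless $f = g$ and $i = j$, and that $s_I(f^i) = v^i$. The subtle point is $(KP2)$: composability $s(\lambda) = r(\mu)$ in $\Lambda$ must be matched against composability in $\Lambda_I$, and here the range/source formulas for $r_I, s_I$ interact with the partition $\mathcal{E}_1 \cup \mathcal{E}_2$. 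When $\lambda$ ends at $v$ and $\mu = f$ begins at $v$, the product $S_\lambda S_f$ must be compared with $S_{\lambda f}$, and one must use that the factorization rule in $\Lambda_I$ is inherited from parents together with the fact that $r_I(f^i) = v^j$ is governed by which partition class $f$ lies in.

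The step I expect to be the main obstacle is verifying $(KP4)$, the Cuntz--Krieger relation, because it is where the pairing condition on the partition does real work. For a vertex $w \neq v$ that emits no edge into $v$, the relation transports directly. The delicate case is the image of $v$: we must show $P_v = p_{v^1} + p_{v^2} = \sum_{\lambda \in v\Lambda^\mathbf{n}} S_\lambda S_{\lambda^*}$ for every $\mathbf{n}$, and separately that for each split vertex $v^i$ the relation $(KP4)$ already holding in $\KP_\mathsf{k}(\Lambda_I)$ sums correctly. Here the pairing condition guarantees that the minimal common extensions, hence the factorizations of degree-$\mathbf{n}$ paths emitted from $v^i$, are consistent with those emitted from the parent $v$; without it the sum over $v^i \Lambda_I^\mathbf{n}$ would not reassemble into the sum over $v \Lambda^\mathbf{n}$. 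Concretely I would show $v^i \Lambda_I^\mathbf{n}$ is in parent-preserving bijection with the paths in $v\Lambda^\mathbf{n}$ whose first edge lies in $\mathcal{E}_i$, so that $\sum_{i} \sum_{\lambda \in v^i \Lambda_I^\mathbf{n}} = \sum_{\lambda \in v\Lambda^\mathbf{n}}$, matching the image of $(KP4)$ at $v$. Once $\pi$ is established as a graded homomorphism, the inverse $\pi^{-1}$ sending $p_{v^i} \mapsto p_v P_{i}$ (the appropriate spectral projection built from range data, i.e. $p_{v^i} = \sum_{\alpha \in v^i \Lambda_I^{\mathbf{e}_j}, \text{via } \mathcal{E}_i} \cdots$) is constructed symmetrically; checking $\pi \circ \pi^{-1} = \id$ and $\pi^{-1} \circ \pi = \id$ on generators finishes the argument. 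The final sentence, that graded isomorphism implies graded Morita equivalence, is immediate since a graded ring isomorphism induces a graded equivalence of the graded module categories.
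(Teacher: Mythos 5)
Your forward construction coincides with the paper's: the edge-level assignments $P_v=p_{v^1}+p_{v^2}$ and $S_f=s_{f^1}+s_{f^2}$ are exactly the paper's uniform formula $S_\lambda=\sum_{par(\alpha)=\lambda}s_\alpha$ (sum over offspring) restricted to edges, and your outline of the $(KP1)$--$(KP4)$ checks, the role of the pairing condition in reassembling $\bigsqcup_i v^i\Lambda_I^{\mathbf{n}}$ into $v\Lambda^{\mathbf{n}}$, and the remark that gradedness is automatic because the parent map preserves degree all match the paper. One caution on this half: ``extend multiplicatively'' to all of $\Lambda^{\neq 0}$ is not free in a $k$-graph, since a path admits many factorizations into edges; you must check the product is independent of the factorization chosen. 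Defining $S_\lambda$ directly as the sum over offspring of $\lambda$ sidesteps this, and the identity $S_{\lambda}S_{\mu}=S_{\lambda\mu}$ then becomes part of the $(KP2)$ verification rather than a prerequisite for the definition. Where you genuinely diverge is bijectivity. The paper gets injectivity from the graded uniqueness theorem \cite[Theorem 4.1]{Pino} (each $\pi(q_w)$ is a nonzero sum of vertex idempotents) and proves surjectivity by exhibiting $p_{v^i}=\pi\bigl(\sum_{\lambda\in v\Lambda^{\mathbf{e}_j}\cap \mathcal{E}_i}t_\lambda t_{\lambda^*}\bigr)$ and then the remaining generators in the image. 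You instead propose constructing an explicit inverse by a second universal-property argument. That route is viable but substantially heavier: you must verify an entire Kumjian--Pask $\Lambda_I$-family in $\KP_\mathsf{k}(\Lambda)$, and your candidate for the image of $p_{v^i}$ (the ``spectral projection'' $\sum_{\lambda\in v\Lambda^{\mathbf{e}_j}\cap\mathcal{E}_i}t_\lambda t_{\lambda^*}$) depends a priori on the colour $j$; showing independence of $j$ is exactly where the pairing condition must be invoked again, via the minimal-common-extension computation the paper records as equation (\ref{welldefined}) in the proof of Theorem \ref{th in-splitting preserves talented monoid}. If you want the shorter argument, keep your family, use the graded uniqueness theorem for injectivity, and finish with the short surjectivity check.
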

\begin{proof}
Suppose $(q,t)$ and $(p,s)$ denote the Kumjian--Pask families generating $\KP_\mathsf{F}(\Lambda)$ and $\KP_\mathsf{F}(\Lambda_I)$ respectively. Define \[P_u:=\displaystyle{\sum_{\substack{w\in \Lambda_I^0\\par(w)=u}}} p_w,~S_\lambda:=\displaystyle{\sum_{\substack{\alpha\in \Lambda_I\\par(\alpha)=\lambda}}} s_\alpha,~\text{and}~S_{\lambda^*}:=\displaystyle{\sum_{\substack{\alpha\in \Lambda_I\\par(\alpha)=\lambda}}} s_{\alpha^*}\] for all $u\in \Lambda^0$ and $\lambda\in \Lambda^{\neq 0}$. We now prove that $\{P_u,S_\lambda,S_{\lambda^*}\}$ forms a Kumjian--Pask $\Lambda$-family in the $\mathsf{F}$-algebra $\KP_\mathsf{F}(\Lambda_I)$. Since each $P_u$ is a sum of mutually orthogonal idempotents and distinct vertices in $\Lambda$ cannot have a common offspring, so $P_u$'s are also mutually orthogonal idempotents in $\KP_\mathsf{F}(\Lambda_I)$. This shows that $(P,S)$ satisfies $(KP1)$. Let $\lambda,\mu\in \Lambda^{\neq 0}$ be such that $s(\lambda)=r(\mu)$. If $r(\mu)\neq v$, then for all $\alpha,\beta\in \Lambda_I$ with $par(\alpha)=\lambda$, $par(\beta)=\mu$, we have $r_I(\beta)=r(\mu)=s(\lambda)=s_I(\alpha)$ and so
\begin{align*}
    S_\lambda S_\mu=\left(\displaystyle{\sum_{par(\alpha)=\lambda}} s_\alpha\right)\left(\displaystyle{\sum_{par(\beta)=\mu}} s_\beta\right) &=\displaystyle{\sum_{\substack{par(\alpha)=\lambda\\par(\beta)=\mu}}} s_\alpha s_\beta\\
    &= \displaystyle{\sum_{\substack{par(\alpha)=\lambda\\par(\beta)=\mu}}} s_{\alpha\beta}\\
    &= \displaystyle{\sum_{par(\gamma)=\lambda\mu}} s_\gamma=S_{\lambda\mu}.
\end{align*}
The second last equality follows since there is a bijection between the sets \[\{\alpha\in \Lambda_I~|~par(\alpha)=\lambda\}\times \{\beta\in \Lambda_I~|~par(\beta)=\mu\}\] and \[\{\gamma\in \Lambda_I~|~par(\gamma)=\lambda\mu\}\] defined by $(\alpha,\beta)\longmapsto \alpha\beta$. If $r(\mu)=v$, then $\mu=g\eta$ for some $g\in v\Lambda^\mathbf{1}$. Without loss of generality, assume that $g\in \mathcal{E}_1$. Then, for any $\beta\in \Lambda_I$ with $par(\beta)=\mu$, we have $r_I(\beta)=v^1$. In this case, \[S_\lambda S_\mu=\left(\displaystyle{\sum_{par(\alpha)=\lambda}} s_\alpha\right)\left(\displaystyle{\sum_{par(\beta)=\mu}} s_\beta\right)=\displaystyle{\sum_{\substack{par(\alpha)=\lambda,s_I(\alpha)=v^1\\par(\beta)=\mu}}} s_{\alpha\beta}=\displaystyle{\sum_{par(\gamma)=\lambda\mu}} s_\gamma=S_{\lambda\mu}.\] Similarly, one can show that $S_{\lambda^*}S_{\mu^*}=S_{(\lambda\mu)^*}$ for all $\lambda,\mu\in \Lambda$ with $r(\mu)=s(\lambda)$. The other relations in $(KP2)$ are straightforward to verify. For $(KP3)$, choose $\lambda,\mu\in \Lambda^{\neq 0}$ such that $\lambda\neq \mu$ and $d(\lambda)=d(\mu)$. Then, $\alpha\neq \beta$ for all $\alpha,\beta$ with $par(\alpha)=\lambda$ and $par(\beta)=\mu$. Since $d_I(\alpha)=d(\lambda)=d(\mu)=d_I(\beta)$, we have \[S_{\lambda^*}S_\mu=\left(\displaystyle{\sum_{par(\alpha)=\lambda}} s_{\alpha^*}\right)\left(\displaystyle{\sum_{par(\beta)=\mu}} s_\beta\right)=\displaystyle{\sum_{\substack{par(\alpha)=\lambda\\par(\beta)=\mu}}} s_{\alpha^*}s_\beta=0.\] On the other hand, \[S_{\lambda^*}S_\lambda=\left(\displaystyle{\sum_{par(\alpha)=\lambda}} s_{\alpha^*}\right)\left(\displaystyle{\sum_{par(\beta)=\alpha}} s_\beta\right)=\displaystyle{\sum_{par(\alpha)=\lambda}} s_{\alpha^*}s_\alpha.\] If $s(\lambda)\neq v$, then for all $\alpha\in \Lambda_I$ with $par(\alpha)=\lambda$, $s_I(\alpha)=s(\lambda)$. By \cite[Remark 3.7]{EFGGGP}, there is exactly one such $\alpha$ and hence $S_{\lambda^*}S_\lambda=s_{\alpha^*}s_\alpha=p_{s_I(\alpha)}=P_{s(\lambda)}$. If $s(\lambda)=v$, then we can write $\lambda=\delta f$, where $f\in \Lambda^\mathbf{1} v$. We observe that in this case, there are exactly two offspring of $\lambda$ in $\Lambda_I$. Indeed, if $par(\alpha)=\lambda$, then $\alpha=\xi h$ for some $\xi\in \Lambda_I$, $h\in \Lambda_I^1$ such that $par(\xi)=\delta$ and $par(h)=f$. Clearly, $h$ is either $f^1$ or $f^2$. In both cases 
$$s_I(\xi):=
	\left\{
	\begin{array}{lll}
		r(f) & \mbox{if } r(f)\neq v, \\
            v^1  & \mbox{if } f\in \mathcal{E}_1, \\
            v^2  & \mbox{if } f\in \mathcal{E}_2.
	\end{array}
	\right.$$ 
Again in view of \cite[Remark 3.7]{EFGGGP}, there is a unique $\xi$ with the above properties, and we have exactly two offspring of $\lambda$, namely $\alpha_1:=\xi f^1$ and $\alpha_2:=\xi f^2$. Now, \[S_{\lambda^*}S_\lambda=\displaystyle{\sum_{par(\alpha)=\lambda}} s_{\alpha^*}s_\alpha=s_{\alpha_1^*}s_{\alpha_1}+s_{\alpha_2^*}s_{\alpha_2}=p_{v^1}+p_{v^2}=P_{s(\lambda)}.\] Hence, $(KP3)$ is verified. Finally, for $(KP4)$, take $u\in \Lambda^0$ and $\N\in \mathbb{N}^k\setminus \{0\}$. If $\alpha\neq \beta$ and $par(\alpha)=par(\beta)$, then $s_I(\alpha)\neq s_I(\beta)$ and consequently, $s_\alpha s_{\beta^*}=0$. Therefore, \[\displaystyle{\sum_{\lambda\in u\Lambda^\N}} S_\lambda S_{\lambda^*}=\displaystyle{\sum_{\lambda\in u\Lambda^\N}} \left(\displaystyle{\sum_{par(\alpha)=\lambda}} s_\alpha s_{\alpha^*}\right).\] If $u\neq v$, then $u\Lambda_I^\N=\displaystyle{\bigsqcup_{\lambda\in u\Lambda^\N}} \{\alpha\in \Lambda_I~|~par(\alpha)=\lambda\}$, and so \[\displaystyle{\sum_{\lambda\in u\Lambda^\N}} S_\lambda S_{\lambda^*}=\displaystyle{\sum_{\alpha\in u\Lambda_I^\N}} s_\alpha s_{\alpha^*}=p_u=P_u.\] Again, 
\begin{align*}
    \displaystyle{\sum_{\lambda\in v\Lambda^\N}} S_\lambda S_{\lambda^*} &=\displaystyle{\sum_{\lambda\in v\Lambda^\N}} \left(\displaystyle{\sum_{par(\alpha)=\lambda}} s_\alpha s_{\alpha^*}\right)\\
    &= \displaystyle{\sum_{\substack{\lambda\in v\Lambda^\N\\\lambda(0,\mathbf{e}_i)\in \mathcal{E}_1}}} \left(\displaystyle{\sum_{par(\alpha)=\lambda}} s_\alpha s_{\alpha^*}\right)+\displaystyle{\sum_{\substack{\lambda\in v\Lambda^\N\\\lambda(0,\mathbf{e}_i)\in \mathcal{E}_2}}} \left(\displaystyle{\sum_{par(\beta)=\lambda}} s_\beta s_{\beta^*}\right)~(\text{since}~\N\neq 0, \N\ge \mathbf{e}_i~\text{for some}~i=1,2,\ldots,k)\\
    &= \displaystyle{\sum_{\alpha\in v^1 \Lambda_I^\N}} s_\alpha s_{\alpha^*}+\displaystyle{\sum_{\beta\in v^2\Lambda_I^\N}} s_\beta s_{\beta^*}\\
    &= p_{v^1}+p_{v^2}=P_v.
\end{align*}
Thus, $(P,S)$ satisfies $(KP4)$ and becomes a Kumjian--Pask $\Lambda$-family in $\KP_\mathsf{F}(\Lambda_I)$. By \cite[Theorem 3.4]{Pino}, we have a unique $k$-algebra homomorphism $\pi:\KP_\mathsf{F}(\Lambda)\longrightarrow \KP_\mathsf{F}(\Lambda_I)$ such that \[\pi(q_w)=P_w,~\pi(t_\lambda)=S_\lambda~\text{and}~\pi(t_{\lambda^*})=S_{\lambda^*},\] for all $w\in \lambda^0$ and $\lambda\in \Lambda^{\neq 0}$. Since $d_I(\alpha)=d(par(\alpha))$ for all $\alpha\in \Lambda_I$, $\pi$ preserves the degrees of generators of $\KP_\mathsf{F}(\Lambda)$ and is a graded homomorphism. Note that for all $w\in \Lambda^0$, $\pi(q_w)$ being a sum of vertex idempotents in $\KP_\mathsf{F}(\Lambda_I)$ is never $0$. By invoking the graded uniqueness theorem \cite[Theorem 4.1]{Pino}, we can conclude that $\pi$ is injective. For any $w\in \Lambda^0\setminus \{v\}$, $p_w=P_w=\pi(q_w)$. Choose any $j\in \{1,2,\ldots,k\}$. Then, for $i=1,2$, \[p_{v^i}=\displaystyle{\sum_{\alpha\in v^i\Lambda_I^{\mathbf{e}_j}}} s_\alpha s_{\alpha^*}=\displaystyle{\sum_{\lambda\in v\Lambda^{\mathbf{e}_j}\cap \mathcal{E}_i}} \left(\displaystyle{\sum_{par(\alpha)=\lambda}} s_\alpha s_{\alpha^*}\right)=\displaystyle{\sum_{\lambda\in v\Lambda^{\mathbf{e}_j}\cap \mathcal{E}_i}}S_\lambda S_{\lambda^*}=\pi\left(\displaystyle{\sum_{\lambda\in v\Lambda^{\mathbf{e}_j}\cap \mathcal{E}_i}} t_\lambda t_{\lambda^*}\right).\] Let $\alpha\in \Lambda_I^{\neq 0}$. If $s(par(\alpha))\neq v$, then $s_\alpha=S_{par(\alpha)}=\pi(t_{par(\alpha)})$ and $s_{\alpha^*}=\pi(t_{par(\alpha)^*})$. If $s(par(\alpha))=v$, then $\alpha=\mu f^i$ for some $\mu\in \Lambda_I, f\in \Lambda^\mathbf{1} v$ and $i\in \{1,2\}$. Subsequently, $s_\alpha=S_{par(\alpha)}p_{v^i}=\pi(t_{par(\alpha)})p_{v^i}\in \image(\pi)$ since $p_{v^i}\in \image(\pi)$. Similarly, $s_{\alpha^*}\in \image(\pi)$. Thus, all the generators of $\KP_\mathsf{F}(\Lambda_I)$ are in $\image(\pi)$, which proves that $\pi$ is surjective. Therefore, $\pi$ is a graded isomorphism of $\mathbb{Z}^k$-graded $\mathsf{F}$-algebras. 
\end{proof}

As an immediate consequence of the above proposition, it follows that whenever we in-split a $k$-graph $\Lambda$ at a vertex $v$, then there is an isomorphism between the categories $\KP_\mathsf{F}(\Lambda)-\Gr$ and $\KP_\mathsf{F}(\Lambda_I)-\Gr$ (as any graded left $\KP_\mathsf{F}(\Lambda_I)$-module can be realised canonically as a graded left $\KP_\mathsf{F}(\Lambda)$-module via the graded isomorphism $\pi$ of Proposition \ref{pro in-splitting gives graded isomorphic KP algebras}). This category isomorphism commutes with the respective shift functors $\tau_\N$ for every $\N\in \mathbb{Z}^k$ and hence, induces a $\mathbb{Z}^k$-monoid isomorphism from $\mathcal{V}^{\gr}(\KP_\mathsf{F}(\Lambda))$ to $\mathcal{V}^{\gr}(\KP_\mathsf{F}(\Lambda_I))$; the $\mathbb{Z}^k$-action on $\mathcal{V}^{\gr}(\KP_\mathsf{F}(\Lambda))$ is given as $~^\N [P]:=[P(\N)]$. Therefore, in-splitting results in $\mathbb{Z}^k$-monoid isomorphic talented monoids. However, we can prove this independently without referring to the Kumjian--Pask algebras, which justifies that the talented monoid directly captures geometries of a $k$-graph that are linked with the graded Morita invariant properties of the corresponding Kumjian--Pask algebra. 

The directed graph version of the following theorem was obtained in \cite[Theorem 4.7]{Cordeiro}.
\begin{thm}\label{th in-splitting preserves talented monoid}
Let $\Lambda$ be a row-finite $k$-graph with no sources and $\Lambda_I$ the in-split of $\Lambda$ at a vertex $v$ with respect to the partition $v\Lambda^\mathbf{1}=\mathcal{E}_1\cup \mathcal{E}_2$. The map $\phi_I: T_\Lambda \longrightarrow T_{\Lambda_I}$ defined on generators as
$$\phi_I(w(\N)):=
	\left\{
	\begin{array}{ll}
		w(\N)  & \mbox{if } w\neq v, \\
		v^1(\N)+v^2(\N)  & \mbox{if } w=v
	\end{array}
	\right.$$ 
for all $w\in \Lambda^0$ and $\N\in \mathbb{Z}^k$, is a $\mathbb{Z}^k$-monoid isomorphism. 
\end{thm}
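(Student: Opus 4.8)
The plan is to verify that $\phi_I$ is a well-defined $\mathbb{Z}^k$-equivariant monoid homomorphism and then to produce an explicit two-sided inverse $\psi_I\colon T_{\Lambda_I}\to T_\Lambda$. Throughout I would use the structure of $\Lambda_I$ recorded above: an edge of $\Lambda_I$ is either an edge $\alpha$ of $\Lambda$ with $s(\alpha)\neq v$ (so $s_I(\alpha)=s(\alpha)$, and $r_I(\alpha)=v^i$ precisely when $r(\alpha)=v$ and $\alpha\in\mathcal{E}_i$), or one of the two copies $f^1,f^2$ of an edge $f$ with $s(f)=v$ (so $s_I(f^i)=v^i$). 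Since the defining relations of the talented monoid are indexed by a vertex and a coordinate $\mathbf{e}_j$, well-definedness of $\phi_I$ amounts to checking, for every vertex $w$ and every $j$, that $\phi_I$ carries the relation $w(\N)=\sum_{\alpha\in w\Lambda^{\mathbf{e}_j}}s(\alpha)(\N+\mathbf{e}_j)$ to an identity in $T_{\Lambda_I}$.

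First I would treat well-definedness. For $w\neq v$ the edges of $\Lambda_I$ with range $w$ are the $\alpha\in w\Lambda^{\mathbf{e}_j}$ with $s(\alpha)\neq v$ together with both copies $f^1,f^2$ of each $f\in w\Lambda^{\mathbf{e}_j}$ with $s(f)=v$; since $s_I(f^1)(\N+\mathbf{e}_j)+s_I(f^2)(\N+\mathbf{e}_j)=v^1(\N+\mathbf{e}_j)+v^2(\N+\mathbf{e}_j)=\phi_I(v(\N+\mathbf{e}_j))$, the $T_{\Lambda_I}$-relation at $w$ reads off exactly $\sum_{\alpha\in w\Lambda^{\mathbf{e}_j}}\phi_I(s(\alpha)(\N+\mathbf{e}_j))$, which is $\phi_I(w(\N))=w(\N)$. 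For $w=v$ I would instead add the two $T_{\Lambda_I}$-relations at $v^1$ and $v^2$ in direction $j$: the edges feeding $v^1$ and $v^2$ are precisely the split copies of the edges of $v\Lambda^{\mathbf{e}_j}$ distributed according to $\mathcal{E}_1,\mathcal{E}_2$, so adding the two relations recombines them into $\sum_{\alpha\in v\Lambda^{\mathbf{e}_j}}\phi_I(s(\alpha)(\N+\mathbf{e}_j))$, which equals $v^1(\N)+v^2(\N)=\phi_I(v(\N))$. Equivariance is immediate, as $\phi_I$ is defined uniformly in $\N$ and the $\mathbb{Z}^k$-action is translation of $\N$.

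The substance of the argument is the inverse. For $i=1,2$ and a coordinate $j$, set
\[
x_i^{(j)}(\N):=\sum_{\alpha\in v\Lambda^{\mathbf{e}_j}\cap\mathcal{E}_i}s(\alpha)(\N+\mathbf{e}_j)\in T_\Lambda .
\]
The crucial lemma I would establish is that $x_i^{(j)}(\N)$ does not depend on $j$; write $x_i(\N)$ for the common value. This is the one place where the pairing condition is used. Expanding $x_i^{(j)}(\N)$ one further step in a direction $j'\neq j$ rewrites it as a sum of $s(\tau)(\N+\mathbf{e}_j+\mathbf{e}_{j'})$ over those $\tau\in v\Lambda^{\mathbf{e}_j+\mathbf{e}_{j'}}$ whose initial edge $\tau(0,\mathbf{e}_j)$ lies in $\mathcal{E}_i$, whereas expanding $x_i^{(j')}(\N)$ in direction $j$ gives the same sum indexed by those $\tau$ with $\tau(0,\mathbf{e}_{j'})\in\mathcal{E}_i$. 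Since $\tau(0,\mathbf{e}_j)$ and $\tau(0,\mathbf{e}_{j'})$ are two elements of $v\Lambda^{\mathbf{1}}$ admitting the common extension $\tau$, the pairing condition forces them into the same partition set, so the two index sets agree and $x_i^{(j)}(\N)=x_i^{(j')}(\N)$.

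With $x_i(\N)$ available, I would define $\psi_I$ on generators by $\psi_I(w(\N))=w(\N)$ for $w\neq v^1,v^2$ and $\psi_I(v^i(\N))=x_i(\N)$, and verify that it respects the $T_{\Lambda_I}$-relations by a computation mirroring the one for $\phi_I$, using $x_1(\N)+x_2(\N)=v(\N)$ (which holds for each fixed $j$ by partitioning $v\Lambda^{\mathbf{e}_j}$) together with the direction-independence just proved. Then $\psi_I\circ\phi_I=\id$ is immediate on generators, the only nontrivial case being $\psi_I(v^1(\N)+v^2(\N))=x_1(\N)+x_2(\N)=v(\N)$, while $\phi_I\circ\psi_I=\id$ follows because $\phi_I(x_i(\N))$ reproduces precisely the $T_{\Lambda_I}$-relation defining $v^i(\N)$. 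I expect the direction-independence of $x_i^{(j)}$ to be the main obstacle: it is the sole step requiring the combinatorics of the pairing condition, and it is exactly the phenomenon that is invisible in the $1$-graph (Leavitt path algebra) setting, where there is only a single coordinate.
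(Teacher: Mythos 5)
Your proposal is correct and follows essentially the same route as the paper's proof: well-definedness of $\phi_I$ by merging the relations at $v^1$ and $v^2$, an inverse defined via the sums $\sum_{\alpha\in v\Lambda^{\mathbf{e}_j}\cap\mathcal{E}_i}s(\alpha)(\N+\mathbf{e}_j)$, and the key lemma that these are independent of the coordinate $j$, proved exactly as in the paper by factoring a common extension $\tau\in v\Lambda^{\mathbf{e}_j+\mathbf{e}_{j'}}$ both ways and invoking the pairing condition. You have correctly isolated the direction-independence (the paper's equation~(\ref{welldefined})) as the sole genuinely higher-rank step.
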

\begin{proof}
We first show that $\phi_I$ is well-defined. For this, we need to show that $\phi_I$ preserves the relations \[u(\N)=\displaystyle{\sum_{\lambda\in u\Lambda^{\mathbf{e}_i}}} s(\lambda)(\N+\mathbf{e}_i),\] for all $u\in \Lambda^0$, $\N\in \mathbb{Z}^k$ and $i=1,2,\ldots,k$. Note that \[\phi_I\left(\displaystyle{\sum_{\lambda\in u\Lambda^{\mathbf{e}_i}}} s(\lambda)(\N+\mathbf{e}_i)\right)=\displaystyle{\sum_{\lambda\in u\Lambda^{\mathbf{e}_i}}}\phi_I(s(\lambda)(\N+\mathbf{e}_i))=\displaystyle{\sum_{\substack{\lambda\in u\Lambda^{\mathbf{e}_i}\\s(\lambda)\neq v}}} s(\lambda)(\N+\mathbf{e}_i)+\displaystyle{\sum_{\lambda\in u\Lambda^{\mathbf{e}_i}v}} (v^1(\N+\mathbf{e}_i)+v^2(\N+\mathbf{e}_i)).\] If $u\neq v$, then it implies 
{\allowdisplaybreaks
\begin{align*}
&\phi_I\left(\displaystyle{\sum_{\lambda\in u\Lambda^{\mathbf{e}_i}}} s(\lambda)(\N+\mathbf{e}_i)\right)\\
=& \displaystyle{\sum_{\substack{\alpha\in u\Lambda_I^{\mathbf{e}_i}\\s(par(\alpha))\neq v}}} s_I(\alpha)(\N+\mathbf{e}_i)+\displaystyle{\sum_{\beta\in u\Lambda_I^{\mathbf{e}_i}v^1}} s_I(\beta)(\N+\mathbf{e}_i)+\displaystyle{\sum_{\gamma\in u\Lambda_I^{\mathbf{e}_i}v^2}} s_I(\gamma)(\N+\mathbf{e}_i)\\
=& \displaystyle{\sum_{h\in u\Lambda_I^{\mathbf{e}_i}}} s_I(h)(\N+\mathbf{e}_i)\\
=&~ u(\N)=\phi_I(u(\N)),
\end{align*}
}
whereas 
\noindent
{\allowdisplaybreaks
\begin{align*}
&\phi_I\left(\displaystyle{\sum_{\lambda\in v\Lambda^{\mathbf{e}_i}}} s(\lambda)(\N+\mathbf{e}_i)\right)\\
=& \displaystyle{\sum_{\substack{\lambda\in v\Lambda^{\mathbf{e}_i}\cap \mathcal{E}_1\\s(\lambda)\neq v}}} s(\lambda)(\N+\mathbf{e}_i)+\displaystyle{\sum_{\substack{\lambda\in v\Lambda^{\mathbf{e}_i}\cap \mathcal{E}_2\\s(\lambda)\neq v}}} s(\lambda)(\N+\mathbf{e}_i)+\displaystyle{\sum_{\lambda\in v\Lambda^{\mathbf{e}_i}v\cap \mathcal{E}_1}}(v^1(\N+\mathbf{e}_i)+v^2(\N+\mathbf{e}_i))\\
&+\displaystyle{\sum_{\lambda\in v\Lambda^{\mathbf{e}_i}v\cap \mathcal{E}_2}} (v^1(\N+\mathbf{e}_i)+v^2(\N+\mathbf{e}_i))\\
=& \left(\displaystyle{\sum_{\substack{\lambda\in v\Lambda^{\mathbf{e}_i}\cap \mathcal{E}_1\\s(\lambda)\neq v}}} s(\lambda)(\N+\mathbf{e}_i)+\displaystyle{\sum_{\lambda\in v\Lambda^{\mathbf{e}_i}v \cap \mathcal{E}_1}}v^1(\N+\mathbf{e}_i)+\displaystyle{\sum_{\lambda\in v\Lambda^{\mathbf{e}_i}v \cap \mathcal{E}_1}}v^2(\N+\mathbf{e}_i)\right)\\
&+\left(\displaystyle{\sum_{\substack{\lambda\in v\Lambda^{\mathbf{e}_i}\cap \mathcal{E}_2\\s(\lambda)\neq v}}} s(\lambda)(\N+\mathbf{e}_i)+\displaystyle{\sum_{\lambda\in v\Lambda^{\mathbf{e}_i}v \cap \mathcal{E}_2}}v^1(\N+\mathbf{e}_i)+\displaystyle{\sum_{\lambda\in v\Lambda^{\mathbf{e}_i}v \cap \mathcal{E}_2}}v^2(\N+\mathbf{e}_i)\right)\\
=& \left(\displaystyle{\sum_{\substack{\alpha\in v^1\Lambda_I^{\mathbf{e}_i}\\s(par(\alpha))\neq v}}} s_I(\alpha)(\N+\mathbf{e}_i)+\displaystyle{\sum_{\beta\in v^1\Lambda_I^{\mathbf{e}_i}v^1}} s_I(\beta)(\N+\mathbf{e}_i)+\displaystyle{\sum_{\gamma\in v^1\Lambda_I^{\mathbf{e}_i}v^2}} s_I(\gamma)(\N+\mathbf{e}_i)\right)\\
&+\left(\displaystyle{\sum_{\substack{\alpha'\in v^2\Lambda_I^{\mathbf{e}_i}\\s(par(\alpha'))\neq v}}} s_I(\alpha')(\N+\mathbf{e}_i)+\displaystyle{\sum_{\beta'\in v^2\Lambda_I^{\mathbf{e}_i}v^1}} s_I(\beta')(\N+\mathbf{e}_i)+\displaystyle{\sum_{\gamma'\in v^2\Lambda_I^{\mathbf{e}_i}v^2}} s_I(\gamma')(\N+\mathbf{e}_i)\right)\\
=& \displaystyle{\sum_{g\in v^1\Lambda_I^{\mathbf{e}_i}}} s_I(g)(\N+\mathbf{e}_i)+\displaystyle{\sum_{h\in v^2\Lambda_I^{\mathbf{e}_i}}} s_I(h)(\N+\mathbf{e}_i)\\
=&~ v^1(\N)+v^2(\N)=\phi_I(v(\N)).
\end{align*}
} 
Thus, $\phi_I$ is a well-defined monoid homomorphism. We now define an inverse for $\phi_I$. For this, we fix $j\in \{1,2,\ldots,k\}$ and define a map $\psi:T_{\Lambda_I}\longrightarrow T_\Lambda$ on generators by 
$$\psi(u(\N)):=
	\left\{
	\begin{array}{ll}
		\hspace{1.25cm} u(\N)  & \mbox{if } u\neq v^1,v^2, \\
		\displaystyle{\sum_{f\in \mathcal{E}_i\cap \Lambda^{\mathbf{e}_j}}} s(f)(\N+\mathbf{e}_j)  & \mbox{if } u=v^i~\mbox{for}~i=1,2.
	\end{array}
	\right.$$ 
Again we first need to check that $\psi$ is well-defined. If $u\neq v^1,v^2$, then for each $i=1,2,\ldots,k$ and $\N\in \mathbb{Z}^k$, we have 
{\allowdisplaybreaks
\begin{align*}
&\psi\left(\displaystyle{\sum_{\alpha\in u\Lambda_I^{\mathbf{e}_i}}} s_I(\alpha)(\N+\mathbf{e}_i)\right)\\
=& \displaystyle{\sum_{\alpha\in u\Lambda_I^{\mathbf{e}_i}}} \psi(s_I(\alpha)(\N+\mathbf{e}_i))\\
=& \displaystyle{\sum_{\substack{\alpha\in u\Lambda_I^{\mathbf{e}_i}\\s_I(\alpha)\neq v^1,v^2}}} s_I(\alpha)(\N+\mathbf{e}_i)+\displaystyle{\sum_{\alpha\in u\Lambda_I^{\mathbf{e}_i}v^1}} \left(\displaystyle{\sum_{f\in \mathcal{E}_1\cap \Lambda^{\mathbf{e}_j}}} s(f)(\N+\mathbf{e}_i+\mathbf{e}_j)\right)+\displaystyle{\sum_{\alpha\in u\Lambda_I^{\mathbf{e}_i}v^2}} \left(\displaystyle{\sum_{f\in \mathcal{E}_2\cap \Lambda^{\mathbf{e}_j}}} s(f)(\N+\mathbf{e}_i+\mathbf{e}_j)\right)\\
=& \displaystyle{\sum_{\substack{\alpha\in u\Lambda_I^{\mathbf{e}_i}\\par(\alpha)\neq v}}}s(par(\alpha))(\N+\mathbf{e}_i)+\displaystyle{\sum_{\lambda\in u\Lambda^{\mathbf{e}_i}v}}\left(\displaystyle{\sum_{f\in \mathcal{E}_1\cap \Lambda^{\mathbf{e}_j}}} s(f)(\N+\mathbf{e}_i+\mathbf{e}_j)+\displaystyle{\sum_{f\in \mathcal{E}_2\cap \Lambda^{\mathbf{e}_j}}} s(f)(\N+\mathbf{e}_i+\mathbf{e}_j)\right)\\
=& \displaystyle{\sum_{\substack{\lambda\in u\Lambda^{\mathbf{e}_i}\\s(\lambda)\neq v}}} s(\lambda)(\N+\mathbf{e}_i)+\displaystyle{\sum_{\lambda\in u\Lambda^{\mathbf{e}_i}v}}\left(\displaystyle{\sum_{f\in v\Lambda^{\mathbf{e}_j}}} s(f)(\N+\mathbf{e}_i+\mathbf{e}_j)\right)\\
=& \displaystyle{\sum_{\substack{\lambda\in u\Lambda^{\mathbf{e}_i}\\s(\lambda)\neq v}}}s(\lambda)(\N+\mathbf{e}_i)+\displaystyle{\sum_{\lambda\in u\Lambda^{\mathbf{e}_i}v}} v(\N+\mathbf{e}_i) ~~(\text{using relations of}~ T_\Lambda)\\
=& \displaystyle{\sum_{\lambda\in u\Lambda^{\mathbf{e}_i}}} s(\lambda)(\N+\mathbf{e}_i)=u(\N)=\psi(u(\N)).
\end{align*}
For $t=1,2$ 
\begin{align*}
&\psi\left(\displaystyle{\sum_{\alpha\in v^t\Lambda_I^{\mathbf{e}_i}}} s_I(\alpha)(\N+\mathbf{e}_i)\right)\\
=&  \displaystyle{\sum_{\substack{\alpha\in v^t\Lambda_I^{\mathbf{e}_i}\\s_I(\alpha)\neq v^1,v^2}}} s_I(\alpha)(\N+\mathbf{e}_i)+\displaystyle{\sum_{\alpha\in v^t\Lambda_I^{\mathbf{e}_i}v^1}} \left(\displaystyle{\sum_{f\in \mathcal{E}_1\cap \Lambda^{\mathbf{e}_j}}} s(f)(\N+\mathbf{e}_i+\mathbf{e}_j)\right)+\displaystyle{\sum_{\alpha\in v^t\Lambda_I^{\mathbf{e}_i}v^2}} \left(\displaystyle{\sum_{f\in \mathcal{E}_2\cap \Lambda^{\mathbf{e}_j}}} s(f)(\N+\mathbf{e}_i+\mathbf{e}_j)\right)\\
=& \displaystyle{\sum_{\substack{\alpha\in v^t\Lambda_I^{\mathbf{e}_i}\\par(\alpha)\neq v}}}s(par(\alpha))(\N+\mathbf{e}_i)+\displaystyle{\sum_{\lambda\in v\Lambda^{\mathbf{e}_i}v\cap \mathcal{E}_t}}\left(\displaystyle{\sum_{f\in \mathcal{E}_1\cap \Lambda^{\mathbf{e}_j}}} s(f)(\N+\mathbf{e}_i+\mathbf{e}_j)+\displaystyle{\sum_{f\in \mathcal{E}_2\cap \Lambda^{\mathbf{e}_j}}} s(f)(\N+\mathbf{e}_i+\mathbf{e}_j)\right)\\
=& \displaystyle{\sum_{\substack{\lambda\in v\Lambda^{\mathbf{e}_i}\cap \mathcal{E}_t\\s(\lambda)\neq v}}} s(\lambda)(\N+\mathbf{e}_i)+\displaystyle{\sum_{\lambda\in v\Lambda^{\mathbf{e}_i}v\cap \mathcal{E}_t}}\left(\displaystyle{\sum_{f\in v\Lambda^{\mathbf{e}_j}}} s(f)(\N+\mathbf{e}_i+\mathbf{e}_j)\right)\\
=& \displaystyle{\sum_{\substack{\lambda\in v\Lambda^{\mathbf{e}_i}\cap \mathcal{E}_t\\s(\lambda)\neq v}}}s(\lambda)(\N+\mathbf{e}_i)+\displaystyle{\sum_{\lambda\in v\Lambda^{\mathbf{e}_i}v\cap \mathcal{E}_t}} v(\N+\mathbf{e}_i) ~~(\text{using relations of}~ T_\Lambda)\\
=& \displaystyle{\sum_{\lambda\in \mathcal{E}_t \cap \Lambda^{\mathbf{e}_i}}} s(\lambda)(\N+\mathbf{e}_i).
\end{align*}
}
The second equality above follows since the parent of any $\alpha\in v^t\Lambda_I^{\mathbf{e}_i}v^1$ (or $v^t\Lambda_I^{\mathbf{e}_i}v^2$) is a loop at $v$ of degree $\mathbf{e}_i$, contained in the partition set $\mathcal{E}_t$; and hence both $v^t\Lambda_I^{\mathbf{e}_i}v^1$ and $v^t\Lambda_I^{\mathbf{e}_i}v^2$ are in bijection with $v\Lambda^{\mathbf{e}_i}v \cap \mathcal{E}_t$. The crucial part which remains to prove in order to complete the verification is that the equality
\begin{equation}\label{welldefined}
\displaystyle{\sum_{\lambda\in \mathcal{E}_t \cap \Lambda^{\mathbf{e}_i}}} s(\lambda)(\N+\mathbf{e}_i)=\displaystyle{\sum_{\mu\in \mathcal{E}_t \cap \Lambda^{\mathbf{e}_j}}} s(\mu)(\N+\mathbf{e}_j)    
\end{equation}
holds in $T_\Lambda$ for each $i=1,2,\ldots,k$. There is nothing to show if $i=j$. So assume that $i\neq j$. Then, in $T_\Lambda$,
\begin{align*}
\displaystyle{\sum_{\lambda\in \mathcal{E}_t\cap \Lambda^{\mathbf{e}_i}}} s(\lambda)(\N+\mathbf{e}_i)&=\displaystyle{\sum_{\lambda\in \mathcal{E}_t\cap \Lambda^{\mathbf{e}_i}}} \left(\displaystyle{\sum_{\alpha\in s(\lambda)\Lambda^{\mathbf{e}_j}}} s(\alpha)(\N+\mathbf{e}_i+\mathbf{e}_j)\right)\\
&= \displaystyle{\sum_{\substack{\gamma\in v\Lambda^{\mathbf{e}_i+\mathbf{e}_j}\\\gamma(0,\mathbf{e}_i)\in \mathcal{E}_t}}} s(\gamma)(\N+\mathbf{e}_i+\mathbf{e}_j)\\
&= \displaystyle{\sum_{\substack{\gamma\in v\Lambda^{\mathbf{e}_i+\mathbf{e}_j}\\\gamma(0,\mathbf{e}_j)\in \mathcal{E}_t}}} s(\gamma)(\N+\mathbf{e}_i+\mathbf{e}_j)\\
&= \displaystyle{\sum_{\mu\in \mathcal{E}_t\cap \Lambda^{\mathbf{e}_j}}} \left(\displaystyle{\sum_{\beta\in s(\mu)\Lambda^{\mathbf{e}_i}}} s(\beta)(\N+\mathbf{e}_j+\mathbf{e}_i)\right)=\displaystyle{\sum_{\mu\in \mathcal{E}_t\cap \Lambda^{\mathbf{e}_j}}} s(\mu)(\N+\mathbf{e}_j).
\end{align*}
The second and fourth equalities are self-explanatory. For the third one, note that whenever $\gamma=\lambda\alpha\in v\Lambda^{\mathbf{e}_i+\mathbf{e}_j}$ and $\lambda\in \mathcal{E}_t\cap \Lambda^{\mathbf{e}_i}$, by the factorization property of $\Lambda$, there exist unique $\mu\in v\Lambda^{\mathbf{e}_j}$ and $\beta\in s(\mu)\Lambda^{\mathbf{e}_i}$ such that $\gamma=\mu\beta$. Then, by the pairing condition, $\lambda$ and $\mu$ are in the same partition set, whence $\mu\in \mathcal{E}_t$. Thus $\gamma(0,\mathbf{e}_i)\in \mathcal{E}_t$ if and only if $\gamma(0,\mathbf{e}_j)\in \mathcal{E}_t$. 

Now using $(\ref{welldefined})$, we have \[\psi\left(\displaystyle{\sum_{\alpha\in v^t\Lambda_I^{\mathbf{e}_i}}} s_I(\alpha)(\N+\mathbf{e}_i)\right)=\displaystyle{\sum_{\lambda\in \mathcal{E}_t\cap \Lambda^{\mathbf{e}_i}}} s(\lambda)(\N+\mathbf{e}_i)=\displaystyle{\sum_{\mu\in \mathcal{E}_t\cap \Lambda^{\mathbf{e}_j}}} s(\mu)(\N+\mathbf{e}_j)=\psi(v^t(\N)),\] showing that $\psi$ is well-defined. To complete the proof, we observe that $\phi_I$ and $\psi$ are mutually inverse homomorphisms. If $w\neq v$ then $\psi(\phi_I(w(\N)))=\psi(w(\N))=w(\N)$. Again, \[\psi(\phi_I(v(\N)))=\psi(v^1(\N)+v^2(\N))=\displaystyle{\sum_{f\in \mathcal{E}_1\cap \Lambda^{\mathbf{e}_j}}} s(f)(\N+\mathbf{e}_j)+\displaystyle{\sum_{g\in \mathcal{E}_2\cap \Lambda^{\mathbf{e}_j}}} s(g)(\N+\mathbf{e}_j)=\displaystyle{\sum_{h\in v\Lambda^{\mathbf{e}_j}}} s(h)(\N+\mathbf{e}_j)=v(\N).\] On the other hand, for $u\neq v^1,v^2$, $\phi_I(\psi(u(\N)))=\phi_I(u(\N))=u(\N)$ and for $t=1,2$,
{\allowdisplaybreaks
\begin{align*}
\phi_I(\psi(v^t(\N)))&=\phi_I\left(\displaystyle{\sum_{f\in \mathcal{E}_t\cap \Lambda^{\mathbf{e}_j}}} s(f)(\N+\mathbf{e}_j)\right)\\
&= \displaystyle{\sum_{\substack{f\in \mathcal{E}_t\cap \Lambda^{\mathbf{e}_j}\\s(f)\neq v}}} s(f)(\N+\mathbf{e}_j)+\displaystyle{\sum_{f\in \mathcal{E}_t\cap \Lambda^{\mathbf{e}_j}v}} (v^1(\N+\mathbf{e}_j)+v^2(\N+\mathbf{e}_j))\\
&= \displaystyle{\sum_{\substack{\alpha\in v^t\Lambda_I^{\mathbf{e}_j}\\s_I(\alpha)\neq v^1,v^2}}} s_I(\alpha)(\N+\mathbf{e}_j)+\displaystyle{\sum_{\alpha\in v^t\Lambda_I^{\mathbf{e}_j}v^1}} s_I(\alpha)(\N+\mathbf{e}_j)+\displaystyle{\sum_{\alpha\in v^t\Lambda_I^{\mathbf{e}_j}v^2}} s_I(\alpha)(\N+\mathbf{e}_j)\\
&= \displaystyle{\sum_{\alpha\in v^t\Lambda_I^{\mathbf{e}_j}}} s_I(\alpha)(\N+\mathbf{e}_j)\\
&= v^t(\N)~~(\text{using relations of}~T_{\Lambda_I}).\qedhere
\end{align*}
}
\end{proof}

\subsection{Sink deletion}\label{ssec sink deletion}
If a directed graph $E$ has a source $v$ which is also a regular vertex, then deleting $v$ and all the edges emanating from $v$, one obtains the \emph{source elimination graph} $E_{\setminus v}$. Similar to in-splitting and out-splitting, the move of source elimination also does not change the graded Morita equivalent class of the corresponding Leavitt path algebra (see \cite[Proposition 13]{Hazrat-main}). The directions of paths are reversed in the transition from directed graphs to path categories and so at the level of $k$-graphs, the  analogue of the source elimination process is \emph{sink deletion}. We briefly review the process as described in \cite[Section 5]{EFGGGP}. 

A vertex $v\in \Lambda^0$ is called an \emph{$\mathbf{e}_i$-sink} (where $1\le i\le k$) if $\Lambda^{\mathbf{e}_i}v=\emptyset$. Notice that an $\mathbf{e}_i$-sink $v$ may emit edges of other degrees, and if that happens, then $v$ is not the only $\mathbf{e}_i$-sink present in the $k$-graph. In this case, it is not hard to observe that any $w\in \Lambda^0$ with $w\Lambda v\neq \emptyset$ is also an $\mathbf{e}_i$-sink (see \cite[Lemma 5.3]{EFGGGP}). Keeping this in mind, the move of sink deletion at $v$ not only removes $v$ but also removes all its out-neighbours. More precisely, the \emph{sink deletion $k$-graph} of $\Lambda$ is defined as \[\Lambda_S:=\{\lambda\in \Lambda~|~r(\lambda)\Lambda v=\emptyset\}.\] Clearly, $\Lambda_S^0=\{w\in \Lambda^0~|~w\Lambda v=\emptyset\}$. The source, range and degree maps are denoted by $s_S$, $r_S$ and $d_S$ respectively, and they are the restrictions of $s$, $r$ and $d$ to $\Lambda_S$. By \cite[Theorem 5.4]{EFGGGP}, $(\Lambda_S,d_S)$ is also a row-finite $k$-graph having no sources. 

The following proposition shows that sink deletion leaves invariant the graded Morita equivalence class of Kumjian--Pask algebras. For the $C^*$-algebra analogue, see \cite[Theorem 5.5]{EFGGGP}.
\begin{prop}\label{pro sink deletion giving graded Morita equivalent KP-algberas}
Let $\Lambda$ be a row-finite $k$-graph without sources, $v$ an $\mathbf{e}_i$-sink in $\Lambda$ and $\Lambda_S$ the sink deletion $k$-graph obtained by deleting $v$. Suppose $|\Lambda^0|< \infty$. Then, for any field $\mathsf{F}$, the Kumjian--Pask algebras $\KP_\mathsf{F}(\Lambda)$ and $\KP_\mathsf{F}(\Lambda_S)$ are graded Morita equivalent.     
\end{prop}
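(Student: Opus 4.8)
The plan is to realise $\KP_\mathsf{k}(\Lambda_S)$ as a \emph{full corner} of $\KP_\mathsf{k}(\Lambda)$ and then invoke graded Morita theory. Since $|\Lambda^0|<\infty$, the algebra $\KP_\mathsf{k}(\Lambda)$ is unital with $1=\sum_{w\in\Lambda^0}p_w$, so I would set $P:=\sum_{w\in\Lambda_S^0}p_w$, a homogeneous idempotent of degree $0$. The first task is to produce a graded isomorphism $\KP_\mathsf{k}(\Lambda_S)\cong P\,\KP_\mathsf{k}(\Lambda)\,P$. What makes this routine is that $\Lambda_S^0$ is hereditary in the sense that $w\in\Lambda_S^0$ and $r(\lambda)=w$ force $s(\lambda)\in\Lambda_S^0$: otherwise $s(\lambda)$ could reach $v$, and then so could $w$ via $\lambda$, contradicting $w\in\Lambda_S^0$. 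Consequently $\Lambda_S$ is precisely the full subcategory of $\Lambda$ on $\Lambda_S^0$ and, crucially, $w\Lambda^\N=w\Lambda_S^\N$ for every $w\in\Lambda_S^0$ and $\N\in\mathbb{N}^k$. This last equality guarantees that the images of the canonical generators $\{p_w,s_\alpha,s_{\alpha^*}\mid w\in\Lambda_S^0,\ \alpha\in\Lambda_S^{\neq0}\}$ satisfy $(KP1)$--$(KP4)$ for $\Lambda_S$ inside the corner, so the universal property \cite[Theorem 3.4]{Pino} yields an algebra homomorphism $\Phi:\KP_\mathsf{k}(\Lambda_S)\to\KP_\mathsf{k}(\Lambda)$ with image exactly $P\,\KP_\mathsf{k}(\Lambda)\,P$ (by hereditariness, $p_u s_\alpha s_{\beta^*}p_{u'}\neq 0$ with $u,u'\in\Lambda_S^0$ forces $\alpha,\beta\in\Lambda_S$). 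As $\Phi$ is degree preserving and $\Phi(p_w)=p_w\neq0$ for all $w\in\Lambda_S^0$, the graded uniqueness theorem \cite[Theorem 4.1]{Pino} makes $\Phi$ a graded isomorphism onto the corner.

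The heart of the argument is to show that $P$ is a \emph{full} idempotent, i.e.\ that $\langle P\rangle:=\KP_\mathsf{k}(\Lambda)\,P\,\KP_\mathsf{k}(\Lambda)$ equals $\KP_\mathsf{k}(\Lambda)$; it suffices to prove $p_w\in\langle P\rangle$ for every $w$ in the deleted set $H:=\Lambda^0\setminus\Lambda_S^0$. This is exactly where the $\mathbf{e}_i$-sink hypothesis enters. I would argue that every $\lambda\in w\Lambda^{\mathbf{e}_i}$ emitted by such a $w$ has its source in $\Lambda_S^0$: if instead $s(\lambda)\in H$, then $s(\lambda)$ reaches $v$ and so is itself an $\mathbf{e}_i$-sink by \cite[Lemma 5.3]{EFGGGP}, giving $\Lambda^{\mathbf{e}_i}s(\lambda)=\emptyset$, which contradicts $\lambda\in\Lambda^{\mathbf{e}_i}s(\lambda)$. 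Since $\Lambda$ has no sources we have $w\Lambda^{\mathbf{e}_i}\neq\emptyset$, so applying $(KP4)$ with $\N=\mathbf{e}_i$ gives
\[
p_w=\sum_{\lambda\in w\Lambda^{\mathbf{e}_i}}s_\lambda s_{\lambda^*}=\sum_{\lambda\in w\Lambda^{\mathbf{e}_i}}s_\lambda\,p_{s(\lambda)}\,s_{\lambda^*}.
\]
Because each $s(\lambda)\in\Lambda_S^0$, the vertex idempotent satisfies $p_{s(\lambda)}=p_{s(\lambda)}P\in\KP_\mathsf{k}(\Lambda)P$, so every summand, and hence $p_w$, lies in $\langle P\rangle$. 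Together with $p_u\in\langle P\rangle$ for $u\in\Lambda_S^0$ (trivially), this gives $1\in\langle P\rangle$ and therefore $\langle P\rangle=\KP_\mathsf{k}(\Lambda)$.

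With both facts established, the conclusion is formal: a unital $\mathbb{Z}^k$-graded ring is graded Morita equivalent to the corner cut out by any full homogeneous idempotent (standard graded Morita theory). Applying this to $P$ shows $\KP_\mathsf{k}(\Lambda)$ and $P\,\KP_\mathsf{k}(\Lambda)\,P$ are graded Morita equivalent, and composing with the graded isomorphism $\Phi$ of the first step yields the desired graded Morita equivalence between $\KP_\mathsf{k}(\Lambda)$ and $\KP_\mathsf{k}(\Lambda_S)$. The main obstacle I anticipate is precisely the fullness of $P$: the corner identification and the final Morita step are routine, whereas fullness is the one place the $\mathbf{e}_i$-sink condition is genuinely needed, through the fact that every $\mathbf{e}_i$-edge leaving a deleted vertex must land in the retained hereditary set $\Lambda_S^0$.
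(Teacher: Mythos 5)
Your proposal is correct and follows essentially the same route as the paper: identify $\KP_\mathsf{k}(\Lambda_S)$ with the corner $P\,\KP_\mathsf{k}(\Lambda)\,P$ for $P=\sum_{w\in\Lambda_S^0}p_w$ using hereditariness of $\Lambda_S^0$, prove $P$ is a full homogeneous idempotent via the $\mathbf{e}_i$-sink hypothesis, and conclude by graded Morita theory for full corners. The only cosmetic difference is that you establish fullness by the explicit $(KP4)$ expansion of $p_w$ for deleted vertices $w$, whereas the paper packages the same computation as the statement that the saturated closure of the hereditary set $\Lambda_S^0$ is all of $\Lambda^0$, so that $I_{\Lambda_S^0}=I_{\Lambda^0}=\KP_\mathsf{k}(\Lambda)$.
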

\begin{proof}
Suppose $(p,s)$ and $(q,t)$ are the respective Kumjian--Pask families in $\KP_\mathsf{F}(\Lambda)$ and $\KP_\mathsf{F}(\Lambda_S)$. Let $H:=\Lambda_S^0$. Let $\lambda\in \Lambda$ be such that $r(\lambda)\in H$. So $r(\Lambda)\Lambda v=\emptyset$. This implies $s(\lambda)\Lambda v=\emptyset$, i.e., $s(\lambda)\in H$. Therefore, $H$ is a hereditary subset of $\Lambda^0$. Notice that $\Lambda_S=\Lambda_H$, where $\Lambda_H$ is the $k$-graph defined as: \[\Lambda_H:=\{\lambda\in \Lambda~|~r(\lambda)\in H\}.\] Then the assignment \[q_v\longmapsto p_v,~~~t_\lambda\longmapsto s_\lambda,~~~t_{\lambda^*}\longmapsto s_{\lambda^*},\] extends to a $\mathbb{Z}^k$-graded injective homomorphism $\varphi$ from $\KP_\mathsf{F}(\Lambda_S)$ to $\KP_\mathsf{F}(\Lambda)$ and $\image(\varphi)=p\KP_\mathsf{F}(\Lambda)p$, where $p=\displaystyle{\sum_{w\in \Lambda_S^0}} p_w$. We are not going to prove it here, as it will be exactly the same as the Leavitt path algebra case given in \cite[Proposition 2.2.22 $(i)$ and $(ii)$]{Abrams-Monograph}. Hence, $\KP_\mathsf{F}(\Lambda_S)\cong p\KP_\mathsf{F}(\Lambda)p$ as $\mathbb{Z}^k$-graded rings. Clearly, $p$ is a homogeneous idempotent of $\KP_\mathsf{F}(\Lambda)$. We claim that $p$ is a full idempotent. First, observe that $\KP_\mathsf{F}(\Lambda)p\KP_\mathsf{F}(\Lambda)=I_{H}$, the ideal of $\KP_\mathsf{F}(\Lambda)$ generated by $\{p_u~|~u\in H\}$. This is because, for every $u\in H$, $p_u=p_u p p_u$ and so $I_H\subseteq \KP_\mathsf{F}(\Lambda)p\KP_\mathsf{F}(\Lambda)$; the other inclusion is trivial as $p\in I_H$. Again, notice that the saturated closure of $H$ is $\Lambda^0$. To confirm this, choose any $u\in \Lambda^0\setminus \Lambda_S^0$ and $\alpha\in u\Lambda^{\mathbf{e}_i}$. Then, $s(\alpha)\Lambda v=\emptyset$; otherwise, $s(\alpha)$ turns out to be an $\mathbf{e}_i$-sink, which is a contradiction. Thus, $s(u\Lambda^{\mathbf{e}_i})\subseteq H$ and consequently, $u\in \overline{H}$. Putting these together, we have \[\KP_\mathsf{F}(\Lambda)p\KP_\mathsf{F}(\Lambda)=I_H=I_{\Lambda^0}=\KP_\mathsf{F}(\Lambda).\] So our claim is proved. Therefore, the corner $p\KP_\mathsf{F}(\Lambda)p$ and $\KP_\mathsf{F}(\Lambda)$ are graded Morita equivalent and the result follows by applying \cite[Theorem 3]{Hazrat-main}. 
\end{proof}
We now directly show without referring to Kumjian--Pask algebras that the talented monoid of $\Lambda$ remains invariant under the move of sink deletion. The following theorem should be viewed as the $k$-graph analogue of \cite[Proposition 4.2]{Cordeiro}. 
\begin{thm}\label{th sink deletion preserves talented monoid}
Let $\Lambda$ be a row-finite $k$-graph without sources, $v$ an $\mathbf{e}_i$-sink and $\Lambda_S$ the sink deletion $k$-graph. Then, there is a $\mathbb{Z}^k$-monoid isomorphism $\phi_S:T_{\Lambda_S}\longrightarrow T_\Lambda$ such that $\phi_S(w(\N))=w(\N)$ for all $w\in \Lambda_S^0$ and $\N\in \mathbb{Z}^k$. 
\end{thm}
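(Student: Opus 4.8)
The plan is to follow the template of the in-splitting argument (Theorem \ref{th in-splitting preserves talented monoid}): exhibit $\phi_S$ explicitly, build a candidate inverse $\psi\colon T_\Lambda\longrightarrow T_{\Lambda_S}$, and check that the two are mutually inverse $\mathbb{Z}^k$-monoid homomorphisms. The starting observation, already recorded in the proof of Proposition \ref{pro sink deletion giving graded Morita equivalent KP-algberas}, is that $\Lambda_S=\Lambda_H$ with $H:=\Lambda_S^0$ \emph{hereditary} and with saturated closure $\overline{H}=\Lambda^0$. Hereditarity gives $w\Lambda_S^{\mathbf{e}_j}=w\Lambda^{\mathbf{e}_j}$ and $s_S=s$ for every $w\in H$ and every $j$, so the defining relations of $T_{\Lambda_S}$ are literally a subset of those of $T_\Lambda$. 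Consequently $\phi_S(w(\N)):=w(\N)$ preserves relations and is a well-defined monoid homomorphism; it visibly commutes with the degree-shift action, hence is $\mathbb{Z}^k$-equivariant.

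For the inverse I would set $\psi(u(\N)):=u(\N)$ when $u\in H$ and, when $u\notin H$,
\[
\psi(u(\N)):=\sum_{\gamma\in u\Lambda^{\mathbf{e}_i}} s(\gamma)(\N+\mathbf{e}_i),
\]
where $\mathbf{e}_i$ is the sink direction. The device that keeps this clean is that the source of \emph{every} edge of degree $\mathbf{e}_i$ lies in $H$: an $\mathbf{e}_i$-edge with source $w$ would force $\Lambda^{\mathbf{e}_i}w\neq\emptyset$, yet every vertex of $\Lambda^0\setminus H$ is an $\mathbf{e}_i$-sink by \cite[Lemma 5.3]{EFGGGP}. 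Thus the right-hand side above is a genuine element of $T_{\Lambda_S}$. Applying the direction-$i$ relation of $T_{\Lambda_S}$ to $w(\N)$ with $w\in H$ shows that the \emph{same} formula $\psi(w(\N))=\sum_{\gamma\in w\Lambda^{\mathbf{e}_i}}s(\gamma)(\N+\mathbf{e}_i)$ in fact holds uniformly for all $w\in\Lambda^0$, which I would use to streamline the remaining checks.

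The main obstacle is the well-definedness of $\psi$: one must verify that $\psi$ respects the relation of $T_\Lambda$ in \emph{every} direction $j$, not merely the sink direction $i$. For $j=i$ this is immediate from the definition. For $j\neq i$, starting from $\psi(u(\N))=\sum_{\gamma\in u\Lambda^{\mathbf{e}_i}}s(\gamma)(\N+\mathbf{e}_i)$ I would expand each summand by the direction-$j$ relation in $T_{\Lambda_S}$ and regroup, using the unique factorization property to rewrite the double sum as the single sum $\sum_{\tau\in u\Lambda^{\mathbf{e}_i+\mathbf{e}_j}}s(\tau)(\N+\mathbf{e}_i+\mathbf{e}_j)$ over paths of degree $\mathbf{e}_i+\mathbf{e}_j$. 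On the other side one must show $\sum_{\beta\in u\Lambda^{\mathbf{e}_j}}\psi(s(\beta)(\N+\mathbf{e}_j))$ yields the same element; here the crucial point is that the uniform formula makes the two cases $s(\beta)\in H$ and $s(\beta)\notin H$ collapse to the identical expression $\sum_{\gamma\in s(\beta)\Lambda^{\mathbf{e}_i}}s(\gamma)(\N+\mathbf{e}_i+\mathbf{e}_j)$, after which factorization again regroups the result as $\sum_{\tau\in u\Lambda^{\mathbf{e}_i+\mathbf{e}_j}}s(\tau)(\N+\mathbf{e}_i+\mathbf{e}_j)$. Matching the two single sums finishes well-definedness, entirely parallel to the two-dimensional computation $(\ref{welldefined})$ in the in-splitting proof.

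Finally I would check that $\psi$ and $\phi_S$ are mutually inverse on generators. The composite $\psi\circ\phi_S$ is the identity because both maps fix $w(\N)$ for $w\in H$. For $\phi_S\circ\psi$ the only nontrivial case is $u\notin H$: applying $\phi_S$ to $\psi(u(\N))=\sum_{\gamma\in u\Lambda^{\mathbf{e}_i}}s(\gamma)(\N+\mathbf{e}_i)$ returns the same sum inside $T_\Lambda$ (as $\phi_S$ fixes $H$-generators), which equals $u(\N)$ by the direction-$i$ relation in $T_\Lambda$. Since both $\phi_S$ and $\psi$ commute with the shift action, the resulting isomorphism is one of $\mathbb{Z}^k$-monoids, establishing the claim.
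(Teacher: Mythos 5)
Your proof is correct, but it reaches bijectivity by a genuinely different route than the paper. The paper keeps the same $\phi_S$ and then proves injectivity by passing an equality $\sum_i u_i(\mathbf{p}_i)=\sum_j w_j(\mathbf{q}_j)$ to the graph monoid $M_{\overline{\Lambda}}$ of the skew-product and invoking the confluence lemma of \cite[Lemma 3.5]{HMPS}: a common reduct $\gamma$ exists, hereditarity of $\Lambda_S^0$ forces $\supp(\gamma)$ to lie in $\overline{\Lambda_S}$, and a second application of confluence gives the equality already in $M_{\overline{\Lambda_S}}$. Surjectivity is then the one-line observation (which you also use) that for $u\notin\Lambda_S^0$ every $\alpha\in u\Lambda^{\mathbf{e}_i}$ has $s(\alpha)\in\Lambda_S^0$, since otherwise $s(\alpha)$ would be an $\mathbf{e}_i$-sink by \cite[Lemma 5.3]{EFGGGP}. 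You instead build an explicit two-sided inverse $\psi$, which transfers the whole burden onto the well-definedness of $\psi$ in every coordinate direction $j\neq i$; your regrouping of the double sum into $\sum_{\tau\in u\Lambda^{\mathbf{e}_i+\mathbf{e}_j}}s(\tau)(\N+\mathbf{e}_i+\mathbf{e}_j)$ via unique factorization, together with the uniform formula $\psi(w(\N))=\sum_{\gamma\in w\Lambda^{\mathbf{e}_i}}s(\gamma)(\N+\mathbf{e}_i)$ valid for all $w\in\Lambda^0$, does close this correctly (here it is essential, and you note it, that every degree-$\mathbf{e}_i$ edge has source in $H$, so the formula lands in $T_{\Lambda_S}$). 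Your argument is more elementary and self-contained — it avoids the confluence machinery entirely and mirrors the structure of the in-splitting proof, Theorem \ref{th in-splitting preserves talented monoid} — at the cost of a longer compatibility verification; the paper's route is shorter once the confluence lemma is in hand and sidesteps the multi-direction check altogether.
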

\begin{proof}
The assignment $w(\N)\longmapsto w(\N)$ when extended linearly to all of $T_{\Lambda_S}$, gives a well-defined $\mathbb{Z}^k$-monoid homomorphism $\phi_S:T_{\Lambda_S}\longrightarrow T_\Lambda$. This is easy to prove since for any $w\in \Lambda_S^0$, $\N\in \mathbb{Z}^k$ and $\M\in \mathbb{N}^k$, we have \[\phi_S(w(\N))=w(\N)=\displaystyle{\sum_{\lambda\in w\Lambda^\M}} s(\lambda)(\N+\M)=\displaystyle{\sum_{\lambda\in w\Lambda_S^\M}} s_S(\lambda)(\N+\M)=\phi_S\left(\displaystyle{\sum_{\lambda\in w\Lambda_S^\M}} s_S(\lambda)(\N+\M)\right).\] Suppose $x,y\in T_{\Lambda_S}$ are such that $\phi_S(x)=\phi_S(y)$. Let $x=\displaystyle{\sum_{i=1}^{\ell}} u_i(\mathbf{p}_i)$ and $y=\displaystyle{\sum_{j=1}^{t}} w_j(\mathbf{q}_j)$. 
Then in $T_\Lambda$ we have \[\displaystyle{\sum_{i=1}^{\ell}} u_i(\mathbf{p}_i)=\displaystyle{\sum_{j=1}^{t}} w_j(\mathbf{q}_j).\]
Passing this equality to $M_{\overline{\Lambda}}$ (where $\overline{\Lambda}$ is the skew product $k$-graph) and using the confluence lemma (see \cite[Lemma 3.5]{HMPS}),
we have $\gamma\in \mathbb{F}_{\overline{\Lambda}}\setminus \{0\}$ such that \[\displaystyle{\sum_{i=1}^{\ell}} (u_i,\mathbf{p}_i)\longrightarrow \gamma~\text{and}~\displaystyle{\sum_{j=1}^{t}} (w_j,\mathbf{q}_j)\longrightarrow \gamma.\] Since $u_i,w_j\in \Lambda_S^0$ for all $i=1,2,\ldots,\ell$, $j=1,2,\ldots,t$ and $\Lambda_S^0$ is hereditary, so every vertex in the support of $\gamma$ is in fact a vertex of $\overline{\Lambda_S}$. Therefore, by applying the confluence lemma again, we have \[\displaystyle{\sum_{i=1}^{\ell}} (u_i,\mathbf{p}_i)=\displaystyle{\sum_{j=1}^{t}} (w_j,\mathbf{q}_j)\] in $M_{\overline{\Lambda_S}}$ and so $x=y$ in $T_{\Lambda_S}$. This proves that $\phi_S$ is injective. For surjectivity, it suffices to show that $u(\N)\in \image(\phi_S)$ for all $u\in \Lambda^0\setminus \Lambda_S^0$ and $n\in \mathbb{Z}^k$. For each such $u$ and $\N$, we can write \[u(\N)=\displaystyle{\sum_{\alpha\in u\Lambda^{\mathbf{e}_i}}}s(\alpha)(\N+\mathbf{e}_i)\] in $T_\Lambda$. The sum is nonempty since $\Lambda$ has no sources. If $\alpha\in u\Lambda^{\mathbf{e}_i}$, then $s(\alpha)\Lambda v=\emptyset$; otherwise, $s(\alpha)$ becomes an $\mathbf{e}_i$-sink by \cite[Lemma 5.3]{EFGGGP}, which is obviously not the case. This implies $s(\alpha)\in \Lambda_S^0$ for all $\alpha\in u\Lambda^{\mathbf{e}_i}$ and so \[u(\N)=\displaystyle{\sum_{\alpha\in u\Lambda^{\mathbf{e}_i}}}s(\alpha)(\N+\mathbf{e}_i)=\phi_S\left(\displaystyle{\sum_{\alpha\in u\Lambda^{\mathbf{e}_i}}} s(\alpha)(\N+\mathbf{e}_i)\right)\in \image(\phi_S).\qedhere\]
\end{proof}
We conclude this section by giving an example which illustrates our results on the invariance of talented monoid under the two $k$-graph moves we have discussed. 
\begin{example}\label{ex on the invariance of talented monoid under moves}
Suppose the following is the $1$-skeleton of a $2$-graph $\Lambda$:

\[
\begin{tikzpicture}[scale=1.2]

\node[circle,draw,fill=black,inner sep=0.5pt] (p11) at (0, 0) {$.$} 
edge[-latex, blue,thick,loop, out=45, in=135, min distance=60] (p11)
edge[-latex, blue,thick, loop, out=225, in=-45, min distance=60] (p11)

edge[-latex, red,thick, loop, dashed, out=135, in=225, min distance=60] (p11);
                                                               
\node at (-0.1, -0.42) {$u$};

\node[inner sep=1.5pt, circle,draw,fill=black] (A) at (3,0) {}
edge[-latex, blue,thick, loop, out=20, in=70, min distance=40] (A)
edge[-latex, blue,thick, loop, out=-70, in=-20, min distance=40] (A);

\node[inner sep=1.5pt, circle,draw,fill=black] (B) at (1.5,1.5) {};

\path[->, red, dashed, >=latex,thick] (p11) edge [bend left=20] node[above=0.05cm]{} (A);
\path[->, red, dashed, >=latex,thick] (p11) edge [bend right=20] node[above=0.05cm]{} (A);
\path[->, red, dashed, >=latex,thick] (p11) edge [] node[above=0.05cm]{} (B);
\path[->, blue, >=latex,thick] (A) edge [] node[above=0.05cm]{} (B);

\node at (3.5,0) {$v$};
\node at (1.8,1.8) {$w$};

\node at (-1.4,0) {$f$};
\node at (1.5,0.6) {$f_1$};
\node at (1.5,-0.6) {$f_2$};
\node at (0.8,1.1) {$g$};
\node at (0,1) {$e$};
\node at (0,-1) {$e'$};
\node at (4,0.8) {$h_1$};
\node at (4,-0.8) {$h_2$};
\node at (2.2,1.1) {$h$};

\end{tikzpicture}
\]
with the factorization rules given as follows:

$(i)$ $u$-$u$ bi-colored paths: $ef=fe$, $e'f=fe'$;

$(ii)$ $u$-$w$ bi-colored paths: $hf_1=ge$, $hf_2=ge'$; 

$(iii)$ $u$-$v$ bi-colored paths: $h_1f_1=f_2 e$, $h_1f_2=f_2 e'$, $h_2 f_1=f_1 e$, $h_2 f_2=f_1 e'$.

We now in-split $\Lambda$ at the vertex $v$. Notice that there is only one possible partition (up to indexing) of $v\Lambda^\mathbf{1}$ which satisfies the pairing condition, namely $\mathcal{E}_1=\{h_1,f_2\}$ and $\mathcal{E}_2=\{h_2,f_1\}$. The in-split $2$-graph $\Lambda_I$ has the following $1$-skeleton:
\[
\begin{tikzpicture}[scale=1.2]

\node[circle,draw,fill=black,inner sep=0.5pt] (p11) at (0, 0) {$.$} 
edge[-latex, blue,thick,loop, out=45, in=135, min distance=60] (p11)
edge[-latex, blue,thick, loop, out=225, in=-45, min distance=60] (p11)

edge[-latex, red,thick, loop, dashed, out=135, in=225, min distance=60] (p11);
                                          
\node at (-0.1, -0.42) {$u$};

\node[inner sep=1.5pt, circle,draw,fill=black] (A1) at (1.5,1.5) {}
edge[-latex, blue,thick, loop, out=45, in=135, min distance=40] (A1);

\node[inner sep=1.5pt, circle,draw,fill=black] (A2) at (1.5,-1.5) {}
edge[-latex, blue,thick, loop, out=225, in=-45, min distance=40] (A2);

\node[inner sep=1.5pt, circle,draw,fill=black] (B) at (3,0) {};

\path[->, red, dashed, >=latex,thick] (p11) edge [] node[above=0.05cm]{} (A1);
\path[->, red, dashed, >=latex,thick] (p11) edge [] node[above=0.05cm]{} (A2);
\path[->, red, dashed, >=latex,thick] (p11) edge [] node[above=0.05cm]{} (B);
\path[->, blue, >=latex,thick] (A1) edge [] node[above=0.05cm]{} (B);
\path[->, blue, >=latex,thick] (A2) edge [] node[above=0.05cm]{} (B);
\path[->,blue, >=latex,thick] (A1) edge [bend left=20] node[below=0.05cm]{} (A2);
\path[->,blue, >=latex,thick] (A2) edge [bend left=20] node[below=0.05cm]{} (A1);

\node at (3.5,0) {$w$};
\node at (1.9,1.6) {$v^1$};
\node at (1.9,-1.6) {$v^2$};

\node at (-1.4,0) {$f$};
\node at (1.5,0.2) {$g$};
\node at (1,0.3) {$h_1^2$};
\node at (2,-0.3) {$h_2^1$};
\node at (0.8,1.1) {$f_2$};
\node at (0.8,-1.1) {$f_1$};
\node at (0,1) {$e$};
\node at (0,-1) {$e'$};
\node at (2.1,2) {$h_1^1$};
\node at (2.1,-2) {$h_2^2$};
\node at (2.2,1.1) {$h^1$};
\node at (2.2,-1.1) {$h^2$};

\end{tikzpicture}
\]
Again, $v$ is an $(0,1)$-sink (red edges are of degree $(0,1)$) in $\Lambda$. Carrying out the sink deletion process, we obtain the $2$-graph $\Lambda_S$ with the following $1$-skeleton:
\[
\begin{tikzpicture}[scale=1.2]

\node[circle,draw,fill=black,inner sep=0.5pt] (p11) at (0, 0) {$.$} 
edge[-latex, blue,thick,loop, out=200, in=250, min distance=40] (p11)
edge[-latex, blue,thick, loop, out=-20, in=290, min distance=40] (p11)

edge[-latex, red,thick, loop, dashed, out=65, in=115, min distance=40] (p11);
                                          
\node at (0, -0.5) {$u$};
\node at (0.5, 0.8) {$f$};
\node at (-1, -0.5) {$e$};
\node at (1, -0.5) {$e'$};

\end{tikzpicture}
\]
By Theorem \ref{th in-splitting preserves talented monoid} and Theorem \ref{th sink deletion preserves talented monoid}, $T_\Lambda\cong T_{\Lambda_I}\cong T_{\Lambda_S}$. It is rather easy to compute $T_{\Lambda_S}$. In $T_{\Lambda_S}$, we have $u((i,j))=u((i,0))$ and $u((i,j))=2u((i+1,j))$ for each $i,j\in \mathbb{Z}$. One can easily deduce that $T_{\Lambda_S}$ is $\mathbb{Z}^2$-isomorphic to the commutative monoid 
\[
\mathbb{N}[1/2]:=\left\{\frac{m}{2^t}~:~m\in \mathbb{N}, t\in \mathbb{Z} \right\},
\] where the $\mathbb{Z}^2$-action is given as: $^{(i,j)}\frac{m}{2^t}:=\frac{m}{2^{t+i}}$ for all $(i,j)\in \mathbb{Z}^2$. 
\end{example}

\section{Relationship between graded $K$-theory and graded homology}\label{sec homology}
In this section, we investigate the relationship between the graded $K$-theory of a Kumjian--Pask algebra and the graded homology theory of the infinite path groupoid associated with the underlying $k$-graph. Before going to the details, let us briefly recall homology of ample groupoids, first introduced by Crainic and Moerdijk \cite{Crainic} in the setting of \'{e}tale groupoids. The readers may go through \cite{Matui, MatAdv} for more insights on the homology theory of groupoids. 

\subsection{Graded homology of a graded ample groupoid}\label{ssec groupoid homology}
Let $\mathcal{G}$ be an ample groupoid. For each positive integer $n$, write \[\mathcal{G}^{(n)}:=\{(g_1,g_2,\ldots,g_n)\in \mathcal{G}^n~|~s(g_i)=r(g_{i+1})~\text{for all}~i=1,2,\ldots,n-1\},\] and define $n+1$ maps $d_i^{(n)}:\mathcal{G}^{(n)}\longrightarrow \mathcal{G}^{(n-1)}$, $i=0,1,2,\ldots,n$ by 
$$d_i^{(n)}((g_1,g_2,\ldots,g_n)):=
	\left\{
	\begin{array}{lll}
		(g_2,g_3,\ldots,g_n)  & \mbox{if } i=0, \\
		(g_1,g_2,\ldots,g_ig_{i+1},\ldots,g_n)  & \mbox{if } 1\le i\le n-1,\\
            (g_1,g_2,\ldots,g_{n-1})  & \mbox{if } i=n.
	\end{array}
	\right.$$ 
where $d_0^{(1)},d_1^{(1)}:\mathcal{G}^{(1)}\longrightarrow \mathcal{G}^{(0)}$ are defined to be the \emph{source} and \emph{range} maps, respectively. For any topological abelian group $A$, suppose $C_c(\mathcal{G}^{(n)},A)$ is the set of all continuous functions $f:\mathcal{G}^{(n)}\longrightarrow A$ with compact support. Clearly, each $C_c(\mathcal{G}^{(n)},A)$ is an abelian group. Define the $n^{\text{th}}$ \emph{boundary map} to be the group homomorphism $\partial_n:C_c(\mathcal{G}^{(n)},A)\longrightarrow C_c(\mathcal{G}^{(n-1)},A)$, \[\partial_n:=\displaystyle{\sum_{i=1}^{n}} (-1)^{i}d_{i*}^{(n)},\] where $d_{i*}^{(n)}:C_c(\mathcal{G}^{(n)},A)\longrightarrow C_c(\mathcal{G}^{(n-1)},A)$ is the map given by $f\longmapsto d_{i*}^{(n)}(f)$ with \[(d_{i*}^{(n)}(f))(y):=\displaystyle{\sum_{d_i^{(n)}(x)=y}} f(x),\] for all $y\in \mathcal{G}^{(n-1)}$. Then, \[0\overset{\partial_0}{\longleftarrow} C_c(\mathcal{G}^{(0)},A)\overset{\partial_1}{\longleftarrow} C_c(\mathcal{G}^{(1)},A)\overset{\partial_2}{\longleftarrow} C_c(\mathcal{G}^{(2)},A)\overset{\partial_3}{\longleftarrow}\cdots\] is a chain complex, called the \emph{Moore complex} of the simplicial abelian group $(C_c(\mathcal{G}^{(n)},A))_n$. The \emph{$n^{\text{th}}$ homology group} of $\mathcal{G}$ with constant coefficients $A$ is defined as: \[H_n(\mathcal{G},A):=\Ker \partial_n /\image \partial_{n+1}.\] If we take $A=\mathbb{Z}$, the group of integers with discrete topology, then we simply write the $n^{\text{th}}$ homology $H_n(\mathcal{G},\mathbb{Z})$ as $H_n(\mathcal{G})$ and call it the $n^{\text{th}}$ \emph{integral homology} of $\mathcal{G}$. In particular, $H_0(\mathcal{G})=C_c(\mathcal{G}^{(0)},\mathbb{Z})/\image \partial_1$. Also, define \[H_0(\mathcal{G})^+:=\{[f]\in H_0(\mathcal{G})~|~f(u)\ge 0~\text{for all}~u\in \mathcal{G}^{(0)}\},\] where $[f]=f+\image \partial_1$. Then, $H_0(\mathcal{G})^+$ is a commutative cancellative monoid and $(H_0(\mathcal{G}),H_0(\mathcal{G})^+)$ is a pre-ordered abelian group. 

If $\Gamma$ is a group with discrete topology acting continuously from the left on the groupoid $\mathcal{G}$, i.e., there is a group homomorphism $\phi:\Gamma\longrightarrow \Aut(\mathcal{G})$, then each $C_c(\mathcal{G}^{(n)},A)$ is a $\Gamma$-module with respect to the following $\Gamma$-action: \[\gamma\cdot f:=f\circ \phi_{\gamma^{-1}}.\] Also, $\partial_n$'s are $\Gamma$-module homomorphisms and consequently, $H_n(\mathcal{G},A)$ is a $\Gamma$-module. 

We now present the definition of the graded homology of an ample groupoid.
\begin{dfn}\label{def graded homology}
(\cite[Definition 5.3]{H-Li}) Let $\mathcal{G}$ be a $\Gamma$-graded ample groupoid graded by the continuous cocycle $c:\mathcal{G}\longrightarrow \Gamma$. For each $n\ge 0$, the \emph{$n^{\text{th}}$ graded (integral) homology} of $\mathcal{G}$ is defined as: \[H_n^{\gr}(\mathcal{G}):=H_n(\mathcal{G}\times_c \Gamma).\] Also, define \[H_0^{\gr}(\mathcal{G})^+:=H_0(\mathcal{G}\times_c \Gamma)^+.\] 
\end{dfn}
Here $\mathcal{G}\times_c \Gamma$ is the skew-product of $\mathcal{G}$ with $\Gamma$ (see \cite[Definition 1.6]{Renault}). Since the skew-product groupoid $\mathcal{G}\times_c \Gamma$ is equipped with a natural $\Gamma$-action, namely $~^\gamma (g,\alpha):=(g,\gamma\alpha)$, by the discussion above, $H_n^{\gr}(\mathcal{G})$ is a left $\Gamma$-module. 

\subsection{Graded zeroth homology of the infinite path groupoid}\label{ssec homology of path groupoid}
Given a $k$-graph $\Lambda$ with degree functor $d$, the associated infinite path groupoid $\mathcal{G}_\Lambda$ is canonically $\mathbb{Z}^k$-graded via the continuous cocycle 
\begin{align*}
\Tilde{d}:\mathcal{G}_\Lambda &\longrightarrow \mathbb{Z}^k\\
(x,\N,y) &\longmapsto \N
\end{align*}
Therefore, $H_n^{\gr}(\mathcal{G}_\Lambda)=H_n(\mathcal{G}_\Lambda\times_{\Tilde{d}} \mathbb{Z}^k)$. 

When $\mathcal{G}$ is a strongly $\Gamma$-graded groupoid (see \cite{H-Li}), one can relate the graded homology of $\mathcal{G}$ to the usual (non-graded) homology of $\mathcal{G}_\epsilon$ by \cite[Theorem 5.4]{H-Li}, where $\epsilon$ is the identity of $\Gamma$ and $\mathcal{G}_\epsilon$ is the $\epsilon^{\text{th}}$ homogeneous component of $\mathcal{G}$. In the following proposition, we specialise this to the case of the infinite path groupoid of a $k$-graph $\Lambda$. 

\begin{prop}\label{pro connecting graded homology to non-graded one}
For any row-finite $k$-graph $\Lambda$ without sources, the following are equivalent.

$(i)$ $\Lambda$ satisfies Condition $(Y)$ of \cite[Definition 4.10]{CHR}.

$(ii)$ $\mathcal{G}_\Lambda$ is a strongly $\mathbb{Z}^k$-graded groupoid.

$(iii)$ $Y:=\mathcal{G}_\Lambda^{(0)}\times \{0\}$ is a $(\mathcal{G}_\Lambda\times_{\Tilde{d}} \mathbb{Z}^k)$-full clopen subset of $(\mathcal{G}_\Lambda\times_{\Tilde{d}} \mathbb{Z}^k)^{(0)}$.

If any one of the above holds, then $H_n^{\gr}(\mathcal{G}_\Lambda)\cong H_n((\mathcal{G}_\Lambda)_0)$ for any $n\ge 0$ and $H_0^{\gr}(\mathcal{G}_\Lambda)^+\cong H_0((\mathcal{G}_\Lambda)_0)^+$.  
\end{prop}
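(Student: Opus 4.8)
The plan is to establish the three-way equivalence and then read off the final homology statement from \cite[Theorem 5.4]{H-Li}. Concretely, I would treat $(i)\Leftrightarrow(ii)$ as the characterisation of strong grading by Condition $(Y)$ established in \cite{CHR}, prove $(ii)\Leftrightarrow(iii)$ by a direct unravelling of the skew product, and then, once $(ii)$ is in hand, invoke \cite[Theorem 5.4]{H-Li} to obtain $H_n^{\gr}(\mathcal{G}_\Lambda)=H_n(\mathcal{G}_\Lambda\times_{\Tilde{d}}\mathbb{Z}^k)\cong H_n((\mathcal{G}_\Lambda)_0)$ for all $n\ge 0$, with the positive-cone assertion for $n=0$ handled by tracking positivity through that isomorphism.

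The concrete core is $(ii)\Leftrightarrow(iii)$. First I would recall that, for an ample $\Gamma$-graded groupoid with cocycle $c$ and $\mathcal{G}_\gamma:=c^{-1}(\gamma)$, being \emph{strongly graded} means $\mathcal{G}_\gamma\mathcal{G}_\delta=\mathcal{G}_{\gamma+\delta}$ for all $\gamma,\delta$, and I would show this is equivalent to the range map $r\colon\mathcal{G}_\gamma\to\mathcal{G}^{(0)}$ being surjective for every $\gamma$: taking $\delta=-\gamma$ forces every unit into $\mathcal{G}_\gamma\mathcal{G}_{-\gamma}=\mathcal{G}_0$, giving $r|_{\mathcal{G}_\gamma}$ onto; conversely any $x\in\mathcal{G}_{\gamma+\delta}$ factors as $g(g^{-1}x)$ with $g\in\mathcal{G}_\gamma$ chosen so that $r(g)=r(x)$, whence $g^{-1}x\in\mathcal{G}_\delta$. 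Then I would compute fullness directly: with the skew-product conventions of \S\ref{ssec path groupoid}, an arrow $(g,\alpha)$ of $\mathcal{G}_\Lambda\times_{\Tilde{d}}\mathbb{Z}^k$ with $g=(x,\N-\M,y)$ has range $(x,\alpha)$ and source $(y,\alpha+\N-\M)$, so its source lies in $Y=\Lambda^\infty\times\{0\}$ exactly when $\alpha=-\Tilde{d}(g)$. Hence $Y$ is full iff for every $x\in\Lambda^\infty$ and every $\eta\in\mathbb{Z}^k$ there is $g\in\mathcal{G}_\Lambda$ with $r(g)=x$ and $\Tilde{d}(g)=\eta$ (the sign convention being immaterial since $\eta$ ranges over all of $\mathbb{Z}^k$), i.e.\ iff $r\colon\Tilde{d}^{-1}(\eta)\to\Lambda^\infty$ is onto for every $\eta$ --- which is exactly the strong-grading criterion above. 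Clopenness of $Y$ is immediate since $\{0\}$ is clopen in the discrete group $\mathbb{Z}^k$.

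For $(i)\Leftrightarrow(ii)$ I would connect Condition $(Y)$ to the range-surjectivity just obtained. Unpacking $\Tilde{d}^{-1}(\eta)\to\Lambda^\infty$ surjective for all $\eta$: writing $\eta=\N-\M$ with $\M$ the negative part, the obstruction to finding $y$ with $\sigma^\N(x)=\sigma^\M(y)$ is the backward-extendability of the vertex $r(\sigma^\N(x))$ by a path of degree $\M$, while the forward/no-source part is automatic; since $x,\eta$ are arbitrary this reduces to $\Lambda^\M v\neq\emptyset$ for all $v\in\Lambda^0$ and all $\M\in\mathbb{N}^k$. One then checks this is precisely Condition $(Y)$ of \cite[Definition 4.10]{CHR}; alternatively, $(i)\Leftrightarrow(ii)$ can be cited outright, as Condition $(Y)$ was introduced in \cite{CHR} to characterise strong grading of the associated groupoid.

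Finally, for the homological conclusion I would observe that $(\mathcal{G}_\Lambda\times_{\Tilde{d}}\mathbb{Z}^k)|_Y\cong(\mathcal{G}_\Lambda)_0$, so that \cite[Theorem 5.4]{H-Li} (applicable by $(ii)$) yields the stated isomorphisms; for the positive cones I would note that the underlying isomorphism $H_0(\mathcal{G}_\Lambda\times_{\Tilde{d}}\mathbb{Z}^k)\cong H_0((\mathcal{G}_\Lambda)_0)$ is implemented at the chain level by extension-by-zero $C_c(Y,\mathbb{Z})\to C_c((\mathcal{G}_\Lambda\times_{\Tilde{d}}\mathbb{Z}^k)^{(0)},\mathbb{Z})$, with inverse given by restriction to $Y$, and both maps carry nonnegative functions to nonnegative functions, so the isomorphism restricts to $H_0^{\gr}(\mathcal{G}_\Lambda)^+\cong H_0((\mathcal{G}_\Lambda)_0)^+$. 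The main obstacle I anticipate is pinning down $(i)\Leftrightarrow(ii)$: matching the precise combinatorial form of Condition $(Y)$ to the range-surjectivity of $\Tilde{d}$ requires care about how the negative coordinates of $\eta$ translate into backward path-extension in $\Lambda$ and about the exact role of the no-sources hypothesis; the groupoid step $(ii)\Leftrightarrow(iii)$ and the homological conclusion are comparatively routine given \cite[Theorem 5.4]{H-Li}.
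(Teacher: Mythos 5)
Your overall architecture is sound and your treatment of $(ii)\Leftrightarrow(iii)$ is correct; it is essentially the same computation as the paper's, just packaged through the general fact that strong grading is equivalent to surjectivity of $r\colon\Tilde{d}^{-1}(\eta)\to\Lambda^\infty$ for every $\eta$ (the paper instead proves the cycle $(i)\Rightarrow(ii)\Rightarrow(iii)\Rightarrow(i)$, factoring $(x,0,x)=(x,-\M,y)(y,\M,x)$ for $(ii)\Rightarrow(iii)$ and extracting $\beta=y(0,\mathbf{q})$ from a fullness witness for $(iii)\Rightarrow(i)$). However, your proposed combinatorial reduction of Condition $(Y)$ is wrong: strong grading unwinds to ``for every $x\in\Lambda^\infty$ and every $\M\in\NN^k$ there exists $\N\in\NN^k$ with $\Lambda^{\N+\M}x(\N)\neq\emptyset$,'' which is exactly what the paper extracts, and this is strictly weaker than your condition ``$\Lambda^\M v\neq\emptyset$ for all $v\in\Lambda^0$.'' For example, in the $1$-graph with vertices $v,w$, an edge $e$ from $w$ to $v$ and a loop $f$ at $w$, one has $\Lambda^1v=\emptyset$, yet Condition $(Y)$ (and strong grading) holds by sliding down the infinite path to $x(1)=w$. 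Your fallback of citing $(i)\Leftrightarrow(ii)$ outright from \cite{CHR} rescues this step, but the ``one then checks this is precisely Condition $(Y)$'' claim should be deleted or corrected.

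The second genuine gap is in the positive-cone assertion. Restriction of functions to $Y$ is not a chain-level inverse to extension by zero: a class such as $[1_{Z(\lambda)\times\{\N\}}]$ with $\N\neq 0$ restricts to the zero function on $Y$ while being nonzero in $H_0$, so ``restriction to $Y$'' does not even descend to a well-defined map on $H_0(\mathcal{G}_\Lambda\times_{\Tilde{d}}\mathbb{Z}^k)$. The correct mechanism is that fullness of the clopen set $Y$ lets one move the support of any nonnegative function into $Y$ using compact open bisections (equivalently, one invokes Matui's results that a groupoid with $\sigma$-compact, totally disconnected unit space is homologically similar to its restriction to a full clopen subset, which is what the paper does via \cite[Theorems 3.5 and 3.6]{Matui}; note that checking $\sigma$-compactness of $\mathcal{G}_\Lambda^{(0)}\times\mathbb{Z}^k$ is a small step you would also need). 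With those two repairs --- cite \cite{CHR} for $(i)\Leftrightarrow(ii)$ rather than your reduction, and replace the restriction-map argument by the bisection/homological-similarity argument --- your proof goes through.
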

\begin{proof}
$(i)\Longrightarrow (ii)$ by \cite[Theorem 4.12]{CHR}. Suppose $(ii)$ holds. $Y$ is clearly a clopen subset of $(\mathcal{G}_\Lambda\times_{\Tilde{d}} \mathbb{Z}^k)^{(0)}$. To show the fullness of $Y$, take any $((x,0,x),\M)\in (\mathcal{G}_\Lambda\times_{\Tilde{d}} \mathbb{Z}^k)^{(0)}$. We can write $(x,0,x)=(x,-\M,y)(y,\M,x)$ for some $y\in \Lambda^\infty$ since $\mathcal{G}_\Lambda$ is strongly $\mathbb{Z}^k$-graded. Then, $((x,-\M,y),\M)\in \mathcal{G}_\Lambda\times_{\Tilde{d}}\mathbb{Z}^k$, $r(((x,-\M,y),\M))=((x,0,x),\M)$ and $s(((x,-\M,y),\M))=((y,0,y),0)\in Y$. Hence, $Y$ is $(\mathcal{G}_\Lambda\times_{\Tilde{d}} \mathbb{Z}^k)$-full and $(iii)$ follows.

Suppose $(iii)$ holds. Take any $\M\in \mathbb{N}^k$ and $x\in \Lambda^\infty$. Then, we have $((x,0,x),\M)\in (\mathcal{G}_\Lambda\times_{\Tilde{d}} \mathbb{Z}^k)^{(0)}$. Since $Y$ is full, there exists $((x,\N-\mathbf{q},y),\M)\in \mathcal{G}_\Lambda\times_{\Tilde{d}}\mathbb{Z}^k$ such that $s(((x,\N-\mathbf{q},y),\M))\in Y$. It follows that $\sigma^\N(x)=\sigma^\mathbf{q}(y)$ and $\N-\mathbf{q}+\M=0$. Let $\beta=y(0,\mathbf{q})$. Then, $s(\beta)=y(\mathbf{q})=\sigma^\mathbf{q}(y)(0)=\sigma^\N(x)(0)=x(\N)$ and $d(\beta)-\N=\mathbf{q}-\N=\M$. So $\Lambda$ satisfies Condition $(Y)$. 

For the remaining part, note that the unit space $(\mathcal{G}_\Lambda\times_{\Tilde{d}} \mathbb{Z}^k)^{(0)}=\mathcal{G}_\Lambda^{(0)}\times \mathbb{Z}^k$ is $\sigma$-compact since $\Lambda^0$ and $\mathbb{Z}^k$ are countable, $Z(v)$'s are compact and \[\mathcal{G}_\Lambda^{(0)}\times \mathbb{Z}^k=\displaystyle{\bigcup_{v\in \Lambda^0,\N\in \mathbb{Z}^k}} Z(v)\times \{\N\}.\] Also, since $\mathcal{G}_\Lambda\times_{\Tilde{d}} \mathbb{Z}^k$ is ample, the unit space is totally disconnected. Thus, by \cite[Theorem 3.6 (2)]{Matui}, $\mathcal{G}_\Lambda\times_{\Tilde{d}} \mathbb{Z}^k$ is homologically similar to $(\mathcal{G}_\Lambda\times_{\Tilde{d}} \mathbb{Z}^k)|_{Y}$. Again, $(\mathcal{G}_\Lambda\times_{\Tilde{d}} \mathbb{Z}^k)|_{Y}\cong (\mathcal{G}_\Lambda)_0$ via the canonical isomorphism which maps $((x,0,y),0)$ to $(x,0,y)$. Therefore, $\mathcal{G}_\Lambda\times_{\Tilde{d}} \mathbb{Z}^k$ is homologically similar to $(\mathcal{G}_\Lambda)_0$ and we are done by \cite[Theorem 3.5 (2)]{Matui}. 
\end{proof}

To relate the graded homology of the graph groupoid $\mathcal{G}_E$ of a directed graph $E$ with the graded $K$-theory of the corresponding Leavitt path algebra $L(E)$, the first named author and Li used $\mathcal{D}_\mathbb{Z}(E)$, the \emph{diagonal subalgebra} of $L_\mathbb{Z}(E)$ (the Leavitt path algebra of $E$ with integral coefficients). They realised $\mathcal{D}_\mathbb{Z}(E)$ as a certain quotient of the free abelian group generated by the elements of the form $\alpha\alpha^*$, $\alpha\in E^*$ (see \cite[Lemma 6.4]{H-Li}). The proof uses the fact that the Leavitt path algebra of any directed graph has a specific basis \cite[Lemma 6.2]{H-Li}. Unfortunately, we do not have a prescribed basis for the Kumjian--Pask algebra of a row-finite $k$-graph. We instead follow a different approach. Below, we define the \emph{path group} associated with a $k$-graph $\Lambda$, which will be used in the sequel as a bridge to set up our desired relationship between the graded homology of $\mathcal{G}_\Lambda$ and $K_0^{\gr}(\KP_\mathsf{F}(\Lambda))$.   
\begin{dfn}\label{def path group}
Let $\Lambda$ be a row-finite $k$-graph without sources. The \emph{path group} of $\Lambda$ is denoted by $\mathcal{P}(\Lambda)$ and is defined as the free abelian group generated by $\Lambda$ subject to the relations
\begin{equation}\label{pathrel}
\lambda=\displaystyle{\sum_{\alpha\in s(\lambda)\Lambda^\N}} \lambda\alpha    
\end{equation}
for all $\lambda\in \Lambda$ and $\N\in \mathbb{N}^k$.
\end{dfn}
We will show that as abelian groups $\mathcal{P}(\Lambda)\cong C_c(\mathcal{G}_\Lambda^{(0)},\mathbb{Z})$, where $\mathcal{G}_\Lambda$ is the path groupoid associated to $\Lambda$. For this we need the following lemma.
\begin{lem}\label{lem key for showing well-definedness}
Let $\lambda\in \Lambda$ be such that $Z(\lambda)=\displaystyle{\bigsqcup_{i=1}^{t}}~Z(\mu_i)$ for some $t\ge 1$ and $\mu_i\in \Lambda$ for each $i=1,2,\ldots,t$. Then, $\lambda=\displaystyle{\sum_{i=1}^{t}} ~\mu_i$ in $\mathcal{P}(\Lambda)$.     
\end{lem}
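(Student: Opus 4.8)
The plan is to reduce both sides to a single common \emph{level} --- a degree $\mathbf{M}\in\mathbb{N}^k$ dominating $d(\lambda)$ and all the $d(\mu_i)$ --- and then read off the identity $\lambda=\sum_i\mu_i$ directly from the defining relation $(\ref{pathrel})$ of $\mathcal{P}(\Lambda)$. Concretely I would set $\mathbf{M}:=d(\lambda)\vee\bigvee_{i=1}^{t}d(\mu_i)$. The guiding principle is that any cylinder $Z(\eta)$ partitions into cylinders of any prescribed degree $\mathbf{M}\ge d(\eta)$, namely $Z(\eta)=\bigsqcup_{\gamma\in s(\eta)\Lambda^{\mathbf{M}-d(\eta)}}Z(\eta\gamma)$, and relation $(\ref{pathrel})$ is exactly the algebraic shadow of this partition: $\eta=\sum_{\gamma\in s(\eta)\Lambda^{\mathbf{M}-d(\eta)}}\eta\gamma$ in $\mathcal{P}(\Lambda)$. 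So the strategy is to match the degree-$\mathbf{M}$ cylinders appearing in $Z(\lambda)$ with those appearing in $\bigsqcup_i Z(\mu_i)$.

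The first key step I would isolate is an \emph{all-or-nothing} observation: if $\nu\in\Lambda^{\mathbf{M}}$ and $\mu\in\Lambda$ with $d(\mu)\le\mathbf{M}$, then either $Z(\nu)\subseteq Z(\mu)$ or $Z(\nu)\cap Z(\mu)=\emptyset$, the former holding precisely when $\nu(0,d(\mu))=\mu$. This is immediate, since any $x\in Z(\nu)$ satisfies $x(0,d(\mu))=\nu(0,d(\mu))$, a quantity independent of $x$; hence membership of $x$ in $Z(\mu)$ is determined by $\nu$ alone. Applying this with $\mu=\lambda$ identifies the degree-$\mathbf{M}$ cylinders inside $Z(\lambda)$ as exactly $\{Z(\lambda\beta):\beta\in s(\lambda)\Lambda^{\mathbf{M}-d(\lambda)}\}$, and applying it with $\mu=\mu_i$ does the same for each $Z(\mu_i)$.

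The second key step is to check that these degree-$\mathbf{M}$ cylinders partition compatibly. Since $Z(\lambda)=\bigsqcup_i Z(\mu_i)$ and each $Z(\nu)$ with $\nu\in\Lambda^{\mathbf{M}}$ and $Z(\nu)\subseteq Z(\lambda)$ is nonempty --- here I use that $\Lambda$ is row-finite with no sources, so $s(\nu)\Lambda^\infty\neq\emptyset$ --- the cylinder $Z(\nu)$ meets at least one $Z(\mu_i)$, and by the all-or-nothing step it is then contained in that one; disjointness of the $Z(\mu_i)$ forces the index to be unique. Thus $\{\nu\in\Lambda^{\mathbf{M}}:Z(\nu)\subseteq Z(\lambda)\}=\bigsqcup_{i=1}^{t}\{\nu\in\Lambda^{\mathbf{M}}:Z(\nu)\subseteq Z(\mu_i)\}$ as a genuine set partition. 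Summing the corresponding generators in $\mathcal{P}(\Lambda)$ and invoking $(\ref{pathrel})$ on the left (with $\N=\mathbf{M}-d(\lambda)$) and termwise on the right (with $\N=\mathbf{M}-d(\mu_i)$) yields $\lambda=\sum_{\nu}\nu=\sum_{i=1}^{t}\mu_i$, as required.

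I expect the only genuine subtlety --- and the step most in need of care --- to be the possibility that the degrees $d(\mu_i)$ are mutually incomparable and incomparable with $d(\lambda)$, so that no $\mu_i$ need be an extension of $\lambda$ (as already happens for a one-vertex $2$-graph, where $Z(e)=Z(f)$ for edges $e,f$ of different colours). Passing to the common upper bound $\mathbf{M}$ is precisely what sidesteps this: after refinement everything lives at a single degree, where the extension relation is clean and the combinatorics collapse to a bare set partition. The nonemptiness of cylinders (no sources) is the other point that must not be skipped, since it is what rules out phantom degree-$\mathbf{M}$ pieces that could otherwise break the correspondence.
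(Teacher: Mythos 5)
Your proof is correct, and it establishes exactly the right statement; the overall strategy (refine everything to the common degree $\mathbf{M}=d(\lambda)\vee\bigvee_i d(\mu_i)$ and then read the identity off the defining relation (\ref{pathrel})) is the same as the paper's. The execution, however, is organised differently, and more cleanly. The paper first proves, for each $i$ separately, that $Z(\mu_i)=\bigsqcup_{\alpha\in\Ext(\lambda,\{\mu_i\})}Z(\lambda\alpha)$ and that $\Ext(\mu_i,\{\lambda\})$ is all of $s(\mu_i)\Lambda^{(d(\mu_i)\vee d(\lambda))-d(\mu_i)}$ (this is where the MCE machinery and the citation to \cite[Lemma 3.17]{HMPS} enter), and then performs a second refinement from degree $d(\lambda)\vee d(\mu_i)$ up to the common degree, after which it must separately verify that the doubly-indexed summands $\lambda\alpha\gamma$ are pairwise distinct and exhaust $s(\lambda)\Lambda^{\M}$ --- the longest part of the argument. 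Your single-step refinement to degree $\mathbf{M}$, combined with the all-or-nothing dichotomy for a cylinder $Z(\nu)$ with $\nu\in\Lambda^{\mathbf{M}}$ against any $Z(\mu)$ with $d(\mu)\le\mathbf{M}$, collapses all of that bookkeeping into one transparent set-partition statement $\{\nu\in\Lambda^{\mathbf{M}}:Z(\nu)\subseteq Z(\lambda)\}=\bigsqcup_i\{\nu\in\Lambda^{\mathbf{M}}:Z(\nu)\subseteq Z(\mu_i)\}$, from which the identity in $\mathcal{P}(\Lambda)$ follows by applying (\ref{pathrel}) once on each side. You also correctly flag the two points that actually carry the argument: the incomparability of the degrees $d(\mu_i)$ (which is why one cannot expect the $\mu_i$ to extend $\lambda$, and why the common refinement is necessary), and the nonemptiness of cylinders coming from the no-sources hypothesis (needed both for surjectivity of $\beta\mapsto\lambda\beta$ onto the degree-$\mathbf{M}$ cylinders inside $Z(\lambda)$ and to guarantee that each such cylinder actually meets some $Z(\mu_i)$). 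Your version buys a shorter and more self-contained proof; the paper's version keeps the $\Ext$-set description of intersections of cylinders explicit, which it reuses in the proof of Proposition \ref{pro connecting path group with SA of path groupoid}.
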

\begin{proof}
For each $i=1,2,\ldots,t$ we have
\[Z(\mu_i)=Z(\mu_i)\cap Z(\lambda)=\displaystyle{\bigsqcup_{\beta\in \Ext(\mu_i,\{\lambda\})}} Z(\mu_i\beta)=\displaystyle{\bigsqcup_{\alpha\in \Ext(\lambda,\{\mu_i\})}}Z(\lambda\alpha).\] The second equality follows from \cite[Lemma 3.17 $(i)$]{HMPS} and the last equality follows since for each $\alpha\in \Ext(\lambda,\{\mu_i\})$, there is a unique $\beta\in \Ext(\mu_i,\{\lambda\})$ such that $\lambda\alpha=\mu_i\beta$ and conversely. Thus, \[\displaystyle{\bigsqcup_{\nu\in s(\mu_i)\Lambda^{(d(\mu_i)\vee d(\lambda))-d(\mu_i)}}} Z(\mu_i\nu)=\displaystyle{\bigsqcup_{\beta\in \Ext(\mu_i,\{\lambda\})}} Z(\mu_i\beta).\] Clearly, $\Ext(\mu_i,\{\lambda\})\subseteq s(\mu_i)\Lambda^{(d(\mu_i)\vee d(\lambda))-d(\mu_i)}$. From the above equality, it follows that the reverse inclusion also holds. Indeed, if we take $\nu\in s(\mu_i)\Lambda^{(d(\mu_i)\vee d(\lambda))-d(\mu_i)}$ and any $x\in s(\nu)\Lambda^\infty$, then there is a unique $\beta\in \Ext(\mu_i,\{\lambda\})$ such that $\mu_i\nu x\in Z(\mu_i\beta)$, which implies $\mu_i\nu x=\mu_i\beta y$ for some $y\in s(\beta)\Lambda^\infty$. But since $d(\nu)=d(\beta)$, it must happen that $\nu=\beta$. So $\Ext(\mu_i,\{\lambda\})=s(\mu_i)\Lambda^{(d(\mu_i)\vee d(\lambda))-d(\mu_i)}$. Let \[\M:=\displaystyle{\bigvee_{i=1}^{t}} \{(d(\lambda)\vee d(\mu_i))-d(\lambda)\}=\displaystyle{\bigvee_{i=1}^{t}}(d(\lambda)\vee d(\mu_i))-d(\lambda).\] Now in $\mathcal{P}(\Lambda)$, we have
\begin{align*}
\displaystyle{\sum_{i=1}^{t}}\mu_i&= \displaystyle{\sum_{i=1}^{t}}\left(\displaystyle{\sum_{\beta\in s(\mu_i)\Lambda^{(d(\mu_i)\vee d(\lambda))-d(\mu_i)}}}\mu_i\beta\right)~(\text{by defining relations (\ref{pathrel})})\\
&=\displaystyle{\sum_{i=1}^{t}}\left(\displaystyle{\sum_{\beta\in \Ext(\mu_i,\{\lambda\})}} \mu_i\beta\right)\\
&=\displaystyle{\sum_{i=1}^{t}}\left(\displaystyle{\sum_{\alpha\in \Ext(\lambda,\{\mu_i\})}} \lambda\alpha\right)\\
&=\displaystyle{\sum_{i=1}^{t}}\left(\displaystyle{\sum_{\alpha\in \Ext(\lambda,\{\mu_i\})}} \left(\displaystyle{\sum_{\gamma\in s(\alpha)\Lambda^{\M-d(\alpha)}}}\lambda\alpha\gamma\right)\right)~(\text{by defining relations (\ref{pathrel})})\\
&=\displaystyle{\sum_{\delta\in s(\lambda)\Lambda^\M}}\lambda\delta=\lambda~(\text{by defining relations (\ref{pathrel})}).
\end{align*}
It remains to explain how the penultimate equality holds. If $i\neq j$, then it is clear that $\Ext(\lambda,\{\mu_i\})\cap \Ext(\lambda,\{\mu_j\})=\emptyset$; otherwise, $Z(\mu_i)\cap Z(\mu_j)\neq \emptyset$. Again, for the same reason, it cannot happen that $\alpha\gamma=\alpha'\gamma'$ for $\alpha\in\Ext(\lambda,\{\mu_i\})$, $\alpha'\in \Ext(\lambda,\{\mu_j\})$, $\gamma\in s(\alpha)\Lambda^{\M-d(\alpha)}$ and $\gamma'\in s(\alpha')\Lambda^{\M-d(\alpha')}$. Thus, all the summands in line $4$ above are distinct. Also, for each $i=1,2,\ldots,t$, $\alpha\in \Ext(\lambda,\{\mu_i\})$ and $\gamma\in s(\alpha)\Lambda^{\M-d(\alpha)}$, $d(\alpha\gamma)=\M$, which implies $\alpha\gamma\in s(\lambda)\Lambda^\M$. On the other hand, if $\delta\in s(\lambda)\Lambda^\M$, then since $d(\mu_i)\le d(\lambda\delta)$ and $Z(\lambda\delta)\subseteq Z(\lambda)=\displaystyle{\bigsqcup_{i=1}^{t}}~Z(\mu_i)$, it follows that there is a unique $i$ such that $(\lambda\delta)(0,d(\mu_i))=\mu_i$. So $\lambda\delta=\mu_i\beta$ for some $\beta\in \Lambda$. Since $d(\lambda\delta)\ge d(\lambda)\vee d(\mu_i)$, there should exist $\alpha,\beta'\in \Lambda$ such that $(\lambda\delta)(0,d(\lambda)\vee d(\mu_i))=\lambda\alpha$ and $(\mu_i\beta)(0,d(\lambda)\vee d(\mu_i))=\mu_i\beta'$, whence $\lambda\alpha=\mu_i\beta'$ and $\alpha\in \Ext(\lambda,\{\mu_i\})$. Then, $\delta=\alpha\gamma$ for some $\gamma\in s(\alpha)\Lambda^{\M-d(\alpha)}$, which proves that each $\delta\in s(\lambda)\Lambda^\M$ is in the list of summand in line $4$. 
\end{proof}

\begin{prop}\label{pro connecting path group with SA of path groupoid}
Let $\Lambda$ be any row-finite $k$-graph without sources and $\mathbb{Z}$, the group of integers equipped with discrete topology. Then, the map $1_{Z(\lambda)} \mapsto \lambda$ extends to an isomorphism $\phi : C_c(\mathcal{G}_\Lambda^{(0)},\mathbb{Z})\to \mathcal{P}(\Lambda)$. 
\end{prop}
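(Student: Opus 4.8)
The plan is to present $C_c(\mathcal{G}_\Lambda^{(0)},\mathbb{Z})$ in terms of the basic compact open sets $Z(\lambda)$ and to match their relations with the defining relations of $\mathcal{P}(\Lambda)$. The unit space $\mathcal{G}_\Lambda^{(0)}=\Lambda^\infty$ is locally compact, Hausdorff and totally disconnected, with $\{Z(\lambda):\lambda\in\Lambda\}$ a basis of compact open sets. Hence every $f\in C_c(\mathcal{G}_\Lambda^{(0)},\mathbb{Z})$ is a finite $\mathbb{Z}$-linear combination of indicators $1_{Z(\lambda)}$: write $f=\sum_{n\ne 0}n\,1_{f^{-1}(n)}$ and decompose each compact open set $f^{-1}(n)$ into finitely many basic sets, refining them to be pairwise disjoint. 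So the $1_{Z(\lambda)}$ generate $C_c(\mathcal{G}_\Lambda^{(0)},\mathbb{Z})$, and it is standard that the relations among them are generated by the identities $1_{Z(\lambda)}=\sum_{i=1}^t 1_{Z(\mu_i)}$ arising from finite disjoint decompositions $Z(\lambda)=\bigsqcup_{i=1}^t Z(\mu_i)$ into basic sets. To see that $1_{Z(\lambda)}\mapsto\lambda$ extends to a well-defined homomorphism $\phi$, I therefore only need to check that $\phi$ carries each such relation to a valid identity in $\mathcal{P}(\Lambda)$ — and that is precisely Lemma~\ref{lem key for showing well-definedness}, which gives $\lambda=\sum_{i=1}^t\mu_i$ in $\mathcal{P}(\Lambda)$ whenever $Z(\lambda)=\bigsqcup_{i=1}^t Z(\mu_i)$. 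Thus $\phi$ is well defined.

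Surjectivity is immediate, since each generator $\lambda$ of $\mathcal{P}(\Lambda)$ equals $\phi(1_{Z(\lambda)})$. For injectivity I would first normalise to a common degree. Given a nonzero $f=\sum_i n_i 1_{Z(\lambda_i)}$, put $\mathbf{N}:=\bigvee_i d(\lambda_i)$; using the disjoint decompositions $Z(\lambda_i)=\bigsqcup_{\alpha\in s(\lambda_i)\Lambda^{\mathbf{N}-d(\lambda_i)}} Z(\lambda_i\alpha)$, which are exactly the defining relations~(\ref{pathrel}) and are respected by $\phi$, I can rewrite $f=\sum_{\nu\in\Lambda^{\mathbf{N}}} c_\nu\,1_{Z(\nu)}$ as an honest equality of functions, the sum running over the finitely many distinct degree-$\mathbf{N}$ paths $\nu$ with $c_\nu\ne 0$, and correspondingly $\phi(f)=\sum_\nu c_\nu\,\nu$. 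Distinct paths of the same degree have disjoint cylinders, and every $Z(\nu)\ne\emptyset$ because $\Lambda$ is row-finite without sources; hence $c_\nu$ is recovered as the value of $f$ on $Z(\nu)$, and $f\ne 0$ forces some $c_{\nu_0}\ne 0$.

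It remains to see that $\phi(f)=\sum_\nu c_\nu\,\nu\ne 0$ in $\mathcal{P}(\Lambda)$ whenever some $c_{\nu_0}\ne 0$; equivalently, that distinct paths of a fixed degree are $\mathbb{Z}$-linearly independent in $\mathcal{P}(\Lambda)$. This is the main obstacle, and I would resolve it by realising $\mathcal{P}(\Lambda)$ as a direct limit. Order $\mathbb{N}^k$ by the coordinatewise order and, for $\mathbf{N}\le\mathbf{N}'$, let $\iota_{\mathbf{N},\mathbf{N}'}:\mathbb{Z}^{(\Lambda^{\mathbf{N}})}\to\mathbb{Z}^{(\Lambda^{\mathbf{N}'})}$ be the map $\nu\mapsto\sum_{\mu\in\nu\Lambda^{\mathbf{N}'-\mathbf{N}}}\mu$. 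The defining relations~(\ref{pathrel}) say exactly that $\mathcal{P}(\Lambda)=\varinjlim_{\mathbf{N}}\mathbb{Z}^{(\Lambda^{\mathbf{N}})}$ along these maps. Distinct $\nu$ give extensions with distinct prefixes, so $\iota_{\mathbf{N},\mathbf{N}'}$ sends the basis $\Lambda^{\mathbf{N}}$ to elements with pairwise disjoint supports, and each such image is nonzero since $\Lambda$ has no sources; hence each $\iota_{\mathbf{N},\mathbf{N}'}$ is injective. As a filtered colimit of injective maps embeds every term, $\mathbb{Z}^{(\Lambda^{\mathbf{N}})}\hookrightarrow\mathcal{P}(\Lambda)$, so the paths of degree $\mathbf{N}$ are linearly independent in $\mathcal{P}(\Lambda)$ and $\phi(f)\ne 0$. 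Therefore $\phi$ is injective, and together with the previous paragraphs it is the asserted isomorphism. (Equivalently, one may build the inverse directly: $\lambda\mapsto 1_{Z(\lambda)}$ is well defined by the same disjoint decompositions and is a two-sided inverse of $\phi$.)
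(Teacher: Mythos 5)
Your proof is correct, and it diverges from the paper's in one substantive place. The well-definedness half is essentially the paper's argument: decompose $f$ into a $\mathbb{Z}$-combination of indicators of disjoint cylinders and feed the resulting relations into Lemma~\ref{lem key for showing well-definedness}. The one soft spot is the sentence ``it is standard that the relations among them are generated by the identities arising from finite disjoint decompositions'': that presentation claim is precisely what the paper proves by hand, via the common refinement $W_{ij}=Z(\lambda_i)\cap Z(\mu_j)$ of two representations and the subsequent additivity check, so you are outsourcing a genuine computation rather than a triviality. It is true, and in fact your own normalisation trick proves it: modulo the relations~(\ref{pathrel}) every element of the free group on the symbols $1_{Z(\lambda)}$ is congruent to a combination of indicators of distinct degree-$\mathbf{n}$ cylinders, which are pairwise disjoint and nonempty, so any element of the kernel already lies in the subgroup generated by~(\ref{pathrel}); it would be worth saying this rather than appealing to folklore. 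Where you genuinely depart from the paper is injectivity: the paper constructs the inverse $\psi:\lambda\mapsto 1_{Z(\lambda)}$ explicitly and checks it respects~(\ref{pathrel}), whereas you normalise to a common degree and prove that distinct paths of a fixed degree are $\mathbb{Z}$-linearly independent in $\mathcal{P}(\Lambda)$ by exhibiting $\mathcal{P}(\Lambda)$ as the directed colimit of the free abelian groups $\mathbb{Z}\Lambda^{\mathbf{n}}$ along the injective connecting maps $\nu\mapsto\sum_{\alpha\in s(\nu)\Lambda^{\mathbf{n}'-\mathbf{n}}}\nu\alpha$. All the steps there check out (the colimit identification is immediate from the presentations, the connecting maps are injective because distinct prefixes give disjoint nonempty supports, and directed colimits of injections embed each term). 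That colimit description is a real bonus: it is the path-group analogue of the dimension-group presentation of $H_\Lambda$ in \S\ref{sec dimension group}, and it makes the linear-independence structure of $\mathcal{P}(\Lambda)$ transparent in a way the explicit-inverse argument does not. The paper's route is more self-contained and quicker to verify; yours isolates the structural reason the map is injective.
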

\begin{proof}
First, note that any $f\in C_c(\mathcal{G}_\Lambda^{(0)},\mathbb{Z})$ can be written as $f=\displaystyle{\sum_{U\in \mathcal{F}}} a_U 1_U$, where $\mathcal{F}$ is a finite collection (empty if $f=0$) of mutually disjoint compact open subsets of $\mathcal{G}_\Lambda^{(0)}$ and $a_U\in \mathbb{Z}\setminus \{0\}$ for all $U\in \mathcal{F}$ (precisely, $U=f^{-1}(a_U)$). The set $\mathcal{B}:=\{Z(\lambda):\lambda\in \Lambda\}$ forms a basis of compact sets for the topology of $\mathcal{G}_\Lambda^{(0)}$. Using this fact together with \cite[Lemma 3.17]{HMPS}, we can decompose each $U\in \mathcal{F}$ as a disjoint union $U=\displaystyle{\bigsqcup_{i=1}^{\ell_U}}~Z(\lambda_{\nu_i})$. So $f$ can eventually be expressed as \[f=\displaystyle{\sum_{U\in \mathcal{F}}}\left(\sum_{i=1}^{\ell_U} a_U 1_{Z(\lambda_{\nu_i})}\right).\] This shows that each $f\in C_c(\mathcal{G}_\Lambda^{(0)},\mathbb{Z})$ can be written as a $\mathbb{Z}$-linear combination of characteristic functions of mutually disjoint basic open subsets. Define a map $\phi:C_c(\mathcal{G}_\Lambda^{(0)},\mathbb{Z})\longrightarrow \mathcal{P}(\Lambda)$ by extending the rule \[1_{Z(\lambda)}\longmapsto \lambda,\] linearly to all $\mathbb{Z}$-linear combinations of characteristic functions of mutually disjoint basic open sets. We need to check that this gives a well-defined group homomorphism. We first observe that if $Z(\lambda)=\displaystyle{\bigsqcup_{i=1}^{m}}~U_i$, where $U_i$'s are mutually disjoint compact open subsets of $\mathcal{G}_\Lambda^{(0)}$, then $\phi(1_{Z(\lambda)})=\displaystyle{\sum_{i=1}^{m}}\phi(1_{U_i})$. Write $U_i=\displaystyle{\bigsqcup_{j=1}^{n_i}}~Z(\lambda_{i_j})$ for each $i=1,2,\ldots,m$. Then, by our definition, $\phi(1_{U_i})=\displaystyle{\sum_{j=1}^{n_i}}\phi(1_{Z(\lambda_{i_j})})=\displaystyle{\sum_{j=1}^{n_i}}\lambda_{i_j}$. Again, $Z(\lambda)=\displaystyle{\bigsqcup_{i=1}^{m}}\left(\displaystyle{\bigsqcup_{j=1}^{n_i}}~Z(\lambda_{i_j})\right)$ and so by applying Lemma \ref{lem key for showing well-definedness}, we have \[\phi(1_{Z(\lambda)})=\lambda=\displaystyle{\sum_{i=1}^{m}}\left(\displaystyle{\sum_{j=1}^{n_i}}\lambda_{i_j}\right)=\displaystyle{\sum_{i=1}^{m}}\phi(1_{U_i})\] in $\mathcal{P}(\Lambda)$. Now, assume $\displaystyle{\sum_{i=1}^{m}}a_i 1_{Z(\lambda_i)}=\displaystyle{\sum_{j=1}^{n}}b_j 1_{Z(\mu_j)}$, where $Z(\lambda_i)$'s are mutually disjoint, $Z(\mu_j)$'s are mutually disjoint and $a_i,b_j\in \mathbb{Z}\setminus \{0\}$ for all $i=1,2,\ldots,m$ and $j=1,2,\ldots,n$. For each $i,j$, set $W_{ij}:=Z(\lambda_i)\cap Z(\mu_j)$ and $\mathcal{K}:=\{W_{ij}~|~W_{ij}\neq \emptyset\}$. Note that \[Z(\lambda_i)=\displaystyle{\bigsqcup_{\substack{1\le j\le n\\W_{ij}\neq \emptyset}}}~ W_{ij}=\displaystyle{\bigsqcup_{W\in \mathcal{K},W\subseteq Z(\lambda_i)}}~W\] and similarly, \[Z(\mu_j)=\displaystyle{\bigsqcup_{W\in \mathcal{K},W\subseteq Z(\mu_j)}}~W.\] By our earlier observation, it follows that \[\phi\left(\displaystyle{\sum_{i=1}^{m}}a_i1_{Z(\lambda_i)}\right)=\displaystyle{\sum_{i=1}^{m}}a_i\left(\displaystyle{\sum_{W\in \mathcal{K},W\subseteq Z(\lambda_i)}}\phi(1_W)\right)=\displaystyle{\sum_{W\in \mathcal{K}}}\left(\displaystyle{\sum_{\substack{1\le i\le m\\W\subseteq Z(\lambda_i)}}}a_i\right)\phi(1_W),\] and similarly, \[\phi\left(\displaystyle{\sum_{j=1}^{n}}b_j 1_{Z(\mu_j)}\right)=\displaystyle{\sum_{W\in \mathcal{K}}}\left(\displaystyle{\sum_{\substack{1\le j\le n\\W\subseteq Z(\mu_j)}}}b_j\right)\phi(1_W).\] We remark that there is a single summand in each summation inside the parentheses on the right-hand side of both the equations above. Take any $W\in \mathcal{K}$ and $u\in W$. Clearly, $u\in Z(\lambda_i)$ if and only if $W\subseteq Z(\lambda_i)$, and $u\in Z(\mu_j)$ if and only if $W\subseteq Z(\mu_j)$. Therefore, \[\displaystyle{\sum_{\substack{1\le i\le m\\W\subseteq Z(\lambda_i)}}}a_i=\displaystyle{\sum_{i=1}^{m}}a_i 1_{Z(\lambda_i)}(u)=\displaystyle{\sum_{j=1}^{n}}b_j 1_{Z(\mu_j)}(u)=\displaystyle{\sum_{\substack{1\le j\le n\\W\subseteq Z(\mu_j)}}} b_j,\] and consequently, $\phi\left(\displaystyle{\sum_{i=1}^{m}}a_i 1_{Z(\lambda_i)}\right)=\phi\left(\displaystyle{\sum_{j=1}^{n}}b_j 1_{Z(\mu_j)}\right)$. This shows that $\phi$ is well-defined. It is indeed a group homomorphism. For this, we show that $\phi(a 1_{Z(\lambda)}+b 1_{Z(\mu)})=a\phi(1_{Z(\lambda)})+b\phi(1_{Z(\mu)})$ and then one can extend the idea to show that $\phi(f+g)=\phi(f)+\phi(g)$ for any two $f,g\in C_c(\mathcal{G}_\Lambda^{(0)},\mathbb{Z})$. Note that, $Z(\lambda)=(Z(\lambda)\setminus Z(\mu))\sqcup (Z(\lambda)\cap Z(\mu))$ and $Z(\mu)=(Z(\mu)\setminus Z(\lambda))\sqcup (Z(\mu)\cap Z(\lambda))$. Since $\mathcal{G}_\Lambda^{(0)}$ is Hausdorff, $Z(\lambda)\setminus Z(\mu)$, $Z(\lambda)\cap Z(\mu)$ and $Z(\mu)\setminus Z(\lambda)$ are mutually disjoint compact open subsets. Thus, 
\begin{align*}
&~ a\phi(1_{Z(\lambda)})+b\phi(1_{Z(\mu)})\\
= &~a\left(\phi(1_{Z(\lambda)\setminus Z(\mu)})+\phi(1_{Z(\lambda)\cap Z(\mu)})\right)+b\left(\phi(1_{Z(\mu)\setminus Z(\lambda)})+\phi(1_{Z(\mu)\cap Z(\lambda)})\right)\\
= &~ a\phi(1_{Z(\lambda)\setminus Z(\mu)})+(a+b)\phi(1_{Z(\lambda)\cap Z(\mu)})+b\phi(1_{Z(\mu)\setminus Z(\lambda)})\\
= &~ \phi\left(a 1_{Z(\lambda)\setminus Z(\mu)}+(a+b) 1_{Z(\lambda)\cap Z(\mu)}+b 1_{Z(\mu)\setminus Z(\lambda)}\right)\\
= &~ \phi(a 1_{Z(\lambda)}+b 1_{Z(\mu)}),   
\end{align*}
where the first equality follows by our earlier observation. Now, consider a map $\psi:\mathcal{P}(\Lambda)\longrightarrow C_c(\mathcal{G}_\Lambda^{(0)},\mathbb{Z})$ defined on the generators by \[\lambda\longmapsto 1_{Z(\lambda)},\] and then extended linearly. Note that for any $\N\in \mathbb{N}^k$, \[\psi\left(\displaystyle{\sum_{\alpha\in s(\lambda)\Lambda^\N}} \lambda\alpha\right)=\displaystyle{\sum_{\alpha\in s(\lambda)\Lambda^\N}}1_{Z(\lambda\alpha)}=1_{Z(\lambda)}\] since $Z(\lambda)=\displaystyle{\bigsqcup_{\alpha\in s(\lambda)\Lambda^\N}} Z(\lambda\alpha)$, and hence $\psi$ is compatible with the defining relations of $\mathcal{P}(\Lambda)$. Evidently, $\psi(\phi(1_{Z(\lambda)}))=\psi(\lambda)=1_{Z(\lambda)}$ and $\phi(\psi(\lambda))=\phi(1_{Z(\lambda)})=\lambda$ which assures that $\phi$, $\psi$ are mutually inverse group homomorphisms and that completes the proof.
\end{proof}
Consider the skew-product $k$-graph $\overline{\Lambda}=\Lambda\times_d \mathbb{Z}^k$ (see Example \ref{ex skew-product k-graphs}). Applying Proposition \ref{pro connecting path group with SA of path groupoid} to $\overline{\Lambda}$, we obtain $C_c(\mathcal{G}_{\overline{\Lambda}}^{(0)},\mathbb{Z})\cong \mathcal{P}(\overline{\Lambda})$. Next, we observe that $\mathcal{P}(\overline{\Lambda})$ is a $\mathbb{Z}^k$-module with respect to a certain action of $\mathbb{Z}^k$. 

Note that there is a groupoid isomorphism (see \cite[Theorem 5.2]{Kumjian--Pask})
\begin{align*}
\varphi: \mathcal{G}_\Lambda \times_{\Tilde{d}} \mathbb{Z}^k &\longrightarrow \mathcal{G}_{\overline{\Lambda}}\\
((x,\N,y),\M) &\longmapsto ((x,\M),\N,(y,\M+\N)).
\end{align*}
The readers should note that the set of infinite paths in $\overline{\Lambda}$, i.e., $\overline{\Lambda}^\infty$ is identified with $\Lambda^\infty \times \mathbb{Z}^k$ as follows: for any $\chi\in \overline{\Lambda}^\infty$, $\chi\longmapsto (\pi_1\circ \chi, \pi_2(\chi(0)))\in \Lambda^\infty \times \mathbb{Z}^k$, where $\pi_1:\Lambda\times \mathbb{Z}^k\longrightarrow \Lambda$ and $\pi_2:\Lambda\times \mathbb{Z}^k \longrightarrow \mathbb{Z}^k$ are the usual projections; on the other hand any $(x,\N)\in \Lambda^\infty \times \mathbb{Z}^k$ is realised as the infinite path $(x,\N):\Omega_k\longrightarrow \overline{\Lambda}$ defined by $(x,\N)(\mathbf{p},\mathbf{q}):=(x(\mathbf{p},\mathbf{q}),\N+\mathbf{p})$. 

There is a canonical left $\mathbb{Z}^k$-action on the skew-product groupoid $\mathcal{G}_\Lambda\times_{\tilde{d}} \mathbb{Z}^k$ defined as: \[\mathbf{q}\cdot ((x,\N,y),\M):=((x,\N,y),\mathbf{q}+\M),\] which, in view of the above isomorphism $\varphi$, induces a $\mathbb{Z}^k$-action on $\mathcal{G}_{\overline{\Lambda}}$ as follows: \[\mathbf{q}\cdot ((x,\M),\N,(y,\M+\N)):=((x,\mathbf{q}+\M),\N,(y,\mathbf{q}+\M+\N)).\] Viewing this action as a group homomorphism $\eta:\mathbb{Z}^k\longrightarrow \Aut(\mathcal{G}_{\overline{\Lambda}})$, we have a left $\mathbb{Z}^k$-action on the abelian group $C_c(\mathcal{G}_{\overline{\Lambda}}^{(0)},\mathbb{Z})$ defined by \[\mathbf{q}\cdot f:=f\circ \eta_{-\mathbf{q}}.\] Finally, define a left $\mathbb{Z}^k$-action on the generators of the path group $\mathcal{P}(\overline{\Lambda})$ by \[\mathbf{q}\cdot (\lambda,\N):=\phi(\mathbf{q}\cdot 1_{Z((\lambda,\N))}),\] where $\phi$ is the isomorphism between $C_c(\mathcal{G}_{\overline{\Lambda}}^{(0)},\mathbb{Z})$ and $\mathcal{P}(\overline{\Lambda})$ (see the proof of Proposition \ref{pro connecting path group with SA of path groupoid}). This clearly gives a well-defined action on $\mathcal{P}(\overline{\Lambda})$. For $(x,\M)\in \mathcal{G}_{\overline{\Lambda}}^{(0)}$, \[\left(\mathbf{q}\cdot 1_{Z((\lambda,\N))}\right)((x,\M))=\left(1_{Z((\lambda,\N))}\circ \eta_{-\mathbf{q}}\right)((x,\M))=1_{Z((\lambda,\N))}((x,\M-\mathbf{q}))=1_{Z((\lambda,\mathbf{q}+\N))}((x,\M)).\] The last equality follows since $(x,\M-\mathbf{q})\in Z((\lambda,\N))$ if and only if $(x,\M-\mathbf{q})=(\lambda,\N)(y,\mathbf{p})$ for some $(y,\mathbf{p})\in \overline{\Lambda}^\infty$, i.e, $(x,\M-\mathbf{q})=(\lambda y, \N)$ or, $x=\lambda y$ and $\M=\mathbf{q}+\N$ or, equivalently, $(x,\M)\in Z((\lambda,\mathbf{q}+\N))$. Therefore $\mathbf{q}\cdot (\lambda,\N)=(\lambda,\mathbf{q}+\N)$. 

Now, we state the main theorem of this section.
\begin{thm}\label{th relationship of graded K-theory and graded homology}
Let $\Lambda$ be any row-finite $k$-graph without sources and $\mathsf{F}$ any field. There exists a $\mathbb{Z}[\mathbb{Z}^k]$-module isomorphism \[\Phi:H_0^{\gr}(\mathcal{G}_\Lambda)\longrightarrow K_0^{\gr}(\KP_\mathsf{F}(\Lambda)),\] which when restricted to $H_0^{\gr}(\mathcal{G}_\Lambda)^+$, gives a $\mathbb{Z}^k$-monoid isomorphism between $H_0^{\gr}(\mathcal{G}_\Lambda)^+$ and $T_\Lambda$, the positive cone of $K_0^{\gr}(\KP_\mathsf{F}(\Lambda))$.     
\end{thm}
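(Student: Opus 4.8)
The plan is to realise the desired isomorphism by routing both sides through the path group $\mathcal{P}(\overline{\Lambda})$ of the skew-product $k$-graph $\overline{\Lambda}=\Lambda\times_d\mathbb{Z}^k$, using the machinery already assembled in this section. By Definition \ref{def graded homology} and the cocycle $\tilde d$ we have $H_0^{\gr}(\mathcal{G}_\Lambda)=H_0(\mathcal{G}_\Lambda\times_{\tilde d}\mathbb{Z}^k)$, and the groupoid isomorphism $\varphi$ of \cite[Theorem 5.2]{Kumjian-Pask} identifies $\mathcal{G}_\Lambda\times_{\tilde d}\mathbb{Z}^k\cong\mathcal{G}_{\overline{\Lambda}}$, so that $H_0^{\gr}(\mathcal{G}_\Lambda)\cong H_0(\mathcal{G}_{\overline{\Lambda}})=C_c(\mathcal{G}_{\overline{\Lambda}}^{(0)},\mathbb{Z})/\image\partial_1$. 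Applying Proposition \ref{pro connecting path group with SA of path groupoid} to $\overline{\Lambda}$ replaces $C_c(\mathcal{G}_{\overline{\Lambda}}^{(0)},\mathbb{Z})$ by $\mathcal{P}(\overline{\Lambda})$ via the isomorphism $\phi\colon 1_{Z(\tilde\lambda)}\mapsto\tilde\lambda$. Hence the first task is to transport $\image\partial_1$ through $\phi$ and recognise the resulting quotient.

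For this I would use that the basic homogeneous bisections $Z(\tilde\lambda,\tilde\mu)$ (with $s(\tilde\lambda)=s(\tilde\mu)$) span $C_c(\mathcal{G}_{\overline{\Lambda}}^{(1)},\mathbb{Z})$ and satisfy $r(Z(\tilde\lambda,\tilde\mu))=Z(\tilde\lambda)$, $s(Z(\tilde\lambda,\tilde\mu))=Z(\tilde\mu)$, so that $\partial_1(1_{Z(\tilde\lambda,\tilde\mu)})=1_{Z(\tilde\mu)}-1_{Z(\tilde\lambda)}$. Thus $\phi(\image\partial_1)$ is the subgroup $N$ of $\mathcal{P}(\overline{\Lambda})$ generated by all $\tilde\mu-\tilde\lambda$ with $s(\tilde\lambda)=s(\tilde\mu)$, and $H_0^{\gr}(\mathcal{G}_\Lambda)\cong\mathcal{P}(\overline{\Lambda})/N$. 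Taking $\tilde\mu=s(\tilde\lambda)$ shows every path is identified in the quotient with its source vertex, so $\mathcal{P}(\overline{\Lambda})/N$ is generated by the classes of the vertices $(v,\mathbf{N})\in\overline{\Lambda}^0=\Lambda^0\times\mathbb{Z}^k$; imposing the defining relations \eqref{pathrel} for $\mathcal{P}(\overline{\Lambda})$ in degree $\mathbf{e}_i$ at $(v,\mathbf{N})$ gives exactly $[(v,\mathbf{N})]=\sum_{\alpha\in v\Lambda^{\mathbf{e}_i}}[(s(\alpha),\mathbf{N}+\mathbf{e}_i)]$, which is the defining relation of the talented monoid $T_\Lambda$. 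I would then make the identification precise through the mutually inverse homomorphisms $v(\mathbf{N})\mapsto[(v,\mathbf{N})]$ and $[\tilde\lambda]\mapsto[s(\tilde\lambda)]$; the second is well-defined because the general-degree relations $[w]=\sum_{\tilde\alpha\in w\overline{\Lambda}^{\mathbf{M}}}[s(\tilde\alpha)]$ inherited from \eqref{pathrel} are consequences of the degree-$\mathbf{e}_i$ relations via the unique factorization property. Since $K_0^{\gr}(\KP_\mathsf{k}(\Lambda))$ is the group completion of $T_\Lambda$ by the discussion in \S\ref{ssec graded K-theory}, this yields the group isomorphism $\Phi$.

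It then remains to upgrade $\Phi$ to a $\mathbb{Z}[\mathbb{Z}^k]$-module isomorphism and to match positive cones. For the module structure I would invoke the computation carried out immediately before the statement, which shows that the canonical $\mathbb{Z}^k$-action on $\mathcal{G}_\Lambda\times_{\tilde d}\mathbb{Z}^k$ becomes $\mathbf{q}\cdot(\lambda,\mathbf{N})=(\lambda,\mathbf{q}+\mathbf{N})$ on $\mathcal{P}(\overline{\Lambda})$; passing to $\mathcal{P}(\overline{\Lambda})/N$ this reads $\mathbf{q}\cdot[(v,\mathbf{N})]=[(v,\mathbf{q}+\mathbf{N})]$, which is precisely the shift action $^{\mathbf{q}}v(\mathbf{N})=v(\mathbf{q}+\mathbf{N})$ on $T_\Lambda$ and hence on $K_0^{\gr}$, so $\Phi$ is $\mathbb{Z}[\mathbb{Z}^k]$-linear. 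For the cones, any nonnegative $f\in C_c(\mathcal{G}_{\overline{\Lambda}}^{(0)},\mathbb{Z})$ is a nonnegative $\mathbb{Z}$-combination of characteristic functions of disjoint cylinders, and $\Phi([1_{Z(\tilde\lambda)}])=s(\tilde\lambda)\in T_\Lambda$, giving $\Phi(H_0^{\gr}(\mathcal{G}_\Lambda)^+)\subseteq T_\Lambda$; conversely each generator $v(\mathbf{N})$ of $T_\Lambda$ equals $\Phi([1_{Z((v,\mathbf{N}))}])$ with $1_{Z((v,\mathbf{N}))}\ge 0$, so $\Phi$ restricts to a bijective monoid homomorphism $H_0^{\gr}(\mathcal{G}_\Lambda)^+\to T_\Lambda$, i.e.\ a $\mathbb{Z}^k$-monoid isomorphism onto the positive cone.

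I expect the main obstacle to be the clean identification of $\image\partial_1$ and, above all, the verification that collapsing $\mathcal{P}(\overline{\Lambda})$ by $N$ introduces \emph{no spurious relations} beyond the talented-monoid ones: concretely, constructing the inverse map $[\tilde\lambda]\mapsto[s(\tilde\lambda)]$ and checking it respects both \eqref{pathrel} and $N$, which forces one to reduce every general-degree relation to the generating degrees $\mathbf{e}_i$ through the factorization property. The supporting facts that $s,r$ are injective on bisections (so that $\partial_1$ acts by $1_{Z(\tilde\mu)}-1_{Z(\tilde\lambda)}$) and that the basic bisections generate are routine given Lemma \ref{lem key for showing well-definedness} and Proposition \ref{pro connecting path group with SA of path groupoid}, so the essential labour is this bookkeeping rather than any new structural input.
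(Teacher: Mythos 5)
Your proposal is correct and follows essentially the same route as the paper: pass to $H_0(\mathcal{G}_{\overline{\Lambda}})$, use Proposition \ref{pro connecting path group with SA of path groupoid} to replace $C_c(\mathcal{G}_{\overline{\Lambda}}^{(0)},\mathbb{Z})$ by $\mathcal{P}(\overline{\Lambda})$, identify $\image\partial_1$ with the subgroup generated by $(\lambda,\N)-(s(\lambda),\N+d(\lambda))$ (your generators $\tilde\mu-\tilde\lambda$ with common source span the same subgroup, as you note by taking $\tilde\mu$ to be the source vertex), and then match the quotient with $G(T_\Lambda)$ via the mutually inverse maps $[\tilde\lambda]\mapsto [s(\tilde\lambda)]$ and $v(\N)\mapsto[(v,\N)]$, which is exactly the paper's pair $\tilde f$, $\tilde g$. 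The equivariance and positive-cone arguments also coincide with the paper's.
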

\begin{proof}
The graded zeroth homology of the $\mathbb{Z}^k$-graded groupoid $\mathcal{G}_\Lambda$ is the usual zeroth homology of the skew-product groupoid $\mathcal{G}_\Lambda\times_{\Tilde{d}} \mathbb{Z}^k$, i.e., \[H_0^{\gr}(\mathcal{G}_\Lambda)=H_0(\mathcal{G}_\Lambda\times_{\Tilde{d}} \mathbb{Z}^k)=C_c((\mathcal{G}_\Lambda\times_{\Tilde{d}}\mathbb{Z}^k)^{(0)},\mathbb{Z})/\image \partial_1,\] where $\partial_1:C_c(\mathcal{G}_\Lambda\times_{\Tilde{d}}\mathbb{Z}^k,\mathbb{Z})\longrightarrow C_c((\mathcal{G}_\Lambda\times_{\Tilde{d}}\mathbb{Z}^k)^{(0)},\mathbb{Z})$ is the map $s_*-r_*$. By \cite[Lemma 5.5]{H-Li}, \[\image \partial_1=\lspan_{\mathbb{Z}} \{1_{s(U)}-1_{r(U)}~|~U\in (\mathcal{G}_\Lambda \times_{\Tilde{d}}\mathbb{Z}^k)_h ^a\}.\] Any homogeneous compact open bisection of degree $\mathbf{p}$ in $\mathcal{G}_\Lambda\times_{\Tilde{d}}\mathbb{Z}^k$ is of the form $U=\displaystyle{\bigsqcup_{i=1}^{\ell}}~Z(\lambda_i,\mu_i)\times \{\N_i\}$, where $\ell\in \mathbb{N}$, $\lambda_i,\mu_i\in \Lambda$, $s(\lambda_i)=s(\mu_i)$, $\N_i\in \mathbb{Z}^k$ and $d(\lambda_i)-d(\mu_i)=\mathbf{p}$ for all $i=1,2,\ldots,\ell$. Using this and doing some simplification, we obtain \[\image \partial_1=\lspan_{\mathbb{Z}} \{1_{Z(\lambda)\times \{\N\}}-1_{Z(s(\lambda))\times \{\N+d(\lambda)\}}~|~\lambda\in \Lambda,\N\in \mathbb{Z}^k\}.\] By Proposition \ref{pro connecting path group with SA of path groupoid}, we have an isomorphism $\phi: C_c(\mathcal{G}_{\overline{\Lambda}}^{(0)},\mathbb{Z})\longrightarrow \mathcal{P}(\overline{\Lambda})$ such that $\phi(1_{Z((\lambda,\N))})=(\lambda,\N)$. Again, we note that $\mathcal{G}_{\overline{\Lambda}}^{(0)}\cong (\mathcal{G}_\Lambda\times_{\Tilde{d}}\mathbb{Z}^k)^{(0)}$. Combining these two facts, we have an isomorphism 
\begin{align*}
\xi:C_c((\mathcal{G}_\Lambda\times_{\Tilde{d}}\mathbb{Z}^k)^{(0)},\mathbb{Z})&\longrightarrow \mathcal{P}(\overline{\Lambda})\\
1_{Z(\lambda)\times \{\N\}}&\longmapsto (\lambda,\N).
\end{align*}
In $\mathcal{P}(\overline{\Lambda})$, consider the subgroup \[J:=\langle \xi(1_{Z(\lambda)\times\{\N\}}-1_{Z(s(\lambda))\times \{\N+d(\lambda)\}})~|~\lambda\in \Lambda,\N\in \mathbb{Z}^k \rangle=\langle (\lambda,\N)-(s(\lambda),\N+d(\lambda))~|~\lambda\in \Lambda,\N\in \mathbb{Z}^k\rangle.\] Then $\xi(\image \partial_1)=J$ and so $\xi$ induces an isomorphism \[\Tilde{\xi}:H_0^{\gr}(\mathcal{G}_\Lambda)=C_c((\mathcal{G}_\Lambda\times_{\Tilde{d}}\mathbb{Z}^k)^{(0)},\mathbb{Z})/\image \partial_1 \longrightarrow \mathcal{P}(\overline{\Lambda})/J.\] Clearly, $\xi$ respects the relevant $\mathbb{Z}^k$-actions, $\image \partial_1$ and $J$ are $\mathbb{Z}^k$-submodules. It follows that $\Tilde{\xi}$ is a $\mathbb{Z}[\mathbb{Z}^k]$-module isomorphism. Now, we show that the quotient group $\mathcal{P}(\overline{\Lambda})/J$ is isomorphic to the group completion of the talented monoid of $\Lambda$, and this isomorphism will be the remaining bit of our desired isomorphism. First, define a map \[f:\mathcal{P}(\overline{\Lambda})\longrightarrow G(T_\Lambda)\] on the generators by \[f((\lambda,\N)):=s(\lambda)(\N+d(\lambda)),\] and then extend linearly to all elements. For each $\M\in \mathbb{N}^k$, since 
\begin{align*}
f((\lambda,\N))=s(\lambda)(\N+d(\lambda))&=\displaystyle{\sum_{\mu\in s(\lambda)\Lambda^\M}}s(\mu)(\N+d(\lambda)+\M) ~(\text{by defining relations of}~ T_\Lambda)\\
&=\displaystyle{\sum_{\mu\in s(\lambda)\Lambda^\M}}s(\lambda\mu)(\N+d(\lambda\mu))\\
&=f\left(\displaystyle{\sum_{\mu\in s(\lambda)\Lambda^\M}} (\lambda\mu,\N)\right)\\
&=f\left(\displaystyle{\sum_{\mu\in s(\lambda)\Lambda^\M}} (\lambda,\N)(\mu,\N+d(\lambda))\right)\\
&=f\left(\displaystyle{\sum_{(\mu,\mathbf{p})\in \overline{s}(\lambda,\N)\overline{\Lambda}^\M}} (\lambda,\N)(\mu,\mathbf{p})\right),
\end{align*}
therefore, $f$ is a well-defined group homomorphism. Moreover, $J\subseteq \Ker(f)$ since $f((\lambda,\N)-(s(\lambda),\N+d(\lambda)))=s(\lambda)(\N+d(\lambda))-s(\lambda)(\N+d(\lambda))=0$. Thus, $f$ induces a group homomorphism \[\Tilde{f}:\mathcal{P}(\overline{\Lambda})/J\longrightarrow G(T_\Lambda).\] Now, consider a monoid homomorphism \[g:T_\Lambda\longrightarrow \mathcal{P}(\overline{\Lambda})/J\] defined on the generators by \[g(v(\N)):=[(v,\N)]_J.\] To show that $g$ is well-defined, it suffices to verify that $g$ is compatible with the defining relations of $T_\Lambda$. For any $\M\in \mathbb{N}^k$, in $T_\Lambda$, we have $v(\N)=\displaystyle{\sum_{\alpha\in v\Lambda^\M}} s(\alpha)(\N+\M)$. Now, \[g\left(\displaystyle{\sum_{\alpha\in v\Lambda^\M}}s(\alpha)(\N+\M)\right)=\displaystyle{\sum_{\alpha\in v\Lambda^\M}}[(s(\alpha),\N+\M)]_J=\displaystyle{\sum_{(\alpha,\N)\in (v,\N)\overline{\Lambda}^\M}} [(\alpha,\N)]_J=[(v,\N)]_J=g(v(\N)).\] Hence, $g$ is well-defined. By the universal property of group completion, there is a unique group homomorphism \[\Tilde{g}:G(T_\Lambda)\longrightarrow \mathcal{P}(\overline{\Lambda})/J\] such that $\Tilde{g}|_{T_\Lambda}=g$. It is easy to observe that $\Tilde{g}\circ \Tilde{f}$ and $\Tilde{f}\circ \Tilde{g}$ are respectively the identity morphisms on $\mathcal{P}(\overline{\Lambda})/J$ and $G(T_\Lambda)$, which shows that $\Tilde{f}$ is a group isomorphism. It also preserves the $\mathbb{Z}^k$-action since \[\Tilde{f}(\mathbf{q}\cdot (\lambda,\N))=\Tilde{f}((\lambda,\mathbf{q}+\N))=s(\lambda)(\mathbf{q}+\N+d(\lambda))=~^\mathbf{q} s(\lambda)(\N+d(\lambda))=~^\mathbf{q} \Tilde{f}((\lambda,\N)).\] Finally, defining $\Phi:=\Tilde{f}\circ \Tilde{\xi}$ and noting that $K_0^{\gr}(\KP_\mathsf{F}(\Lambda))$ is the group completion of $T_\Lambda$ (see \cite[Remark 3.15]{HMPS}), we have the desired $\mathbb{Z}[\mathbb{Z}^k]$-module isomorphism between $H_0^{\gr}(\mathcal{G}_\Lambda)$ and $K_0^{\gr}(\KP_\mathsf{F}(\Lambda))$. 

For the remaining part, observe that any $f\in C_c((\mathcal{G}_\Lambda\times_{\Tilde{d}} \mathbb{Z}^k)^{(0)},\mathbb{Z})^+$ can be written as \[f=\displaystyle{\sum_{i=1}^{t}} a_i 1_{Z(\lambda_i)\times \{\N_i\}},\] where $t\in \mathbb{N}$, $a_i\in \mathbb{Z}^+$, $\N_i\in \mathbb{Z}^k$ for each $t$ and $Z(\lambda_i)$'s are mutually disjoint. Under the isomorphism $\Tilde{\xi}$, the equivalence class of such an $f$ corresponds to $\displaystyle{\sum_{i=1}^{t}}a_i [(\lambda_i,\N_i)]_J$. Therefore, \[\Phi([f])=\Tilde{f}\left(\displaystyle{\sum_{i=1}^{t}}a_i[(\lambda_i,\N_i)]_J\right)=\displaystyle{\sum_{i=1}^{t}}a_i f((\lambda_i,\N_i))=\displaystyle{\sum_{i=1}^{t}}a_i s(\lambda_i)(\N_i+d(\lambda_i))\in T_\Lambda\] since each $a_i >0$. Conversely, if $x=\displaystyle{\sum_{j=1}^{N}}v_j(\N_j)\in T_\Lambda$, then \[\Phi^{-1}(x)=\Tilde{\xi}^{-1}\left(\displaystyle{\sum_{j=1}^{N}}[(v_j,\N_j)]_J\right)=[\displaystyle{\sum_{j=1}^{N}} 1_{Z(v_j)\times \{\N_j\}}]\in H_0^{\gr}(\mathcal{G}_\Lambda)^+,\] since $\displaystyle{\sum_{j=1}^{N}} 1_{Z(v_j)\times \{\N_j\}}\in C_c((\mathcal{G}_\Lambda\times_{\Tilde{d}} \mathbb{Z}^k)^{(0)},\mathbb{Z})^+$. This completes the proof. 
\end{proof}
\begin{rmk}\label{rem alternative proof incorporating type semigroup}
We describe an alternative proof of Theorem \ref{th relationship of graded K-theory and graded homology}. For an ample groupoid $\mathcal{G}$ with locally compact Hausdorff unit space $\mathcal{G}^{(0)}$, one can check that the rule $\psi: C_c(\mathcal{G}^{(0)},\mathbb{Z})^+\longrightarrow \Typ(\mathcal{G})$; $1_{U}\longmapsto \typ(U)$ ($U$ is a compact open subset of $\mathcal{G}^{(0)}$) induces an isomorphism of commutative monoids: \[\Psi:H_0(\mathcal{G})^+\longrightarrow \mathcal{C}(\Typ(\mathcal{G})),\] where $\mathcal{C}(\Typ(\mathcal{G}))$ is the \emph{universal cancellative abelian semigroup} \cite{ABBL} of the \emph{type monoid} $\Typ(\mathcal{G})$, i.e., \[\mathcal{C}(\Typ(\mathcal{G}))=\Typ(\mathcal{G})/\sim\] where $\sim$ is the equivalence relation on $\Typ(\mathcal{G})$ given by $x\sim y$ if and only if $x+z=y+z$ for some $z\in \Typ(\mathcal{G})$. For an ample groupoid $\mathcal{G}$ with compact unit space, an isomorphism between $H_0(\mathcal{G})^+$ and $\mathcal{C}(\Typ(\mathcal{G}))$ was established in \cite[Proposition 1.6]{ABBL} with a different (but canonically equivalent) description of type monoid of $\mathcal{G}$.

If $\mathcal{G}$ is equipped with a left $\Gamma$-action given by a group homomorphism $\eta:\Gamma\longrightarrow \Aut(\mathcal{G})$, then $H_0(\mathcal{G})^+$ becomes a $\Gamma$-monoid with respect to the action \[^\gamma [1_U]:=[1_U\circ \eta_{\gamma^{-1}}]=[1_U\circ (\eta_\gamma)^{-1}].\] On the other hand, $\Typ(\mathcal{G})$ is also equipped with a natural $\Gamma$-action, namely $^\gamma \typ(U):=\typ(\eta_\gamma(U))$. This in turn induces a $\Gamma$-action on $\mathcal{C}(\Typ(\mathcal{G}))$ defined by \[^\gamma [\typ(U)]:=[^\gamma \typ(U)].\] Now, it is a matter of routine verification that the isomorphism $\Psi$ mentioned above is indeed an isomorphism of $\Gamma$-monoids. 

Therefore, if we consider a $\Gamma$-graded groupoid $\mathcal{G}$ which is graded by a cocycle $c:\mathcal{G}\longrightarrow \Gamma$, the above observation, together with \cite[Definition 3.3]{Cordeiro}, implies that we have a $\Gamma$-monoid isomorphism \[H_0^{\gr}(\mathcal{G})^+=H_0(\mathcal{G}\times_c \Gamma)^+\cong \mathcal{C}(\Typ(\mathcal{G}\times_c \Gamma))=\mathcal{C}(\Typ^{\gr}(\mathcal{G})).\] Applying this to the path groupoid $\mathcal{G}_\Lambda$ together with the fact that the graded type monoid of $\mathcal{G}_\Lambda$ is isomorphic to the talented monoid of $\Lambda$ (see \cite[Theorem 3.18]{HMPS}), we have an isomorphism of $\mathbb{Z}^k$-monoids \[H_0^{\gr}(\mathcal{G}_\Lambda)^+\cong \mathcal{C}(\Typ^{\gr}(\mathcal{G}_\Lambda))\cong\mathcal{C}(T_\Lambda)=T_\Lambda,\] where the last equality follows since $T_\Lambda$ is a cancellative monoid (see \cite[Proposition 3.14 $(ii)$]{HMPS}). Finally, taking the group completion of both sides, we obtain an order-preserving isomorphism between $H_0^{\gr}(\mathcal{G}_\Lambda)$ and $K_0^{\gr}(\KP_\mathsf{F}(\Lambda))$. On the level of Leavitt path algebras, the said argument can be employed in view of \cite[Lemma 3.6]{Cordeiro}, to provide another proof of \cite[Theorem 6.6]{H-Li}.\qed
\end{rmk}

Given a $k$-graph $\Lambda$ and a monoid homomorphism $f:\mathbb{N}^\ell\longrightarrow \mathbb{N}^k$, recall that we can form the pullback $f^*(\Lambda)$ along $f$, which is an $\ell$-graph. We finish this section by exhibiting some pleasant connections between the homology theories of the groupoids $\mathcal{G}_{f^*(\Lambda)}$ and $\mathcal{G}_\Lambda$. 

\begin{prop}\label{pro homology of pullback graph groupoid}
Let $\ell,k$ be positive integers, $\Lambda$ a row-finite $k$-graph without sources and $f:\mathbb{N}^\ell\longrightarrow \mathbb{N}^k$ a surjective monoid homomorphism. Then 

$(i)$ $H_0(\mathcal{G}_{f^*(\Lambda)})\cong H_0(\mathcal{G}_\Lambda)$;

$(ii)$ there exists a surjective group homomorphism from $H_0^{\gr}(\mathcal{G}_{f^*(\Lambda)})$ onto $H_0^{\gr}(\mathcal{G}_\Lambda)$. 
\end{prop}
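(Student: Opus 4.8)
The plan is to reduce both statements to explicit group presentations and then compare them, exploiting surjectivity of $f$. Throughout I write, for $v\in\Lambda^0$ and $\mathbf{p}\in\mathbb{N}^k$, the element $r_{\mathbf{p}}(v):=v-\sum_{\alpha\in v\Lambda^{\mathbf{p}}}s(\alpha)$ of the free abelian group $\mathbb{Z}\Lambda^0$ on the vertex set. A single application of the unique factorisation property gives the telescoping identity
\[
r_{\mathbf{n}+\mathbf{m}}(v)=r_{\mathbf{n}}(v)+\sum_{\alpha\in v\Lambda^{\mathbf{n}}}r_{\mathbf{m}}(s(\alpha)),
\]
from which it follows that the subgroup $\langle r_{\mathbf{p}}(v):\mathbf{p}\in S,\ v\in\Lambda^0\rangle$ already contains every $r_{\mathbf{q}}(v)$ whenever $S\subseteq\mathbb{N}^k$ generates $\mathbb{N}^k$ as a monoid. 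The same identity holds verbatim for the generators of the talented monoid and shows that in $T_\Lambda$ the relation $v(\mathbf{n})=\sum_{\lambda\in v\Lambda^{\mathbf{p}}}s(\lambda)(\mathbf{n}+\mathbf{p})$ is valid not only for $\mathbf{p}=\mathbf{e}_i$ but for \emph{every} $\mathbf{p}\in\mathbb{N}^k$.

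For $(i)$, I would start from Proposition \ref{pro connecting path group with SA of path groupoid}, which gives $C_c(\mathcal{G}_\Lambda^{(0)},\mathbb{Z})\cong\mathcal{P}(\Lambda)$. Running the computation of $\image\partial_1$ from the proof of Theorem \ref{th relationship of graded K-theory and graded homology} with the trivial grading identifies $\image\partial_1$ (spanned by $1_{s(U)}-1_{r(U)}$ over the compact open bisections $Z(\lambda,\mu)$) with the subgroup $J_0:=\langle\lambda-\mu:s(\lambda)=s(\mu)\rangle$ of $\mathcal{P}(\Lambda)$. Passing to $\mathcal{P}(\Lambda)/J_0$ collapses every path to its source vertex, so the defining relations \eqref{pathrel} become exactly the $r_{\mathbf{p}}(v)$, giving $H_0(\mathcal{G}_\Lambda)\cong\mathbb{Z}\Lambda^0/\langle r_{\mathbf{p}}(v):\mathbf{p}\in\mathbb{N}^k\rangle$. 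For the $\ell$-graph $f^*(\Lambda)$ the vertices are $\{(v,0):v\in\Lambda^0\}$ and the edges of degree $\mathbf{e}_j$ emitted by $(v,0)$ are indexed by $v\Lambda^{f(\mathbf{e}_j)}$; the same argument, under the bijection $(v,0)\mapsto v$, yields $H_0(\mathcal{G}_{f^*(\Lambda)})\cong\mathbb{Z}\Lambda^0/\langle r_{f(\mathbf{m})}(v):\mathbf{m}\in\mathbb{N}^\ell\rangle$. Since $f$ is surjective, $\{f(\mathbf{m}):\mathbf{m}\in\mathbb{N}^\ell\}=\mathbb{N}^k$, so the two relation subgroups of $\mathbb{Z}\Lambda^0$ literally coincide and the identity map on generators descends to the required isomorphism.

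For $(ii)$, I would invoke Theorem \ref{th relationship of graded K-theory and graded homology}, applied to $\Lambda$ and to the $\ell$-graph $f^*(\Lambda)$, to identify $H_0^{\gr}(\mathcal{G}_\Lambda)\cong G(T_\Lambda)$ and $H_0^{\gr}(\mathcal{G}_{f^*(\Lambda)})\cong G(T_{f^*(\Lambda)})$. The monoid $T_{f^*(\Lambda)}$ is generated by $v(\mathbf{m})$, $v\in\Lambda^0$, $\mathbf{m}\in\mathbb{Z}^\ell$, subject to $v(\mathbf{m})=\sum_{\lambda\in v\Lambda^{f(\mathbf{e}_j)}}s(\lambda)(\mathbf{m}+\mathbf{e}_j)$. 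Letting $f$ also denote the group homomorphism $\mathbb{Z}^\ell\to\mathbb{Z}^k$ extending the monoid map, I define $\Theta:T_{f^*(\Lambda)}\to T_\Lambda$ on generators by $v(\mathbf{m})\mapsto v(f(\mathbf{m}))$; well-definedness amounts to checking each defining relation is respected, which is precisely the all-degree talented relation above applied with $\mathbf{p}=f(\mathbf{e}_j)$ and $\mathbf{n}=f(\mathbf{m})$. As $f:\mathbb{Z}^\ell\to\mathbb{Z}^k$ is surjective, every generator $v(\mathbf{n})$ of $T_\Lambda$ lies in the image of $\Theta$, so $\Theta$ is a surjective monoid homomorphism; passing to group completions gives the desired surjection. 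It is worth emphasising that this is only a group homomorphism, not a module map, since the source and target are graded over the different groups $\mathbb{Z}^\ell$ and $\mathbb{Z}^k$. The main obstacle is bookkeeping rather than conceptual: correctly matching the vertex/edge data of the pullback with that of $\Lambda$ and verifying the non-graded description of $\image\partial_1$ for $(i)$ and the all-degree talented relation for $(ii)$; once these presentations are in place, surjectivity of $f$ (equivalently, that $\{f(\mathbf{e}_j)\}$ is a monoid generating set of $\mathbb{N}^k$) does all the real work in both parts.
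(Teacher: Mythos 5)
Your proof of part (ii) is essentially the paper's own argument: the paper defines the same map $v(\N)\mapsto v(f(\N))$ from $T_{f^*(\Lambda)}$ to $T_\Lambda$, checks compatibility with the defining relations, observes surjectivity from surjectivity of $f$, and passes to group completions before invoking Theorem~\ref{th relationship of graded K-theory and graded homology}; your remark that the resulting map is only a group homomorphism (the gradings live over $\mathbb{Z}^\ell$ and $\mathbb{Z}^k$ respectively) matches the statement. Part (i) is where you genuinely diverge. The paper quotes the groupoid isomorphism $\mathcal{G}_{f^*(\Lambda)}\cong \mathcal{G}_\Lambda\times\mathbb{Z}^{\ell-k}$ from \cite[Proposition 2.10]{Kumjian-Pask} and then applies Matui's K\"unneth theorem together with $H_0(\mathbb{Z})\cong\mathbb{Z}$ and $\Gamma\otimes\mathbb{Z}\cong\Gamma$, whereas you compute $H_0$ directly from a presentation: $H_0(\mathcal{G}_\Lambda)\cong \mathbb{Z}\Lambda^0/\langle r_{\mathbf{p}}(v)\rangle$ via Proposition~\ref{pro connecting path group with SA of path groupoid}, with surjectivity of $f$ forcing the two relation subgroups to coincide literally. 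Both routes are sound. The K\"unneth route is shorter given the cited black boxes and would immediately handle the higher homology groups $H_n$ for $n\ge 1$, about which your presentation says nothing; your route is more elementary and self-contained, and as a bonus produces a concrete cokernel description of $H_0(\mathcal{G}_\Lambda)$ in terms of the vertex data. Two small points you should make explicit if you write this up: first, the identification of $\image\partial_1$ with $J_0=\langle\lambda-\mu : s(\lambda)=s(\mu)\rangle$ needs the non-graded analogue of \cite[Lemma 5.5]{H-Li} (that $\image\partial_1$ is spanned by $1_{s(U)}-1_{r(U)}$ over \emph{all} compact open bisections $U$) together with the refinement of an arbitrary compact open bisection into a finite disjoint union of basic ones $Z(\lambda,\mu)$ --- both standard for ample groupoids but worth a sentence; second, the passage from $\mathcal{P}(\Lambda)/J_0$ to $\mathbb{Z}\Lambda^0/\langle r_{\mathbf{p}}(v)\rangle$ should be justified by exhibiting the inverse map $\lambda\mapsto s(\lambda)$ and checking it respects the relations (\ref{pathrel}), which is exactly your telescoping identity.
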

\begin{proof}
$(i)$ By \cite[Proposition 2.10]{Kumjian--Pask}, we have $\mathcal{G}_{f^*(\Lambda)}\cong \mathcal{G}_\Lambda\times \mathbb{Z}^{\ell-k}$. Applying the K\"{u}nneth theorem \cite[Theorem 2.4 \& Corollary 2.5]{MatAdv} for products of \'{e}tale groupoids, we obtain \[H_0(\mathcal{G}_{f^*(\Lambda)})\cong H_0(\mathcal{G}_\Lambda\times \mathbb{Z}^{\ell-k})\cong H_0(\mathcal{G}_\Lambda)\otimes H_0(\mathbb{Z})\otimes \underbrace{\cdots}_{\ell-k} \otimes~ H_0(\mathbb{Z}).\] But the integral zeroth homology of $\mathbb{Z}$ is $\mathbb{Z}$ and for any abelian group (equivalently, $\mathbb{Z}$-module) $\Gamma$, $\Gamma\otimes \mathbb{Z}\cong \Gamma$ via the canonical isomorphism $\gamma\otimes n\longmapsto n\cdot \gamma$. In view of these facts, we finally have $H_0(\mathcal{G}_{f^*(\Lambda)})\cong H_0(\mathcal{G}_\Lambda)$.

$(ii)$ Note that the set of objects $f^*(\Lambda)^0$ can be identified with $\Lambda^0$. The homomorphism $f$ clearly extends to a surjective homomorphism $f:\mathbb{Z}^\ell\longrightarrow \mathbb{Z}^k$. Consider the following correspondence 
\begin{align*}
    \rho:T_{f^*(\Lambda)}&\longrightarrow T_\Lambda\\
    v(\N) &\longmapsto v(f(\N)),
\end{align*}
for all $v\in f^*(\Lambda)^0$ and $\N\in \mathbb{Z}^\ell$. We claim that $\rho$, when extended linearly to all elements of $T_{f^*(\Lambda)}$, gives a well-defined monoid homomorphism. It suffices to check that the rule is compatible with the defining relations of the talented monoid. Let $v\in \Lambda^0,\N\in \mathbb{Z}^\ell$ and $\M\in \mathbb{N}^\ell$. Then, in $T_{f^*(\Lambda)}$, we have \[v(\N)=\displaystyle{\sum_{(\alpha,\M)\in vf^*(\Lambda)^\M}} s((\alpha,\M))(\N+\M).\] Now, 
\begin{align*}
\rho\left(\displaystyle{\sum_{(\alpha,\M)\in vf^*(\Lambda)^\M}}s((\alpha,\M))(\N+\M)\right)&=\displaystyle{\sum_{\alpha\in v\Lambda^{f(\M)}}} \rho\left(s(\alpha)(\N+\M)\right)\\
&=\displaystyle{\sum_{\alpha\in v\Lambda^{f(\M)}}} s(\alpha)(f(\N)+f(\M))\\
&=v(f(\N))\\
&=\rho(v(\N)).    
\end{align*}
Hence, our claim is established. Since $f$ is surjective, it follows from the definition that $\rho$ is also surjective. Since the group completion functor preserves epimorphisms, $\rho$ induces a surjective group homomorphism $\tilde{\rho}:K_0^{\gr}(\KP_\mathsf{F}(f^*(\Lambda)))\longrightarrow K_0^{\gr}(\KP_\mathsf{F}(\Lambda))$. This, together with Theorem \ref{th relationship of graded K-theory and graded homology}, finishes the proof.
\end{proof}
\begin{example}\label{ex computing homology of 2-graph via 1-graph}
Let $n\ge 2$ be an integer. Suppose $\Lambda$ is the $2$-graph with $1$-skeleton:
\[
\begin{tikzpicture}[scale=1]

\node[circle,draw,fill=black,inner sep=0.5pt] (p11) at (0, 0) {$.$} 
edge[-latex, blue,thick,loop, out=45, in=135, min distance=60] (p11)
edge[-latex, blue,thick, loop, out=30, in=150, min distance=120] (p11)
edge[-latex, blue,thick, loop, out=25, in=155, min distance=225] (p11)
edge[-latex, red,thick, loop, out=225, in=-45, min distance=60] (p11);
                                                              
\node at (0,1.8) {$\cdot$};
\node at (0,2) {$\cdot$};
\node at (0,2.2) {$\cdot$};
\node at (0,2.4) {$\cdot$};
\node at (0.5,2) {$(n)$};
\node at (0, -0.5) {$v$};

\node at (0.7,1.1) {$f_1$};
\node at (1.2,1.6) {$f_2$};
\node at (2,2.4) {$f_n$};
\node at (0.7,-1) {$h$};

\end{tikzpicture}
\]
and factorization rules: $hf_i=f_i h$ for all $i=1,2,\ldots,n$. We want to compute the zeroth homology of $\mathcal{G}_\Lambda$. For this, we will apply Proposition \ref{pro homology of pullback graph groupoid} $(i)$. Consider the surjective monoid homomorphism $f:\mathbb{N}^2\longrightarrow \mathbb{N}$ defined by $f((a,b)):=a$. We observe that the pullback $2$-graph $f^*(R_n)$ for the rose with $n$ petals
\[
\begin{tikzpicture}[scale=1]

\node[circle,draw,fill=black,inner sep=0.5pt] (p11) at (0, 0) {$.$} 
edge[-latex,thick,loop, out=30, in=100, min distance=50] (p11)
edge[-latex,thick, loop, out=70, in=140, min distance=50] (p11)
edge[-latex,thick, loop, out=110, in=180, min distance=50] (p11)
edge[-latex,thick, loop, out=150, in=220, min distance=50] (p11)
edge[-latex,thick, loop, out=190, in=260, min distance=50] (p11)
edge[-latex,thick, loop, out=230, in=300, min distance=50] (p11);

\node at (0.45,-0.5) {$\cdot$};
\node at (0.65,-0.25) {$\cdot$};
\node at (0.70,0.01) {$\cdot$};
\node at (0.65,0.25) {$\cdot$};
\node at (1.2,0) {$(n)$};
\node at (0.4,0) {$u$};
\node at (-2,0) {$R_n\equiv$};
\end{tikzpicture}
\]
is isomorphic to $\Lambda$. If we name the loops of $R_n$ as $c_1,c_2,\ldots,c_n$ and identify $u$ as $v$, $(u,(0,1))$ as $h$ and $(c_i,(1,0))$ as $f_i$ for each $i=1,2,\ldots,n$, we get a colored graph isomorphism between the $1$-skeletons of $f^*(R_n)$ and $\Lambda$. Moreover, the factorization rules for both these $2$-graphs agree and hence $\Lambda\cong f^*(R_n)$. Since the graph groupoid of $R_n$ is isomorphic to the groupoid arising from the one-sided full shift on a finite alphabet with $n$ symbols, therefore, $H_0(\mathcal{G}_{R_n})\cong \mathbb{Z}_{n-1}$. One can also establish this using the following relationships: $H_0(\mathcal{G}_{R_n})^+\cong \mathcal{C}(\Typ(\mathcal{G}_{R_n}))$ and $\Typ(\mathcal{G}_{R_n})\cong M_{R_n}\cong \mathbb{Z}_{n-1}\cup \{z\}$ (see Remark \ref{rem alternative proof incorporating type semigroup}, \cite[Theorem 7.5]{Bosa} and \cite[Examples 3.2.2 $(i)$]{Abrams-Monograph}). By applying Proposition \ref{pro homology of pullback graph groupoid} $(i)$, we finally obtain $H_0(\mathcal{G}_\Lambda)\cong H_0(\mathcal{G}_{R_n})\cong \mathbb{Z}_{n-1}$.
\end{example}

\section{The dimension group arising from a $k$-graph}\label{sec dimension group}
Throughout this section, we fix $\Lambda$ to be a row-finite $k$-graph without sources such that the set of objects $\Lambda^0$ is finite. For $\N\in \mathbb{N}^k$, we let $A_\N$ be the matrix in $\mathbb{M}_{\Lambda^0}(\mathbb{N})$ such that 
\[A_\N(u,v):=|u\Lambda^\N v|=|\{\lambda\in \Lambda~|~s(\lambda)=v,r(\lambda)=u,d(\lambda)=\N\}|,\]
for all $u,v\in \Lambda^0$ (see \cite{Evans, Kumjian--Pask, PSS}). These matrices are called the \emph{vertex matrices} of $\Lambda$. Using the unique factorization property, it can be shown that $A_\N A_\M=A_{\N+\M}$, and so $A_\N A_\M=A_\M A_\N$ for all $\N,\M\in \mathbb{N}^k$. Moreover, for $\N=(n_1,n_2,\ldots,n_k)$, we can write 
\begin{equation}\label{basemat}
A_\N=A_{\mathbf{e}_1}^{n_1}A_{\mathbf{e}_2}^{n_2}\cdots A_{\mathbf{e}_k}^{n_k}.  
\end{equation}

We now define a directed system of partially ordered groups using the matrices $A_\N$, $\N\in \mathbb{N}^k$. For each $\M\in \mathbb{Z}^k$, write $G_\M:=\mathbb{Z}\Lambda^0$, the free abelian group generated by $\Lambda^0$. Let $\M\le \N\in \mathbb{Z}^k$. Consider the group homomorphisms 
\begin{align*}
    f_{\M,\N}:G_\M &\longrightarrow G_\N\\
    x &\longmapsto xA_{\N-\M}.
\end{align*}
Since $A_0=I_{|\Lambda^0|}$, $(G_\M,f_{\M,\N})$ is a directed system. We define the \emph{dimension group} $H_\Lambda$ of $\Lambda$ to be the direct limit of this directed system, i.e., \[H_\Lambda:=\displaystyle{\varinjlim_{\mathbb{Z}^k}}~ (G_\N,f_{\M,\N}).\]
Another way to realise the dimension group is via an equivalence relation. Define a relation $\sim_\Lambda$ on $\mathbb{Z}\Lambda^0\times \mathbb{Z}^k$ by $(x,\N)\sim_\Lambda (y,\M)$ if $xA_{\mathbf{l}-\N}=yA_{\mathbf{l}-\M}$ for some $\mathbf{l}\in \mathbb{Z}^k$ with $\mathbf{l}\ge \N\vee \M$ or, equivalently, if \[x\left(\displaystyle{\prod_{i=1}^{k}} A_{\mathbf{e}_i}^{l_i-n_i}\right)=y\left(\displaystyle{\prod_{i=1}^{k}} A_{\mathbf{e}_i}^{l_i-m_i}\right).\] It is easy to check that $\sim_\Lambda$ is an equivalence relation and $H_\Lambda=(\mathbb{Z}\Lambda^0\times \mathbb{Z}^k)/\sim_\Lambda$.

The group $H_\Lambda$ is partially ordered with respect to the usual coordinatewise ordering of $\mathbb{Z}\Lambda^0$ and the positive cone is
\[H_\Lambda^+:=\{[x,\N]\in H_\Lambda~|~x\in \mathbb{N}\Lambda^0\}=\displaystyle{\varinjlim_{\mathbb{Z}^k}}~ (\mathbb{N}\Lambda^0,f_{\M,\N}).\]
We call $H_\Lambda^+$, the \emph{dimension monoid} of $\Lambda$. The addition in the group $H_\Lambda$ (and hence, in the monoid $H_\Lambda^+$) is defined as \[[x,\N]+[y,\M]:=[xA_{(\N\vee \M) -\N}+yA_{(\N\vee \M) -\M},\N\vee \M].\] 

For each $i=1,2,\ldots,k$, the map 
\begin{align}\label{shifts}
\delta_i:H_\Lambda &\longrightarrow H_\Lambda\\
\nonumber [x,\N] &\longmapsto [xA_{\mathbf{e}_i},\N]=[x,\N-\mathbf{e}_i],
\end{align}
is an automorphism with inverse $\delta_i^{-1}:H_\Lambda\longrightarrow H_\Lambda$; $[x,\N]\longmapsto [x,\N+\mathbf{e}_i]$. These automorphisms are mutually commuting and preserve the positive cone $H_\Lambda^+$. As a consequence, $H_\Lambda^+$ becomes a $\mathbb{Z}^k$-monoid with respect to the following action:
\begin{equation}\label{acteqn}
^\M[x,\N]:=(\delta_1^{-m_1}\circ \delta_2^{-m_2}\circ \cdots \circ \delta_{k}^{-m_k})([x,\N])=[x,\N+\M], 
\end{equation}
for all $\M=(m_1,m_2,\ldots,m_k)\in \mathbb{N}^k$ and $[x,\N]\in H_\Lambda^+$. 

In \cite[Proposition 3.14]{HMPS}, the talented monoid $T_\Lambda$ was shown to be the direct limit of the directed system $(\mathbb{N}\Lambda^0,\phi_{\M,\N})$, where the monoid homomorphisms $\phi_{\M,\N}$ are defined as $\phi_{\M,\N}(v):=\displaystyle{\sum_{w\in \Lambda^0}} |v\Lambda^{\N-\M}w| w$. Now, for any $\M\le \N\in \mathbb{Z}^k$, we observe that \[f_{\M,\N}(\epsilon_v)(w)=\displaystyle{\sum_{u\in \Lambda^0}} \epsilon_v(u)A_{\N-\M}(u,w)=A_{\N-\M}(v,w)=|v\Lambda^{\N-\M}w|=\phi_{\M,\N}(\epsilon_v)(w),\] for all $v,w\in \Lambda^0$. This shows that $f_{\M,\N}=\phi_{\M,\N}$ in $\End(\mathbb{N}\Lambda^0)$ and consequently, the talented monoid $T_\Lambda$ and the positive cone $H_\Lambda^+$ of the dimension group of $\Lambda$ are isomorphic as monoids. In fact, we exhibit an explicit $\mathbb{Z}^k$-monoid isomorphism between these two in the following theorem.

\begin{thm}\label{th isomorphism of talented monoid and dimension semigroup}
Let $\Lambda$ be a row-finite $k$-graph with no sources such that $|\Lambda^0|< \infty$. Consider an enumeration of the set of objects, $\Lambda^0=\{v_1,v_2,\ldots,v_{|\Lambda^0|}\}$. Suppose $\psi:T_\Lambda\longrightarrow H_\Lambda^+$ is a map defined on generators by
\[v_i(\N)\longmapsto [\epsilon_i,\N],\]
and then extended linearly. Then, $\psi$ is a $\mathbb{Z}^k$-monoid isomorphism. Consequently, there is an order-preserving $\mathbb{Z}[\mathbb{Z}^k]$-module isomorphism $\gamma_\Lambda$ between the graded Grothendieck group $K_0^{\gr}(\KP_{\mathsf{F}}(\Lambda))$ and the dimension group $H_\Lambda$.
\end{thm}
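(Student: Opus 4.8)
The plan is to recognise $\psi$ as the canonical isomorphism forced by the equality of two directed systems, and then to transport it to the level of Grothendieck groups. The paragraph preceding the statement has already established the key identity $f_{\M,\N}=\phi_{\M,\N}$ in $\End(\mathbb{N}\Lambda^0)$ for all $\M\le\N$ in $\mathbb{Z}^k$. Hence the directed system $(\mathbb{N}\Lambda^0,\phi_{\M,\N})$, whose direct limit is $T_\Lambda$ by \cite[Proposition 3.14]{HMPS}, is \emph{literally} the same directed system $(\mathbb{N}\Lambda^0,f_{\M,\N})$ whose direct limit is $H_\Lambda^+$. Writing $\iota_\N\colon\mathbb{N}\Lambda^0\to T_\Lambda$, $\epsilon_v\mapsto v(\N)$, and $j_\N\colon\mathbb{N}\Lambda^0\to H_\Lambda^+$, $\epsilon_v\mapsto[\epsilon_v,\N]$, for the canonical maps into the two limits, the universal property yields a unique monoid isomorphism $\psi\colon T_\Lambda\to H_\Lambda^+$ with $\psi\circ\iota_\N=j_\N$ for every $\N$; on generators this reads $\psi(v_i(\N))=[\epsilon_i,\N]$, exactly the map in the statement. (That $\iota_\N(\epsilon_v)=v(\N)$ is the correct labelling is checked against the defining relation of $T_\Lambda$: compatibility $\iota_\M=\iota_{\M+\mathbf{e}_i}\circ\phi_{\M,\M+\mathbf{e}_i}$ unwinds precisely to $v(\M)=\sum_{\alpha\in v\Lambda^{\mathbf{e}_i}}s(\alpha)(\M+\mathbf{e}_i)$.)

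If a hands-on verification is preferred over the universal property, well-definedness of $\psi$ reduces to showing it respects the defining relation of $T_\Lambda$. I would compute
\[
\psi\Big(\textstyle\sum_{\alpha\in v\Lambda^{\mathbf{e}_i}}s(\alpha)(\N+\mathbf{e}_i)\Big)
=\sum_{w\in\Lambda^0}|v\Lambda^{\mathbf{e}_i}w|\,[\epsilon_w,\N+\mathbf{e}_i]
=[\epsilon_v A_{\mathbf{e}_i},\,\N+\mathbf{e}_i],
\]
using that addition at a common level $\N+\mathbf{e}_i$ is coordinatewise and that $(\epsilon_v A_{\mathbf{e}_i})(w)=|v\Lambda^{\mathbf{e}_i}w|$; the shift automorphism $(\ref{shifts})$ then gives $[\epsilon_v A_{\mathbf{e}_i},\N+\mathbf{e}_i]=\delta_i([\epsilon_v,\N+\mathbf{e}_i])=[\epsilon_v,\N]$, so the relation is preserved. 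Injectivity and surjectivity follow from the evident inverse $[x,\N]\mapsto\sum_i x_i\,v_i(\N)$ for $x=\sum_i x_i\epsilon_i\in\mathbb{N}\Lambda^0$, whose compatibility with $\sim_\Lambda$ is the same computation read backwards. For $\mathbb{Z}^k$-equivariance I would simply compare the two actions: the action on $T_\Lambda$ inherited from the skew product $\overline{\Lambda}$ is ${}^\M v(\N)=v(\N+\M)$, while $(\ref{acteqn})$ gives ${}^\M[x,\N]=[x,\N+\M]$, so $\psi({}^\M v(\N))=[\epsilon_v,\N+\M]={}^\M\psi(v(\N))$, making $\psi$ a $\mathbb{Z}^k$-monoid isomorphism.

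For the consequence I would apply the group-completion functor to $\psi$. Since group completion sends $\mathbb{Z}^k$-monoid isomorphisms to $\mathbb{Z}[\mathbb{Z}^k]$-module isomorphisms and carries positive cones to positive cones, it produces an order-preserving $\mathbb{Z}[\mathbb{Z}^k]$-module isomorphism $G(T_\Lambda)\cong G(H_\Lambda^+)$. On the left, $G(T_\Lambda)=K_0^{\gr}(\KP_\K(\Lambda))$ as recorded in the preliminaries (via $\mathcal{V}^{\gr}(\KP_\K(\Lambda))\cong T_\Lambda$ and \cite[Remark 3.15]{HMPS}). On the right, $H_\Lambda$ is already an abelian group whose positive cone $H_\Lambda^+$ is cancellative and generates $H_\Lambda$: any $[x,\N]$ with $x=x_+-x_-$, $x_\pm\in\mathbb{N}\Lambda^0$, is the difference $[x_+,\N]-[x_-,\N]$ of elements of $H_\Lambda^+$, so $G(H_\Lambda^+)\cong H_\Lambda$. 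Composing yields the desired order-preserving $\mathbb{Z}[\mathbb{Z}^k]$-module isomorphism $\gamma_\Lambda$.

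The main obstacle is purely bookkeeping: one must keep the indexing and the direction of the connecting maps consistent between the two direct-limit descriptions and firmly pin down that the canonical map into $T_\Lambda$ sends $\epsilon_v$ at level $\N$ to $v(\N)$ (rather than to $v(-\N)$), and likewise align the two $\mathbb{Z}^k$-action conventions. Once these identifications are fixed the argument is formal, since the essential content, the equality $f_{\M,\N}=\phi_{\M,\N}$, was already dispatched before the statement.
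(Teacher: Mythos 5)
Your proposal is correct. The route you take for the main statement is genuinely different in packaging from the paper's: you invoke the universal property of the direct limit, noting that once $f_{\M,\N}=\phi_{\M,\N}$ is known (established in the paragraph preceding the statement) the colimit presentations of $T_\Lambda$ and of $H_\Lambda^+$ are presentations of the \emph{same} directed system, so the comparison map is automatically an isomorphism; the only remaining content is to certify that the structure map into $T_\Lambda$ sends $\epsilon_v$ at level $\N$ to $v(\N)$, which you check against the defining relations. The paper instead argues entirely by hand: well-definedness by the same relation computation you display (the paper does it for general $\M$, though as you note the $\mathbf{e}_i$-relations generate), injectivity by extracting the matrix identity $xA_{\mathbf{l}-\N}=yA_{\mathbf{l}-\M}$ from $\psi(x)=\psi(y)$ and re-expanding inside $T_\Lambda$, and surjectivity by inspection. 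Your fallback hands-on argument is essentially the paper's proof with injectivity repackaged as well-definedness of the explicit inverse $[x,\N]\mapsto\sum_i x_i\,v_i(\N)$ — the computation is identical, and your version has the minor advantage of not needing the normal form $x=\sum_i x_i v_i(\N)$ at a common level that the paper uses implicitly. What the abstract route buys is the observation that the theorem is formal once $f=\phi$ is in hand; what the paper's route buys is independence from the precise form of \cite[Proposition 3.14]{HMPS} (your cocone check shows the maps $\epsilon_v\mapsto v(\N)$ are compatible with the connecting maps, but strictly one still needs that cited result to assert they are the colimit structure maps rather than merely some cocone — your explicit inverse closes this in any case). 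Your treatment of the consequence is slightly more careful than the paper's: you make explicit that $H_\Lambda^+$ is cancellative and generates $H_\Lambda$, so that $G(H_\Lambda^+)\cong H_\Lambda$, an identification the paper leaves implicit.
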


\begin{proof}
First, we show that $\psi$ is well-defined. It suffices to show that $\psi$ is compatible with the defining relations of $T_\Lambda$. Let $i\in \{1,2,\ldots,|\Lambda^0|\}$, $\N\in \mathbb{Z}^k$ and $\M\in \mathbb{N}^k$. Then, in $T_\Lambda$, we have \[v_i(\N)=\displaystyle{\sum_{\alpha\in v_i\Lambda^\M}} s(\alpha)(\N+\M).\]
Now, 
\begin{align*}
\psi\left(\displaystyle{\sum_{\alpha\in v_i\Lambda^\M}} s(\alpha)(\N+\M)\right) &=\psi\left(\displaystyle{\sum_{j=1}^{|\Lambda^0|}} |v_i\Lambda^m v_j|v_j(\N+\M)\right)\\
&=\displaystyle{\sum_{j=1}^{|\Lambda^0|}} |v_i\Lambda^m v_j|[\epsilon_j,\N+\M]\\
&=\displaystyle{\sum_{j=1}^{|\Lambda^0|}} [|v_i\Lambda^m v_j|\epsilon_j,\N+\M]\\
&=[\displaystyle{\sum_{j=1}^{|\Lambda^0|}}|v_i\Lambda^m v_j|\epsilon_j,\N+\M]=[\epsilon_iA_\M,\N+\M]=[\epsilon_i,\N]=\psi(v_i(\N)).
\end{align*}
Hence, $\psi$ is a well-defined monoid homomorphism. To show that $\psi$ is injective, choose any $x,y\in T_\Lambda$ such that $\psi(x)=\psi(y)$. Since $\Lambda$ has no sources, we can write $x=\displaystyle{\sum_{i=1}^{|\Lambda^0|}} x_i v_i(\N)$ and similarly, $y=\displaystyle{\sum_{i=1}^{|\Lambda^0|}} y_i v_i(\M)$, where $\N,\M\in \mathbb{Z}^k$ and $x_i,y_i\in \mathbb{N}$ for all $i=1,2,\ldots,|\Lambda^0|$. Then, $\psi(x)=\psi(y)$ implies that there exists $\mathbf{l}\in \mathbb{Z}^k$ with $\mathbf{l}\ge \N\vee \M$ such that
\begin{equation}\label{mateqn}
(x_1,x_2,\ldots,x_{|\Lambda^0|})A_{\mathbf{l}-\N}=(y_1,y_2,\ldots,y_{|\Lambda^0|})A_{\mathbf{l}-\M}.   
\end{equation}
Now using the relations of $T_\Lambda$ and Equation (\ref{mateqn}), we have
\begin{align*}
    x=\displaystyle{\sum_{i=1}^{|\Lambda^0|}} x_i v_i(\N) 
    &=\displaystyle{\sum_{i=1}^{|\Lambda^0|}} x_i\left(\displaystyle{\sum_{j=1}^{|\Lambda^0|}}|v_i\Lambda^{\mathbf{l}-\N}v_j|v_j(\mathbf{l})\right)\\    &=\displaystyle{\sum_{j=1}^{|\Lambda^0|}}\left(\displaystyle{\sum_{i=1}^{|\Lambda^0|}}|v_i\Lambda^{\mathbf{l}-\N}v_j|x_i\right) v_j(\mathbf{l})\\    &=\displaystyle{\sum_{j=1}^{|\Lambda^0|}}\left(\displaystyle{\sum_{i=1}^{|\Lambda^0|}}|v_i\Lambda^{\mathbf{l}-\M}v_j|y_i\right) v_j(\mathbf{l})=\displaystyle{\sum_{i=1}^{|\Lambda^0|}} y_i\left(\displaystyle{\sum_{j=1}^{|\Lambda^0|}}|v_i\Lambda^{\mathbf{l}-\M}v_j|v_j(\mathbf{l})\right)=\displaystyle{\sum_{i=1}^{|\Lambda^0|}} y_i v_i(\M)=y.
\end{align*}
By definition $\psi$ is surjective. Again, it is easy to see that $\psi$ respects the $\mathbb{Z}^k$-action on the two monoids and this completes the proof of the first part. For the remaining part, let $\gamma_\Lambda:G(T_\Lambda)\longrightarrow G(H_\Lambda^+)$ be the map induced by $\psi$ under the group completion functor. The result now follows immediately in view of \cite[Remark 3.15]{HMPS}. 
\end{proof}

\begin{rmk}\label{rem Connecting K-theory of Kp-algebra and C*-algebra}
The directed system we have considered is the same as that of Evans \cite{Evans}. In \cite[Lemma 3.9]{Evans}, he established that $K_0(C^*(\Lambda\times_d \ZZ^k))\cong \displaystyle{\varinjlim_{\mathbb{Z}^k}}~ (G_\N,f_{\M,\N})$. Relating this with Theorem \ref{th isomorphism of talented monoid and dimension semigroup}, one immediately obtains $K_0^{\gr}(\KP_\mathsf{F}(\Lambda))\cong K_0(C^*(\Lambda\times_d \ZZ^k))$. 
\end{rmk}

Theorem \ref{th isomorphism of talented monoid and dimension semigroup} sets up the intrinsic connection between the graded $K$-theory and dimension group on the level of $k$-graphs and Kumjian--Pask algebras. Via the isomorphism $\gamma_\Lambda$, we can conveniently switch between these two; this is going to be a key tool in our investigation. 

\begin{lem}\label{lem the connecting matrix R}
Let $\Lambda$ and $\Omega$ be row-finite $k$-graphs without sources such that both $\Lambda^0,\Omega^0$ are finite. For each $\N\in \mathbb{N}^k$, suppose $A_\N\in \mathbb{M}_{\Lambda^0}(\mathbb{N})$ and $B_\N\in \mathbb{M}_{\Omega^0}(\mathbb{N})$ are matrices such that $A_\N(u,v):=|u\Lambda^\N v|$ for $u,v\in \Lambda^0$ and $B_\N(x,y):=|x\Omega^\N y|$ for $x,y\in \Omega^0$. 

$(i)$ Suppose there exists a matrix $R\in \mathbb{M}_{\Lambda^0\times \Omega^0}(\mathbb{N})$ such that $A_{\mathbf{e}_i}R=RB_{\mathbf{e}_i}$ for all $i=1,2,\ldots,k$. Then, the map
\begin{align*}
    R:H_\Lambda &\longrightarrow H_\Omega\\
        [x,\N] &\longmapsto [xR,\N]
\end{align*}
is an order-preserving $\mathbb{Z}[x_1^{\pm 1},x_2^{\pm 1},\ldots,x_k^{\pm 1}]$-module homomorphism. Consequently, there exists a unique order-preserving $\mathbb{Z}[x_1^{\pm 1},x_2^{\pm 1},\ldots,x_k^{\pm 1}]$-module homomorphism $\mathfrak{h}_R:K_0^{\gr}(\KP_{\mathsf{F}}(\Lambda))\longrightarrow K_0^{\gr}(\KP_{\mathsf{F}}(\Omega))$ such that the diagram
\begin{equation}\label{comd1}
\begin{tikzcd}[row sep=huge, column sep=huge]
		K_0^{\gr}(\KP_\mathsf{F}(\Lambda)) \arrow [r, "\mathfrak{h}_R"] \arrow [d, "\gamma_\Lambda"'] & K_0^{\gr}(\KP_\mathsf{F}(\Omega)) \arrow [d, "\gamma_\Omega"] \\
		H_\Lambda \arrow [r, "R"] & H_\Omega
\end{tikzcd}    
\end{equation}
commutes.

$(ii)$ If there exists an order-preserving $\mathbb{Z}[x_1^{\pm 1},x_2^{\pm 1},\ldots,x_k^{\pm 1}]$-module homomorphism \[\mathfrak{h}:K_0^{\gr}(\KP_{\mathsf{F}}(\Lambda))\longrightarrow K_0^{\gr}(\KP_{\mathsf{F}}(\Omega)),\] then there is a matrix $R\in \mathbb{M}_{\Lambda^0\times \Omega^0} (\mathbb{N})$ and integers $r_1,r_2,\ldots,r_k\in \mathbb{Z}$ such that $A_{\mathbf{e}_i}R=RB_{\mathbf{e}_i}$ for all $i=1,2,\ldots,k$ and \[\mathfrak{h}(w)=~^{x_1^{r_1}x_2^{r_2}\cdots x_k^{r_k}} \mathfrak{h}_R(w),\] for all $w\in K_0^{\gr}(\KP_\mathsf{F}(\Lambda))$.
\end{lem}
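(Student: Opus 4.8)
The plan is to transport everything through the isomorphisms $\gamma_\Lambda$ and $\gamma_\Omega$ of Theorem \ref{th isomorphism of talented monoid and dimension semigroup} and work entirely inside the dimension groups, where the $\mathbb{Z}^k$-action is the level shift $^\M[x,\N]=[x,\N+\M]$ and right multiplication by a vertex matrix implements a change of level.

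For part $(i)$, I would first upgrade the hypothesis $A_{\mathbf{e}_i}R=RB_{\mathbf{e}_i}$ to $A_\N R=RB_\N$ for every $\N\in\mathbb{N}^k$: writing $A_\N=A_{\mathbf{e}_1}^{n_1}\cdots A_{\mathbf{e}_k}^{n_k}$ as in (\ref{basemat}) and pushing $R$ through one generator at a time, using commutativity of the $A_{\mathbf{e}_i}$'s and of the $B_{\mathbf{e}_i}$'s. Well-definedness of $[x,\N]\mapsto[xR,\N]$ then follows by right-multiplying the defining relation of $\sim_\Lambda$ by $R$: if $xA_{\mathbf{l}-\N}=yA_{\mathbf{l}-\M}$ then $xRB_{\mathbf{l}-\N}=yRB_{\mathbf{l}-\M}$, so $(xR,\N)\sim_\Omega(yR,\M)$. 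The map is additive, sends $\mathbb{N}\Lambda^0$ into $\mathbb{N}\Omega^0$ because $R$ has non-negative entries (hence is order-preserving), and commutes with the level shift (hence is a $\mathbb{Z}[x_1^{\pm1},\dots,x_k^{\pm1}]$-module homomorphism). Finally I set $\mathfrak{h}_R:=\gamma_\Omega^{-1}\circ R\circ\gamma_\Lambda$; this is the unique map making (\ref{comd1}) commute and inherits order-preservation and $\mathbb{Z}[\mathbb{Z}^k]$-linearity from its three factors.

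For part $(ii)$, conjugate to obtain $h:=\gamma_\Omega\circ\mathfrak{h}\circ\gamma_\Lambda^{-1}:H_\Lambda\to H_\Omega$, an order-preserving $\mathbb{Z}[\mathbb{Z}^k]$-module homomorphism; it suffices to produce $R$ and $\mathbf{r}$ with $h={}^{\mathbf{r}}R$ on the generators $[\epsilon_v,0]$, $v\in\Lambda^0$, since these generate $H_\Lambda$ over $\mathbb{Z}[\mathbb{Z}^k]$. Because $h$ is order-preserving, each $h([\epsilon_v,0])$ lies in $H_\Omega^+$ and hence has a representative $[z_v,\mathbf{m}_v]$ with $z_v\in\mathbb{N}\Omega^0$; bringing all of these to a common level $\mathbf{r}=\bigvee_v\mathbf{m}_v$ via $[z,\M]=[zB_{\mathbf{r}-\M},\mathbf{r}]$, I may assume $h([\epsilon_v,0])=[\tilde z_v,\mathbf{r}]$ and define $R\in\mathbb{M}_{\Lambda^0\times\Omega^0}(\mathbb{N})$ to have $v$-th row $\tilde z_v$. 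The matrix relation is then forced by the single dimension-group identity $[\epsilon_v,0]=[\epsilon_v A_{\mathbf{e}_i},\mathbf{e}_i]$: applying $h$ and using module-linearity on the two sides yields $[\epsilon_v RB_{\mathbf{e}_i},\mathbf{r}+\mathbf{e}_i]=[\epsilon_v A_{\mathbf{e}_i}R,\mathbf{r}+\mathbf{e}_i]$ in $H_\Omega$.

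The one genuine obstacle is that equality at a fixed level in the direct limit $H_\Omega$ is only \emph{eventual} equality of vectors: the last identity gives $\epsilon_v(A_{\mathbf{e}_i}R-RB_{\mathbf{e}_i})B_{\mathbf{l}_{i,v}}=0$ for some $\mathbf{l}_{i,v}$, not $A_{\mathbf{e}_i}R=RB_{\mathbf{e}_i}$ on the nose. I would clear this by stabilizing: since $\Lambda^0$, $\Omega^0$ and $k$ are all finite, choose $\M\ge\bigvee_{i,v}\mathbf{l}_{i,v}$, so that $(A_{\mathbf{e}_i}R-RB_{\mathbf{e}_i})B_\M=0$ for every $i$, i.e.\ $A_{\mathbf{e}_i}(RB_\M)=(RB_\M)B_{\mathbf{e}_i}$. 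Replacing $R$ by $R':=RB_\M$ and $\mathbf{r}$ by $\mathbf{r}':=\mathbf{r}-\M$ keeps $R'$ a non-negative matrix, does not change the represented map (because $[\tilde z_v B_\M,\mathbf{r}-\M]=[\tilde z_v,\mathbf{r}]$), and now satisfies the exact relation $A_{\mathbf{e}_i}R'=R'B_{\mathbf{e}_i}$. Setting $(r_1,\dots,r_k):=\mathbf{r}'$ and reading the identity $h={}^{\mathbf{r}'}R'$ back through $\gamma_\Lambda,\gamma_\Omega$ (which commute with the shift) gives $\mathfrak{h}(w)={}^{x_1^{r_1}\cdots x_k^{r_k}}\mathfrak{h}_{R'}(w)$ for all $w$, as required.
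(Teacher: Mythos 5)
Your proposal follows essentially the same route as the paper for both parts: upgrade the commutation to $A_\N R=RB_\N$, transport through $\gamma_\Lambda,\gamma_\Omega$, read off a non-negative matrix from representatives of $\mathfrak{h}([\epsilon_v,0])$ brought to a common level, and then stabilize by a further right multiplication by some $B_\M$ to turn the ``eventual'' matrix identity into an exact one --- this last point, which you flag as the one genuine obstacle, is exactly the step the paper handles by passing from $R'$ to $R:=R'B_{\mathbf{r}}$. The only flaw is a sign slip in your final bookkeeping: in $H_\Omega$ one has $[z,\M]=[zB_{\mathbf{p}},\M+\mathbf{p}]$ (multiplying by a vertex matrix \emph{raises} the level), so $[\tilde z_vB_\M,\mathbf{r}-\M]\neq[\tilde z_v,\mathbf{r}]$ in general; the correct identity is $[\tilde z_vB_\M,\mathbf{r}+\M]=[\tilde z_v,\mathbf{r}]$, and the new exponent should be $\mathbf{r}':=\mathbf{r}+\M$ rather than $\mathbf{r}-\M$. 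Since the statement allows arbitrary integers $r_1,\dots,r_k$, this is a trivial fix and does not affect the structure of the argument.
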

\begin{proof}
$(i)$ It is easy to show that $A_{\mathbf{e}_i}^n R=RB_{\mathbf{e}_i}^n$ for all $N\in \mathbb{N}$. Using (\ref{basemat}), it follows that
\begin{equation}\label{commeqn}
A_\N R=RB_\N
\end{equation}
for any $\N\in \mathbb{N}^k$. Suppose $[x,\N]=[y,\M]$ in $H_\Lambda$. Then, there is some $\mathbf{l}\in \mathbb{Z}^k$ with $\mathbf{l}\ge \N\vee \M$, such that $xA_{\mathbf{l}-\N}=yA_{\mathbf{l}-\M}$. Using this and Equation (\ref{commeqn}), we have \[(xR)B_{\mathbf{l}-\N}=(xA_{\mathbf{l}-\N})R=(yA_{\mathbf{l}-\M})R=(yR)B_{\mathbf{l}-\M}.\]
So $[xR,\N]=[yR,\M]$, and hence $R$ is well-defined. A routine computation using (\ref{commeqn}), yields that it is a group homomorphism. It is also order-preserving since the matrix $R$ is a non-negative integral matrix. Note that the $\mathbb{Z}[x_1^{\pm 1},x_2^{\pm 1},\ldots,x_k^{\pm 1}]$-module structure on the dimension groups is coming from the $\mathbb{Z}^k$-monoid actions on the respective positive cones (see (\ref{acteqn})). So
\[^{x_1^{m_1}x_2^{m_2}\cdots x_k^{m_k}}[xR,\N]=[xR,\N+\displaystyle{\sum_{i=1}^{k}} m_i\mathbf{e}_i]=R([x,\N+\displaystyle{\sum_{i=1}^{k}} m_i\mathbf{e}_i])=R(^{x_1^{m_1}x_2^{m_2}\cdots x_k^{m_k}}[x,\N]).\]
This shows that $R$ is a module homomorphism. Now, define a map
\begin{align*}
    \mathfrak{h}_R:K_0^{\gr}(\KP_{\mathsf{F}}(\Lambda)) &\longrightarrow K_0^{\gr}(\KP_{\mathsf{F}}(\Omega))\\
    w & \longmapsto (\gamma_\Omega^{-1}\circ R \circ \gamma_\Lambda)(w)
\end{align*}
for all $w\in K_0^{\gr}(\KP_{\mathsf{F}}(\Lambda))$. Clearly, $\mathfrak{h}_R$ is the unique $\mathbb{Z}[x_1^{\pm 1},x_2^{\pm 1},\ldots,x_k^{\pm 1}]$-module homomorphism making diagram (\ref{comd1}) commute.

$(ii)$ Suppose we have an order-preserving $\mathbb{Z}[x_1^{\pm 1},x_2^{\pm 1},\ldots,x_k^{\pm 1}]$-module homomorphism \[\mathfrak{h}:K_0^{\gr}(\KP_{\mathsf{F}}(\Lambda))\longrightarrow K_0^{\gr}(\KP_{\mathsf{F}}(\Omega)).\] Then we can obtain an order-preserving $\mathbb{Z}[\mathbb{Z}^k]$-module homomorphism $\theta: H_\Lambda\longrightarrow H_\Omega$ such that the diagram
\begin{center}
\begin{tikzcd}[row sep=huge, column sep=huge]
		K_0^{\gr}(\KP_\mathsf{F}(\Lambda)) \arrow [r, "\mathfrak{h}"] \arrow [d, "\gamma_\Lambda"'] & K_0^{\gr}(\KP_\mathsf{F}(\Omega)) \arrow [d, "\gamma_\Omega"] \\
		H_\Lambda \arrow [r, "\theta"] & H_\Omega
\end{tikzcd} 
\end{center}
commutes, namely $\theta=\gamma_\Omega \circ \mathfrak{h} \circ \gamma_\Lambda^{-1}$.

To show $(ii)$, it suffices to find a matrix $R\in \mathbb{M}_{\Lambda^0\times \Omega^0}(\mathbb{N})$ and integers $r_1,r_2,\ldots,r_k$ such that \[A_{\mathbf{e}_i}R=RB_{\mathbf{e}_i}\] for all $i=1,2,\ldots,k$ and \[\theta([x,\N])=~^{x_1^{r_1}x_2^{r_2}\cdots x_k^{r_k}}[xR,\N]\] for all $[x,\N]\in H_\Lambda$. 

Let $\Lambda^0=\{v_1,v_2,\ldots,v_{|\Lambda^0|}\}$ and $[y^{(i)},\mathbf{l}^{(i)}]:=\theta([\epsilon_i,0])$ for all $i=1,2,\ldots,|\Lambda^0|$. Set $\M:=\displaystyle{\bigvee_{i=1}^{|\Lambda^0|}} \mathbf{l}^{(i)}$. Then, \[[y^{(i)},\mathbf{l}^{(i)}]=[y^{(i)}B_{\M-\mathbf{l}^{(i)}},\M]\] for all $i=1,2,\ldots,|\Lambda^0|$. Since $\theta$ is order-preserving, each $y^{(i)}\in \mathbb{N}^{\Omega^0}$ and consequently, $y^{(i)}B_{\M-\mathbf{l}^{(i)}}\in \mathbb{N}^{\Omega^0}$. Now, we define a matrix $R'\in \mathbb{M}_{\Lambda^0\times \Omega^0}(\mathbb{N})$ such that the $i^{\text{th}}$ row of $R'$ is $y^{(i)}B_{\M-\mathbf{l}^{(i)}}$ for every $1\le i \le |\Lambda^0|$. Let $[x,\N]\in H_\Lambda$. Since $\theta$ is a $\mathbb{Z}[\mathbb{Z}^k]$-module homomorphism, we have
\begin{align*}
\theta([x,\N])=\theta([\displaystyle{\sum_{i=1}^{|\Lambda^0|}} x_i\epsilon_i,\N]) &=\displaystyle{\sum_{i=1}^{|\Lambda^0|}} \theta([x_i\epsilon_i,\N])\\
&=~^\N\left(\displaystyle{\sum_{i=1}^{|\Lambda^0|}} x_i\theta([\epsilon_i,0])\right)\\
&=~^\N(\displaystyle{\sum_{i=1}^{|\Lambda^0|}} x_i[y^{(i)}B_{\M-\mathbf{l}^{(i)}},\M])\\
&=~^\N[\displaystyle{\sum_{i=1}^{|\Lambda^0|}} x_iy^{(i)}B_{\M-\mathbf{l}^{(i)}},\M]=~^\N[xR',\M]=[xR',\M+\N].
\end{align*}

Now. for any $i\in \{1,2,\ldots,k\}$ and $j\in \{1,2,\ldots,|\Lambda^0|\}$, we have 
\[[\epsilon_jA_{\mathbf{e}_i}R',\M]=\theta([\epsilon_jA_{\mathbf{e}_i},0])=\theta([\epsilon_j,-\mathbf{e}_i])=[\epsilon_jR',\M-\mathbf{e}_i]=[\epsilon_jR' B_{\mathbf{e}_i},\M].\]
Therefore, there is some $\mathbf{r}(i,j)\in \mathbb{N}^k$ such that $(\epsilon_jA_{\mathbf{e}_i}R')B_{\mathbf{r}(i,j)}=(\epsilon_jR' B_{\mathbf{e}_i})B_{\mathbf{r}(i,j)}$. Set $\mathbf{r}=\displaystyle{\bigvee_{\substack{1\le i\le k\\1\le j \le |\Lambda^0|}}} \mathbf{r}(i,j)$. Then, $\epsilon_jA_{\mathbf{e}_i}R'B_\mathbf{r}=\epsilon_jR' B_{\mathbf{e}_i}B_\mathbf{r}$ for all $1\le i\le k$ and $1\le j\le |\Lambda^0|$. Finally, we define the matrix $R:=R'B_\mathbf{r}$. So for any $1\le i\le k$, $\epsilon_jA_{\mathbf{e}_i}R=\epsilon_jRB_{\mathbf{e}_i}$ for all $j=1,2,\ldots,|\Lambda^0|$. Thus, $A_{\mathbf{e}_i}R=RB_{\mathbf{e}_i}$ for all $i=1,2,\ldots,k$. Also, \[\theta([x,\N])=[xR',\M+\N]=[xR'B_\mathbf{r},\M+\N+\mathbf{r}]=[xR,\M+\N+\mathbf{r}]=~^{x_1^{m_1+r_1}x_2^{m_2+r_2}\cdots x_k^{m_k+r_k}}[xR,\N],\] for all $[x,\N]\in H_\Lambda$ and we are done.
\end{proof}

In the following result, we connect isomorphism between dimension groups of two $k$-graphs with a certain relation between their vertex matrices and also with isomorphism between the graded homologies of their respective path groupoids.

\begin{thm}\label{th isomorphism of TM in terms of matrices}
Let $\Lambda$ and $\Omega$ be row-finite $k$-graphs without sources such that both $\Lambda^0,\Omega^0$ are finite. For each $\N\in \mathbb{N}^k$, let $A_\N\in \mathbb{M}_{\Lambda^0}(\mathbb{N})$ and $B_\N\in \mathbb{M}_{\Omega^0}(\mathbb{N})$ be the matrices given by $A_\N(u,v):=|u\Lambda^\N v|$ for $u,v\in \Lambda^0$ and $B_\N(x,y):=|x\Omega^\N y|$ for $x,y\in \Omega^0$. Then, the following are equivalent.

$(i)$ There exist $R\in \mathbb{M}_{\Lambda^0\times \Omega^0}(\mathbb{N})$, $S\in \mathbb{M}_{\Omega^0\times \Lambda^0}(\mathbb{N})$ and $\mathbf{p}\in \mathbb{N}^k$ such that \[A_\mathbf{p}=RS,~~~~~~~~~~~~~~~~~~~~B_\mathbf{p}=SR\] and \[A_{\mathbf{e}_i}R=RB_{\mathbf{e}_i},~~~~~~~~~~~~~~~~~~~~~~~~~~~~ B_{\mathbf{e}_i}S=SA_{\mathbf{e}_i}\] for all $i=1,2,\ldots,k$.

$(ii)$ There exists a group isomorphism $\phi: H_\Lambda \longrightarrow H_\Omega$ such that $\phi(H_\Lambda^+)=H_\Omega^+$ and $\phi \circ \delta_i^\Lambda=\delta_i^\Omega \circ \phi$ for all $i=1,2,\ldots,k$, where $\delta_i^\Lambda$, $\delta_i^\Omega$ are the respective automorphisms of $H_\Lambda$ and $H_\Omega$ described in \textup{(\ref{shifts})}.

$(iii)$ The talented monoids $T_\Lambda$ and $T_\Omega$ are isomorphic as $\mathbb{Z}^k$-monoids.

$(iv)$ There is a $\mathbb{Z}[\mathbb{Z}^k]$-module isomorphism $\varphi:H_0^{\gr}(\mathcal{G}_\Lambda)\longrightarrow H_0^{\gr}(\mathcal{G}_\Omega)$ such that $\varphi(H_0^{\gr}(\mathcal{G}_\Lambda)^+)=H_0^{\gr}(\mathcal{G}_\Omega)^+$.

Furthermore, the isomorphism $\phi$ in $(ii)$ is pointed, i.e., preserves the order units if and only if in $(iv)$, $\varphi([1_{\Lambda^\infty\times \{0\}}])=[1_{\Omega^\infty\times \{0\}}]$.
\end{thm}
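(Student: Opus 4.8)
The overall plan is to treat conditions (ii), (iii) and (iv) as mere transcriptions of one another through the canonical identifications already in place, and to locate the genuine content in the equivalence (i) $\Leftrightarrow$ (ii). For (ii) $\Leftrightarrow$ (iii) $\Leftrightarrow$ (iv) I would first note that, by (\ref{acteqn}), the requirement in (ii) that $\phi$ commute with every $\delta_i^\Lambda,\delta_i^\Omega$ is exactly the statement that $\phi$ is $\mathbb{Z}[\mathbb{Z}^k]$-linear, so (ii) asserts precisely the existence of an order-preserving $\mathbb{Z}[\mathbb{Z}^k]$-module isomorphism $H_\Lambda\cong H_\Omega$. Theorem \ref{th isomorphism of talented monoid and dimension semigroup} supplies the fixed $\mathbb{Z}^k$-monoid isomorphism $\psi_\Lambda\colon T_\Lambda\xrightarrow{\sim}H_\Lambda^+$ and its group completion $\gamma_\Lambda\colon K_0^{\gr}(\KP_\mathsf{k}(\Lambda))\xrightarrow{\sim}H_\Lambda$, while Theorem \ref{th relationship of graded K-theory and graded homology} supplies $\Phi\colon H_0^{\gr}(\mathcal{G}_\Lambda)\xrightarrow{\sim}K_0^{\gr}(\KP_\mathsf{k}(\Lambda))$ carrying $H_0^{\gr}(\mathcal{G}_\Lambda)^+$ onto $T_\Lambda$. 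Since $T_\Lambda$, hence $H_\Lambda^+$ and $H_0^{\gr}(\mathcal{G}_\Lambda)^+$, is cancellative, a $\mathbb{Z}^k$-monoid isomorphism of positive cones extends uniquely to, and is recovered by restriction from, an order-preserving $\mathbb{Z}[\mathbb{Z}^k]$-module isomorphism of the ambient groups. Conjugating by the fixed isomorphisms $\gamma$ and $\Phi$ then renders (ii), (iii) and (iv) equivalent with no further work.

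For (i) $\Rightarrow$ (ii) I would feed the matrices into Lemma \ref{lem the connecting matrix R}$(i)$: the relation $A_{\mathbf{e}_i}R=RB_{\mathbf{e}_i}$ yields an order-preserving $\mathbb{Z}[\mathbb{Z}^k]$-module homomorphism $[x,\N]\mapsto[xR,\N]$ from $H_\Lambda$ to $H_\Omega$, and symmetrically $B_{\mathbf{e}_i}S=SA_{\mathbf{e}_i}$ yields $[y,\M]\mapsto[yS,\M]$ from $H_\Omega$ to $H_\Lambda$. Composing and using $RS=A_\mathbf{p}$ together with (\ref{shifts}) gives $[x,\N]\mapsto[xA_\mathbf{p},\N]=[x,\N-\mathbf{p}]$, the shift automorphism of $H_\Lambda$ by $-\mathbf{p}$; likewise the other composite is the shift of $H_\Omega$ by $-\mathbf{p}$, via $SR=B_\mathbf{p}$. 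As both composites are automorphisms, the map induced by $R$ is a $\mathbb{Z}[\mathbb{Z}^k]$-module isomorphism, and since $R,S$ are non-negative and the shifts preserve positive cones it carries $H_\Lambda^+$ onto $H_\Omega^+$; this is the isomorphism $\phi$ required in (ii).

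The crux is (ii) $\Rightarrow$ (i). Given $\phi$, I would apply Lemma \ref{lem the connecting matrix R}$(ii)$ (via $\gamma$) to $\phi$ and, with $\Lambda,\Omega$ interchanged, to $\phi^{-1}$, obtaining non-negative matrices $R$, $S$ and vectors $\mathbf{r},\mathbf{s}\in\mathbb{Z}^k$ with $A_{\mathbf{e}_i}R=RB_{\mathbf{e}_i}$, $B_{\mathbf{e}_i}S=SA_{\mathbf{e}_i}$, $\phi([x,\N])=[xR,\N+\mathbf{r}]$ and $\phi^{-1}([y,\M])=[yS,\M+\mathbf{s}]$. Writing $\mathbf{t}=\mathbf{r}+\mathbf{s}$, the identity $\phi^{-1}\phi=\mathrm{id}$ reads $[xRS,\N+\mathbf{t}]=[x,\N]$ for all $x,\N$; unwinding the relation $\sim_\Lambda$ one basis row at a time and taking the join of the resulting exponents produces a single $\mathbf{l}\ge\mathbf{t}\vee 0$ with $RS\,A_{\mathbf{l}-\mathbf{t}}=A_\mathbf{l}$, and symmetrically $\phi\phi^{-1}=\mathrm{id}$ gives $SR\,B_{\mathbf{l}-\mathbf{t}}=B_\mathbf{l}$ after enlarging $\mathbf{l}$ to a common value. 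Setting $\mathbf{p}:=\mathbf{l}\in\mathbb{N}^k$ and replacing $S$ by $\widetilde S:=SA_{\mathbf{l}-\mathbf{t}}$, I would verify directly that $R\widetilde S=A_\mathbf{p}$ and $\widetilde S R=B_\mathbf{p}$ (the latter using $A_{\mathbf{l}-\mathbf{t}}R=RB_{\mathbf{l}-\mathbf{t}}$), that $A_{\mathbf{e}_i}R=RB_{\mathbf{e}_i}$ is untouched, and that $B_{\mathbf{e}_i}\widetilde S=\widetilde S A_{\mathbf{e}_i}$ follows from $B_{\mathbf{e}_i}S=SA_{\mathbf{e}_i}$ and the commutativity $A_{\mathbf{e}_i}A_{\mathbf{l}-\mathbf{t}}=A_{\mathbf{l}-\mathbf{t}}A_{\mathbf{e}_i}$. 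I expect this step -- forcing the two ``up to shift'' matrix data into exact equalities governed by one common lag vector $\mathbf{p}$ -- to be the main obstacle, and it is precisely where the freedom to adjust $R$ by powers of $B$ built into Lemma \ref{lem the connecting matrix R} is consumed.

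For the final assertion I would track the order unit through the fixed isomorphisms. By the explicit formula for $\Phi$ in Theorem \ref{th relationship of graded K-theory and graded homology} we have $\Phi([1_{Z(v)\times\{0\}}])=v(0)$, and by Theorem \ref{th isomorphism of talented monoid and dimension semigroup} we have $\gamma_\Lambda(v(0))=[\epsilon_v,0]$; summing over $v\in\Lambda^0$ shows that $\gamma_\Lambda\circ\Phi$ sends $[1_{\Lambda^\infty\times\{0\}}]$ to the order unit $[\mathbf{1},0]$ of $H_\Lambda$, where $\mathbf{1}=\sum_{v\in\Lambda^0}\epsilon_v$, and likewise for $\Omega$. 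Since the $\varphi$ of (iv) associated with a given $\phi$ of (ii) is its conjugate $\varphi=(\gamma_\Omega\circ\Phi)^{-1}\circ\phi\circ(\gamma_\Lambda\circ\Phi)$, it follows that $\phi$ preserves the order units exactly when $\varphi([1_{\Lambda^\infty\times\{0\}}])=[1_{\Omega^\infty\times\{0\}}]$, which is the stated equivalence.
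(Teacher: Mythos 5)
Your proposal is correct and follows essentially the same route as the paper: reduce (ii), (iii), (iv) to one another via Theorems \ref{th relationship of graded K-theory and graded homology} and \ref{th isomorphism of talented monoid and dimension semigroup}, and prove (i) $\Leftrightarrow$ (ii) by feeding the matrices through Lemma \ref{lem the connecting matrix R} in both directions. The only (harmless) deviations are that for (i) $\Rightarrow$ (ii) you deduce bijectivity from the two composites being shift automorphisms rather than checking injectivity and surjectivity directly, and for (ii) $\Rightarrow$ (i) you adjust only $S$ by a single common lag $A_{\mathbf{l}-\mathbf{t}}$ where the paper modifies both $R$ and $S$; both adjustments yield the same conclusion.
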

\begin{proof}
The implications $(ii)\Longleftrightarrow (iii)$ and $(ii)\Longleftrightarrow (iv)$ follow from Theorem \ref{th relationship of graded K-theory and graded homology} and Theorem \ref{th isomorphism of talented monoid and dimension semigroup}. Note that since $\Lambda^0,\Omega^0$ are finite, we have that $\Lambda^\infty\times \{0\}$ and $\Omega^\infty\times \{0\}$ are compact. The isomorphism in $(ii)$ is pointed, i.e., takes $[(1~1~\cdots~1)_{1\times |\Lambda^0|},0]$ to $[(1~1~\cdots~1)_{1\times |\Omega^0|},0]$ if and only if, at the level of talented monoids, $\displaystyle{\sum_{v\in \Lambda^0}}v(0)\longmapsto \displaystyle{\sum_{w\in \Omega^0}} w(0)$, which in turn is equivalent (in view of the isomorphism $\Phi$ of Theorem \ref{th relationship of graded K-theory and graded homology}) to the condition that \[\varphi([1_{\Lambda^\infty\times \{0\}}])=\varphi\left(\displaystyle{\sum_{v\in \Lambda^0}}[1_{Z(v)\times \{0\}}]\right)=\displaystyle{\sum_{w\in \Omega^0}}[1_{Z(w)\times \{0\}}]=[1_{\Omega^\infty\times \{0\}}].\] In the remaining part of the proof, we show that $(i)\Longleftrightarrow (ii)$. 

$(i)\Longrightarrow (ii)$. Consider the following map
\begin{align*}
    \phi:H_\Lambda &\longrightarrow H_\Omega\\
    [x,\N] &\longmapsto [xR,\N]
\end{align*}
for all $[x,\N]\in H_\Lambda$. Using the same argument as in the proof of Lemma \ref{lem the connecting matrix R} $(i)$, it can be shown that $\phi$ is a well-defined group homomorphism. For injectivity, choose $[x,\N],[y,\M]\in H_\Lambda$ such that $\phi([x,\N])=\phi([y,\M])$. Then, $[xR,\N]=[yR,\M]$ in $H_\Omega$. Hence, we have an $\mathbf{l}\in \mathbb{Z}^k$ with $\mathbf{l}\ge \N\vee \M$ such that $(xR)B_{\mathbf{l}-\N}=(yR)B_{\mathbf{l}-\M}$. Since $\mathbf{p}\in \mathbb{N}^k$, $\mathbf{l}+\mathbf{p}\ge \N\vee \M$. Now, we have
\[xA_{(\mathbf{l}+\mathbf{p})-\N}=xA_{\mathbf{l}-\N}A_\mathbf{p}=(xA_{\mathbf{l}-\N}R)S=(xRB_{\mathbf{l}-\N})S=(yRB_{\mathbf{l}-\M})S=(yA_{\mathbf{l}-\M}R)S=yA_{\mathbf{l}-\M}A_\mathbf{p}=yA_{(\mathbf{l}+\mathbf{p})-\M}.\] Thus, $[x,\N]=[y,\M]$. For surjectivity, let $[w,\M]\in H_\Omega$. Then, $[wS,\M+\mathbf{p}]\in H_\Lambda$. Now, \[\phi([wS,\M+\mathbf{p}])=[wSR,\M+\mathbf{p}]=[wB_\mathbf{p},\M+\mathbf{p}]=[w,\M].\] Therefore, $\phi$ is an isomorphism. Since $R$ and $S$ are non-negative integral matrices, it follows that $\phi(H_\Lambda^+)=H_\Omega^+$. That $\phi$ intertwines $\delta_i^\Lambda$ with $\delta_i^\Omega$, is evident from its definition.

$(i)\Longleftarrow (ii)$. Suppose $\phi:H_\Lambda\longrightarrow H_\Omega$ is an order-preserving $\mathbb{Z}[\mathbb{Z}^k]$-module isomorphism with inverse $\psi:H_\Omega\longrightarrow H_\Lambda$. In view of Lemma \ref{lem the connecting matrix R} $(ii)$, we have matrices $R\in \mathbb{M}_{\Lambda^0\times \Omega^0}(\mathbb{N})$, $S\in \mathbb{M}_{\Omega^0\times \Lambda^0}(\mathbb{N})$ and $\mathbf{r},\mathbf{s}\in \mathbb{Z}^k$ such that \[A_{\mathbf{e}_i}R=RB_{\mathbf{e}_i}, ~~~~~~~~~B_{\mathbf{e}_i}S=SA_{\mathbf{e}_i}\] for all $i=1,2,\ldots,k$ and \[\phi([x,\N])=~^\mathbf{r}[xR,\N],~~~~~~~~~\psi([y,\M])=~^\mathbf{s}[yS,\M]\] for all $[x,\N]\in H_\Lambda$ and $[y,\M]\in H_\Omega$. 

Now, for any $j=1,2,\ldots,|\Omega^0|$, $(\phi\psi)([\epsilon_j,0])=([\epsilon_j,0])$. So $[\epsilon_jSR,\mathbf{s}+\mathbf{r}]=[\epsilon_j,0]$. Therefore, \[(\epsilon_jSR)B_{\mathbf{t}^{(j)}-(\mathbf{s}+\mathbf{r})}=\epsilon_jB_{\mathbf{t}^{(j)}}\] for some $\mathbf{t}^{(j)}\ge (\mathbf{s}+\mathbf{r})\vee 0$. Let $\mathbf{t}=\displaystyle{\bigvee_{j=1}^{|\Omega^0|}}\mathbf{t}^{(j)}$. Then, for all $j=1,2,\ldots,|\Omega^0|$, we have $(\epsilon_jSR)B_{\mathbf{t}-(\mathbf{s}+\mathbf{r})}=\epsilon_jB_\mathbf{t}$, which implies $S(RB_{\mathbf{t}-(\mathbf{s}+\mathbf{r})})=B_\mathbf{t}$. Similarly, using the fact that $\psi\phi=id_{H_\Lambda}$, we can find $\mathbf{q}\ge (\mathbf{s}+\mathbf{r})\vee 0$ such that $R(SA_{\mathbf{q}-(\mathbf{s}+\mathbf{r})})=A_\mathbf{q}$. Now, set $\mathcal{R}=RB_{\mathbf{t}-(\mathbf{s}+\mathbf{r})}$ and $\mathcal{S}=SA_{\mathbf{q}-(\mathbf{s}+\mathbf{r})}$. Then, \[A_{\mathbf{e}_i}\mathcal{R}=A_{\mathbf{e}_i}RB_{\mathbf{t}-(\mathbf{s}+\mathbf{r})}=RB_{\mathbf{e}_i}B_{\mathbf{t}-(\mathbf{s}+\mathbf{r})}=\mathcal{R}B_{\mathbf{e}_i}.\] Similarly, $B_{\mathbf{e}_i}\mathcal{S}=\mathcal{S}B_{\mathbf{e}_i}$ for all $i=1,2,\ldots,k$. Finally, \[\mathcal{R}\mathcal{S}=(RB_{\mathbf{t}-(\mathbf{s}+\mathbf{r})})(SA_{\mathbf{q}-(\mathbf{s}+\mathbf{r})})=RSA_{\mathbf{t}-(\mathbf{s}+\mathbf{r})}A_{\mathbf{q}-(\mathbf{s}+\mathbf{r})}=A_\mathbf{q}A_{\mathbf{t}-(\mathbf{s}+\mathbf{r})}=A_{(\mathbf{q}+\mathbf{t})-(\mathbf{s}+\mathbf{r})},\] and \[\mathcal{S}\mathcal{R}=(SA_{\mathbf{q}-(\mathbf{s}+\mathbf{r})})(RRB_{\mathbf{t}-(\mathbf{s}+\mathbf{r})})=SRB_{\mathbf{q}-(\mathbf{s}+\mathbf{r})}B_{\mathbf{t}-(\mathbf{s}+\mathbf{r})}=B_\mathbf{t}B_{\mathbf{q}-(\mathbf{s}+\mathbf{r})}=B_{(\mathbf{q}+\mathbf{t})-(\mathbf{s}+\mathbf{r})}.\] Now, letting $\mathbf{p}=(\mathbf{q}+\mathbf{t})-(\mathbf{s}+\mathbf{r})$, we are done. 
\end{proof}
\begin{rmk}\label{rem matrix connection of in-splitting}
Suppose $v$ is a vertex in $\Lambda$ and $\Lambda_I$ is the in-split of $\Lambda$ at $v$ with respect to a partition $v\Lambda^\mathbf{1}=\mathcal{E}_1\cup \mathcal{E}_2$. Moreover, suppose $A_\N,B_\N$, $\N\in \mathbb{N}^k$ are the vertex matrices for $\Lambda$ and $\Lambda_I$ respectively. We observe that these matrices are related as in Condition $(i)$ of Theorem \ref{th isomorphism of TM in terms of matrices}. Fix any $j\in \{1,2,\ldots,k\}$ and define two matrices $R\in \mathbb{M}_{\Lambda^0\times \Lambda_I^0}(\mathbb{N})$, $S\in \mathbb{M}_{\Lambda_I^0\times \Lambda^0}(\mathbb{N})$ as follows: \[R(u,w):=
	\left\{
	\begin{array}{ll}
		1  & \mbox{if } u=par(w), \\
		0  & \mbox{otherwise,} 
	\end{array}
	\right. S(u,w):=
	\left\{
	\begin{array}{ll}
		|\Lambda^{\mathbf{e}_j}w\cap \mathcal{E}_i|  & \mbox{if } u=v^i~\text{for}~i=1,2, \\
		|u\Lambda^{\mathbf{e}_j}w|  & \mbox{otherwise. }
	\end{array}
	\right.\]
It is not hard to show that $RS=A_{\mathbf{e}_j}$, $SR=B_{\mathbf{e}_j}$ and $A_{\mathbf{e}_i}R=RB_{\mathbf{e}_i}$, $B_{\mathbf{e}_i}S=SA_{\mathbf{e}_i}$ for all $i=1,2,\ldots,k$. So we can provide an alternative proof of Theorem \ref{th in-splitting preserves talented monoid} in view of the equivalence $(i)\Longleftrightarrow (iii)$ of Theorem \ref{th isomorphism of TM in terms of matrices}. 

In symbolic dynamics, state splitting, i.e., in-splitting and out-splitting of directed graphs preserve conjugacy of their edge shifts and consequently, the corresponding adjacency matrices of the original graph and the transformed graph are \emph{strongly shift equivalent} and hence, \emph{shift equivalent} (see \cite[Theorems 7.2.7 and 7.3.3]{Lind-Marcus}). These are among the key moves preserving Morita equivalence of Leavitt path algebras. In the study of $k$-graphs, in-splitting is expected to be a crucial move in connection with the geometric classification of higher-rank graph $C^*$-algebras (as well as of Kumjian--Pask algebras in view of Proposition \ref{pro in-splitting gives graded isomorphic KP algebras}). The vertex matrices of a $k$-graph $\Lambda$ and its in-split $\Lambda_I$ naturally satisfy condition $(i)$ of Theorem \ref{th isomorphism of TM in terms of matrices}, which suggests that this matrix condition is not an abstract one; rather, it may be considered as a suitable analogue of the notion of shift equivalence (eventual conjugacy) in the higher-dimensional setting. However, in two dimensions, though the solution to the conjugacy problem for shifts of finite type is fairly clear, it does not use matrices. In-splitting of a finite $2$-graph may be studied using the textile systems introduced in \cite{Nasu}. In \cite{WoA}, it is shown that any $2$-graph in-splitting may be described by a finite sequence of textile moves as described by Johnson and Madden in \cite{JM}. Each of the moves given in \cite{JM} induces a conjugacy of the underlying two-dimensional shift of finite type. 

\end{rmk}

\section{Lifting morphisms of graded $K$-theories of Kumjian--Pask algebras}\label{sec lifting problem}
Let $\mathsf{F}$ be a field. By $\mathbb{K}\mathbb{P}_\mathsf{F}$, we denote the category whose objects are Kumjian--Pask $\mathsf{F}$-algebras of row-finite $k$-graphs without sources and with finite vertex sets, and morphisms are $\mathbb{Z}^k$-graded algebra homomorphisms modulo conjugation by invertible elements of degree $0$. On the other hand, let $\mathbb{P}_{\mathbb{Z}^k}$ be the category of pointed $\mathbb{Z}^k$-pre-ordered abelian groups with pointed order-preserving group homomorphisms (or, equivalently, pointed order-preserving $\mathbb{Z}[\mathbb{Z}^k]$-module homomorphisms).

In this section, we wish to find an answer to the following question:
\begin{ques}\label{ques lifting question}
Consider the functor $K_0^{\gr}:\mathbb{K}\mathbb{P}_\mathsf{F}\longrightarrow \mathbb{P}_{\mathbb{Z}^k}$ defined as follows:
\begin{align*}
K_0^{\gr}:\Obj(\mathbb{K}\mathbb{P}_\mathsf{F}) &\longrightarrow \Obj(\mathbb{P}_{\mathbb{Z}^k})\\
\KP_{\mathsf{F}}(\Lambda) &\longmapsto K_0^{\gr}(\KP_{\mathsf{F}}(\Lambda)),\\
K_0^{\gr}:\Hom_{\mathbb{K}\mathbb{P}_\mathsf{F}}(\KP_\mathsf{F}(\Lambda),\KP_\mathsf{F}(\Omega))&\longrightarrow \Hom_{\mathbb{P}_{\mathbb{Z}^k}}(K_0^{\gr}(\KP_{\mathsf{F}}(\Lambda)),K_0^{\gr}(\KP_{\mathsf{F}}(\Omega)))\\
\psi&\longmapsto K_0^{\gr}(\psi),
\end{align*}
where $K_0^{\gr}(\psi)([P]):=[P\otimes_{\KP_\mathsf{F}(\Lambda)}\KP_{\mathsf{F}}(\Omega)]$ for all graded finitely generated projective right $\KP_\mathsf{F}(\Lambda)$-modules $P$. Given $\mathfrak{h}\in \Hom_{\mathbb{P}_{\mathbb{Z}^k}}(K_0^{\gr}(\KP_{\mathsf{F}}(\Lambda)),K_0^{\gr}(\KP_{\mathsf{F}}(\Omega)))$, when does there exist $\psi\in \Hom_{\mathbb{K}\mathbb{P}_\mathsf{F}}(\KP_\mathsf{F}(\Lambda),\KP_\mathsf{F}(\Omega))$ such that $K_0^{\gr}(\psi)=\mathfrak{h}$?
\end{ques}

\subsection{Polymorphisms and bimodules associated to $k$-graphs}\label{ssec polymorphism}
For any $k$-graph $\Lambda$ and $\N\in \mathbb{N}^k$, we define the \emph{$\N$-shaped directed graph} $E_\N^\Lambda$, where the vertex set is $\Lambda^0$ and edge set is $\Lambda^\N$; the \emph{range} and \emph{source} maps are the restrictions of $r$ and $s$ to $\Lambda^n$ respectively. Considering $E_\N^\Lambda$ as the polymorphism\footnote{In \cite{ART}, a polymorphism $E=(U,V,E^1,r,s)$ is regarded as a set of arrows originated from $U$ and terminated at $V$; whereas we choose a categorical convention in our polymorphisms, i.e., the arrows are directed from $V$ to $U$; consequently the product of our polymorphisms is considered from right to left.} $(\Lambda^0,\Lambda^0,\Lambda^\N,r,s)$, one can note that $E_\N^\Lambda\cong \displaystyle{\prod_{i=1}^{k}}(E_{\mathbf{e}_i}^\Lambda)^{n_i}$, where $E_{\mathbf{e}_i}^\Lambda$ is the $i^{\text{th}}$-coordinate graph associated with $\Lambda$ and the product is the composition of polymorphisms. The order of the components does not matter in the product since for any two distinct $i,j\in \{1,2,\ldots,k\}$, $E_{\mathbf{e}_i}^\Lambda \circ E_{\mathbf{e}_j}^\Lambda\cong E_{\mathbf{e}_j}^\Lambda \circ E_{\mathbf{e}_i}^\Lambda$ via the isomorphism $\Tilde{\kappa}_{\Lambda,i,j}:=\kappa_{j,i}^{-1}\circ \kappa_{i,j}$, where 
\begin{align*}
    \kappa_{i,j}:E_{\mathbf{e}_i}^\Lambda\circ E_{\mathbf{e}_j}^\Lambda &\longrightarrow E_{\mathbf{e}_i+\mathbf{e}_j}^\Lambda\\
    (\lambda,\mu)&\longmapsto \lambda\mu.
\end{align*}
Now, for any $\N\in \mathbb{N}^k$, suppose $\mathsf{F}\Lambda^\N$ is the $\mathsf{F}$-vector space with basis $\Lambda^\N$. Then, $\mathsf{F}\Lambda^\N$ can be considered as an $\mathsf{F}\Lambda^0-\mathsf{F}\Lambda^0$-bimodule with respect to the actions
\[v\lambda:=\delta_{v,r(\lambda)}\lambda,~~~~~~~~~~~~~ \lambda w:=\delta_{w,s(\lambda)}\lambda\] for all $v,w\in \Lambda^0$ and $\lambda\in \Lambda^\N$. In other words, $\mathsf{F}\Lambda^\N$ is the bimodule of the polymorphism $E_\N^\Lambda$. 
 
\begin{lem}
Let $\Lambda$ be a row-finite $k$-graph without sources such that $|\Lambda^0|<\infty$. Then, for  $\N,\M\in \mathbb{N}^k$, we have an $\mathsf{F}\Lambda^0-\mathsf{F}\Lambda^0$-bimodule isomorphism
\[\phi_{\N,\M}^\Lambda:\mathsf{F}\Lambda^{\N+\M}\longrightarrow \mathsf{F}\Lambda^\N \otimes_{\mathsf{F}\Lambda^0} \mathsf{F}\Lambda^\M,\] such that for any $\lambda\in \Lambda^{\N+\M}$, $\phi_{\N,\M}^\Lambda(\lambda)=\lambda_1\otimes \lambda_2$, where $\lambda=\lambda_1 \lambda_2$ is the unique factorization of $\lambda$ into paths of shape $\N,\M$ respectively. 
\end{lem}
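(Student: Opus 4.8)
The plan is to produce an explicit two-sided inverse of $\phi_{\N,\M}^\Lambda$ and then check compatibility with the two vertex actions, letting the unique factorization property carry all the essential content. First I would observe that $\phi_{\N,\M}^\Lambda$ is well defined as a $\mathsf{k}$-linear map for free: since $\Lambda^{\N+\M}$ is a $\mathsf{k}$-basis of $\mathsf{k}\Lambda^{\N+\M}$ and every $\lambda\in\Lambda^{\N+\M}$ has, by unique factorization, a unique decomposition $\lambda=\lambda_1\lambda_2$ with $d(\lambda_1)=\N$ and $d(\lambda_2)=\M$, the assignment $\lambda\mapsto\lambda_1\otimes\lambda_2$ extends uniquely and linearly.

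Next I would build the candidate inverse from a balanced bilinear map. Consider $\Lambda^\N\times\Lambda^\M\to\mathsf{k}\Lambda^{\N+\M}$, $(\lambda_1,\lambda_2)\mapsto\delta_{s(\lambda_1),r(\lambda_2)}\,\lambda_1\lambda_2$, where $\lambda_1\lambda_2\in\Lambda^{\N+\M}$ by functoriality of $d$ whenever the pair is composable. Extending $\mathsf{k}$-bilinearly, I would verify this is $\mathsf{k}\Lambda^0$-balanced: for a vertex $w$, both $(\lambda_1 w,\lambda_2)$ and $(\lambda_1,w\lambda_2)$ are sent to $\delta_{w,s(\lambda_1)}\delta_{w,r(\lambda_2)}\,\lambda_1\lambda_2$, hence agree. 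By the universal property of $\otimes_{\mathsf{k}\Lambda^0}$ this descends to a linear map $\psi:\mathsf{k}\Lambda^\N\otimes_{\mathsf{k}\Lambda^0}\mathsf{k}\Lambda^\M\to\mathsf{k}\Lambda^{\N+\M}$ with $\psi(\lambda_1\otimes\lambda_2)=\delta_{s(\lambda_1),r(\lambda_2)}\,\lambda_1\lambda_2$.

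Then I would check $\phi_{\N,\M}^\Lambda$ and $\psi$ are mutually inverse. On a basis element $\lambda=\lambda_1\lambda_2\in\Lambda^{\N+\M}$ we get $\psi(\phi_{\N,\M}^\Lambda(\lambda))=\psi(\lambda_1\otimes\lambda_2)=\lambda_1\lambda_2=\lambda$, so $\psi\circ\phi_{\N,\M}^\Lambda=\mathrm{id}$. For the other composite, note the simple tensors $\lambda_1\otimes\lambda_2$ span the tensor product, and when $s(\lambda_1)\ne r(\lambda_2)$ one already has $\lambda_1\otimes\lambda_2=\lambda_1 s(\lambda_1)\otimes\lambda_2=\lambda_1\otimes s(\lambda_1)\lambda_2=0$; when $s(\lambda_1)=r(\lambda_2)$ the unique factorization of $\lambda_1\lambda_2$ into shapes $\N,\M$ is exactly $(\lambda_1,\lambda_2)$, whence $\phi_{\N,\M}^\Lambda(\psi(\lambda_1\otimes\lambda_2))=\phi_{\N,\M}^\Lambda(\lambda_1\lambda_2)=\lambda_1\otimes\lambda_2$. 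Thus $\phi_{\N,\M}^\Lambda\circ\psi=\mathrm{id}$ on a spanning set, hence everywhere, and $\phi_{\N,\M}^\Lambda$ is a linear isomorphism.

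Finally I would confirm $\phi_{\N,\M}^\Lambda$ is a $\mathsf{k}\Lambda^0$-$\mathsf{k}\Lambda^0$-bimodule map: for $v,w\in\Lambda^0$ and $\lambda=\lambda_1\lambda_2$ one has $r(\lambda)=r(\lambda_1)$ and $s(\lambda)=s(\lambda_2)$, so $v\lambda w=\delta_{v,r(\lambda_1)}\delta_{w,s(\lambda_2)}\,\lambda$, while $v(\lambda_1\otimes\lambda_2)w=(v\lambda_1)\otimes(\lambda_2 w)=\delta_{v,r(\lambda_1)}\delta_{w,s(\lambda_2)}(\lambda_1\otimes\lambda_2)$, and $\phi_{\N,\M}^\Lambda$ sends the former to the latter. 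I expect the only genuinely delicate points to be the balancing verification for $\psi$ and the identification that the canonical factorization of $\lambda_1\lambda_2$ is $(\lambda_1,\lambda_2)$; both hinge on the unique factorization property, which simultaneously makes $\phi_{\N,\M}^\Lambda$ well defined and forces $\phi_{\N,\M}^\Lambda\circ\psi$ to be the identity. Everything else is routine tracking of range and source vertices, and this approach deliberately sidesteps having to prove directly that the composable simple tensors are $\mathsf{k}$-linearly independent.
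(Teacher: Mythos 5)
Your proof is correct and complete, but it takes a different route from the paper. The paper derives the isomorphism structurally from the polymorphism framework: it invokes \cite[Proposition 3.8]{ART} to identify $\mathsf{k}\Lambda^\N\otimes_{\mathsf{k}\Lambda^0}\mathsf{k}\Lambda^\M$ with the bimodule $\mathsf{k}(E_\N^\Lambda\circ E_\M^\Lambda)^1$ of the composite polymorphism, and then uses the isomorphism of polymorphisms $\kappa_{\N,\M}:(\alpha,\beta)\mapsto\alpha\beta$ (which is a bijection precisely by unique factorization) to pass to $\mathsf{k}\Lambda^{\N+\M}$; the map $\phi_{\N,\M}^\Lambda$ is then the composite of the two inverses. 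You instead construct the inverse by hand: a balanced bilinear map $(\lambda_1,\lambda_2)\mapsto\delta_{s(\lambda_1),r(\lambda_2)}\lambda_1\lambda_2$ descending through the universal property of $\otimes_{\mathsf{k}\Lambda^0}$, together with the observation that non-composable simple tensors already vanish (by inserting the vertex idempotent $s(\lambda_1)$ in the middle) and that composability forces the canonical factorization of $\lambda_1\lambda_2$ to be $(\lambda_1,\lambda_2)$. Your argument is more elementary and self-contained, sidestepping the citation and the need to know that the composable pairs form a basis of the tensor product; the paper's argument buys consistency with the polymorphism formalism that it reuses heavily in the bridging-bimodule constructions of Section 6, where $\phi_{\N,\M}^\Lambda$ must interact with the maps $\kappa_{i,j}$, $\Tilde{\kappa}_{\Lambda,i,j}$ and the flips. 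Both proofs place the entire burden on the unique factorization property, as they must.
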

\begin{proof}
Applying \cite[Proposition 3.8]{ART}, we have $\mathsf{F}\Lambda^\N\otimes_{\mathsf{F}\Lambda^0}\mathsf{F}\Lambda^\M=\mathsf{F}(E_\N^\Lambda)^1\otimes_{\mathsf{F}\Lambda^0}\mathsf{F}(E_\M^\Lambda)^1\cong \mathsf{F}(E_\N^\Lambda\circ E_\M^\Lambda)^1$ via the isomorphism
\begin{align*}
    \chi:\mathsf{F}(E_\N^\Lambda)^1\otimes_{\mathsf{F}\Lambda^0}\mathsf{F}(E_\M^\Lambda)^1 &\longrightarrow \mathsf{F}(E_\N^\Lambda\circ E_\M^\Lambda)^1\\
    \left(\displaystyle{\sum_{\alpha\in \Lambda^\N}}a_\alpha \alpha \otimes \displaystyle{\sum_{\beta\in \Lambda^\M}}b_\beta \beta\right) &\longmapsto \displaystyle{\sum_{\substack{(\alpha,\beta)\in \Lambda^\N\times \Lambda^\M\\r(\beta)=s(\alpha)}}}a_\alpha b_\beta (\alpha,\beta).
\end{align*}
Again, the isomorphism 
\begin{align*}
\kappa_{\N,\M}:E_\N^\Lambda \circ E_\M^\Lambda &\longrightarrow E_{\N+\M}^\Lambda\\
(\alpha,\beta) &\longmapsto \alpha\beta,
\end{align*}
of polymorphisms induces an isomorphism of $\mathsf{F}\Lambda^0-\mathsf{F}\Lambda^0$-bimodules $\overline{\kappa_{\N,\M}}:\mathsf{F}(E_\N^\Lambda\circ E_\M^\Lambda)^1\longrightarrow \mathsf{F}(E_{\N+\M}^\Lambda)^1$. Finally, set $\phi_{\N,\M}^\Lambda=\chi^{-1}\circ \overline{\kappa_{\N,\M}}^{-1}$. Then, $\phi_{\N,\M}^\Lambda$ is the required isomorphism. 
\end{proof}

\begin{rmk}\label{rem the general splitting}
One can extend the above lemma for any number of shapes rather than just two. For instance, if $\N,\M,\mathbf{p}\in \mathbb{N}^k$, then clearly, the composition $(\phi_{\N,\M}^\Lambda \otimes id_{\mathsf{F}\Lambda^\mathbf{p}})\circ \phi_{\N+\M,\mathbf{p}}^\Lambda$ is an isomorphism from $\mathsf{F}\Lambda^{\N+\M+\mathbf{p}}$ to $(\mathsf{F}\Lambda^\N\otimes_{\mathsf{F}\Lambda^0} \mathsf{F}\Lambda^\M)\otimes_{\mathsf{F}\Lambda^0}\mathsf{F}\Lambda^\mathbf{p}$. The readers should be convinced that the other possible isomorphism of this type (obtained from different association of the shapes) is, in fact, the same as the said one due to the unique factorization property. Taking into account the associativity of tensor products, we regard this as a canonical isomorphism 
\[\phi_{\N,\M,\mathbf{p}}^\Lambda:\mathsf{F}\Lambda^{\N+\M+\mathbf{p}}\longrightarrow \mathsf{F}\Lambda^\N\otimes_{\mathsf{F}\Lambda^0}\mathsf{F}\Lambda^\M\otimes_{\mathsf{F}\Lambda^0}\mathsf{F}\Lambda^\mathbf{p}.\] 
In general, for $\N=(n_1,n_2,\ldots,n_k)\in \mathbb{N}^k$, we can proceed in this way to obtain an isomorphism 

\[\phi^\Lambda:\mathsf{F}\Lambda^\N \longrightarrow (\mathsf{F}\Lambda^{\mathbf{e}_1})^{\otimes n_1}\otimes_{\mathsf{F}\Lambda^0}(\mathsf{F}\Lambda^{\mathbf{e}_2})^{\otimes n_2}\otimes_{\mathsf{F}\Lambda^0}\dots \otimes_{\mathsf{F}\Lambda^0}(\mathsf{F}\Lambda^{\mathbf{e}_k})^{\otimes n_k}.\]
\end{rmk}

\begin{prop}\label{pro the map rho}
Let $\Lambda$ be a row-finite $k$-graph without sources such that $|\Lambda^0|<\infty$. Then, for any $\N\in \mathbb{N}^k$, there exists an $\mathsf{F}\Lambda^0-\KP_{\mathsf{F}}(\Lambda)$-bimodule isomorphism 
\[\rho_{\Lambda,\N}: \mathsf{F}\Lambda^\N\otimes_{\mathsf{F}\Lambda^0}\KP_\mathsf{F}(\Lambda)\longrightarrow \KP_\mathsf{F}(\Lambda),\] such that for any $x=\displaystyle{\sum_{\lambda\in \Lambda^\N}}a_\lambda \lambda\in \mathsf{F}\Lambda^\N$ and $w\in \KP_\mathsf{F}(\Lambda)$, $\rho_{\Lambda,\N}(x\otimes w)=(\displaystyle{\sum_{\lambda\in \Lambda^\N}}a_\lambda s_\lambda)w$. Furthermore, the diagram

\begin{equation}\label{comd2}
\begin{tikzcd}[row sep=huge, column sep=huge]
		\mathsf{F}\Lambda^{\N+\M}\otimes_{\mathsf{F}\Lambda^0}\KP_\mathsf{F}(\Lambda) \arrow [r, "\rho_{\Lambda,\N+\M}"] \arrow [d, "\phi_{\N,\M}^\Lambda\otimes id_{\KP_\mathsf{F}(\Lambda)}"'] & \KP_\mathsf{F}(\Lambda) & \mathsf{F}\Lambda^\N\otimes_{\mathsf{F}\Lambda^0}\KP_\mathsf{F}(\Lambda) \arrow [l, "\rho_{\Lambda,\N}"'] \\
		(\mathsf{F}\Lambda^\N\otimes_{\mathsf{F}\Lambda^0}\mathsf{F}\Lambda^\M)\otimes_{\mathsf{F}\Lambda^0}\KP_\mathsf{F}(\Lambda) \arrow [rr, "\alpha_{_{\mathsf{F}\Lambda^\N,\mathsf{F}\Lambda^\M,\KP_\mathsf{F}(\Lambda)}}"] && \mathsf{F}\Lambda^\N\otimes_{\mathsf{F}\Lambda^0}(\mathsf{F}\Lambda^\M\otimes_{\mathsf{F}\Lambda^0}\KP_\mathsf{F}(\Lambda)) \arrow [u, "id_{\mathsf{F}\Lambda^\N}\otimes \rho_{\Lambda,\M}"']
\end{tikzcd} 
\end{equation}
is commutative for all $\N,\M\in \mathbb{N}^k$. 
\end{prop}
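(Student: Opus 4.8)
The plan is to produce $\rho_{\Lambda,\N}$ from a balanced bilinear map, exhibit an explicit two-sided inverse assembled from the Kumjian--Pask relations $(KP3)$ and $(KP4)$, and then verify diagram (\ref{comd2}) by evaluating both composites on simple tensors. Before anything else I would record the standing finiteness: since $\Lambda$ is row-finite and $|\Lambda^0|<\infty$, the set $\Lambda^\N=\bigsqcup_{v\in\Lambda^0} v\Lambda^\N$ is finite, so $\KP_\mathsf{k}(\Lambda)$ is unital with identity $1=\sum_{v\in\Lambda^0}p_v$ and every sum over $\Lambda^\N$ appearing below is finite.

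First I would check that $\rho_{\Lambda,\N}$ is well-defined by showing the assignment $(x,w)\mapsto\big(\sum_{\lambda}a_\lambda s_\lambda\big)w$ is $\mathsf{k}\Lambda^0$-balanced. For a vertex idempotent $p_v$ one has $xp_v=\sum_{s(\lambda)=v}a_\lambda\lambda$, so both $\rho_{\Lambda,\N}(xp_v\otimes w)$ and $\rho_{\Lambda,\N}(x\otimes p_vw)$ reduce to $\big(\sum_{s(\lambda)=v}a_\lambda s_\lambda\big)w$, using $s_\lambda p_v=\delta_{v,s(\lambda)}s_\lambda$ from $(KP2)$. The companion relation $p_us_\lambda=\delta_{u,r(\lambda)}s_\lambda$ yields left $\mathsf{k}\Lambda^0$-linearity, while right $\KP_\mathsf{k}(\Lambda)$-linearity is immediate from the definition; hence $\rho_{\Lambda,\N}$ is a bimodule homomorphism.

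Next I would introduce the candidate inverse $\sigma_{\Lambda,\N}\colon\KP_\mathsf{k}(\Lambda)\to\mathsf{k}\Lambda^\N\otimes_{\mathsf{k}\Lambda^0}\KP_\mathsf{k}(\Lambda)$, $w\mapsto\sum_{\lambda\in\Lambda^\N}\lambda\otimes s_{\lambda^*}w$, a finite sum which is manifestly a right $\KP_\mathsf{k}(\Lambda)$-map. On one side, $\rho_{\Lambda,\N}(\sigma_{\Lambda,\N}(w))=\big(\sum_{\lambda\in\Lambda^\N}s_\lambda s_{\lambda^*}\big)w=\big(\sum_{v\in\Lambda^0}p_v\big)w=w$, where the middle equality is $(KP4)$ summed over the disjoint decomposition $\Lambda^\N=\bigsqcup_v v\Lambda^\N$. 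On the other side, for $x=\sum_\mu a_\mu\mu$ I would compute $\sigma_{\Lambda,\N}(\rho_{\Lambda,\N}(x\otimes w))=\sum_{\lambda,\mu}a_\mu\,\lambda\otimes s_{\lambda^*}s_\mu w$; applying $(KP3)$, namely $s_{\lambda^*}s_\mu=\delta_{\lambda,\mu}p_{s(\lambda)}$ for $\lambda,\mu\in\Lambda^\N$, leaves $\sum_\lambda a_\lambda\,\lambda\otimes p_{s(\lambda)}w=\sum_\lambda a_\lambda\,\lambda p_{s(\lambda)}\otimes w=x\otimes w$, where I move the idempotent across the balanced tensor and use $\lambda p_{s(\lambda)}=\lambda$. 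This proves $\rho_{\Lambda,\N}$ is a bimodule isomorphism with inverse $\sigma_{\Lambda,\N}$.

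Finally, to verify diagram (\ref{comd2}) it suffices to test on a simple tensor $\lambda\otimes w$ with $\lambda\in\Lambda^{\N+\M}$, as these generate $\mathsf{k}\Lambda^{\N+\M}\otimes_{\mathsf{k}\Lambda^0}\KP_\mathsf{k}(\Lambda)$. Writing $\lambda=\lambda_1\lambda_2$ for the unique factorization with $d(\lambda_1)=\N$, $d(\lambda_2)=\M$, the lower composite sends $\lambda\otimes w\mapsto(\lambda_1\otimes\lambda_2)\otimes w\mapsto\lambda_1\otimes(\lambda_2\otimes w)\mapsto\lambda_1\otimes s_{\lambda_2}w\mapsto s_{\lambda_1}s_{\lambda_2}w$, which equals $s_\lambda w=\rho_{\Lambda,\N+\M}(\lambda\otimes w)$ by $(KP2)$. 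I expect the only genuinely delicate points to be the repeated $\mathsf{k}\Lambda^0$-balancing when sliding vertex idempotents across the tensor symbol, and the correct global assembly of the identity $\sum_{\lambda\in\Lambda^\N}s_\lambda s_{\lambda^*}=1$ from $(KP4)$ ranged over all vertices --- both of which rest squarely on the finiteness hypothesis on $\Lambda^0$.
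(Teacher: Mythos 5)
Your argument is correct, and the diagram-chase for (\ref{comd2}) on simple tensors $\lambda\otimes w$ via the unique factorization $\lambda=\lambda_1\lambda_2$ is exactly what the paper does. Where you genuinely diverge is in how bijectivity of $\rho_{\Lambda,\N}$ is established. The paper proves surjectivity by exhibiting each $s_\lambda s_{\mu^*}$ in the image (inserting $p_{r(\lambda)}=\sum_{\alpha\in r(\lambda)\Lambda^{\N}}s_\alpha s_{\alpha^*}$ via $(KP4)$), and then proves injectivity separately by induction on $\ell(\N)=n_1+\cdots+n_k$: the base case $\ell(\N)=1$ applies the map $w\mapsto\sum_{\alpha\in\Lambda^{\mathbf e_i}}\alpha\otimes s_{\alpha^*}w$ to $\rho_{\Lambda,\mathbf e_i}(x)=0$ and recovers $x$ via $(KP3)$, and the inductive step leans on the already-established commutativity of (\ref{comd2}) to factor $\rho_{\Lambda,\N}$ as a composite of injective maps. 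You instead package that same base-case computation as a single two-sided inverse $\sigma_{\Lambda,\N}(w)=\sum_{\lambda\in\Lambda^{\N}}\lambda\otimes s_{\lambda^*}w$, observing that $(KP3)$ holds for arbitrary pairs of paths of equal degree, not just degree $\mathbf e_i$, so the identity $\sigma_{\Lambda,\N}\circ\rho_{\Lambda,\N}=\mathrm{id}$ works for all $\N$ at once and the induction (and the logical dependence of injectivity on the diagram) evaporates; surjectivity comes for free from $\rho_{\Lambda,\N}\circ\sigma_{\Lambda,\N}=\mathrm{id}$, which is $(KP4)$ summed over the finitely many vertices. This is a genuine streamlining of the paper's route and rests on the same finiteness hypotheses ($|\Lambda^0|<\infty$ for unitality, row-finiteness for the finiteness of $\Lambda^{\N}$). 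The only cosmetic caveat is the degenerate case $\N=0$, where $(KP3)$ and $(KP4)$ as stated exclude degree-zero paths and one should either treat $\rho_{\Lambda,0}$ as the canonical multiplication isomorphism (as the paper does) or invoke the convention $s_v=p_v$; your argument silently covers this but it is worth one sentence.
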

\begin{proof}
Let $X:=\Lambda^0\cup \Lambda^{\neq 0}\cup G(\Lambda^{\neq 0})$ and $\mathbb{F}_\mathsf{F}(\omega(X))$ be the free $\mathsf{F}$-algebra on $X$. Consider the canonical surjection $\pi: \mathbb{F}_\mathsf{F}(\omega(X)) \longrightarrow \KP_\mathsf{F}(\Lambda)$ defined on generators as
\begin{center}
$v\longmapsto p_v,~~~~\lambda\longmapsto s_\lambda,~~~~~\mu^*\longmapsto s_{\mu^*}$
\end{center}
for all $v\in \Lambda^0$, $\lambda,\mu\in \Lambda^{\neq 0}$. Now, with the help of the map $\pi$, we define
\begin{align*}
    \rho_{\Lambda,\N}:\mathsf{F}\Lambda^\N\otimes_{\mathsf{F}\Lambda^0}\KP_\mathsf{F}(\Lambda) &\longrightarrow \KP_\mathsf{F}(\Lambda)\\
    \displaystyle{\sum_{i=1}^{N}} x_i\otimes w_i &\longmapsto \displaystyle{\sum_{i=1}^{N}} \pi(x_i)w_i.
\end{align*}
A routine verification yields that $\rho_{\Lambda,\N}$ is a well-defined $\mathsf{F}\Lambda^0-\KP_\mathsf{F}(\Lambda)$-bimodule homomorphism. For surjectivity, it suffices to show that the elements of the form $s_\lambda s_{\mu^*}$ of $\KP_\mathsf{F}(\Lambda)$ are in the image of $\rho_{\Lambda,\N}$. So take $\lambda,\mu\in \Lambda$ such that $s(\lambda)=s(\mu)$. By using $(KP4)$, we can write $p_{r(\lambda)}=\displaystyle{\sum_{\alpha\in r(\lambda)\Lambda^n}}s_\alpha s_{\alpha^*}$. Thus, we have
\[s_\lambda s_{\mu^*}=p_{r(\lambda)}s_\lambda s_{\mu^*}=\displaystyle{\sum_{\alpha\in r(\lambda)\Lambda^n}}s_\alpha (s_{\alpha^*}s_\lambda s_{\mu^*})=\displaystyle{\sum_{\alpha\in r(\lambda)\Lambda^n}}\pi(\alpha) (s_{\alpha^*}s_\lambda s_{\mu^*})=\rho_{\Lambda,\N}\left(\displaystyle{\sum_{\alpha\in r(\lambda)\Lambda^\N}} \alpha\otimes s_{\alpha^*}s_\lambda s_{\mu^*}\right).\]
Before proving that $\rho_{\Lambda,\N}$ is injective, let us check the commutativity of Diagram (\ref{comd2}). Since all the maps involved are bimodule homomorphisms, it suffices to show that $\rho_{\Lambda,\N}\circ (id_{\mathsf{F}\Lambda^\N}\otimes \rho_{\Lambda,\M})\circ \alpha_{\mathsf{F}\Lambda^\N,\mathsf{F}\Lambda^\M,\KP_\mathsf{F}(\Lambda)}\circ (\phi_{\N,\M}^\Lambda\otimes id_{\KP_\mathsf{F}(\Lambda)})$ and $\rho_{\Lambda,\N+\M}$ agree on each simple tensor $\lambda\otimes w$, where $\lambda\in \mathsf{F}\Lambda^{\N+\M}$ and $w\in \KP_\mathsf{F}(\Lambda)$. But this is evident since 
\begin{align*}
&\left(\rho_{\Lambda,\N}\circ (id_{\mathsf{F}\Lambda^\N}\otimes \rho_{\Lambda,\M})\circ \alpha_{\mathsf{F}\Lambda^\N,\mathsf{F}\Lambda^\M,\KP_\mathsf{F}(\Lambda)}\circ (\phi_{\N,\M}^\Lambda\otimes id_{\KP_\mathsf{F}(\Lambda)})\right)(\lambda\otimes w)\\
= & \left(\rho_{\Lambda,\N}\circ (id_{\mathsf{F}\Lambda^\N}\otimes \rho_{\Lambda,\M})\circ \alpha_{\mathsf{F}\Lambda^\N,\mathsf{F}\Lambda^\M,\KP_\mathsf{F}(\Lambda)}\right)((\lambda_1\otimes \lambda_2)\otimes w)\\ 
= & \left(\rho_{\Lambda,\N}\circ (id_{\mathsf{F}\Lambda^\N}\otimes \rho_{\Lambda,\M})\right)(\lambda_1\otimes (\lambda_2\otimes w))\\
= & \rho_{\Lambda,\N}(\lambda_1\otimes s_{\lambda_2}w)\\
= & s_{\lambda_1}(s_{\lambda_2}w)\\
= & s_\lambda w\\
= & \rho_{\Lambda,\N+\M}(\lambda\otimes w).
\end{align*}

Now, we prove that $\rho_{\Lambda,\N}$ is injective by induction on $\ell(\N):=n_1+n_2+\ldots+n_k$. The case $\ell(\N)=0$ is trivial since it is possible only if $\N=0\in \mathbb{N}^k$ and in this case $\rho_{\Lambda,0}$ is the canonical isomorphism $\mathsf{F}\Lambda^0\otimes_{\mathsf{F}\Lambda^0}\KP_\mathsf{F}(\Lambda)\longrightarrow \KP_\mathsf{F}(\Lambda)$ given by multiplication, and hence injective. So let us consider the case $\ell(\N)=1$. Then, $\N=\mathbf{e}_i$ for some $i=1,2,\ldots,k$. Suppose $x=\displaystyle{\sum_{j=1}^{t}}x_j\otimes w_j\in \mathsf{F}\Lambda^{\mathbf{e}_i}\otimes_{\mathsf{F}\Lambda^0}\KP_\mathsf{F}(\Lambda)$ is such that $\rho_{\Lambda,\mathbf{e}_i}(x)=0$. For each $j=1,2,\ldots,t$, write $x_j=\displaystyle{\sum_{\lambda\in \Lambda^{\mathbf{e}_i}}} c_{j,\lambda}\lambda$. Then,  
\begin{align*}
0=\displaystyle{\sum_{\alpha\in \Lambda^{\mathbf{e}_i}}} \alpha\otimes s_{\alpha^*}\left(\displaystyle{\sum_{j=1}^{t}}\pi(x_j)w_j\right)
&= \displaystyle{\sum_{\alpha\in \Lambda^{\mathbf{e}_i}}} \alpha \otimes \left(\displaystyle{\sum_{j=1}^{t}} (s_{\alpha^*}\pi(x_j))w_j\right)\\
&= \displaystyle{\sum_{\alpha\in \Lambda^{\mathbf{e}_i}}} \displaystyle{\sum_{j=1}^{t}}\alpha\pi^{-1}(s_{\alpha^*} \pi(x_j))\otimes w_j\\
&= \displaystyle{\sum_{\alpha\in \Lambda^{\mathbf{e}_i}}} \displaystyle{\sum_{j=1}^{t}}\alpha\pi^{-1}\left(s_{\alpha^*} \displaystyle{\sum_{\lambda\in \Lambda^{\mathbf{e}_i}}}c_{j,\lambda}s_\lambda\right)\otimes w_j\\
&= \displaystyle{\sum_{\alpha\in \Lambda^{\mathbf{e}_i}}} \displaystyle{\sum_{j=1}^{t}}\alpha\pi^{-1}\left(\displaystyle{\sum_{\lambda\in \Lambda^{\mathbf{e}_i}}}c_{j,\lambda}s_{\alpha^*}s_\lambda\right)\otimes w_j\\
&= \displaystyle{\sum_{\alpha\in \Lambda^{\mathbf{e}_i}}} \displaystyle{\sum_{j=1}^{t}}\alpha\pi^{-1}(c_{j,\alpha}p_{s(\alpha)})\otimes w_j
= \displaystyle{\sum_{j=1}^{t}}\left(\displaystyle{\sum_{\alpha\in \Lambda^{\mathbf{e}_i}}} c_{j,\alpha}\alpha\right)\otimes w_j=\displaystyle{\sum_{j=1}^{t}}x_j\otimes w_j=x.
\end{align*}

Therefore, $\rho_{\Lambda,\mathbf{e}_i}$ is injective. Now, take $\ell\ge 1$ and assume that $\rho_{\Lambda,\N}$ is injective for all $\N\in \mathbb{N}^k$ with $0\le \ell(\N)\le \ell$. Let $n\in \mathbb{N}^k$ be such that $\ell(\N)=\ell+1$. Writing $\N=\M+\mathbf{p}$ for some $0<\M,\mathbf{p}\in \mathbb{N}^k$, we have \[\rho_{\Lambda,\M}\circ (id_{\mathsf{F}\Lambda^\M}\otimes \rho_{\Lambda,\mathbf{p}})\circ \alpha_{\mathsf{F}\Lambda^\M,\mathsf{F}\Lambda^\mathbf{p},\KP_\mathsf{F}(\Lambda)}\circ (\phi_{\M,\mathbf{p}}^\Lambda\otimes id_{\KP_\mathsf{F}(\Lambda)})=\rho_{\Lambda,\N}\] because (\ref{comd2}) commutes. Now by the induction hypothesis, both $\rho_{\Lambda,\M}$ and $\rho_{\Lambda,\mathbf{p}}$ are injective. Since $\mathsf{F}\Lambda^0$ is semisimple, both the right $\mathsf{F}\Lambda^0$-module $\mathsf{F}\Lambda^{\mathbf{m}}$ and the left $\mathsf{F}\Lambda^0$-module $\KP_\mathsf{F}(\Lambda)$ are flat, and hence the homomorphisms inside parentheses on the left-hand side of the above equality are injective. Being a composition of injective module homomorphisms, it now follows that $\rho_{\Lambda,\N}$ is injective. This completes the proof.
\end{proof}

\subsection{Bridging matrix and the formation of bridging bimodule}\label{ssec bridging matrix}
If $E=(U,V,E^1,r,s)$ is a polymorphism, then the \emph{adjacency matrix} of $E$ is defined to be a matrix $A_E\in \mathbb{M}_{U\times V}(\mathbb{N})$ such that \[A_E(u,v):=|\{e\in E^1~|~s(e)=v~\text{and}~r(e)=u\}|.\] Note that our adjacency matrix $A_E$ is the transpose of the adjacency matrix defined in \cite[Definition 3.2]{ART}. This is simply because the edges in our polymorphism are directed from $V$ to $U$. 

Let $\Lambda,\Omega$ be row-finite $k$-graphs without sources such that $|\Lambda^0|,|\Omega^0|< \infty$. Given a matrix $R\in \mathbb{M}_{\Lambda^0\times \Omega^0}(\mathbb{N})$, we can associate with it a polymorphism $E_R\equiv (\Lambda^0,\Omega^0,R^1,r,s)$, where \[R^1:=\{g_i^{v,w}~|~v\in \Lambda^0,w\in \Omega^0, 1\le i\le R(v,w)\},\] and $r:R^1\longrightarrow \Lambda^0$, $s:R^1\longrightarrow \Omega^0$ are respectively defined as \[r(g_i^{v,w}):=v,~~~~~~~s(g_i^{v,w}):=w.\]
It is then easy to note that the adjacency matrix of the polymorphism $E_R$ is $R$. Also, $E_R$ gives an $\mathsf{F}\Lambda^0-\mathsf{F}\Omega^0$-bimodule $\mathsf{F}R^1$ with respect to the bi-action \[vgw:=\delta_{v,r(g)}\delta_{w,s(g)}g\] for all $v\in \Lambda^0,w\in \Omega^0$ and $g\in R^1$.
\begin{dfn}\label{def the bridging matrix and polymorphism}
A matrix $R\in \mathbb{M}_{\Lambda^0\times \Omega^0}(\mathbb{N})$ is called a \emph{bridging matrix} (respectively, $E_R$, a \emph{bridging polymorphism}) for the $k$-graphs $\Lambda$ and $\Omega$ if there is a family of isomorphisms (called \emph{flips}) $\mathfrak{F}=(\mathfrak{f}_i)_{1\le i\le k}$; \[\mathfrak{f}_i:E_{\mathbf{e}_i}^\Lambda\circ E_R\longrightarrow E_R\circ E_{\mathbf{e}_i}^\Omega,\] such that the diagram 

\begin{equation}\label{comd3}
\begin{tikzcd}[row sep=huge, column sep=huge]
		E_{\mathbf{e}_i}^\Lambda\circ E_{\mathbf{e}_j}^\Lambda \circ E_R \arrow [r, "id_{E_{\mathbf{e}_i}^\Lambda}\times \mathfrak{f}_j"] \arrow [d, "\Tilde{\kappa}_{\Lambda,i,j}\times id_{E_R}"'] & E_{\mathbf{e}_i}^\Lambda \circ E_R \circ E_{\mathbf{e}_j}^\Omega \arrow [r, "\mathfrak{f}_i\times id_{E_{\mathbf{e}_j}^\Omega}"] & E_R\circ E_{\mathbf{e}_i}^\Omega \circ E_{\mathbf{e}_j}^\Omega \arrow [d, "id_{E_R}\times \Tilde{\kappa}_{\Omega,i,j}"] \\	
         E_{\mathbf{e}_j}^\Lambda\circ E_{\mathbf{e}_i}^\Lambda \circ E_R \arrow [r, "id_{E_{\mathbf{e}_j}^\Lambda}\times \mathfrak{f}_i"] & E_{\mathbf{e}_j}^\Lambda \circ E_R\circ E_{\mathbf{e}_i}^\Omega \arrow [r, "\mathfrak{f}_j\times id_{E_{\mathbf{e}_i}^\Omega}"] & E_R\circ E_{\mathbf{e}_j}^\Omega \circ E_{\mathbf{e}_i}^\Omega
\end{tikzcd} 
\end{equation}
commutes for all $1\le i < j \le k$. We call $(R,\mathfrak{F})$, a \emph{specified $\Lambda-\Omega$ bridging pair}. 
\end{dfn}
\begin{rmks}\label{rem explaining the bridging graph} 
$(i)$ Since composition of polymorphisms is associative, we ignore the unnecessary parentheses in (\ref{comd3}). In this way, we avoid the extra layers of associativity isomorphisms. The following figure explains the \emph{bridging} criterion in the case $k=2$. 
\[
\begin{tikzpicture}[scale=2.5]
\node[inner sep=1.5pt, circle,draw,fill=black] (A1) at (0,0,0) {};
\node[inner sep=1.5pt, circle,draw,fill=black] (A2) at (1,0,0) {};
\node[inner sep=1.5pt, circle,draw,fill=black] (A3) at (1,0,1) {};
\node[inner sep=1pt, circle] (A4) at (0,0,1) {$v$};

\node[inner sep=1.5pt, circle,draw,fill=black] (B1) at (0,1,0) {};
\node[inner sep=1pt, circle] (B2) at (1,1,0) {$w$};
\node[inner sep=1.5pt, circle,draw,fill=black] (B3) at (1,1,1) {};
\node[inner sep=1.5pt, circle,draw,fill=black] (B4) at (0,1,1) {};

\path[->, red, dashed, >=latex,thick] (B1) edge [] node[]{} (B4);
\path[->, red, dashed, >=latex,thick] (A1) edge [] node[]{} (A4);
\path[->, red, dashed, >=latex,thick] (B2) edge [] node[]{} (B3);
\path[->, red, dashed, >=latex,thick] (A2) edge [] node[]{} (A3);

\path[->,blue, >=latex,thick] (B1) edge [] node[]{} (A1);
\path[->,blue, >=latex,thick] (B4) edge [] node[]{} (A4);
\path[->,blue, >=latex,thick] (B2) edge [] node[]{} (A2);
\path[->,blue, >=latex,thick] (B3) edge [] node[]{} (A3);

\path[->,dotted, >=latex,thick] (B2) edge [] node {} (B1);
\path[->,dotted, >=latex,thick] (B3) edge [] node {} (B4);
\path[->,dotted, >=latex,thick] (A2) edge [] node {} (A1);
\path[->,dotted, >=latex,thick] (A3) edge [] node {} (A4);

\node at (0,0.8,1.5) {$\Lambda$};
\node at (1.2,0.5,0) {$\Omega$};
\end{tikzpicture}
\]
The two blue-red commutative squares are parts of the $1$-skeletons of the $k$-graphs; the left one is on the level of $\Lambda$ and the right one is on the level of $\Omega$. The edges of the polymorphism $E_R$, shown as dotted black lines, connect vertices of $\Omega$ to vertices of $\Lambda$ in such a way that the remaining squares are commutative (due to the flips) and all the tri-colored paths from $w\in \Omega^0$ to $v\in \Lambda^0$ are the same. 

$(ii)$ We remark that the commutativity of the above diagram has close similarity with the necessary and sufficient requirement (see \cite[Remark 2.3]{Fowler} and \cite[Theorem 4.4]{Hazlewood}) for a given set of $k$ directed graphs on a common vertex set to be the coordinate graphs of a $k$-graph. 

To be more precise, suppose we have two $k$-graphs $\Lambda,\Omega$ and a matrix $R\in \mathbb{M}_{\Lambda^0\times \Omega^0}(\mathbb{N})$ along with a family of isomorphisms $(\mathfrak{f}_i:E_{\mathbf{e}_i}^\Lambda\circ E_R\longrightarrow E_R\circ E_{\mathbf{e}_i}^\Omega)_{1\le i \le k}$. Form a directed graph $G$, where $G^0:=\Lambda^0\sqcup \Omega^0$ and $G^1:=\Lambda^\mathbf{1}\sqcup \Omega^\mathbf{1} \sqcup R^1$. The range and source maps are defined by extending naturally the individual range and source maps of $\Lambda,\Omega$ and $E_R$. Now, we color the edges of $G$ with $k+1$ colors using the map $d:G^1\longrightarrow \mathbb{N}^{k+1}$ defined by 
$$d(\alpha) :=
	\left\{
	\begin{array}{lll}
		\iota(d_\Lambda(\alpha))  & \mbox{if } \alpha\in \Lambda^\mathbf{1}, \\
            \iota(d_\Omega(\alpha))  & \mbox{if } \alpha\in \Omega^\mathbf{1},\\
		\mathbf{e}_{k+1}  &  \mbox{if } \alpha\in R^1,
	\end{array}
	\right.$$ 
where $\iota:\mathbb{N}^k\longrightarrow \mathbb{N}^{k+1}$ is the inclusion sending $(n_1,n_2,\ldots,n_k)$ to $(n_1,n_2,\ldots,n_k,0)$. One can naturally extend $d$ to color all the paths in $G$ (e.g., if $k=2$ and $\alpha g\beta$ is a path with $\alpha\in \Lambda^\mathbf{1}$, $d_\Lambda(\alpha)=(1,0)$, $g\in R_1$ and $\beta\in \Omega^\mathbf{1}$, $d_\Omega(\beta)=(0,1)$ then $d(\alpha g\beta)=(1,0,0)+(0,0,1)+(0,1,0)=(1,1,1)$). Thus, we have a $(k+1)$-colored directed graph $G$ with $G_i=E_{\mathbf{e}_i}^\Lambda\sqcup E_{\mathbf{e}_i}^\Omega$ for $i=1,2,\ldots,k$ and $G_{k+1}=E_R$. If $i,j\neq k+1$, we define isomorphisms $G_i\circ G_j\longrightarrow G_j\circ G_i$ by using the factorization rules of $\Lambda$ or $\Omega$, whichever is suitable. For each $i\neq k+1$, the isomorphism $G_i\circ G_{k+1}\longrightarrow G_{k+1}\circ G_i$ is given by the flip $\mathfrak{f}_i$. Now, the readers can easily verify (in view of \cite[Remark 2.3]{Fowler}) that the set of paths of $G$, together with the map $d$ and the said factorization rules, forms a well-defined $(k+1)$-graph if and only if the diagram $(\ref{comd3})$ commutes. 

$(iii)$ Given a specified $\Lambda-\Omega$ bridging pair $(R,\mathfrak{F})$, we can naturally form a $\Lambda-\Omega$ \emph{morph} \cite[Definition 3.1]{KPS}. Let $g\in R^1$ and $\omega\in \Omega$. If $\omega\notin \Omega^0$, then we can write $\omega=\omega_1\omega_2\cdots\omega_t$, where $\omega_i\in \Omega^\mathbf{1}$ for all $i=1,2,\ldots,t$ and $d(\omega_1)\le d(\omega_2)\le \cdots \le d(\omega_t)$. Let $d(\omega_i)=e_{d_i}$, $\mathfrak{f}_{d_1}^{-1}((g,\omega_1))=(\lambda_1,g_1)$ and $\mathfrak{f}_{d_{i+1}}^{-1}((g_i,\omega_{i+1})=(\lambda_{i+1},g_{i+1})$ for all $i=1,2,\ldots,t-1$. Let $\lambda=\lambda_1\lambda_2\cdots \lambda_t$. Define a map $\phi:R^1*_{\Omega^0}\Omega\longrightarrow \Lambda*_{\Lambda^0}R^1$ by 
$$\phi((g,\omega)) :=
	\left\{
	\begin{array}{ll}
		 (\lambda,g_t) & \mbox{if } \omega\notin \Omega^0\\
            (r(g),g)  & \mbox{if } \omega\in \Omega^0.
	\end{array}
	\right.$$ 
The commutativity of diagram $(\ref{comd3})$ then guarantees that $\phi$ is a well-defined bijection and subsequently $(R^1,r,s,\phi)$ is a $\Lambda-\Omega$ morph. 
\end{rmks}

A necessary condition for $R$ to be a bridging matrix for $\Lambda$ and $\Omega$ is that $A_{\mathbf{e}_i}R=RB_{\mathbf{e}_i}$ for all $i=1,2,\ldots,k$. This is also sufficient for $k=1$ (diagram (\ref{comd3}) vacuously commutes in this case) since to form a $2$-graph (see Remark \ref{rem explaining the bridging graph} $(ii)$) from a given $2$-colored directed graph, it is enough to specify the factorization rules for bi-colored paths of degree $(1,1)$ and that can be done by any isomorphism $\mathfrak{f}:E_1^\Lambda\circ E_R\longrightarrow E_R\circ E_1^\Omega$ (see \cite[\S 6]{Kumjian--Pask}). However, for $k\ge 2$, only commutativity on the level of matrices is not sufficient. We give some examples.
\begin{example}\label{ex matrix commutativity is not enough}
Suppose $\Lambda$ is a $2$-graph with the following $1$-skeleton:
\[
\begin{tikzpicture}[scale=0.35]

\node[circle,draw,fill=black,inner sep=0.5pt] (p11) at (1, 0) {$.$} 

edge[-latex, blue,thick,loop, out=50, in=-50, min distance=123, looseness=2.2] (p11)
edge[-latex, blue,thick, loop, out=60, in=-60, min distance=190, looseness=2.3] (p11)
edge[-latex, red,thick, loop, dashed, out=125, in=235, min distance=115, looseness=1.5] (p11)
edge[-latex, red, thick, loop, dashed, out=115, in=245, min distance=190, looseness=2.2] (p11);
                                                               
\node at (0.9, -1.5) {$u$};
\node at (-4, 1) {\textcolor{red}{$\beta_1,\beta_2$}};
\node at (6,1) {\textcolor{blue}{$\alpha_1,\alpha_2$}};

\end{tikzpicture}
\]
and $\Omega$ is a $2$-graph with the following $1$-skeleton:
\[
\begin{tikzpicture}[scale=1.5]
\node[inner sep=1.5pt, circle,draw,fill=black] (A) at (0,0) {};	
\node[inner sep=1.5pt, circle,draw,fill=black] (B) at (2,0) {};

\path[->, red, dashed, >=latex,thick] (A) edge [bend left=20] node[above=0.05cm]{} (B);
\path[->, red, dashed, >=latex,thick] (A) edge [bend left=40] node[above=0.05cm]{} (B);
\path[->, red, dashed, >=latex,thick] (B) edge [bend left=70] node[above=0.05cm]{} (A);
\path[->, red, dashed, >=latex,thick] (B) edge [bend left=100] node[above=0.05cm]{} (A);
\path[->,blue, >=latex,thick] (B) edge [bend left=20] node[below=0.05cm]{} (A);
\path[->,blue, >=latex,thick] (B) edge [bend left=40] node[below=0.05cm]{} (A);
\path[->,blue, >=latex,thick] (A) edge [bend left=70] node[below=0.05cm]{} (B);
\path[->,blue, >=latex,thick] (A) edge [bend left=100] node[below=0.05cm]{} (B);

\node at (-0.5,0) {$w$};
\node at (2.5,0) {$v$};
\node at (1,1) {\textcolor{blue}{$f_1,f_2$},\textcolor{red}{$~~e_3,e_4$}};
\node at (1,-1) {\textcolor{red}{$e_1,e_2$},\textcolor{blue}{$~~f_3,f_4$}};
\end{tikzpicture}
\]

The factorization rules for $\Lambda$ are given by $\alpha_i\beta_j=\beta_j\alpha_i$ for all $1\le i,j\le 2$. We fix the factorization rules for $\Omega$ as follows:
\begin{align*}
v\text{-}v~&\text{bi-colored paths}\hspace{2cm} w\text{-}w~\text{bi-colored paths}\\
&f_1e_1=e_3f_3\hspace{4.2cm} f_3e_3=e_2f_2\\
&f_1e_2=e_4f_3\hspace{4.2cm} f_3e_4=e_2f_1\\
&f_2e_1=e_3f_4\hspace{4.2cm} f_4e_3=e_1f_2\\
&f_2e_2=e_4f_4\hspace{4.2cm} f_4e_4=e_1f_1
\end{align*}

The matrices for $\Lambda$ are $A_{(1,0)}=A_{(0,1)}=(2)$ and the matrices for $\Omega$ are \[B_{(1,0)}=B_{(0,1)}=
\begin{pmatrix}
0 & 2 \\
2 & 0
\end{pmatrix}.\]
Take $R=\begin{pmatrix}
1 & 1
\end{pmatrix}$. Then, $A_{(1,0)}R=RB_{(1,0)}$ and $A_{(0,1)}R=RB_{(0,1)}$. The following figure shows the edges of the polymorphism $E_R$ connecting the vertices of $\Lambda$ and $\Omega$. 

\[
\begin{tikzpicture}[scale=1]

\node[circle,draw,fill=black,inner sep=0.5pt] (p11) at (0, 0) {$.$} 
edge[-latex, blue,thick,loop, out=45, in=135, min distance=60] (p11)
edge[-latex, blue,thick, loop, out=35, in=145, min distance=90] (p11)

edge[-latex, red,thick, loop, dashed, out=225, in=-45, min distance=60] (p11)
edge[-latex, red, thick, loop, dashed, out=215, in=-35, min distance=90] (p11);
                                                               
\node at (-0.1, -0.5) {$u$};

\node[inner sep=1.5pt, circle,draw,fill=black] (A) at (6,1) {};	
\node[inner sep=1.5pt, circle,draw,fill=black] (B) at (6,-1) {};

\path[->, red, dashed, >=latex,thick] (A) edge [bend left=20] node[above=0.05cm]{} (B);
\path[->, red, dashed, >=latex,thick] (A) edge [bend left=40] node[above=0.05cm]{} (B);
\path[->, red, dashed, >=latex,thick] (B) edge [bend left=70] node[above=0.05cm]{} (A);
\path[->, red, dashed, >=latex,thick] (B) edge [bend left=100] node[above=0.05cm]{} (A);
\path[->,blue, >=latex,thick] (B) edge [bend left=20] node[below=0.05cm]{} (A);
\path[->,blue, >=latex,thick] (B) edge [bend left=40] node[below=0.05cm]{} (A);
\path[->,blue, >=latex,thick] (A) edge [bend left=70] node[below=0.05cm]{} (B);
\path[->,blue, >=latex,thick] (A) edge [bend left=100] node[below=0.05cm]{} (B);

\node at (6,1.5) {$w$};
\node at (6,-1.5) {$v$};

\path[->, >=latex,thick] (A) edge [bend right=30] node[above=0.05cm]{$g_1$} (p11);
\path[->, >=latex,thick] (B) edge [bend left=30] node[below=0.05cm]{$g_2$} (p11);
\end{tikzpicture}
\]
For each $i=1,2$, there are four possible choices for an isomorphism $\mathfrak{f}_i:E_{\mathbf{e}_i}^\Lambda\circ E_R\longrightarrow E_R\circ E_{\mathbf{e}_i}^\Omega$. Thus, in total, we have $16$ different families of flips. A tedious verification confirms that no such family makes the diagram $(\ref{comd3})$ commute, for instance, if we choose 
\begin{align*}
    \mathfrak{f}_1:E_{(1,0)}^\Lambda\circ E_R &\longrightarrow E_R\circ E_{(1,0)}^\Omega\\
    (\alpha_1,g_1)&\longmapsto (g_2,f_2)\\
    (\alpha_1,g_2)&\longmapsto (g_1,f_3)\\
    (\alpha_2,g_1)&\longmapsto (g_2,f_1)\\
    (\alpha_2,g_2)&\longmapsto (g_1,f_4)
\end{align*}
and
\begin{align*}
    \mathfrak{f}_2:E_{(0,1)}^\Lambda\circ E_R &\longrightarrow E_R\circ E_{(0,1)}^\Omega\\
    (\beta_1,g_1)&\longmapsto (g_2,e_3)\\
    (\beta_1,g_2)&\longmapsto (g_1,e_2)\\
    (\beta_2,g_1)&\longmapsto (g_2,e_4)\\
    (\beta_2,g_2)&\longmapsto (g_1,e_1),
\end{align*}
then $(\alpha_1,\beta_2,g_1)\longmapsto (\alpha_1,g_2,e_4)\longmapsto (g_1,f_3,e_4)\longmapsto (g_1,e_2,f_1)$ by taking the top-right path in $(\ref{comd3})$; whereas $(\alpha_1,\beta_2,g_1)\longmapsto (\beta_2,\alpha_1,g_1)\longmapsto (\beta_2,g_2,f_2)\longmapsto (g_1,e_1,f_2)$ by taking the left-bottom path. 
\end{example}
\begin{example}\label{ex positive example of a bridging matrix}
The following are the $1$-skeletons of two $2$-graphs $\Lambda$ and $\Omega$.
\[
\begin{tikzpicture}[scale=1]

\node[circle,draw,fill=black,inner sep=0.5pt] (p11) at (0, 0) {$.$} 

edge[-latex, blue,thick,loop, out=30, in=110, min distance=70] (p11)
edge[-latex, blue,thick, loop, out=70, in=150, min distance=70] (p11)

edge[-latex, red,thick, loop, dashed, out=225, in=-45, min distance=60] (p11);

\node at (-0.1, -0.5) {$u$};
\node at (-1,0) {$\Lambda\equiv$};

\node[circle,draw,fill=black,inner sep=0.5pt] (A) at (6, 1) {$.$}
edge[-latex, red,thick,loop, dashed, out=45, in=135, min distance=60] (A);

\node[circle,draw,fill=black,inner sep=0.5pt] (B) at (6, -1) {$.$}
edge[-latex, red,thick, loop, dashed, out=225, in=-45, min distance=60] (B);

\path[->,blue, >=latex,thick] (B) edge [bend left=25] node[below=0.05cm]{} (A);
\path[->,blue, >=latex,thick] (B) edge [bend left=50] node[below=0.05cm]{} (A);
\path[->,blue, >=latex,thick] (A) edge [bend left=25] node[below=0.05cm]{} (B);
\path[->,blue, >=latex,thick] (A) edge [bend left=50] node[below=0.05cm]{} (B);

\node at (6,1.5) {$w$};
\node at (6,-1.5) {$v$};
\node at (3.6,0) {$\Omega\equiv$};
\node at (-1.3,1) {$f_1$};
\node at (1.2,1) {$f_2$};
\node at (0.8,-1) {$e$};
\node at (4.8,0) {$\alpha_3,\alpha_4$};
\node at (7.2,0) {$\alpha_1,\alpha_2$};
\node at (7,1.5) {$\gamma_1$};
\node at (7,-1.5) {$\gamma_2$};

\end{tikzpicture}
\]
In $\Lambda$, suppose $ef_i=f_i e$ for $i=1,2$. For $\Omega$, suppose the factorization rules are $\alpha_i\gamma_1=\gamma_2\alpha_i$ for $i=1,2$ and $\alpha_j\gamma_2=\gamma_1\alpha_j$ for $j=3,4$. It is easy to see that the matrix $R=\begin{pmatrix}
1 & 1
\end{pmatrix}$ satisfies $A_{\mathbf{e}_i}R=RB_{\mathbf{e}_i}$ for $i=1,2$. Consider the following isomorphisms:
\begin{align*}
    \mathfrak{f}_1:E_{(1,0)}^\Lambda\circ E_R &\longrightarrow E_R\circ E_{(1,0)}^\Omega\\
    (f_1,g^{u,w})&\longmapsto (g^{u,v},\alpha_1)\\
    (f_1,g^{u,v})&\longmapsto (g^{u,w},\alpha_3)\\
    (f_2,g^{u,w})&\longmapsto (g^{u,v},\alpha_2)\\
    (f_2,g^{u,v})&\longmapsto (g^{u,w},\alpha_4)
\end{align*}
and
\begin{align*}
    \mathfrak{f}_2:E_{(0,1)}^\Lambda\circ E_R &\longrightarrow E_R\circ E_{(0,1)}^\Omega\\
    (e,g^{u,w})&\longmapsto (g^{u,w},\gamma_1)\\
    (e,g^{u,v})&\longmapsto (g^{u,v},\gamma_2).\\
\end{align*}
One can check that the flips $\mathfrak{f}_1,\mathfrak{f}_2$ make the diagram (\ref{comd3}) commute. Therefore, $R$ is a bridging matrix for $\Lambda$ and $\Omega$. 
\end{example}

The existence of a bridging matrix has the following consequence in terms of bimodules of polymorphisms: 
\begin{lem}\label{lem towards specified conjugacy}
Let $\Lambda,\Omega$ be row-finite $k$-graphs without sources such that $|\Lambda^0|,|\Omega^0|< \infty$. Suppose $(R,\mathfrak{F}=(\mathfrak{f}_i)_{1\le i\le k})$ is a specified $\Lambda-\Omega$ bridging pair. Then, we have $\mathsf{F}\Lambda^0-\mathsf{F}\Omega^0$-bimodule isomorphisms
\[\sigma_\N:\mathsf{F}\Lambda^\N\otimes_{\mathsf{F}\Lambda^0}\mathsf{F}R^1\longrightarrow \mathsf{F}R^1\otimes_{\mathsf{F}\Omega^0}\mathsf{F}\Omega^\N,\]
for all $\N\in \mathbb{N}^k$ such that the diagram 

\begin{equation}\label{comd4}
\begin{tikzcd}[row sep=huge, column sep=huge]
        \mathsf{F}\Lambda^{\N+\M}\otimes_{\mathsf{F}\Lambda^0}\mathsf{F}R^1 \arrow [r, "\sigma_{\N+\M}"] \arrow [d, "\phi_{\N,\M}^\Lambda\otimes id_{\mathsf{F}R^1}"] & \mathsf{F}R^1\otimes_{\mathsf{F}\Omega^0}\mathsf{F}\Omega^{\N+\M} \arrow [d, "id_{\mathsf{F}R^1}\otimes \phi_{\N,\M}^\Omega"] \\	(\mathsf{F}\Lambda^\N\otimes_{\mathsf{F}\Lambda^0}\mathsf{F}\Lambda^\M)\otimes_{\mathsf{F}\Lambda^0}\mathsf{F}R^1 \arrow [d, "\alpha_{_{\mathsf{F}\Lambda^\N,\mathsf{F}\Lambda^\M,\mathsf{F}R^1}}"] & \mathsf{F}R^1\otimes_{\mathsf{F}\Omega^0}(\mathsf{F}\Omega^\N\otimes_{\mathsf{F}\Omega^0}\mathsf{F}\Omega^\M) \arrow [d, "\alpha_{_{\mathsf{F}R^1,\mathsf{F}\Omega^\N,\mathsf{F}\Omega^\M}}^{-1}"]\\
        \mathsf{F}\Lambda^\N\otimes_{\mathsf{F}\Lambda^0}(\mathsf{F}\Lambda^\M\otimes_{\mathsf{F}\Lambda^0}\mathsf{F}R^1) \arrow [d, "id_{\mathsf{F}\Lambda^\N}\otimes \sigma_\M"] & (\mathsf{F}R^1\otimes_{\mathsf{F}\Omega^0}\mathsf{F}\Omega^\N)\otimes_{\mathsf{F}\Omega^0}\mathsf{F}\Omega^\M \arrow [d, "\sigma_\N^{-1}\otimes id_{\mathsf{F}\Omega^\M}"]\\
        \mathsf{F}\Lambda^\N\otimes_{\mathsf{F}\Lambda^0}(\mathsf{F}R^1\otimes_{\mathsf{F}\Omega^0}\mathsf{F}\Omega^\M) \arrow [r,"\alpha_{_{\mathsf{F}\Lambda^\N,\mathsf{F}R^1,\mathsf{F}\Omega^\M}}^{-1}"] & (\mathsf{F}\Lambda^\N\otimes_{\mathsf{F}\Lambda^0}\mathsf{F}R^1)\otimes_{\mathsf{F}\Omega^0}\mathsf{F}\Omega^\M       
\end{tikzcd}
\end{equation}
commutes for all $\N,\M\in \mathbb{N}^k$. 
\end{lem}
\begin{proof}
For any $i=1,2,\ldots,k$, since $\mathsf{F}\Lambda^{\mathbf{e}_i}\otimes_{\mathsf{F}\Lambda^0}\mathsf{F}R^1\cong \mathsf{F}(E_{\mathbf{e}_i}^\Lambda \circ E_R)^1$ and $\mathsf{F}R^1\otimes_{\mathsf{F}\Omega^0}\mathsf{F}\Omega^{\mathbf{e}_i}\cong \mathsf{F}(E_R\circ E_{\mathbf{e}_i}^\Omega)^1$, the flip 
\[\mathfrak{f}_i:E_{\mathbf{e}_i}^\Lambda\circ E_R\longrightarrow E_R\circ E_{\mathbf{e}_i}^\Omega\] induces the isomorphism
\[\Tilde{\mathfrak{f}}_i:\mathsf{F}\Lambda^{\mathbf{e}_i}\otimes_{\mathsf{F}\Lambda^0}\mathsf{F}R^1\longrightarrow \mathsf{F}R^1\otimes_{\mathsf{F}\Omega^0}\mathsf{F}\Omega^{\mathbf{e}_i}.\] Let $\sigma_{\mathbf{e}_i}:=\Tilde{\mathfrak{f}}_i$ for all $i=1,2,\ldots,k$. Suppose we have defined $\sigma_\M$ for some $m\in \mathbb{N}^k$. Then, we define \[\sigma_{\M+\mathbf{e}_i}:\mathsf{F}\Lambda^{\M+\mathbf{e}_i}\otimes_{\mathsf{F}\Lambda^0}\mathsf{F}R^1\longrightarrow \mathsf{F}R^1\otimes_{\mathsf{F}\Omega^0}\mathsf{F}\Omega^{\M+\mathbf{e}_i}\] as \[\sigma_{\M+\mathbf{e}_i}:=\left(id_{\mathsf{F}R^1}\otimes (\phi_{\M,\mathbf{e}_i}^\Omega)^{-1}\right) \circ \alpha_{_{\mathsf{F}R^1,\mathsf{F}\Omega^\M,\mathsf{F}\Omega^{\mathbf{e}_i}}}\circ \left(\sigma_\M\otimes id_{\mathsf{F}\Omega^{\mathbf{e}_i}}\right)\circ \alpha_{_{\mathsf{F}\Lambda^\M,\mathsf{F}R^1,\mathsf{F}\Omega^{\mathbf{e}_i}}}^{-1}\circ \left(id_{\mathsf{F}\Lambda^\M}\otimes \Tilde{\mathfrak{f}}_i\right)\circ \alpha_{_{\mathsf{F}\Lambda^\M,\mathsf{F}\Lambda^{\mathbf{e}_i},\mathsf{F}R^1}}\circ \left(\phi_{\M,\mathbf{e}_i}^\Lambda\otimes id_{\mathsf{F}R^1}\right).\] The commutativity of (\ref{comd3}) guarantees that if $\sigma_\M,\sigma_{\M'}$ are already defined and $\M+\mathbf{e}_i=\M'+\mathbf{e}_j$ for $i\neq j$, then $\sigma_{\M+\mathbf{e}_i}=\sigma_{\M'+\mathbf{e}_j}$. Therefore, we can define $\sigma_\N$ for each $\N\in \mathbb{N}^k$ without bothering about which basic $k$-tuple to choose first in the factorization $\N=n_1 \mathbf{e}_1+n_2 \mathbf{e}_2+\dots +n_k \mathbf{e}_k$. 

Now, we prove the commutativity of (\ref{comd4}). Fix any $\N\in \mathbb{N}^k$. We claim that (\ref{comd4}) commutes for every $\M\in \mathbb{N}^k$. For this, we proceed by induction on $\ell(\M)=m_1+m_2+\dots+m_k$. The case $\ell(\M)=1$ is already covered in view of our inductive definition of the maps $\sigma_p$'s. Assume that (\ref{comd4}) commutes for all $\M\in \mathbb{N}^k$ with $\ell(\M)=N(\ge 1)$ and choose any $\mathbf{p}\in \mathbb{N}^k$ with $\ell(\mathbf{p})=N+1$. Then, $\mathbf{p}=\M+\mathbf{e}_i$ for some $i\in \{1,2,\ldots,k\}$. Now, 
{\allowdisplaybreaks
\begin{align*}
    &\left(id_{\mathsf{F}R^1}\otimes (\phi_{\N,\mathbf{p}}^\Omega)^{-1}\right) \circ \alpha_{_{\mathsf{F}R^1,\mathsf{F}\Omega^\N,\mathsf{F}\Omega^\mathbf{p}}}\circ \left(\sigma_\N\otimes id_{\mathsf{F}\Omega^\mathbf{p}}\right)\circ \alpha_{_{\mathsf{F}\Lambda^\N,\mathsf{F}R^1,\mathsf{F}\Omega^\mathbf{p}}}^{-1}\circ \left(id_{\mathsf{F}\Lambda^\N}\otimes \sigma_\mathbf{p}\right)\circ \alpha_{_{\mathsf{F}\Lambda^\N,\mathsf{F}\Lambda^\mathbf{p},\mathsf{F}R^1}}\circ \left(\phi_{\N,\mathbf{p}}^\Lambda\otimes id_{\mathsf{F}R^1}\right)\\    
    =&\left(id_{\mathsf{F}R^1}\otimes (\phi_{\N+\M,\mathbf{e}_i}^\Omega)^{-1}\right)\circ \left(id_{\mathsf{F}R^1}\otimes (\phi_{\N,\M}^\Omega)^{-1}\otimes id_{\mathsf{F}\Omega^{\mathbf{e}_i}}\right)\circ \left(id_{\mathsf{F}R^1}\otimes \alpha_{_{\mathsf{F}\Omega^\N,\mathsf{F}\Omega^\M,\mathsf{F}\Omega^{\mathbf{e}_i}}}^{-1}\right)\circ \left(id_{\mathsf{F}R^1}\otimes id_{\mathsf{F}\Omega^\N}\otimes \phi_{\M,\mathbf{e}_i}^\Omega\right)\circ \\
    &\alpha_{_{\mathsf{F}R^1,\mathsf{F}\Omega^\N,\mathsf{F}\Omega^\mathbf{p}}}\circ \left(\sigma_\N\otimes id_{\mathsf{F}\Omega^\mathbf{p}}\right)\circ \alpha_{_{\mathsf{F}\Lambda^\N,\mathsf{F}R^1,\mathsf{F}\Omega^\mathbf{p}}}^{-1}\circ \left(id_{\mathsf{F}\Lambda^\N}\otimes \sigma_\mathbf{p}\right)\circ \alpha_{_{\mathsf{F}\Lambda^\N,\mathsf{F}\Lambda^\mathbf{p},\mathsf{F}R^1}}\circ \left(\phi_{\N,\mathbf{p}}^\Lambda\otimes id_{\mathsf{F}R^1}\right)\\
    =&\left(id_{\mathsf{F}R^1}\otimes (\phi_{\N+\M,\mathbf{e}_i}^\Omega)^{-1}\right)\circ \left(id_{\mathsf{F}R^1}\otimes (\phi_{\N,\M}^\Omega)^{-1}\otimes id_{\mathsf{F}\Omega^{\mathbf{e}_i}}\right)\circ \left(id_{\mathsf{F}R^1}\otimes \alpha_{_{\mathsf{F}\Omega^\N,\mathsf{F}\Omega^\M,\mathsf{F}\Omega^{\mathbf{e}_i}}}^{-1}\right)\circ \left(id_{\mathsf{F}R^1}\otimes id_{\mathsf{F}\Omega^\N}\otimes \phi_{\M,\mathbf{e}_i}^\Omega\right)\circ \\
    &\alpha_{_{\mathsf{F}R^1,\mathsf{F}\Omega^\N,\mathsf{F}\Omega^\mathbf{p}}}\circ \left(\sigma_\N\otimes id_{\mathsf{F}\Omega^\mathbf{p}}\right)\circ \alpha_{_{\mathsf{F}\Lambda^\N,\mathsf{F}R^1,\mathsf{F}\Omega^\mathbf{p}}}^{-1}\circ \left(id_{\mathsf{F}\Lambda^\N}\otimes \sigma_\mathbf{p}\right)\circ \alpha_{_{\mathsf{F}\Lambda^\N,\mathsf{F}\Lambda^\mathbf{p},\mathsf{F}R^1}}\circ \left(id_{\mathsf{F}\Lambda^\N}\otimes (\phi_{\M,\mathbf{e}_i}^\Lambda)^{-1}\otimes id_{\mathsf{F}R^1}\right)\circ\\   &\left(\alpha_{_{\mathsf{F}\Lambda^\N,\mathsf{F}\Lambda^\M,\mathsf{F}\Lambda^{\mathbf{e}_i}}}\otimes id_{\mathsf{F}R^1}\right)\circ \left(\phi_{\N,\M}^\Lambda\otimes id_{\mathsf{F}\Lambda^{\mathbf{e}_i}}\otimes id_{\mathsf{F}R^1}\right)\circ \left(\phi_{\N+\M,\mathbf{e}_i}^\Lambda\otimes id_{\mathsf{F}R^1}\right)\\
    =&\left(id_{\mathsf{F}R^1}\otimes (\phi_{\N+\M,\mathbf{e}_i}^\Omega)^{-1}\right)\circ \alpha_{_{\mathsf{F}R^1,\mathsf{F}\Omega^{\N+\M},\mathsf{F}\Omega^{\mathbf{e}_i}}}\circ \overline{\left(id_{\mathsf{F}R^1}\otimes (\phi_{\N,\M}^\Omega)^{-1}\otimes id_{\mathsf{F}\Omega^{\mathbf{e}_i}}\right)\circ \left(\alpha_{_{\mathsf{F}R^1,\mathsf{F}\Omega^\N,\mathsf{F}\Omega^\M}}\otimes id_{\mathsf{F}\Omega^{\mathbf{e}_i}}\right)\circ}\\
    &\overline{\left(\sigma_\N\otimes id_{\mathsf{F}\Omega^\M}\otimes id_{\mathsf{F}\Omega^{\mathbf{e}_i}}\right)\circ \left(\alpha_{_{\mathsf{F}\Lambda^\N,\mathsf{F}R^1,\mathsf{F}\Omega^\M}}^{-1}\otimes id_{\mathsf{F}\Omega^{\mathbf{e}_i}}\right)\circ \left(id_{\mathsf{F}\Lambda^\N}\otimes \sigma_\M \otimes id_{\mathsf{F}\Omega^{\mathbf{e}_i}}\right)\circ \left(\alpha_{_{\mathsf{F}\Lambda^\N,\mathsf{F}\Lambda^\M,\mathsf{F}R^1}}\otimes id_{\mathsf{F}\Omega^{\mathbf{e}_i}}\right)\circ}\\
    &\overline{\left(\phi_{\N,\M}^\Lambda \otimes id_{\mathsf{F}R^1} \otimes id_{\mathsf{F}\Omega^{\mathbf{e}_i}}\right)}\circ \alpha_{_{\mathsf{F}\Lambda^{\N+\M},\mathsf{F}R^1,\mathsf{F}\Omega^{\mathbf{e}_i}}}^{-1}\circ \left(id_{\mathsf{F}\Lambda^{\N+\M}}\otimes \Tilde{\mathfrak{f}}_i\right)\circ \alpha_{_{\mathsf{F}\Lambda^{\N+\M},\mathsf{F}\Lambda^{\mathbf{e}_i},\mathsf{F}R^1}}\circ \left(\phi_{\N+\M,\mathbf{e}_i}^\Lambda\otimes id_{\mathsf{F}R^1}\right)\\
    =&\left(id_{\mathsf{F}R^1}\otimes (\phi_{\N+\M,\mathbf{e}_i}^\Omega)^{-1}\right)\circ \alpha_{_{\mathsf{F}R^1,\mathsf{F}\Omega^{\N+\M},\mathsf{F}\Omega^{\mathbf{e}_i}}}\circ \textcolor{blue}{\left(\sigma_{\N+\M}\otimes id_{\mathsf{F}\Omega^{\mathbf{e}_i}}\right)}\circ \alpha_{_{\mathsf{F}\Lambda^{\N+\M},\mathsf{F}R^1,\mathsf{F}\Omega^{\mathbf{e}_i}}}^{-1}\circ \left(id_{\mathsf{F}\Lambda^{\N+\M}}\otimes \Tilde{\mathfrak{f}}_i\right)\circ \alpha_{_{\mathsf{F}\Lambda^{\N+\M},\mathsf{F}\Lambda^{\mathbf{e}_i},\mathsf{F}R^1}}\circ\\ &\left(\phi_{\N+\M,\mathbf{e}_i}^\Lambda\otimes id_{\mathsf{F}R^1}\right)\\
    =&\sigma_{\N+\M+\mathbf{e}_i}=\sigma_{\N+\mathbf{p}}.
\end{align*}
}
We use the induction hypothesis on the overlined composition and replace it in the next step by the map marked in blue. All other replacements follow from associativity of tensor products. Hence, by induction, (\ref{comd4}) commutes for all $\M\in \mathbb{N}^k$. Since $\N$ was chosen arbitrarily, this completes the proof.
\end{proof}

Now, we are in a position to proceed towards the construction of the bridging bimodule in the higher-rank setting. 

We continue with the assumption that $\Lambda$ and $\Omega$ are row-finite $k$-graphs without sources and with finite object sets. Suppose we have a matrix $R\in \mathbb{M}_{\Lambda^0\times \Omega^0}(\mathbb{N})$ such that $A_{\mathbf{e}_i}R=RB_{\mathbf{e}_i}$ for all $i=1,2,\ldots,k$. Note that the matrix of the product polymorphism $E_{\mathbf{e}_i}^\Lambda\circ E_R$ is the product of the individual matrices, i.e., $A_{\mathbf{e}_i}R$ and similarly, the matrix of $E_R\circ E_{\mathbf{e}_i}^\Omega$ is $RB_{\mathbf{e}_i}$. Therefore, the polymorphisms $E_{\mathbf{e}_i}^\Lambda\circ E_R$ and $E_R\circ E_{\mathbf{e}_i}^\Omega$ are isomorphic for each $i=1,2,\ldots,k$. 

Let us choose isomorphisms 
\[\mathfrak{f}_i:E_{\mathbf{e}_i}^\Lambda\circ E_R\longrightarrow E_R\circ E_{\mathbf{e}_i}^\Omega,\] such that (\ref{comd3}) commutes, i.e., we choose a specific family of isomorphisms $\mathfrak{F}=(\mathfrak{f}_i)_{i\le i\le k}$ such that $(R,\mathfrak{F})$ is a $\Lambda-\Omega$ bridging pair. 

We set \[M(R):=\mathsf{F}R^1\otimes_{\mathsf{F}\Omega^0}\KP_\mathsf{F}(\Omega).\] Then $M(R)$ is automatically an $\mathsf{F}\Lambda^0-\KP_\mathsf{F}(\Omega)$-bimodule. We now impose a certain left $\KP_\mathsf{F}(\Lambda)$ action on $M(R)$ which will make it a $\KP_\mathsf{F}(\Lambda)-\KP_\mathsf{F}(\Omega)$-bimodule. We begin by fixing some notation. 
\begin{itemize}
    \item For any $A-B$-bimodule $M$ and $a\in A$, $\mu_a:M\longrightarrow M$ is the $B$-module endomorphism defined as left multiplication by $a$. 

    \item For any $\lambda\in \Lambda$ and any $\mathsf{F}\Lambda^0-\mathsf{F}\Omega^0$-bimodule $M$, define $\theta_\lambda:M\longrightarrow \mathsf{F}\Lambda^{d(\lambda)}\otimes_{\mathsf{F}\Lambda^0}M$ to be the $\mathsf{F}\Lambda^0-\mathsf{F}\Omega^0$-bimodule homomorphisms such that \[\theta_\lambda(m):=\lambda\otimes m,\] for all $m\in M$.

    \item For any $\mu\in \Lambda$ and any $\mathsf{F}\Lambda^0-\mathsf{F}\Omega^0$-bimodule $M$, define $\theta_{\mu^*}:\mathsf{F}\Lambda^{d(\mu)}\otimes_{\mathsf{F}\Lambda^0}M\longrightarrow M$ to be the $\mathsf{F}\Lambda^0-\mathsf{F}\Omega^0$-bimodule homomorphism such that \[\theta_{\mu^*}(x\otimes m):=\left(\pi^{-1}(s_{\mu^*}\pi(x))\right)m,\] for all $x\in \mathsf{F}\Lambda^{d(\mu)}$ and $m\in M$. 
\end{itemize}
Next, we construct a certain Kumjian--Pask $\Lambda$-family inside the $\mathsf{F}$-algebra $\End_{\KP_\mathsf{F}(\Omega)}(M(R))$. Define
\begin{center}
    $P_v:=\mu_v\otimes id_{\KP_\mathsf{F}(\Omega)}$;

    $S_\lambda:=\left(id_{\mathsf{F}R^1}\otimes \rho_{\Omega,d(\lambda)}\right)\circ \alpha_{_{\mathsf{F}R^1,\mathsf{F}\Omega^{d(\lambda)},\KP_\mathsf{F}(\Omega)}}\circ \left(\sigma_{d(\lambda)}\otimes id_{\KP_\mathsf{F}(\Omega)}\right)\circ \left(\theta_\lambda\otimes id_{\KP_\mathsf{F}(\Omega)}\right)$;

    $S_{\mu^*}:=\left(\theta_{\mu^*}\otimes id_{\KP_\mathsf{F}(\Omega)}\right)\circ \left(\sigma_{d(\mu)}^{-1}\otimes id_{\KP_\mathsf{F}(\Omega)}\right)\circ \alpha_{_{\mathsf{F}R^1,\mathsf{F}\Omega^{d(\mu)},\KP_\mathsf{F}(\Omega)}}^{-1}\circ \left(id_{\mathsf{F}R^1}\otimes\rho_{\Omega,d(\mu)}^{-1}\right)$;
\end{center}
for all $v\in \Lambda^0$ and $\lambda,\mu\in \Lambda$. 

The relations $(KP1)$, $(KP3)$ and $(KP4)$ can be verified exactly in the same way as it was done for Relations $1,4$ and $5$ in the setting of Leavitt path algebras (see the proof of \cite[Theorem 4.11]{ART}). So we do not repeat these here. Also, one can verify that 
\begin{align*}
    P_{r(\lambda)}S_\lambda=S_\lambda=S_\lambda P_{s(\lambda)} ~\text{and}~
    P_{s(\lambda)}S_{\lambda^*}=S_{\lambda^*}=S_{\lambda^*} P_{r(\lambda)}
\end{align*}
for all $\lambda\in \Lambda^{\neq 0}$. Below, we only verify the remaining part of $(KP2)$, i.e., $S_\lambda S_\mu=S_{\lambda\mu}$ and $S_{\mu^*}S_{\lambda^*}=S_{(\lambda\mu)^*}$ for all $\lambda,\mu\in \Lambda^{\neq 0}$ such that $s(\lambda)=r(\mu)$. 

Let $\lambda,\mu\in \Lambda^{\neq 0}$ be such that $s(\lambda)=r(\mu)$, $d(\lambda)=\N$ and $d(\mu)=\M$. Then, we have

{\allowdisplaybreaks
\begin{align*}
    S_\lambda S_\mu=& \left(\left(id_{\mathsf{F}R^1}\otimes \rho_{\Omega,\N}\right)\circ \alpha_{_{\mathsf{F}R^1,\mathsf{F}\Omega^\N,\KP_\mathsf{F}(\Omega)}}\circ \left(\sigma_\N\otimes id_{\KP_\mathsf{F}(\Omega)}\right)\circ \left(\theta_\lambda\otimes id_{\KP_\mathsf{F}(\Omega)}\right)\right)\circ\\ &\left(\left(id_{\mathsf{F}R^1}\otimes \rho_{\Omega,\M}\right)\circ \alpha_{_{\mathsf{F}R^1,\mathsf{F}\Omega^\M,\KP_\mathsf{F}(\Omega)}}\circ \left(\sigma_\M\otimes id_{\KP_\mathsf{F}(\Omega)}\right)\circ \left(\theta_\mu\otimes id_{\KP_\mathsf{F}(\Omega)}\right)\right)\\
    =& \left(id_{\mathsf{F}R^1}\otimes \rho_{\Omega,\N}\right)\circ \alpha_{_{\mathsf{F}R^1,\mathsf{F}\Omega^\N,\KP_\mathsf{F}(\Omega)}}\circ \left(\sigma_\N\theta_\lambda\otimes \rho_{\Omega,\M}\right) \circ \alpha_{_{\mathsf{F}R^1,\mathsf{F}\Omega^\M,\KP_\mathsf{F}(\Omega)}}\circ \left(\sigma_\M\otimes id_{\KP_\mathsf{F}(\Omega)}\right)\circ \left(\theta_\mu\otimes id_{\KP_\mathsf{F}(\Omega)}\right)\\
    =& \left(id_{\mathsf{F}R^1}\otimes \rho_{\Omega,\N}\right)\circ \alpha_{_{\mathsf{F}R^1,\mathsf{F}\Omega^\N,\KP_\mathsf{F}(\Omega)}}\circ \left((id_{\mathsf{F}R^1}\otimes id_{\mathsf{F}\Omega^\N})\otimes \rho_{\Omega,\M}\right)\circ \left(\sigma_\N\theta_\lambda\otimes (id_{\mathsf{F}\Omega^\M}\otimes id_{\KP_\mathsf{F}(\Omega)})\right)\circ\\  &\alpha_{_{\mathsf{F}R^1,\mathsf{F}\Omega^\M,\KP_\mathsf{F}(\Omega)}}\circ \left(\sigma_\M\otimes id_{\KP_\mathsf{F}(\Omega)}\right)\circ \left(\theta_\mu\otimes id_{\KP_\mathsf{F}(\Omega)}\right)\\
    =& \left(id_{\mathsf{F}R^1}\otimes \rho_{\Omega,\N}\right)\circ \left(id_{\mathsf{F}R^1}\otimes (id_{\mathsf{F}\Omega^\N}\otimes \rho_{\Omega,\M})\right)\circ \alpha_{_{\mathsf{F}R^1,\mathsf{F}\Omega^\N,\mathsf{F}\Omega^\M\otimes \KP_\mathsf{F}(\Omega)}}\circ \alpha_{_{\mathsf{F}R^1\otimes \mathsf{F}\Omega^\N,\mathsf{F}\Omega^\M,\KP_\mathsf{F}(\Omega)}}\circ\\
    &\left((\sigma_\N\theta_\lambda\otimes (id_{\mathsf{F}\Omega^\M})\otimes id_{\KP_\mathsf{F}(\Omega)}\right)\circ \left(\sigma_\M\otimes id_{\KP_\mathsf{F}(\Omega)}\right)\circ \left(\theta_\mu\otimes id_{\KP_\mathsf{F}(\Omega)}\right)\\
    =& \left(id_{\mathsf{F}R^1}\otimes \rho_{\Omega,\N}\right)\circ \left(id_{\mathsf{F}R^1}\otimes (id_{\mathsf{F}\Omega^\N}\otimes \rho_{\Omega,\M})\right)\circ \left(id_{\mathsf{F}R^1}\otimes \alpha_{_{\mathsf{F}\Omega^\N,\mathsf{F}\Omega^\M,\KP_\mathsf{F}(\Omega)}}\right)\circ \left(id_{\mathsf{F}R^1}\otimes (\phi_{\N,\M}^\Omega\otimes id_{\KP_\mathsf{F}(\Omega)})\right)\circ\\
    &\alpha_{_{\mathsf{F}R^1,\mathsf{F}\Omega^{\N+\M},\KP_\mathsf{F}(\Omega)}}\circ \left((id_{\mathsf{F}R^1}\otimes (\phi_{\N,\M}^\Omega)^{-1})\otimes id_{\KP_\mathsf{F}(\Omega)}\right)\circ \left(\alpha_{_{\mathsf{F}R^1,\mathsf{F}\Omega^\N,\mathsf{F}\Omega^\M}}\otimes id_{\KP_\mathsf{F}(\Omega)}\right)\circ\\
    &\left((\sigma_\N\theta_\lambda\otimes (id_{\mathsf{F}\Omega^\M})\otimes id_{\KP_\mathsf{F}(\Omega)}\right)\circ \left(\sigma_\M\otimes id_{\KP_\mathsf{F}(\Omega)}\right)\circ \left(\theta_\mu\otimes id_{\KP_\mathsf{F}(\Omega)}\right)\\
    =& \left(id_{\mathsf{F}R^1}\otimes \left(\rho_{\Omega,\N}\circ (id_{\mathsf{F}\Omega^\N}\otimes \rho_{\Omega,\M})\circ \alpha_{_{\mathsf{F}\Omega^\N,\mathsf{F}\Omega^\M,\KP_\mathsf{F}(\Omega)}}\circ (\phi_{\N,\M}^\Omega\otimes id_{\KP_\mathsf{F}(\Omega)})\right)\right)\circ \alpha_{_{\mathsf{F}R^1,\mathsf{F}\Omega^{\N+\M},\KP_\mathsf{F}(\Omega)}}\circ\\
    &\left((id_{\mathsf{F}R^1}\otimes (\phi_{\N,\M}^\Omega)^{-1})\otimes id_{\KP_\mathsf{F}(\Omega)}\right)\circ \left(\alpha_{_{\mathsf{F}R^1,\mathsf{F}\Omega^\N,\mathsf{F}\Omega^\M}}\otimes id_{\KP_\mathsf{F}(\Omega)}\right)\circ \left((\sigma_\N\otimes id_{\mathsf{F}\Omega^\M})\otimes id_{\KP_\mathsf{F}(\Omega)}\right)\circ\\
    &\overline{\left((\theta_\lambda\otimes id_{\mathsf{F}\Omega^\M})\otimes id_{\KP_\mathsf{F}(\Omega)}\right)\circ \left(\sigma_\M\otimes id_{\KP_\mathsf{F}(\Omega)}\right)\circ \left(\theta_\mu\otimes id_{\KP_\mathsf{F}(\Omega)}\right)}\\
    =& \left(id_{\mathsf{F}R^1}\otimes \rho_{\Omega,\N+\M}\right)\circ \alpha_{_{\mathsf{F}R^1,\mathsf{F}\Omega^{\N+\M},\KP_\mathsf{F}(\Omega)}}\circ \left((id_{\mathsf{F}R^1}\otimes (\phi_{\N,\M}^\Omega)^{-1})\otimes id_{\KP_\mathsf{F}(\Omega)}\right)\circ \left(\alpha_{_{\mathsf{F}R^1,\mathsf{F}\Omega^\N,\mathsf{F}\Omega^\M}}\otimes id_{\KP_\mathsf{F}(\Omega)}\right)\circ\\
    &\left((\sigma_\N\otimes id_{\mathsf{F}\Omega^\M})\otimes id_{\KP_\mathsf{F}(\Omega)}\right)\circ \textcolor{blue}{\left(\alpha_{_{\mathsf{F}\Lambda^\N,\mathsf{F}R^1,\mathsf{F}\Omega^\M}}^{-1}\otimes id_{\KP_\mathsf{F}(\Omega)}\right)\circ \left((id_{\mathsf{F}\Lambda^\N}\otimes \sigma_\M)\otimes id_{\KP_\mathsf{F}(\Omega)}\right)\circ}\\
    &\textcolor{blue}{\left(\alpha_{_{\mathsf{F}\Lambda^\N,\mathsf{F}\Lambda^\M,\mathsf{F}R^1}}\otimes id_{\KP_\mathsf{F}(\Omega)}\right)\circ \left((\phi_{\N,\M}^\Lambda\otimes id_{\mathsf{F}R^1})\otimes id_{\KP_\mathsf{F}(\Omega)}\right)\circ \left(\theta_{\lambda\mu}\otimes id_{\KP_\mathsf{F}(\Omega)}\right)}~\text{(Using (\ref{comd2}) for $\Omega$)}\\
    =&\left(id_{\mathsf{F}R^1\otimes \rho_{\Omega,\N+\M}}\right)\circ \alpha_{_{\mathsf{F}R^1,\mathsf{F}\Omega^{\N+\M},\KP_\mathsf{F}(\Omega)}}\circ \left(\sigma_{\N+\M}\otimes id_{\KP_\mathsf{F}(\Omega)}\right)\circ \left(\theta_{\lambda\mu}\otimes id_{\KP_\mathsf{F}(\Omega)}\right)~\text{(Using (\ref{comd4}))}\\
    =& S_{\lambda\mu}
\end{align*}
}
and 
{\allowdisplaybreaks
\begin{align*}
    S_{\mu^*}S_{\lambda^*}=& \left((\theta_{\mu^*}\otimes id_{\KP_\mathsf{F}(\Omega)})\circ (\sigma_\M^{-1}\otimes id_{\KP_\mathsf{F}(\Omega)})\circ \alpha_{_{\mathsf{F}R^1,\mathsf{F}\Omega^\M,\KP_\mathsf{F}(\Omega)}}\circ (id_{\mathsf{F}R^1}\otimes \rho_{\Omega,\M}^{-1})\right)\circ\\
    &\left((\theta_{\lambda^*}\otimes id_{\KP_\mathsf{F}(\Omega)})\circ (\sigma_\N^{-1}\otimes id_{\KP_\mathsf{F}(\Omega)})\circ \alpha_{_{\mathsf{F}R^1,\mathsf{F}\Omega^\N,\KP_\mathsf{F}(\Omega)}}\circ (id_{\mathsf{F}R^1}\otimes \rho_{\Omega,\N}^{-1})\right)\\
    =& \left(\theta_{\mu^*}\otimes id_{\KP_\mathsf{F}(\Omega)}\right)\circ (\sigma_\M^{-1}\otimes id_{\KP_\mathsf{F}(\Omega)})\circ \alpha_{_{\mathsf{F}R^1,\mathsf{F}\Omega^\M,\KP_\mathsf{F}(\Omega)}}^{-1}\circ \left(\theta_{\lambda^*}\otimes \rho_{\Omega,\M}^{-1}\right)\circ \left(\sigma_\N^{-1}\otimes id_{\KP_\mathsf{F}(\Omega)}\right)\circ\\
    &\alpha_{_{\mathsf{F}R^1,\mathsf{F}\Omega^\N,\KP_\mathsf{F}(\Omega)}}^{-1}\circ \left(id_{\mathsf{F}R^1}\otimes \rho_{\Omega,\N}^{-1}\right)\\
    =& \overline{\left(\theta_{\mu^*}\otimes id_{\KP_\mathsf{F}(\Omega)}\right)\circ \left(\sigma_\M^{-1}\otimes id_{\KP_\mathsf{F}(\Omega)}\right)\circ \left((\theta_{\lambda^*}\otimes id_{\mathsf{F}\Omega^\M})\otimes id_{\KP_\mathsf{F}(\Omega)}\right)}\circ \alpha_{_{\mathsf{F}\Lambda^\N\otimes \mathsf{F}R^1,\mathsf{F}\Omega^\M,\KP_\mathsf{F}(\Omega)}}^{-1}\circ\\
    &\left((id_{\mathsf{F}\Lambda^\N}\otimes id_{\mathsf{F}R^1})\otimes \rho_{\Omega,\M}^{-1}\right)\circ \left(\sigma_\N^{-1}\otimes id_{\KP_\mathsf{F}(\Omega)}\right)\circ \alpha_{_{\mathsf{F}R^1,\mathsf{F}\Omega^\N,\KP_\mathsf{F}(\Omega)}}^{-1}\circ \left(id_{\mathsf{F}R^1}\otimes \rho_{\Omega,\N}^{-1}\right)\\
    =& \textcolor{blue}{\left(\theta_{(\lambda\mu)^*}\otimes id_{\KP_\mathsf{F}(\Omega)}\right)\circ \left(((\phi_{\N,\M}^\Lambda)^{-1}\otimes id_{\mathsf{F}R^1})\otimes id_{\KP_\mathsf{F}(\Omega)}\right)\circ \left(\alpha_{_{\mathsf{F}\Lambda^\N,\mathsf{F}\Lambda^\M,\mathsf{F}R^1}}^{-1}\otimes id_{\KP_\mathsf{F}(\Omega)}\right)\circ}\\
    &\textcolor{blue}{\left((id_{\mathsf{F}\Lambda^\N}\otimes \sigma_\M^{-1})\otimes id_{\KP_\mathsf{F}(\Omega)}\right)\circ \left(\alpha_{_{\mathsf{F}\Lambda^\N,\mathsf{F}R^1,\mathsf{F}\Omega^\M}}\otimes id_{\KP_\mathsf{F}(\Omega)}\right)}\circ \alpha_{_{\mathsf{F}\Lambda^\N\otimes \mathsf{F}R^1,\mathsf{F}\Omega^\M,\KP_\mathsf{F}(\Omega)}}^{-1}\circ \left((id_{\mathsf{F}\Lambda^\N}\otimes id_{\mathsf{F}R^1})\otimes \rho_{\Omega,\M}^{-1}\right)\circ\\
    &\left(\sigma_\N^{-1}\otimes id_{\KP_\mathsf{F}(\Omega)}\right)\circ \alpha_{_{\mathsf{F}R^1,\mathsf{F}\Omega^\N,\KP_\mathsf{F}(\Omega)}}^{-1}\circ \left(id_{\mathsf{F}R^1}\otimes \rho_{\Omega,\N}^{-1}\right)\\
    =& \left(\theta_{(\lambda\mu)^*}\otimes id_{\KP_\mathsf{F}(\Omega)}\right)\circ \left(((\phi_{\N,\M}^\Lambda)^{-1}\otimes id_{\mathsf{F}R^1})\otimes id_{\KP_\mathsf{F}(\Omega)}\right)\circ \left(\alpha_{_{\mathsf{F}\Lambda^\N,\mathsf{F}\Lambda^\M,\mathsf{F}R^1}}^{-1}\otimes id_{\KP_\mathsf{F}(\Omega)}\right)\circ\\
    &\left((id_{\mathsf{F}\Lambda^\N}\otimes \sigma_\M^{-1})\otimes id_{\KP_\mathsf{F}(\Omega)}\right)\circ \left(\alpha_{_{\mathsf{F}\Lambda^\N,\mathsf{F}R^1,\mathsf{F}\Omega^\M}}\otimes id_{\KP_\mathsf{F}(\Omega)}\right)\circ \alpha_{_{\mathsf{F}\Lambda^\N\otimes \mathsf{F}R^1,\mathsf{F}\Omega^\M,\KP_\mathsf{F}(\Omega)}}^{-1}\circ \left(\sigma_\N^{-1}\otimes(id_{\mathsf{F}\Omega^\M}\otimes id_{\KP_\mathsf{F}(\Omega)})\right)\circ\\
    &\left((id_{\mathsf{F}R^1}\otimes id_{\mathsf{F}\Omega^\N})\otimes \rho_{\Omega,\M}^{-1}\right)\circ \alpha_{_{\mathsf{F}R^1,\mathsf{F}\Omega^\N,\KP_\mathsf{F}(\Omega)}}^{-1}\circ \left(id_{\mathsf{F}R^1}\otimes \rho_{\Omega,\N}^{-1}\right)\\
    =& \left(\theta_{(\lambda\mu)^*}\otimes id_{\KP_\mathsf{F}(\Omega)}\right)\circ \left(((\phi_{\N,\M}^\Lambda)^{-1}\otimes id_{\mathsf{F}R^1})\otimes id_{\KP_\mathsf{F}(\Omega)}\right)\circ \left(\alpha_{_{\mathsf{F}\Lambda^\N,\mathsf{F}\Lambda^\M,\mathsf{F}R^1}}^{-1}\otimes id_{\KP_\mathsf{F}(\Omega)}\right)\circ\\
    &\left((id_{\mathsf{F}\Lambda^\N}\otimes \sigma_\M^{-1})\otimes id_{\KP_\mathsf{F}(\Omega)}\right)\circ \left(\alpha_{_{\mathsf{F}\Lambda^\N,\mathsf{F}R^1,\mathsf{F}\Omega^\M}}\otimes id_{\KP_\mathsf{F}(\Omega)}\right)\circ \left((\sigma_\N^{-1}\otimes id_{\mathsf{F}\Omega^\M})\otimes id_{\KP_\mathsf{F}(\Omega)}\right)\circ\\ 
    &\alpha_{_{\mathsf{F}R^1\otimes \mathsf{F}\Omega^\N,\mathsf{F}\Omega^\M,\KP_\mathsf{F}(\Omega)}}^{-1}\circ \alpha_{_{\mathsf{F}R^1, \mathsf{F}\Omega^\N,\mathsf{F}\Omega^\M\otimes\KP_\mathsf{F}(\Omega)}}^{-1}\circ \left(id_{\mathsf{F}R^1}\otimes (id_{\mathsf{F}\Omega^\N}\otimes \rho_{\Omega,\M}^{-1})\right)\circ \left(id_{\mathsf{F}R^1}\otimes \rho_{\Omega,\N}^{-1}\right)\\
    =& \left(\theta_{(\lambda\mu)^*}\otimes id_{\KP_\mathsf{F}(\Omega)}\right)\circ \left(((\phi_{\N,\M}^\Lambda)^{-1}\otimes id_{\mathsf{F}R^1})\otimes id_{\KP_\mathsf{F}(\Omega)}\right)\circ \left(\alpha_{_{\mathsf{F}\Lambda^\N,\mathsf{F}\Lambda^\M,\mathsf{F}R^1}}^{-1}\otimes id_{\KP_\mathsf{F}(\Omega)}\right)\circ\\
    &\left((id_{\mathsf{F}\Lambda^\N}\otimes \sigma_\M^{-1})\otimes id_{\KP_\mathsf{F}(\Omega)}\right)\circ \left(\alpha_{_{\mathsf{F}\Lambda^\N,\mathsf{F}R^1,\mathsf{F}\Omega^\M}}\otimes id_{\KP_\mathsf{F}(\Omega)}\right)\circ \left((\sigma_\N^{-1}\otimes id_{\mathsf{F}\Omega^\M})\otimes id_{\KP_\mathsf{F}(\Omega)}\right)\circ\\
    &\left(\alpha_{_{\mathsf{F}R^1,\mathsf{F}\Omega^\N,\mathsf{F}\Omega^\M}}^{-1}\otimes id_{\KP_\mathsf{F}(\Omega)}\right)\circ \left((id_{\mathsf{F}R^1}\otimes \phi_{\N,\M}^\Omega)\otimes id_{\KP_\mathsf{F}(\Omega)}\right)\circ \alpha_{_{\mathsf{F}R^1,\mathsf{F}\Omega^{\N+\M},\KP_\mathsf{F}(\Omega)}}^{-1}\circ\\
    &\left(id_{\mathsf{F}R^1}\otimes ((\phi_{\N,\M}^\Omega)^{-1}\otimes id_{\KP_\mathsf{F}(\Omega)})\right)\circ \left(id_{\mathsf{F}R^1}\otimes \alpha_{_{\mathsf{F}\Omega^\N,\mathsf{F}\Omega^\M,\KP_\mathsf{F}(\Omega)}}^{-1}\right)\circ \left(id_{\mathsf{F}R^1}\otimes (id_{\mathsf{F}\Omega^\N}\otimes \rho_{\Omega,\M}^{-1})\right)\circ \left(id_{\mathsf{F}R^1}\otimes \rho_{\Omega,\N}^{-1}\right)\\
    =& \left(\theta_{(\lambda\mu)^*}\otimes id_{\KP_\mathsf{F}(\Omega)}\right)\circ \left(\sigma_{\N+\M}^{-1}\otimes id_{\KP_\mathsf{F}(\Omega)}\right)\circ \alpha_{_{\mathsf{F}R^1,\mathsf{F}\Omega^{\N+\M},\KP_\mathsf{F}(\Omega)}}^{-1}\circ \left(id_{\mathsf{F}R^1}\otimes \rho_{\Omega,\N+\M}^{-1}\right) ~\text{(Using (\ref{comd3}) and (\ref{comd4}))}\\
    =& S_{(\lambda\mu)^*}.
\end{align*}
}

With this, we have shown that $\{P_v,S_\lambda,S_{\mu^*}~|~v\in \Lambda^0,\lambda,\mu\in \Lambda^{\neq 0}\}$ is a Kumjian--Pask $\Lambda$-family in $\End_{\KP_\mathsf{F}(\Omega)}(M(R))$. Hence, by the universal property of $\KP_\mathsf{F}(\Lambda)$ (see \cite[Theorem 3.4]{Pino}), there exists a unique $\mathsf{F}$-algebra homomorphism \[\varphi:\KP_\mathsf{F}(\Lambda)\longrightarrow \End_{\KP_\mathsf{F}(\Omega)}(M(R))\] such that 
\begin{center}
    $\varphi(p_v)=P_v$, $\varphi(s_\lambda)=S_\lambda$ and $\varphi(s_{\mu^*})=S_{\mu^*}$
\end{center}
for all $v\in \Lambda^0$ and $\lambda,\mu\in \Lambda^{\neq 0}$. Now, we can define a left $\KP_\mathsf{F}(\Lambda)$-action on $M(R)$ as \[w\cdot x:=\varphi(w)(x)\] for all $w\in \KP_\mathsf{F}(\Lambda)$ and $x\in M(R)$. As each $\varphi(w)$ is a $\KP_\mathsf{F}(\Omega)$-module endomorphism, the left $\KP_\mathsf{F}(\Lambda)$-action and the right $\KP_\mathsf{F}(\Omega)$-action commute, which makes $M(R)$ a $\KP_\mathsf{F}(\Lambda)-\KP_\mathsf{F}(\Omega)$-bimodule as desired. 

For each $\N\in \mathbb{N}^k$, set \[M(R)_\N:=\mathsf{F}R^1\otimes_{\mathsf{F}\Omega^0}(\KP_\mathsf{F}(\Omega))_\N.\] Clearly, the above grading makes $M(R)$, a graded right $\KP_\mathsf{F}(\Omega)$-module.

Any simple tensor in $M(R)_\N$ can be written as $x\otimes w$, where $x\in \mathsf{F}R^1$ and $w\in (\KP_\mathsf{F}(\Omega))_\N$. Then, \[P_v(x\otimes w)=\left(\mu_v\otimes id_{\KP_\mathsf{F}(\Omega)}\right)(x\otimes w)=vx\otimes w\in M(R)_\N,\] for all $v\in \Lambda^0$. Again,
\begin{align*}
    S_\lambda(x\otimes w)=&\left(\left(id_{\mathsf{F}R^1}\otimes \rho_{\Omega,d(\lambda)}\right)\circ \alpha_{_{\mathsf{F}R^1,\mathsf{F}\Omega^{d(\lambda)},\KP_\mathsf{F}(\Omega)}}\circ \left((\sigma_{d(\lambda)}\circ\theta_\lambda)\otimes id_{\KP_\mathsf{F}(\Omega)}\right)\right)(x\otimes w)\\
    =& \left(\left(id_{\mathsf{F}R^1}\otimes \rho_{\Omega,d(\lambda)}\right)\circ \alpha_{_{\mathsf{F}R^1,\mathsf{F}\Omega^{d(\lambda)},\KP_\mathsf{F}(\Omega)}}\right)\left(\displaystyle{\sum_{j=1}^{t}}(y_j\otimes \mu_j)\otimes w\right)\\
    &~~~~~~~~~~~~~~~~~~~~~~~~~~~~~~~~~~~~~~~~~~~~~~~~~~~~~~~~~~~~~~~~~~~~~~~~~(\text{Assuming}~\sigma_{d(\lambda)}(\lambda\otimes x)=\displaystyle{\sum_{j=1}^{t}}(y_j\otimes \mu_j)\in \mathsf{F}R^1\otimes_{\mathsf{F}\Omega^0}\mathsf{F}\Omega^{d(\lambda)})\\
    =& \left(id_{\mathsf{F}R^1}\otimes \rho_{\Omega,d(\lambda)}\right)\left(\displaystyle{\sum_{j=1}^{t}}y_j\otimes (\mu_j\otimes w)\right)\\
    =& \displaystyle{\sum_{j=1}^{t}} y_j\otimes \pi(\mu_j)w\in M(R)_{d(\lambda)+\N}
\end{align*}
for all $\lambda\in \Lambda^{\neq 0}$. 

Similarly, $S_{\mu^*}(x\otimes w)\in M(R)_{\N-d(\mu)}$ for all $\mu\in \Lambda^{\neq 0}$. This shows that $P_v,S_\lambda$ and $S_{\mu^*}$ are graded $\KP_\mathsf{F}(\Omega)$-module endomorphisms of degree $0,d(\lambda)$ and $-d(\mu)$ respectively. Thus the homomorphism \[\varphi:\KP_\mathsf{F}(\Lambda)\longrightarrow \displaystyle{\bigoplus_{\N\in \mathbb{Z}^k}}\left(\End_{\KP_\mathsf{F}(\Omega)}(M(R))\right)_\N\subseteq \End_{\KP_\mathsf{F}(\Omega)}(M(R))\] preserves degrees of generators and hence a graded $\mathsf{F}$-algebra homomorphism. Consequently, \[M(R)=\displaystyle{\bigoplus_{\N\in \mathbb{Z}^k}}M(R)_\N\] becomes a graded left $\KP_\mathsf{F}(\Lambda)$-module.

\subsection{A sufficient condition for lifting}\label{ssec the condition}
The graded $\KP_\mathsf{F}(\Lambda)-\KP_\mathsf{F}(\Omega)$-bimodule $M(R)$, obtained in the previous subsection, gives rise to the tensor product functor
\begin{align*}
    \mathcal{F}_{M(R)}:=-\otimes_{\KP_\mathsf{F}(\Lambda)}M(R):\Gr\!-\KP_\mathsf{F}(\Lambda)&\longrightarrow \Gr\!-\KP_\mathsf{F}(\Omega).
\end{align*}
\begin{lem}\label{lem tensor product functor inducing group hom}
Suppose $R$ is a bridging matrix for the $k$-graphs $\Lambda$ and $\Omega$. The functor $\mathcal{F}_{M(R)}$ induces an order preserving $\mathbb{Z}[\mathbb{Z}^k]$-module homomorphism 
\begin{align*}
K_0^{\gr}(\mathcal{F}_{M(R)}):K_0^{\gr}(\KP_\mathsf{F}(\Lambda))\longrightarrow K_0^{\gr}(\KP_\mathsf{F}(\Omega))
\end{align*}
such that $K_0^{\gr}(\mathcal{F}_{M(R)})([P])= [P\otimes_{\KP_\mathsf{F}(\Lambda)}M(R)]$ for all graded projective right $\KP_\mathsf{F}(\Lambda)$-modules $P$ and $\mathfrak{h}_R=K_0^{\gr}(\mathcal{F}_{M(R)})$, where $\mathfrak{h}_R$ is the unique module homomorphism mentioned in Lemma \textup{\ref{lem the connecting matrix R}} $(i)$.     
\end{lem}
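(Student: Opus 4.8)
The plan is to first verify that $\mathcal{F}_{M(R)}$ descends to graded $K$-theory with the asserted formula, and then to establish the identity $\mathfrak{h}_R=K_0^{\gr}(\mathcal{F}_{M(R)})$ by evaluating both homomorphisms on a convenient generating set.

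For the first part, the key observation is that $M(R)$ is already graded finitely generated projective as a \emph{right} $\KP_\mathsf{k}(\Omega)$-module. Since $R^1$ is finite, $\mathsf{k}R^1=\bigoplus_{g\in R^1}\mathsf{k}g$ is a finite-dimensional free right $\mathsf{k}\Omega^0$-module with $\mathsf{k}g\cong p_{s(g)}\mathsf{k}\Omega^0$; tensoring over $\mathsf{k}\Omega^0$ and placing $\mathsf{k}R^1$ in degree $0$ gives the graded decomposition $M(R)\cong\bigoplus_{g\in R^1}p_{s(g)}\KP_\mathsf{k}(\Omega)$. Hence, for any graded finitely generated projective right $\KP_\mathsf{k}(\Lambda)$-module $P$, the module $P\otimes_{\KP_\mathsf{k}(\Lambda)}M(R)$ is again graded finitely generated projective over $\KP_\mathsf{k}(\Omega)$, because $P$ is a graded direct summand of a finite sum of shifts of $\KP_\mathsf{k}(\Lambda)$ and $\KP_\mathsf{k}(\Lambda)\otimes_{\KP_\mathsf{k}(\Lambda)}M(R)\cong M(R)$. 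Moreover $\mathcal{F}_{M(R)}$ is additive and, as $M(R)$ is a graded bimodule, enjoys the natural isomorphism $P(\N)\otimes_{\KP_\mathsf{k}(\Lambda)}M(R)\cong\big(P\otimes_{\KP_\mathsf{k}(\Lambda)}M(R)\big)(\N)$, so it commutes with the shift functors. Therefore it induces a well-defined group homomorphism $K_0^{\gr}(\mathcal{F}_{M(R)})$ with $[P]\mapsto[P\otimes_{\KP_\mathsf{k}(\Lambda)}M(R)]$, which is $\mathbb{Z}[\mathbb{Z}^k]$-linear (compatibility with shifts) and order-preserving (it sends classes of genuine modules to classes of genuine modules).

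For the identity $\mathfrak{h}_R=K_0^{\gr}(\mathcal{F}_{M(R)})$, I would use that both sides are order-preserving $\mathbb{Z}[\mathbb{Z}^k]$-module homomorphisms and that $K_0^{\gr}(\KP_\mathsf{k}(\Lambda))$, being the group completion of $T_\Lambda$, is generated as a $\mathbb{Z}[\mathbb{Z}^k]$-module by the classes $v(0)=[p_v\KP_\mathsf{k}(\Lambda)]$, $v\in\Lambda^0$ (since $v(\N)={}^{\N}v(0)$). Thus it suffices to check agreement on each $v(0)$. On the one hand, using the standard isomorphism $eA\otimes_A M\cong eM$ together with the description $P_v=\mu_v\otimes\id_{\KP_\mathsf{k}(\Omega)}$ of the left action, one obtains $p_v\KP_\mathsf{k}(\Lambda)\otimes_{\KP_\mathsf{k}(\Lambda)}M(R)\cong v\,\mathsf{k}R^1\otimes_{\mathsf{k}\Omega^0}\KP_\mathsf{k}(\Omega)\cong\bigoplus_{w\in\Omega^0}\big(p_w\KP_\mathsf{k}(\Omega)\big)^{R(v,w)}$, because the edges of $R^1$ with range $v$ and source $w$ number exactly $R(v,w)$; passing to $T_\Omega$ this reads $K_0^{\gr}(\mathcal{F}_{M(R)})(v(0))=\sum_{w\in\Omega^0}R(v,w)\,w(0)$. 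On the other hand, $\gamma_\Lambda(v(0))=[\epsilon_v,0]$, so by the formula $\mathfrak{h}_R=\gamma_\Omega^{-1}\circ R\circ\gamma_\Lambda$ of Lemma \ref{lem the connecting matrix R}$(i)$ we get $\mathfrak{h}_R(v(0))=\gamma_\Omega^{-1}([\epsilon_vR,0])=\sum_{w\in\Omega^0}R(v,w)\,w(0)$, since $\epsilon_vR$ is the $v$-th row of $R$. The two expressions coincide, which completes the identification.

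The construction of $M(R)$ as a bimodule is precisely where the full bridging hypothesis (the flips $\mathfrak{f}_i$ and the commuting diagrams (\ref{comd3}), (\ref{comd4})) is consumed; by contrast, the present lemma only reads off the left action of the vertex idempotents $P_v=\mu_v\otimes\id_{\KP_\mathsf{k}(\Omega)}$, which is insensitive to the maps $\sigma_\N$. Consequently the only genuinely delicate point is the grading bookkeeping: keeping the degree-$0$ placement of $\mathsf{k}R^1$ straight so that $p_vM(R)\cong\bigoplus_{w}(p_w\KP_\mathsf{k}(\Omega))^{R(v,w)}$ is an isomorphism of \emph{graded} modules, and confirming that the $\mathbb{Z}^k$-action built into $\mathfrak{h}_R$ via (\ref{acteqn}) matches the shift action on $\mathcal{V}^{\gr}$ used to realise $T_\Lambda$. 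I expect no essential obstacle beyond this; the argument is the higher-rank analogue of the corresponding step in \cite{ART, Brix}.
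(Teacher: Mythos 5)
Your proposal is correct and follows essentially the same route as the paper: both arguments reduce to the computation $p_v\KP_\mathsf{k}(\Lambda)\otimes_{\KP_\mathsf{k}(\Lambda)}M(R)\cong\bigoplus_{w\in\Omega^0}\bigl(p_w\KP_\mathsf{k}(\Omega)\bigr)^{R(v,w)}$ on the generators coming from $T_\Lambda$ and then identify the result with $\gamma_\Omega^{-1}\circ R\circ\gamma_\Lambda$ using the uniqueness statement of Lemma \ref{lem the connecting matrix R} $(i)$. Your preliminary observation that $M(R)\cong\bigoplus_{g\in R^1}p_{s(g)}\KP_\mathsf{k}(\Omega)$ is graded finitely generated projective on the right, so that $\mathcal{F}_{M(R)}$ automatically preserves the relevant category, makes the well-definedness step slightly more explicit than the paper's treatment, but it is not a different argument.
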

\begin{proof}
We first define a $\mathbb{Z}^k$-monoid homomorphism $\mathcal{V}^{\gr}(\KP_\mathsf{F}(\Lambda))\longrightarrow \mathcal{V}^{\gr}(\KP_\mathsf{F}(\Omega))$. Since $T_\Lambda\cong \mathcal{V}^{\gr}(\KP_\mathsf{F}(\Lambda))$ via the $\mathbb{Z}^k$-monoid isomorphism $v(\N)\longmapsto [(p_v\KP_\mathsf{F}(\Lambda))(-\N)]$ and $v(\N)$'s are generators of $T(\Lambda)$, it suffices to define the images of the graded projective right $\KP_\mathsf{F}(\Lambda)$-modules of the form $(p_v\KP_\mathsf{F}(\Lambda))(\N)$ where $v\in \Lambda^0$ and $\N\in \mathbb{Z}^k$. Define $\mathcal{V}^{\gr}(\mathcal{F}_{M(R)}):\mathcal{V}^{\gr}(\KP_\mathsf{F}(\Lambda))\longrightarrow \mathcal{V}^{\gr}(\KP_\mathsf{F}(\Omega))$ by \[\mathcal{V}^{\gr}(\mathcal{F}_{M(R)})\left([(p_v\KP_\mathsf{F}(\Lambda))(\N)]\right):=[(p_v\KP_\mathsf{F}(\Lambda))(\N)\otimes_{\KP_\mathsf{F}(\Lambda)}M(R)].\] 
Note that 
\begin{align*}
(p_v\KP_\mathsf{F}(\Lambda))(\N)\otimes_{\KP_\mathsf{F}(\Lambda)}M(R)\cong & \left(\mathsf{F}v\otimes_{\mathsf{F}\Lambda^0}\KP_\mathsf{F}(\Lambda)(\N)\right)\otimes_{\KP_\mathsf{F}(\Lambda)}M(R)\\
\cong & ~\mathsf{F}v\otimes_{\mathsf{F}\Lambda^0}\left(\KP_\mathsf{F}(\Lambda)(\N)\otimes_{\KP_\mathsf{F}(\Lambda)}M(R)\right)\\
\cong & ~\mathsf{F}v\otimes_{\mathsf{F}\Lambda^0}M(R)(\N),
\end{align*}
where the grading of $\mathsf{F}v$ is concentrated in degree zero and each isomorphism is a graded module isomorphism of graded right $\KP_\mathsf{F}(\Omega)$-modules. For any $g\in R^1$, the assignment $(g\otimes x)\longmapsto p_{s(g)} x$ extends to a graded right $\KP_\mathsf{F}(\Omega)$-module isomorphism from $\mathsf{F}g\otimes_{\mathsf{F}\Omega^0}\KP_\mathsf{F}(\Omega)$ to $p_{s(g)}\KP_\mathsf{F}(\Omega)$. Therefore,
\begin{align*}
    \mathsf{F}v\otimes_{\mathsf{F}\Lambda^0}M(R)(\N) \cong & ~\mathsf{F}v\otimes_{\mathsf{F}\Lambda^0}\left(\mathsf{F}R^1\otimes_{\mathsf{F}\Omega^0}\KP_{\mathsf{F}}(\Omega)(\N)\right)\\
    \cong &~\left(\mathsf{F}v\otimes_{\mathsf{F}\Lambda^0}\mathsf{F}R^1\right)\otimes_{\mathsf{F}\Omega^0}\KP_\mathsf{F}(\Omega)(\N)\\
    \cong &~\left(\displaystyle{\bigoplus_{\substack{w\in \Omega^0\\R(v,w)\neq 0}}}\left(\displaystyle{\bigoplus_{i=1}^{R(v,w)}} \mathsf{F}g_i^{v,w}\right)\right)\otimes_{\mathsf{F}\Omega^0}\KP_\mathsf{F}(\Omega)(\N)\\
    \cong &~\displaystyle{\bigoplus_{\substack{w\in \Omega^0\\R(v,w)\neq 0}}}\left(\displaystyle{\bigoplus_{i=1}^{R(v,w)}}(\mathsf{F}g_i^{v,w}\otimes_{\mathsf{F}\Omega^0}\KP_\mathsf{F}(\Omega)(\N))\right)\\
    \cong &~\displaystyle{\bigoplus_{\substack{w\in \Omega^0\\R(v,w)\neq 0}}} (p_w\KP_\mathsf{F}(\Omega))(\N)^{R(v,w)}
\end{align*}
and consequently, $\mathcal{V}^{\gr}(\mathcal{F}_{M(R)})\left([(p_v\KP_\mathsf{F}(\Lambda))(\N)]\right)=\displaystyle{\sum_{\substack{w\in \Omega^0\\R(v,w)\neq 0}}}R(v,w)[(p_w\KP_\mathsf{F}(\Omega))(\N)]$. This description shows that $\mathcal{V}^{\gr}(\mathcal{F}_{M(R)})$ is a well-defined monoid homomorphism between the graded $\mathcal{V}$-monoids. Evidently, it respects the $\mathbb{Z}^k$-action and hence induces an order-preserving $\mathbb{Z}[\mathbb{Z}^k]$-module homomorphism \[K_0^{\gr}(\mathcal{F}_{M(R)}):K_0^{\gr}(\KP_\mathsf{F}(\Lambda))\longrightarrow K_0^{\gr}(\KP_\mathsf{F}(\Omega)),\] via the group completion functor. Clearly, $K_0^{\gr}(\mathcal{F}_{M(R)})([P])= [P\otimes_{\KP_\mathsf{F}(\Lambda)}M(R)]$ for all graded projective right $\KP_\mathsf{F}(\Lambda)$-modules $P$. Now, 
\begin{align*}
    \gamma_\Omega\left(K_0^{\gr}(\mathcal{F}_{M(R)})([(p_v\KP_\mathsf{F}(\Lambda))(\N)])\right)=& \gamma_\Omega\left(\displaystyle{\sum_{\substack{w\in \Omega^0\\R(v,w)\neq 0}}}R(v,w)[(p_w\KP_\mathsf{F}(\Omega))(\N)]\right)\\
    =& \displaystyle{\sum_{\substack{w\in \Omega^0\\R(v,w)\neq 0}}} [R(v,w)\epsilon_w,-\N]\\
    =& [\displaystyle{\sum_{\substack{w\in \Omega^0\\R(v,w)\neq 0}}}R(v,w)\epsilon_w,-\N]\\
    =& [\epsilon_vR,-\N]=R\left([\epsilon_v,-\N]\right)=R\left(\gamma_\Lambda([(p_v\KP_\mathsf{F}(\Lambda))(\N)])\right).
\end{align*}
Thus, by uniqueness of the map $\mathfrak{h}_R$ (see Lemma \ref{lem the connecting matrix R} $(i)$), it follows that $\mathfrak{h}_R=K_0^{\gr}(\mathcal{F}_{M(R)})$.
\end{proof}
\begin{dfn}\label{def bridging homomorphism between graded K-groups}
Suppose $\Lambda$ and $\Omega$ are row-finite $k$-graphs without sources such that $|\Lambda^0|,|\Omega^0|< \infty$. A morphism $\mathfrak{h}\in \mathbb{P}_{\mathbb{Z}^k}\left(K_0^{\gr}(\KP_\mathsf{F}(\Lambda)),K_0^{\gr}(\KP_\mathsf{F}(\Omega))\right)$ is called a \emph{bridging homomorphism} if the matrix $R$ obtained in Lemma \ref{lem the connecting matrix R} $(ii)$ is a bridging matrix for $\Lambda$ and $\Omega$ (see Definition \ref{def the bridging matrix and polymorphism}). 
\end{dfn}
In the following theorem, we provide an answer to Question \ref{ques lifting question} by showing that any bridging homomorphism between graded $K$-theories of Kumjian--Pask algebras can be lifted to a graded homomorphism between the algebras. 
\begin{thm}\label{the lifting theorem}
Suppose $\Lambda$ and $\Omega$ are row-finite $k$-graphs without sources such that $|\Lambda^0|,|\Omega^0|< \infty$. Let $\mathfrak{h}:K_0^{\gr}(\KP_\mathsf{F}(\Lambda))\longrightarrow K_0^{\gr}(\KP_\mathsf{F}(\Omega))$ be any bridging homomorphism. Then, there exists a unital graded homomorphism $\psi:\KP_\mathsf{F}(\Lambda)\longrightarrow \KP_\mathsf{F}(\Omega)$ such that $K_0^{\gr}(\psi)=\mathfrak{h}$.    
\end{thm}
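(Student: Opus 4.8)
The plan is to realize the lift $\psi$ by extracting it from the graded bimodule $M(R)$ that was painstakingly assembled in \S\ref{ssec bridging matrix}. Starting from the bridging homomorphism $\mathfrak{h}$, I would first invoke Lemma \ref{lem the connecting matrix R} $(ii)$ to obtain a matrix $R\in\mathbb{M}_{\Lambda^0\times\Omega^0}(\mathbb{N})$ with $A_{\mathbf{e}_i}R=RB_{\mathbf{e}_i}$ for all $i$, together with integers $r_1,\ldots,r_k$ satisfying $\mathfrak{h}(w)={}^{x_1^{r_1}\cdots x_k^{r_k}}\mathfrak{h}_R(w)$. Since $\mathfrak{h}$ is a bridging homomorphism by hypothesis (Definition \ref{def bridging homomorphism between graded K-groups}), this $R$ is a bridging matrix, so we may fix a family of flips $\mathfrak{F}=(\mathfrak{f}_i)$ making $(R,\mathfrak{F})$ a specified $\Lambda-\Omega$ bridging pair. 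The entire construction of the preceding subsection then applies: the isomorphisms $\sigma_\N$ of Lemma \ref{lem towards specified conjugacy} exist and make diagram (\ref{comd4}) commute, and the operators $P_v,S_\lambda,S_{\mu^*}$ form a Kumjian-Pask $\Lambda$-family in $\End_{\KP_\mathsf{k}(\Omega)}(M(R))$, yielding the graded algebra homomorphism $\varphi:\KP_\mathsf{k}(\Lambda)\longrightarrow\End_{\KP_\mathsf{k}(\Omega)}(M(R))$ that turns $M(R)$ into a graded $\KP_\mathsf{k}(\Lambda)-\KP_\mathsf{k}(\Omega)$-bimodule.

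Next I would pass to $K$-theory via the tensor functor $\mathcal{F}_{M(R)}=-\otimes_{\KP_\mathsf{k}(\Lambda)}M(R)$. By Lemma \ref{lem tensor product functor inducing group hom}, the induced map $K_0^{\gr}(\mathcal{F}_{M(R)})$ equals $\mathfrak{h}_R$. To recover $\mathfrak{h}$ itself rather than merely $\mathfrak{h}_R$, I would absorb the degree shift $x_1^{r_1}\cdots x_k^{r_k}$ by composing with an internal shift: since the $\mathbb{Z}[\mathbb{Z}^k]$-module action on $K_0^{\gr}$ is implemented by the shift functors on graded modules, replacing $M(R)$ with its shift $M(R)(\mathbf{r})$ (where $\mathbf{r}=\sum_i r_i\mathbf{e}_i$) produces a functor whose $K$-theoretic effect is precisely ${}^{x_1^{r_1}\cdots x_k^{r_k}}\mathfrak{h}_R=\mathfrak{h}$. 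Thus after this shift we have a graded bimodule $N:=M(R)(\mathbf{r})$ with $K_0^{\gr}(-\otimes_{\KP_\mathsf{k}(\Lambda)}N)=\mathfrak{h}$, and a corresponding graded algebra map $\psi_0:\KP_\mathsf{k}(\Lambda)\longrightarrow\End_{\KP_\mathsf{k}(\Omega)}(N)$.

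The final and most delicate step is converting the module map $\psi_0$ into an honest unital graded ring homomorphism $\psi:\KP_\mathsf{k}(\Lambda)\longrightarrow\KP_\mathsf{k}(\Omega)$. The key structural fact is that $N$, being a direct sum of shifts of modules of the form $p_w\KP_\mathsf{k}(\Omega)$ (as computed in the proof of Lemma \ref{lem tensor product functor inducing group hom}), is a graded finitely generated projective right $\KP_\mathsf{k}(\Omega)$-module, and $\mathfrak{h}$ being \emph{pointed} forces $N$ to be graded isomorphic to $\KP_\mathsf{k}(\Omega)$ itself as a right module: indeed the order unit $[\KP_\mathsf{k}(\Lambda)]=\sum_{v}[p_v\KP_\mathsf{k}(\Lambda)]$ maps under $\mathfrak{h}$ to $[\KP_\mathsf{k}(\Omega)]=\sum_w[p_w\KP_\mathsf{k}(\Omega)]$, so $[\,\KP_\mathsf{k}(\Lambda)\otimes_{\KP_\mathsf{k}(\Lambda)}N\,]=[N]=[\KP_\mathsf{k}(\Omega)]$ in $K_0^{\gr}$. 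I would then upgrade this $K_0$-level equality to an actual graded module isomorphism $N\cong\KP_\mathsf{k}(\Omega)$; here one uses that the relevant graded $\mathcal{V}$-monoid is cancellative (as $\mathcal{V}^{\gr}(\KP_\mathsf{k}(\Omega))\cong T_\Omega$ is cancellative by \cite[Proposition 3.14 $(ii)$]{HMPS}), so equality of classes yields genuine isomorphism. Under the identification $\End_{\KP_\mathsf{k}(\Omega)}(N)\cong\End_{\KP_\mathsf{k}(\Omega)}(\KP_\mathsf{k}(\Omega))\cong\KP_\mathsf{k}(\Omega)$ of graded algebras, $\psi_0$ becomes the desired unital graded homomorphism $\psi$. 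Finally $K_0^{\gr}(\psi)$ acts by tensoring with $N$ over this identification, which is exactly $K_0^{\gr}(-\otimes N)=\mathfrak{h}$. The hardest part will be the cancellation/upgrading argument in this last paragraph: carefully checking that the pointedness hypothesis, combined with cancellativity of the talented monoid, delivers a graded right-module isomorphism $N\cong\KP_\mathsf{k}(\Omega)$ compatible with the grading, and that the resulting endomorphism-algebra identification is itself graded, so that $\psi$ genuinely lands in $\KP_\mathsf{k}(\Omega)$ and recovers $\mathfrak{h}$ on $K$-theory.
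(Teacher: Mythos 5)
Your proposal is correct and follows essentially the same route as the paper: extract $R$ and the shift $\mathbf{r}$ from Lemma \ref{lem the connecting matrix R}~$(ii)$, use the bridging hypothesis to build $M(R)$ and identify $\mathfrak{h}$ with $K_0^{\gr}$ of tensoring against a shift of $M(R)$, then use pointedness plus cancellativity of $T_\Omega$ to get a graded right-module isomorphism of that shifted bimodule with $\KP_\mathsf{k}(\Omega)$. The only difference is that you unpack by hand (via the identification $\End_{\KP_\mathsf{k}(\Omega)}(N)\cong\KP_\mathsf{k}(\Omega)$) the final conversion step that the paper delegates to \cite[Proposition 3.1]{Brix}; this is a valid substitute, modulo the sign convention on the shift $M(R)(\pm\mathbf{r})$, which is a bookkeeping detail.
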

\begin{proof}
By Lemma \ref{lem the connecting matrix R} $(ii)$, there exist a matrix $R\in \mathbb{M}_{\Lambda^0\times \Omega^0}(\mathbb{N})$ and a $k$-tuple $\mathbf{r}\in \mathbb{Z}^k$ with $\mathfrak{h}(w)=~^{\mathbf{r}}\mathfrak{h}_R(w)$ for all $w\in K_0^{\gr}(\KP_\mathsf{F}(\Lambda))$. Since $\mathfrak{h}$ is a bridging homomorphism, $R$ is a bridging matrix for $\Lambda$ and $\Omega$, whence $\mathfrak{h}_R=K_0^{\gr}(\mathcal{F}_{M(R)})$ by Lemma \ref{lem tensor product functor inducing group hom}. In view of the description of the map $\mathcal{V}^{\gr}(\mathcal{F}_{M(R)})$, one can easily verify that for each $w\in K_0^{\gr}(\KP_\mathsf{F}(\Lambda))$, \[~^{\mathbf{r}}K_0^{\gr}(\mathcal{F}_{M(R)})(w)=K_0^{\gr}(\mathcal{F}_{M(R)(-\mathbf{r})})(w).\] Therefore, $\mathfrak{h}=K_0^{\gr}(\mathcal{F}_{M(R)(-\mathbf{r})})$. Since $\mathfrak{h}$ is pointed, \[[M(R)(-\mathbf{r})]=[\KP_\mathsf{F}(\Lambda)\otimes_{\KP_\mathsf{F}(\Lambda)}M(R)(-\mathbf{r})]=K_0^{\gr}(\mathcal{F}_{M(R)(-\mathbf{r})})([\KP_\mathsf{F}(\Lambda)])=[\KP_\mathsf{F}(\Omega)]\] in $K_0^{\gr}(\KP_\mathsf{F}(\Omega))$. Hence, $M(R)(-\mathbf{r})$ and $\KP_\mathsf{F}(\Omega)$ are isomorphic as graded right $\KP_\mathsf{F}(\Omega)$-modules. Finally, by \cite[Proposition 3.1]{Brix}, there exists a unital graded homomorphism $\psi:\KP_\mathsf{F}(\Lambda)\longrightarrow \KP_\mathsf{F}(\Omega)$ such that $K_0^{\gr}(\psi)=K_0^{\gr}(\mathcal{F}_{M(R)(-\mathbf{r})})=\mathfrak{h}$.
\end{proof}

\begin{rmk}\label{rem the generality of bridging criterion}
The lifting result for Leavitt path algebras is perfectly generalised in Theorem \ref{the lifting theorem}. The matrix $R$ of any pointed order-preserving $\mathbb{Z}[x^{-1},x]$-module homomorphism between graded Grothendieck groups of Leavitt path algebras is trivially a bridging matrix. As a result, the lifting can always be done for $1$-graphs and consequently, $K_0^{\gr}$ is a full functor.    
\end{rmk}

\section{Further directions}\label{sec future}
Given two finite directed graphs $E$ and $F$, the Graded Classification Conjecture (\cite{willie, mathann}) states that the Leavitt path algebras $L_\mathsf{F}(E)$ and $L_\mathsf{F}(F)$ with coefficient field $\mathsf{F}$, are graded isomorphic if and only if there is an order preserving $\mathbb{Z}[x^{-1},x]$-module isomorphism \[\phi:K_0^{\gr}(L_\mathsf{F}(E))\longrightarrow K_0^{\gr}(L_\mathsf{F}(F))\] such that $\phi([L_\mathsf{F}(E)])=[L_\mathsf{F}(F)]$ (equivalently, if there exists a $\mathbb{Z}$-monoid isomorphism between $T_E$ and $T_F$ which preserves the order units). So far, no counterexample has been found contradicting the conjecture. However, in the case of higher-rank graphs, we found the following example showing that the graded $K$-theory (or, equivalently, the talented monoid), in general, may fail to classify Kumjian--Pask algebras up to graded isomorphism.

\begin{example}\label{ex non-isomorphic KP-algebras with isomorphic talented monoids}
Consider the following colored directed graph:
\[
\begin{tikzpicture}[scale=1]
\node[circle,draw,fill=black,inner sep=0.5pt] (p11) at (0, 0) {$.$} 

edge[-latex, red,thick,loop, dashed, in=10, out=90, min distance=70] (p11)
edge[-latex, red,thick, loop, dashed, out=90, in=170, min distance=70] (p11)

edge[-latex, blue,thick, loop, out=225, in=-45, min distance=70] (p11);
                                        \node at (0, -0.5) {$v$};

\node at (-1.5,1) {$e_1$};
\node at (1.5,1) {$e_2$};
\node at (0.8,-1) {$f$};

\end{tikzpicture}
\]
We can obtain two non-isomorphic $2$-graphs by setting two different factorization rules for the bi-colored paths in the above directed graph.

$(I)$ The $2$-graph $\Lambda_1$: We define the factorization rules as follows: \[e_1 f=fe_1,~~ e_2 f=fe_2.\]

$(II)$ The $2$-graph $\Lambda_2$: Here the factorization rules are defined as: \[e_1 f=fe_2,~~ e_2f=fe_1.\]

The defining relations for both the talented monoids $T_{\Lambda_1}$ and $T_{\Lambda_2}$ are the same: \[v((i,j))=v((i+1,j))=2 v((i,j+1));~ (i,j)\in \mathbb{Z}^2.\] Consequently, the map $v((i,j))\longmapsto v((i,j))$ is a pointed $\mathbb{Z}^2$-isomorphism between $T_{\Lambda_1}$ and $T_{\Lambda_2}$. In fact, $T_{\Lambda_1}\cong T_{\Lambda_2}\cong \mathbb{N}[1/2]$ (see Example \ref{ex on the invariance of talented monoid under moves}).

Let $\mathsf{F}$ be any field. We denote the Kumjian--Pask families in $\KP_\mathsf{F}(\Lambda_1)$ and $\KP_\mathsf{F}(\Lambda_2)$ by $(p,s)$ and $(P,S)$ respectively.

Note that if $\mathcal{A}$ is the subalgebra of $\KP_\mathsf{F}(\Lambda_2)$ generated by $S_{e_1}, S_{e_2}, S_{e_1^*}, S_{e_2^*}$, then $\mathcal{A}\cong L_\mathsf{F}(1,2)$ as $\mathsf{F}$-algebras. Setting $e_1$ as a special edge emanating from $v$, we have a basis for the $\mathsf{F}$-vector space $\mathcal{A}$ consisting of the following elements:
\begin{itemize}
\item $P_v$;

\item $S_\lambda,S_{\lambda^*}$ where $\lambda\in \Lambda_2$ and $d(\lambda)=(0,n)$ for some $n\in \mathbb{N}\setminus \{0\}$;

\item $S_\lambda S_{\mu^*}$ where $s(\lambda)=s(\mu)$, $\lambda,\mu\in \Lambda_2$, $d(\lambda)=(0,n)$, $d(\mu)=(0,m)$ for some $n,m\in \mathbb{N}\setminus \{0\}$ and if $\lambda=\alpha_1\alpha_2\cdots \alpha_n$, $\mu=\beta_1\beta_2\cdots \beta_m$, $\alpha_i,\beta_j\in \{e_1,e_2\}$ then either $\alpha_n\neq \beta_m$ or $\alpha_n=\beta_m$ and this common edge is not $e_1$. 
\end{itemize}

We observe that there cannot exist any $\mathbb{Z}^2$-graded ring isomorphism between $\KP_\mathsf{F}(\Lambda_1)$ and $\KP_\mathsf{F}(\Lambda_2)$. In $\KP_\mathsf{F}(\Lambda_1)$, using relations $(KP2)$ (see Definition \ref{def KP-algebra}), we have $s_{f^*}s_{e_1^*}=s_{e_1^*}s_{f^*}$, which upon multiplying from both sides by $s_f$, yields $s_{e_1^*}s_f=s_f s_{e_1^*}$. A similar argument shows that $s_{e_2^*}s_f=s_f s_{e_2^*}$ and in $\KP_\mathsf{F}(\Lambda_2)$, $S_{e_1^*}S_f=S_f S_{e_2^*}$, $S_{e_2^*} S_f=S_f S_{e_1^*}$. It follows that $s_f\in Z(\KP_\mathsf{F}(\Lambda_1))$, whereas $S_f\notin Z(\KP_\mathsf{F}(\Lambda_2))$. If we have a graded isomorphism $\phi:\KP_\mathsf{F}(\Lambda_1)\longrightarrow \KP_\mathsf{F}(\Lambda_2)$, then it is not hard to see that $\phi(s_f)$ should be of the form $S_f x$, where \[x=\displaystyle{\sum_{i=1}^{t}}a_iS_{\lambda_i}S_{\mu_i^*},\] $a_i\in \mathsf{F}\setminus \{0\}$, $d(\lambda_i)=d(\mu_i)=(0,n_i)$, $n_i\in \mathbb{N}$ for all $i=1,2,\ldots,t$. Clearly, $x\in \mathcal{A}\setminus \{0\}$. Since $s_f\in Z(\KP_\mathsf{F}(\Lambda_1))$, we should have $S_f x\in Z(\KP_\mathsf{F}(\Lambda_2))$. Assume that $x$ is in normal form (if not, then we can easily transform those monomials which are not in the basis of $\mathcal{A}$, to express $x$ as a linear combination of basic vectors and all the monomials of such a basic representation satisfy the same condition which is satisfied by the monomials of the starting representation). Now, two cases may appear.

\emph{Case-I:} $\mu_i(0,(0,1))=e_2$ for some $i$. Then, $\mu_i=e_2\gamma$ for some $\gamma\in \Lambda_2$ with $d(\gamma)=(0,n_i-1)$. Observe that if a monomial $S_\lambda S_{\mu^*}(\neq P_v)$ is in normal form (i.e., the first edges of $\lambda$ and $\mu$ either do not coincide or, if they do, then the common first edge is not $e_1$) and we multiply it by $S_{e_1}$ (or, $S_{e_2}$) from left or right, then the resulting element also remains in normal form. Now, \[xS_{e_1}=a_iS_{\lambda_i}S_{\mu_i^*}S_{e_1}+\displaystyle{\sum_{j\neq i}}a_jS_{\lambda_j}S_{\mu_j^*}S_{e_1}=a_iS_{\lambda_i}S_{\gamma^*}S_{e_2^*}S_{e_1}+\displaystyle{\sum_{j\neq i}}a_j S_{\lambda_j} S_{\mu_j^*}S_{e_1}=\displaystyle{\sum_{j\neq i}}a_j S_{\lambda_j} S_{\mu_j^*}S_{e_1}.\] Thus, the number of basic vectors in $xS_{e_1}$ is less than $t$, whereas the number of basic vectors in $S_{e_2}x$ is still $t$. Thus, $S_{e_2}x\neq xS_{e_1}$ which implies $S_{e_1}S_f x\neq S_f x S_{e_1}$, a contradiction to the fact that $S_f x\in Z(\KP_\mathsf{F}(\Lambda_2))$. 

\emph{Case-II:} $\mu_i(0,(0,1))=e_1$ for all $i$ with $d(\mu_i)\neq 0$. Clearly, $x\notin \mathsf{F}P_v$; otherwise, $\phi(s_f)=cS_f$ for some $c\in \mathsf{F}$ and then $\phi(s_f)\notin Z(\KP_\mathsf{F}(\Lambda_2))$ as $S_{e_1}S_f=S_fS_{e_2}\neq S_f S_{e_1}$. Now, $x S_{e_2}=0$ if $\mu_i(0,(0,1))=e_1$ for all $i$ and $x S_{e_2}\in \mathsf{F}S_{e_2}$ if $S_{\lambda_i}=S_{\mu_i}=P_v$ for some $i$, in which case, obviously, $t\ge 2$. So if $x S_{e_2}\neq 0$, then the number of summand in the basic representation of $x S_{e_2}$ reduces to $1$ which is less than $t$. On the other hand, $S_{e_1}x\neq 0$ and the number of basic vectors in $S_{e_1}x$ is $t$. Therefore, $S_{e_1}x\neq x S_{e_2}$ and so $S_{e_2}\phi(s_f)\neq \phi(s_f)S_{e_2}$, which shows $\phi(s_f)=S_f x\notin Z(\KP_\mathsf{F}(\Lambda_2))$, the same contradiction. 

In fact, it can be shown that $\KP_\mathsf{F}(\Lambda_1)$ is graded isomorphic to $L_\mathsf{F}(1,2)[t^{-1},t]$, the ring of Laurent polynomials with coefficients from $L_\mathsf{F}(1,2)$, whereas $\KP_\mathsf{F}(\Lambda_2)$ is graded isomorphic to the skew Laurent polynomial ring $L_\mathsf{F}(1,2)[t^{-1},t;\varphi]$, where $\varphi:L_\mathsf{F}(1,2)\longrightarrow L_\mathsf{F}(1,2)$ is the ring automorphism switching $e_1$ and $e_2$. The readers should be convinced that the $\mathbb{Z}^2$-grading on the Laurent polynomial (and the skew Laurent polynomial) ring is defined by setting \[L_\mathsf{F}(1,2)[t^{-1},t]_{\mathbf{n}}:=\lspan_\mathsf{F} \{xt^i~|~x\in L_\mathsf{F}(1,2)^h,i\in \mathbb{Z};~ \iota_2(\deg(x))+(i,0)=\mathbf{n}\},\] where $\deg(x)$ is the degree of $x$ in $L_\mathsf{F}(1,2)$ when we view it as the Leavitt path algebra $L_\mathsf{F}(R_2)$ with the usual $\mathbb{Z}$-grading and $\iota_2:\mathbb{Z}\longrightarrow \mathbb{Z}^2$ is the standard injection sending $j$ to $(0,j)$.     
\end{example} 

The above example shows that the factorization of paths, which may affect the algebraic structure of a Kumjian--Pask algebra, is not well captured by the graded $K$-theory. In the $1$-graph case, factorization of paths into edges is always unique and hence the above problem does not appear. However, this significant difference between $1$-graphs and general $k$-graphs makes the investigation of graded $K$-theory (or, the talented monoid) in line with the classification of Kumjian--Pask algebras more interesting. To be precise, we state the following problem.

\begin{prob}\label{prob For which k-graphs TM can classify KP-algebras upto graded isomorphism}
Characterise the class of $k$-graphs for which the existence of a pointed $\mathbb{Z}^k$-isomorphism between talented monoids implies that the respective Kumjian--Pask algebras are graded isomorphic. 
\end{prob}

We remark that the above problem is very general. One may work in a particular dimension. For example, if $k=1$, solving the problem will settle the Graded Classification Conjecture as it says that the path categories of all finite directed graphs form the required class. For $k=2$, it is interesting to know whether the relevant class of $2$-graphs can be characterised by means of bi-colored skeletons. Obviously, the bi-colored graph in Example \ref{ex non-isomorphic KP-algebras with isomorphic talented monoids} is a forbidden skeleton in such a plausible characterization.

In Theorem \ref{the lifting theorem}, we obtain a sufficient condition (via a bridging matrix) for lifting homomorphisms between graded $K$-theories of Kumjian--Pask algebras to graded homomorphisms between the Kumjian--Pask algebras. However, we have not yet explored whether the condition is necessary. If it is so, then obviously, not all pointed order-preserving homomorphisms between graded Grothendieck groups can be lifted (one can easily find such a homomorphism in view of Example \ref{ex matrix commutativity is not enough}). On the level of Leavitt path algebras, this lifting can always be done as the functor $K_0^{\gr}$ is full. One way of showing this, among others, is to realise Leavitt path algebras as \emph{Bergman algebras} (see \cite{HLR} for details). So, in order to have a more specific answer to the lifting question (Question \ref{ques lifting question}) for Kumjian--Pask algebras, it may be fruitful to execute further research in the following direction: 
\begin{prob}\label{prob For which k-graphs KP-algebras are Bergman algebras}
Describe the class of $k$-graphs whose Kumjian--Pask algebras can be realised as Bergman algebras. 
\end{prob}

{\bf Acknowledgments.} The authors are thankful to the referee for the meticulous review of the article. They also thank Ralf Meyer for helpful comments that improved the presentation of the paper. Hazrat acknowledges Australian Research Council grant DP230103184. Mukherjee would like to thank University Grants Commission (UGC), India for providing Research Fellowship (ID: 211610022010/ Joint CSIR-UGC NET JUNE 2021). Mukherjee and Sardar also acknowledge the DST-FIST Programme (No: SR/FST/MS-11/2021/101(C)). Part of this work was carried out during visits by Mukherjee and Sardar to Western Sydney University in 2025.

\end{document}